\declaretheoremstyle[bodyfont=\sl]{slanted}
\declaretheorem[name=Definition,style=definition,qed=$\dashv$,
numberwithin=section]{dfn}
\declaretheorem[name=Definition,style=definition,numbered=no,qed=$\dashv$]{dfn*}
\declaretheorem[name=Definition,style=definition,numbered=no]{dfnnoqed*}
\declaretheorem[name=Theorem,style=slanted,sibling=dfn]{tm}
\declaretheorem[name=Theorem,style=slanted,numbered=no]{tm*}
\declaretheorem[name=Lemma,style=slanted,sibling=dfn]{lem}
\declaretheorem[name=Corollary,style=slanted,sibling=dfn]{cor}
\declaretheorem[name=Corollary,style=slanted,numbered=no]{cor*}
\declaretheorem[name=Remark,style=definition,sibling=dfn]{rem}
\declaretheorem[name=Question,style=definition,sibling=dfn]{ques}
\declaretheorem[name=Fact,style=definition,sibling=dfn]{fact}
\declaretheoremstyle[headfont=\scshape]{claimstyle}
\declaretheorem[name=Claim,style=claimstyle]{clm}
\declaretheorem[name=Claim,style=claimstyle]{clmtwo}
\declaretheorem[name=Claim,style=claimstyle]{clmthree}
\declaretheorem[name=Claim,style=claimstyle]{clmfour}
\newcommand{\Lim}{\mathrm{Lim}}
\newcommand{\illfp}{\mathrm{illfp}}
\newcommand{\ind}{\mathrm{ind}}
\newcommand{\elmt}{\mathrm{elmt}}
\newcommand{\pvec}{\vec{p}}
\newcommand{\ins}{\trianglelefteq}
\newcommand{\iso}{\cong}
\newcommand{\RR}{\mathbb R}
\newcommand{\PP}{\mathbb P}
\newcommand{\sub}{\subseteq}
\newcommand{\cross}{\times}
\newcommand{\all}{\forall}
\newcommand{\inter}{\cap}
\renewcommand{\int}{\inter}
\newcommand{\om}{\omega}
\newcommand{\pow}{\mathcal{P}}
\newcommand{\OR}{\mathrm{OR}}
\newcommand{\Hull}{\mathrm{Hull}}
\newcommand{\cut}{\backslash}
\newcommand{\Ss}{\mathcal{S}}
\newcommand{\Ll}{\mathcal{L}}
\newcommand{\rg}{\mathrm{rg}}
\newcommand{\dom}{\mathrm{dom}}
\newcommand{\crit}{\mathrm{cr}}
\newcommand{\rest}{\!\upharpoonright\!}
\newcommand{\com}{\circ}
\newcommand{\Ult}{\mathrm{Ult}}
\newcommand{\sats}{\models}
\newcommand{\elem}{\preccurlyeq}
\newcommand{\J}{\mathcal{J}}
\newcommand{\AC}{\mathsf{AC}}
\newcommand{\DC}{\mathsf{DC}}
\newcommand{\HOD}{\mathrm{HOD}}
\newcommand{\ZFC}{\mathsf{ZFC}}
\newcommand{\ZF}{\mathsf{ZF}}
\newcommand{\id}{\mathrm{id}}
\newcommand{\forces}{\dststile{}{}}
\newcommand{\bfPi}{\utilde{\Pi}}
\newcommand{\bfSigma}{\utilde{\Sigma}}
\newcommand{\rSigma}{\mathrm{r}\Sigma}
\newcommand{\sig}{\mathrm{sig}}
\newcommand{\RL}{\mathsf{RL}}
\newcommand{\scot}{\mathrm{scot}}
\newcommand{\ZFR}{\mathrm{ZFR}}
\newcommand{\spt}{\mathrm{spt}}
\newcommand{\bfrSigma}{\utilde{\rSigma}}
\newcommand{\psub}{\subsetneq}
\newcommand{\lpole}{\left\lfloor}
\newcommand{\rpole}{\right\rfloor}
\newcommand{\univ}[1]{\lpole #1\rpole}
\newcommand{\tu}{\textup}
\newcommand{\NN}{\mathbb{N}}
\newcommand{\lex}{\mathrm{lex}}
\DeclareMathOperator{\Th}{Th}
\DeclareMathOperator{\cof}{cof}
\DeclareMathOperator{\wfp}{wfp}
\DeclareMathOperator{\rank}{rank}
\begin{document}

\title{Extenders under ZF and\\
constructibility of rank-to-rank embeddings}

\author{Farmer Schlutzenberg\footnote{Funded by the Deutsche 
Forschungsgemeinschaft (DFG, German Research
Foundation) under Germany's Excellence Strategy EXC 2044-390685587,
Mathematics M\"unster: Dynamics-Geometry-Structure.}\\
farmer.schlutzenberg@gmail.com}

\maketitle

\begin{abstract}
Assume ZF (without the Axiom 
of Choice). Let $j:V_{\varepsilon}\to V_\delta$ be a non-trivial 
$\in$-cofinal $\Sigma_1$-elementary 
embedding,
where $\varepsilon,\delta$ are limit ordinals.
We prove some 
restrictions
on the constructibility of $j$ from $V_\delta$,
mostly focusing on the case $\varepsilon=\delta$.
In particular, if $\varepsilon=\delta$ and $j\in L(V_\delta)$ then 
$\cof(\delta)=\om$.
However, assuming ZFC + I$_3$,
with the appropriate $\varepsilon=\delta$, one can force
to get such
$j\in L(V^{V[G]}_\delta)$.
Assuming Dependent Choice  and  $\cof(\delta)=\om$  (but not assuming 
$V=L(V_\delta)$), and $j:V_\delta\to V_\delta$ is  $\Sigma_1$-elementary, we 
show 
that the there ``perfectly many' such $j$, with none being ``isolated''.
Assuming
a proper class of weak Lowenheim-Skolem cardinals,
we also  give a first-order 
characterization
of critical points of embeddings $j:V\to M$ with $M$ 
transitive. The main results rely on a development of extenders
under ZF (which is most useful given such wLS cardinals).\end{abstract}

\section{Introduction}

Large cardinal axioms are typically exhibited by a class elementary 
embedding
\[ j:V\to M \]
where $V$ is the universe of all sets and $M\sub V$ some transitive inner model.
Stronger large cardinal notions are obtained by demanding more 
resemblance between $M$ and $V$, and by demanding that more of $j$ 
gets into $M$.  William Reinhardt, in \cite{reinhardt_diss} and 
\cite{reinhardt_remarks}, took this notion to its extreme by setting $M=V$;
that is, he considered embeddings of the form
\[ j:V\to V.\]
The critical point\footnote{The least ordinal $\kappa$ such that 
$j(\kappa)>\kappa$.} of such an embedding became known as a \emph{Reinhardt 
cardinal}.

But Kunen showed in \cite{kunen_no_R} that if $V\sats\ZFC$ then there is no 
such 
$j$, and  in fact no ordinal $\lambda$ and elementary 
embedding $j:V_{\lambda+2}\to V_{\lambda+2}$.\footnote{If there is
$j:V\to V$ then there is a proper class of ordinals $\alpha$
such that $j:V_\alpha\to V_\alpha$, and then $j:V_{\alpha+n}\to V_{\alpha+n}$ 
for each $n<\om$.} Given  this violation of $\AC$, most research in large 
cardinals
following Kunen's discovery has been focused below the 
$\AC$ threshold.
In this paper, we drop the $\AC$ assumption,
and investigate embeddings $j:V_\delta\to V_\delta$,
 assuming only $\ZF$ in $V$. (We mention it explicitly
 when we use some $\DC$.)
 Thus, Kunen's objections are not valid here, and
moreover, no one has succeeded in refuting 
$j:V_{\lambda+2}\to V_{\lambda+2}$, or even $j:V\to V$, in $\ZF$ alone,
except for in the sense we describe next.

The following theorem is 
Suzuki \cite[Theorem 2.1]{suzuki_no_def_j}:

\begin{fact}[Suzuki] Assume $\ZF$ and let $j\sub V$ be a class which is 
definable from parameters.\footnote{Here given a structure $M$
and $X\sub M$, we say that $X$ is \emph{definable from parameters over} $M$
iff there is $\pvec\in M^{<\om}$ and a formula $\varphi$
such that for all $x\in M$, we have $x\in X$ iff $M\sats\varphi(x,\pvec)$.} 
Then it is not the case that $j:V\to V$ is a $\Sigma_1$-elementary 
embedding.\footnote{Actually, because $V\sats\ZF$,
$\Sigma_1$-elementarity is equivalent to full elementarity in this 
case.}\end{fact}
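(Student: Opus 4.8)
Suppose toward a contradiction that $j$ is a class, definable over $(V,\in)$ from a parameter $a$ by a formula $\varphi$, such that $j\colon V\into V$ is a nontrivial $\in$-cofinal $\Sigma_1$-elementary embedding; by the footnote, $j$ is then fully elementary. The one feature of definability I would use is that ``$y=j(x)$'' is the $\in$-formula $\varphi(x,y,a)$, so $\ZF$ provides Separation and Replacement for every formula mentioning $j$. Hence $\crit(j)=:\kappa_0$ exists, the critical sequence $\langle\kappa_n\rangle_{n<\om}$ (with $\kappa_{n+1}=j(\kappa_n)$) is a \emph{set} — the recursion theorem applies, its step map being $\in$-definable from $a$ — and so $\lambda:=\sup_n\kappa_n$ is an ordinal. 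Standard arguments then give $j(\lambda)=\lambda$, $j[V_\lambda]\sub V_\lambda$, $\kappa_0<\lambda$, and $\kappa_0\notin j[\lambda]$. (Note that it is precisely this step — forming $\langle\kappa_n\rangle$, hence $\lambda$ — that fails for a merely amenable, non-definable $j$ in $\ZF$, which is why Kunen's argument does not by itself refute $j\colon V\into V$ in $\ZF$; definability restores it.)

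With $\lambda$ in hand, the plan is to run Kunen's $\ZFC$ inconsistency argument inside a transitive class model of $\ZFC$. Take $M:=\HOD_{\{a\}}$: it models $\ZFC$ (with a well-order definable from $a$), it contains every ordinal — so $\lambda$, all $\kappa_n\in M$ — it contains $j[\lambda]=\{\xi:\exists\beta<\lambda\ \varphi(\beta,\xi,a)\}$, and, using that $j(a)$ is $\OD$ from $a$, one checks $j[M]\sub M$. As $M\sats\ZFC$, there is in $M$ a J\'onsson function $f\colon[\lambda]^{\om}\into\lambda$, i.e.\ $f[[X]^{\om}]=\lambda$ for every $X\sub\lambda$ of size $|\lambda|$. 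Granting that $j\rest M$ is elementary $M\into M$ (the delicate point, below), applying $j$ to ``$f$ is J\'onsson for $\lambda$'' shows $j(f)$ is J\'onsson for $j(\lambda)=\lambda$; together with $[j[\lambda]]^{\om}=j[[\lambda]^{\om}]$ (every $\om$-subset of $j[\lambda]$ is $j(t)$ for a unique $t\in[\lambda]^{\om}$, by injectivity of $j$) and $j(f)\bigl(j(t)\bigr)=j\bigl(f(t)\bigr)$, applying the J\'onsson property of $j(f)$ to $X=j[\lambda]$ gives
\[ \lambda \;=\; j(f)\bigl[[j[\lambda]]^{\om}\bigr] \;=\; j\bigl[f[[\lambda]^{\om}]\bigr] \;\sub\; j[\lambda], \]
i.e.\ $\lambda=j[\lambda]$, contradicting $\kappa_0\in\lambda\setminus j[\lambda]$.

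The one genuinely delicate point is the elementarity of $j\rest M$ as a map $M\into M$: applying $j$ to the (first-order, parameter-$a$) assertion ``$M\sats\psi(\vec x)$'' returns ``$\HOD_{\{j(a)\}}\sats\psi(j\vec x)$'', and $\HOD_{\{j(a)\}}$ need not literally equal $M$. I would manage this by a reflection step performed first: using the continuity of $j$ at ordinals of cofinality $\om$ together with L\'evy reflection, produce a limit $\theta>\lambda$ with $a\in V_\theta$, $V_\theta\elem V$, $j(\theta)=\theta$ and $j[V_\theta]\sub V_\theta$, so that $j\rest V_\theta\colon V_\theta\into V_\theta$ is a nontrivial $\in$-cofinal elementary embedding, definable over the \emph{set} $V_\theta$ from $a$, with the same $\kappa_0$ and $\lambda$. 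Running the previous paragraph inside $V_\theta$ then makes every satisfaction relation genuinely first-order and reduces the issue to verifying finitely much $\Sigma_n$-elementarity of a restricted embedding on $\HOD^{V_\theta}_{\{a\}}$ — the ``$a$ versus $j(a)$'' bookkeeping — which I expect to be the real work. Once that, the existence of $\lambda$, and a J\'onsson function at $\lambda$ are all secured, the displayed contradiction is immediate.
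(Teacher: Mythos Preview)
The paper does not prove this statement --- it is cited as Suzuki's result --- but Suzuki's own argument (echoed at the end of the paper's proof of Theorem~\ref{tm:RL_no_def_j}) is entirely different from yours and much shorter: fix the defining formula $\varphi$, and for each parameter $q$ let $j_q$ be the class $\{(x,y):\varphi(x,y,q)\}$. Let $\kappa$ be the least ordinal which is $\crit(j_q)$ for some $q$ such that $j_q:V\to V$ is $\Sigma_1$-elementary. Then $\kappa$ is definable \emph{without} parameters (the formula $\varphi$ is fixed in the metatheory). Picking $q_0$ with $\crit(j_{q_0})=\kappa$, since $V\models\ZF$ the map $j_{q_0}$ is fully elementary, hence fixes the parameter-free definable ordinal $\kappa$ --- contradicting $\crit(j_{q_0})=\kappa$. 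No inner model, no J\'onsson function, no Kunen.

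Your approach has a genuine gap, and it is precisely the one you flag but do not resolve. Reflecting to $V_\theta$ does nothing for the $\HOD_{\{a\}}$ versus $\HOD_{\{j(a)\}}$ mismatch: inside $V_\theta$ you face the identical problem with $\HOD^{V_\theta}_{\{a\}}$ versus $\HOD^{V_\theta}_{\{j(a)\}}$. Concretely, from ``$f$ is J\'onsson for $\lambda$ in $M=\HOD_{\{a\}}$'' elementarity of $j:V\to V$ gives only ``$j(f)$ is J\'onsson for $\lambda$ in $\HOD_{\{j(a)\}}$''. To run Kunen you must apply this to $X=j[\lambda]$, but $j[\lambda]$ is definable from $a$ (via $\varphi$), not from $j(a)$, so there is no reason $j[\lambda]\in\HOD_{\{j(a)\}}$; likewise the bijection $j\rest\lambda$ witnessing $|j[\lambda]|=|\lambda|$ lives in $\HOD_{\{a\}}$, not obviously in $\HOD_{\{j(a)\}}$. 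This is not bookkeeping but the heart of the matter, and there is no evident $j$-fixed parameter to substitute for $a$ (iterating to $\{j^n(a):n<\omega\}$ does not help, since $j$ shifts this set). Suzuki's argument sidesteps the whole issue by quantifying over the parameter rather than fixing it.
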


So if we interpret ``class'' as requiring definability from parameters -- 
which is one standard interpretation -- then Suzuki's theorem actually
disproves the existence of Reinhardt cardinals from $\ZF$.
However, there are other interpretations of ``class'' which are more liberal,
and for which this question is still open. (And significantly, note that this 
theorem doesn't seem to say anything about the existence of an elementary 
$j:V_{\lambda+2}\to V_{\lambda+2}$.)

What can  ``classes'' be,
if not definable from parameters?
One way to consider this question,  is to shift perspective:
instead of considering $V$, let us consider some rank segment $V_\delta$ of $V$,
where $\delta$ 
is an ordinal. 
The sets $X\sub V_\delta$ such that $X$ is definable from parameters
over $V_\delta$, are exactly those sets $X\in L_1(V_\delta)$,
the first step in G\"odel's constructibility hierarchy above base set 
$V_\delta$.\footnote{That is, we start with $V_\delta$ as the first
set in the hierarchy, instead of $L_0=\emptyset$, and then proceed 
level-by-level as for $L$.} So by Suzuki's theorem,
if $V_\delta\sats\ZF$, then there is no $\Sigma_1$-elementary $j:V_\delta\to 
V_\delta$ with $j\in L_1(V_\delta)$. Viewed this way, we now have the obvious 
question, as to whether we can have such a $j\in L_\alpha(V_\delta)$, for 
larger $\alpha$. And the subsets of $V_\delta$ which are in $L(V_\delta)$, for 
example, provide another reasonable notion of ``class'' with respect to 
$V_\delta$.

So, wanting to examine these issues, 
this paper focuses mostly on the question of the constructibility of
 such $j$
 from $V_\delta$,
looking for generalizations of 
Suzuki's theorem, and examining the consequences of having such $j\in 
L(V_\delta)$.
We remark that the papers \cite{cumulative_periodicity}, 
\cite{goldberg_even_ordinals} develop some general analysis
of \emph{rank-to-rank} embeddings $j:V_\delta\to V_\delta$,
for arbitrary ordinals $\delta$, where we include all $\Sigma_1$-elementary
maps in this notion. But in this paper, we consider only the case 
that $\delta$ is a limit.\footnote{Most of the results
in this paper first appeared
in the author's informal notes \cite{reinhardt_non-definability}.
Those notes were broken into pieces according to theme,
and this paper is one of 
those pieces. 
There are also some further observations here not present in 
\cite{reinhardt_non-definability}, mainly in \S\ref{sec:admissible}.
We also corrected an error in the definition of \emph{extender}
given in \cite{reinhardt_non-definability}; see \S\ref{sec:ultrapowers_in_ZF}
of the present paper, and in particular Footnote \ref{ftn:rank-ext}.}
For convenience, we write $\mathscr{E}_m(V_\delta)$ for the set of all 
$\Sigma_m$-elementary
$j:V_\delta\to V_\delta$, and $\mathscr{E}(V_\delta)=\mathscr{E}_\om(V_\delta)$.

We also consider, to some extent, analogous constructibility questions
for $\in$-cofinal $\Sigma_1$-elementary embeddings $j:V_{\bar{\delta}}\to 
V_\delta$ where $\bar{\delta}<\delta$; of course these embeddings are
much lower in consistency strength. Here the picture regarding constructibility 
seems murkier.

The first basic result here is the following
(in the case that $\delta$ is inaccessible,
the result is due independently and earlier to Goldberg, who used other 
methods):

\begin{tm*}[\ref{tm:V=L(V_delta)_cof(delta)>om}]
  Assume $\ZF+V=L(V_\delta)$ where
 $\cof(\delta)>\om$. Then $\mathscr{E}_1(V_\delta)=\emptyset$; that is,
there is no $\Sigma_1$-elementary $j:V_\delta\to V_\delta$.
\end{tm*}

We also establish a restriction on $\in$-cofinal $\Sigma_1$-elementary
$j:V_{\bar{\delta}}\to V_\delta$ when $V=L(V_\delta)$ and $\cof(\delta)>\om$
and $\bar{\delta}<\delta$,
although our proof does not rule this situation out.
Considering the case  that $\cof(\delta)=\om$, we  establish:
\begin{tm*}[\ref{tm:no_j_in_adm}] Assume $\ZF$, $\delta\in\Lim$
with $\cof(\delta)=\om$, $\bar{\delta}\leq\delta$ and $j:V_{\bar{\delta}}\to 
V_\delta$
is $\in$-cofinal $\Sigma_1$-elementary.
Let $\kappa\in\OR$ be least with $L_\kappa(V_\delta)$ admissible.
Then $j\notin L_\kappa(V_\delta)$, and moreover,  $j$ is not 
$\bfSigma_1^{L_\kappa(V_\delta)}$,
and  not $\bfPi_1^{L_\kappa(V_\delta)}$.
\end{tm*}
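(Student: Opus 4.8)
The plan is to reduce the three assertions to the last two and then run an argument in the spirit of Suzuki's theorem, applied to the canonical lift of $j$. First, since $j\sub V_\delta\cross V_\delta$ and $V_\delta\in L_\kappa(V_\delta)$, the relation ``$(x,y)\in j$'' is $\bfDelta_0^{L_\kappa(V_\delta)}$ in the parameter $j$; so if $j\in L_\kappa(V_\delta)$ then $j$ is both $\bfSigma_1^{L_\kappa(V_\delta)}$ and $\bfPi_1^{L_\kappa(V_\delta)}$, and it suffices to refute these last two. Up front I would also: (i) use minimality of $\kappa$, which gives $L_\kappa(V_\delta)=\Hull^{L_\kappa(V_\delta)}_1(V_\delta\cup\{V_\delta\})$ (the $\Sigma_1$-Skolem hull), to absorb any defining parameter into one in $V_\delta$ together with $V_\delta$ itself; (ii) dispose of the degenerate subcases — if $j$ has no critical point, use the structure theory of \cite{cumulative_periodicity}, \cite{goldberg_even_ordinals} to pass to a $\Sigma_1$-elementary self-map of $V_\delta$ with a genuine critical point and the same complexity over $L_\kappa(V_\delta)$; the case $\bar\delta<\delta$ I would treat by a variant of the argument below, using that $\in$-cofinality together with $\cof(\delta)=\om$ forces $\cof(\bar\delta)=\om$. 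Finally, from $\cof(\bar\delta)=\om$ fix increasing $\langle\delta_n\rangle_{n<\om}$ cofinal in $\bar\delta$ with $\delta_0>\crit(j)$; then $j_n:=j\rest V_{\delta_n}\in V_\delta$ since $j[V_{\delta_n}]\sub j(V_{\delta_n})\in V_\delta$, and $j=\bigun_n j_n$.

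Now assume $\bar\delta=\delta$, let $\kappa_0:=\crit(j)$, and lift $j$ to $\hat\jmath\colon L(V_\delta)\to L(V_\delta)$, which is fully elementary, being $\in$-cofinal between models of $\ZF$. As $\hat\jmath(V_\delta)=V_\delta$ and $\kappa$ is the least ordinal with $L_\kappa(V_\delta)$ admissible — a property preserved by elementarity and absolutely computed from $V_\delta$ — we get $\hat\jmath(\kappa)=\kappa$, so $\hat\jmath$ maps the set $L_\kappa(V_\delta)$ into itself, and by elementarity of $\hat\jmath$ on $L(V_\delta)$ one checks $\hat\jmath\rest L_\kappa(V_\delta)\colon L_\kappa(V_\delta)\to L_\kappa(V_\delta)$ is fully elementary, with $\hat\jmath(\kappa_0)=j(\kappa_0)>\kappa_0$. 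Suppose $j$ is $\bfSigma_1^{L_\kappa(V_\delta)}$, with graph $\Sigma_1$-definable over $L_\kappa(V_\delta)$ from $\vec q\in V_\delta$ (and $V_\delta$). Using admissibility of $L_\kappa(V_\delta)$, its $\Sigma_1$-definable $\Sigma_1$-Skolem function, and $L_\kappa(V_\delta)=\Hull^{L_\kappa(V_\delta)}_1(V_\delta\cup\{V_\delta\})$, one shows $\hat\jmath\rest L_\kappa(V_\delta)$ is itself definable over $L_\kappa(V_\delta)$ from $\vec q$ (it is computed pointwise from $j$ via the Skolem terms witnessing membership in the hull), and hence $\kappa_0=\crit(\hat\jmath)$ is definable over $L_\kappa(V_\delta)$ from $\vec q$.

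If $\vec q$ could be taken with $\rank(\vec q)<\kappa_0$ we would be done: $\hat\jmath$ then fixes $\vec q$, hence fixes everything definable over $L_\kappa(V_\delta)$ from $\vec q$, forcing $\hat\jmath(\kappa_0)=\kappa_0$, a contradiction. Removing the hypothesis $\rank(\vec q)<\kappa_0$ is the step I expect to be the main obstacle. I would attack it by condensation: form $H:=\Hull^{L_\kappa(V_\delta)}_1(V_{\kappa_0}\cup\{V_\delta,\vec q\})$, transitively collapse to $\bar H\iso H$, and argue — using $\cof(\delta)=\om$ and that the $j_n$ organize $j$ into pieces bounded in $V_\delta$ — that $\bar H$ has the form $L_{\bar\kappa}(W)$ with $\bar\kappa<\kappa$, then contradict the minimality of $\kappa$ by pulling the resulting admissibility back through the collapse. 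An alternative, where the $\ZF$-extender calculus developed later in the paper permits it, is to iterate $j$ and its restrictions $j_n$ so as to push the critical point above $\rank(\vec q)$ while keeping the relevant embedding definable over $L_\kappa(V_\delta)$ from a now-fixed parameter, and then apply the contradiction above. For the $\bfPi_1$ case one cannot pass to $\bfDelta_1$; instead I would use that ``$y\neq j(x)$'' is $\bfSigma_1^{L_\kappa(V_\delta)}$ and that each $j_n$ is an element of the set $V_\delta\in L_\kappa(V_\delta)$, so that $\Sigma_1$-collection applied to the $\Sigma_1$ relation ``$z\neq j_n$'' still yields $\langle j_n\rangle_{n<\om}$, hence $j$, inside a definable subclass of $L_\kappa(V_\delta)$, after which the same hull/collapse argument proceeds.
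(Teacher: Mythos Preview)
The central gap is the sentence ``lift $j$ to $\hat\jmath\colon L(V_\delta)\to L(V_\delta)$, which is fully elementary.'' You give no construction of $\hat\jmath$, and in fact no such lift need exist. The only canonical way to extend $j:V_\delta\to V_\delta$ to $L(V_\delta)$ is via the extender ultrapower $\Ult_0(L(V_\delta),E_j)$, and the paper's Lemma \ref{lem:V=L(V_delta)_cof(delta)=om}(\ref{item:Ult_illfd}) shows this ultrapower is \emph{illfounded} (working in $L(V_\delta)$; the illfoundedness is forced precisely because a wellfounded ultrapower would yield a parameter-definable elementary $L(V_\delta)\to L(V_\delta)$, contradicting Suzuki). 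So your $\hat\jmath$ simply is not available, and everything downstream of it --- $\hat\jmath(\kappa)=\kappa$, the elementarity of $\hat\jmath\rest L_\kappa(V_\delta)$, the Suzuki-style minimal-critical-point argument, the condensation manoeuvre to lower $\rank(\vec q)$ --- collapses with it.

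The paper's argument exploits exactly this illfoundedness rather than trying to avoid it. The reduction also runs in the opposite direction from yours: one first proves $j\notin\J_\theta(V_\delta)$ and then derives ``not $\bfSigma_1$'' and ``not $\bfPi_1$'' from admissibility (if $j$ were $\bfSigma_1$, $\Sigma_1$-collection bounds the witnesses and puts $j$ into $\J_\theta$; the $\bfPi_1$ case is a short dual argument). For $j\notin\J_\theta$, assume $j\in\J_\alpha$ for some $\alpha<\theta$, form $U=\Ult_0(\J_{\alpha+\crit(j)+1}(V_\delta),E_j)$, and show: (i) $\Sigma_0$-{\L}o\'{s} holds, so $i_E$ is cofinal $\Sigma_1$-elementary; (ii) $i_E(V_\delta)=V_\delta$ by continuity at $\delta$ (here $\cof(\delta)=\om$ is used); (iii) $U$ is illfounded, since otherwise $U$ would contain a new subset of $V_\delta$ not in $\J_{\alpha+\crit(j)+1}$, yet $\pow(V_\delta)\cap U\sub\J_{\alpha+\crit(j)+1}$ because $E\in\J_\alpha$. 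Now Fact~\ref{fact:wfp_admissible} gives $\J_\theta\sub\wfp(U)$, whence $\pow(V_\delta)\cap\J_\theta\sub U\sub\J_{\alpha+\crit(j)+1}$, the same contradiction. The case $\bar\delta<\delta$ is handled by coding $\J_{\kappa_{V_{\bar\delta}}+1}(V_{\bar\delta})$ inside $V_{\bar\delta}$ and running the same argument in the codes. In short, the key idea you are missing is that the ultrapower is illfounded and that this, combined with $\J_\theta\sub\wfp(U)$, is what produces the contradiction.
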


However, assuming $\ZFC+I_3$, one can force ``there
is 
$\delta\in\Lim$
and an elementary $j:V_\delta\to V_\delta$
with $j\in L_{\kappa+1}(V_\delta)$ (with $\kappa$ as above)'';
see \ref{tm:j_in_L(V_delta)_under_V=HOD}.

The question of constructibility of rank-to-rank embeddings is also 
related to questions on their uniqueness. That is, consider an $I_3$ 
rank-to-rank embedding 
$j$
(so $j:V_\delta\to V_\delta$ where $\delta$ is a limit and 
$\delta=\kappa_\om(j)=\lim_{n<\om}\kappa_n$ where $\kappa_0=\crit(j)$ and 
$\kappa_{n+1}=j(\kappa_n)$). Each finite iterate $j_n$ is also such an 
embedding,
and $\lim_{n<\om}\crit(j_n)=\delta$. It follows easily that
for each $\alpha<\delta$, there are multiple elementary $k:V_\delta\to V_\delta$
such that $k\rest V_\alpha=j\rest V_\alpha$ (just consider $j_n(j)$ for some 
sufficiently large $n$). But what if instead, $j:V_{\lambda+\om}\to 
V_{\lambda+\om}$
is elementary where $\lambda=\kappa_\om(j)$? Can there be
$n<\om$ such that $j$ is the unique elementary $k:V_{\lambda+\om}\to
V_{\lambda+\om}$
extending $j\rest V_{\lambda+n}$? The $I_3$ argument clearly doesn't work here,
since an elementary $k:V_{\lambda+\om}\to V_{\lambda+\om}$
must have $\crit(k)<\lambda$. The answer is ``no'' under
$\DC$:

\begin{tm*}[\ref{tm:DC_or_force_embeddings}] Assume $\ZF$ and let $\delta$
be  a limit ordinal
where $\mathscr{E}(V_\delta)\neq\emptyset$,
and $m\in[1,\om]$.
 Then:
 \begin{enumerate}
  \item In a set-forcing extension of $V$, for each 
$V$-amenable\footnote{\emph{$V$-amenable}
means that $j\rest V_\alpha\in V$ for each $\alpha<\delta$.}
$j\in\mathscr{E}_m(V_\delta)$ and $\alpha<\delta$
 there is a $V$-amenable $k\in\mathscr{E}_m(V_\delta)$
 with $k\rest V_\alpha=j\rest V_\alpha$
 but $k\neq j$.

  \item If $\DC$ holds and $\cof(\delta)=\om$ then for each 
$j\in\mathscr{E}_m(V_\delta)$ and $\alpha<\delta$ there 
is 
 $k\in\mathscr{E}_m(V_\delta)$ with $k\rest V_\alpha=j\rest 
V_\alpha$
 but $k\neq j$.
 \end{enumerate}
\end{tm*}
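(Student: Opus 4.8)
The plan is to produce the perturbed embedding $k$ by a \emph{shift} of $j$ when $\alpha$ lies below the critical sequence of $j$, and by a level-by-level limit construction otherwise — the limit being extracted from $\DC$ for part~(2) and from a forcing for part~(1). The basic tool is the shift operation. If $h,j\in\mathscr{E}_m(V_\delta)$ then $j$ is $V_\delta$-amenable (each $j\rest V_\beta\in V_\delta$, since $j$ is $\in$-cofinal $\Sigma_1$-elementary, so $j$ maps $V_\beta$ into $V_{j(\beta)}$ with $j(\beta)<\delta$), so we may set $h*j:=\bigcup_{\beta<\delta}h(j\rest V_\beta)$. One checks, using $\in$-cofinality of $h$, that $h*j$ is a total function $V_\delta\to V_\delta$, and, using $\Sigma_n$-satisfaction predicates over $V_\delta$ as parameters (for each $n$) together with a back-and-forth choice of correctness points to globalize elementarity, that $h*j\in\mathscr{E}_m(V_\delta)$; moreover $h$ fixes $j\rest V_\alpha$ pointwise and setwise whenever $\crit(h)>\rank(j\rest V_\alpha)$, and then $(h*j)\rest V_\alpha=j\rest V_\alpha$, since restriction commutes with applying $h$ below $\crit(h)$. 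Now put $\lambda:=\kappa_\om(j)=\sup_n\kappa_n(j)\le\delta$ and consider first the case $\alpha<\lambda$. Then $\rank(j\rest V_\alpha)<\lambda$, so for $n$ large enough that $\kappa_{n-1}(j)>\alpha$ — whence $\crit(j_{(n)})=\kappa_n(j)>\rank(j\rest V_\alpha)$ — the $n$-fold left self-application $j_{(n)}$ (with $j_{(0)}=j$, $j_{(n+1)}=j*j_{(n)}$; these lie in $\mathscr{E}_m(V_\delta)$ and have critical points the $\kappa_n(j)$, as in the introduction's $I_3$ case) yields $k:=j_{(n)}*j$: then $k\rest V_\alpha=j\rest V_\alpha$, while $k\ne j$ because $k(\kappa_{n-1})=j_{(n)}(\kappa_n)\ne\kappa_n=j(\kappa_{n-1})$. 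This settles the theorem whenever $\lambda=\delta$, with no use of $\DC$ or $\cof(\delta)=\om$.

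It remains to treat $\lambda<\delta$ and $\lambda\le\alpha<\delta$, where the shift fails because no member of $\mathscr{E}_1(V_\delta)$ has critical point $\ge\lambda$. The structural input here — which I would draw from the $\ZF$ theory of extenders developed in the earlier sections — is that $j\rest V_{\alpha+2}$ does \emph{not} determine the action of $j$ on ranks $\ge\lambda$ (already for $\delta=\lambda+\om$: $j\rest V_{\lambda+1}$ is determined by $j\rest V_\lambda$, but $j\rest V_{\lambda+2}$ is not), so $j\rest V_\alpha$ has $\Sigma_m$-elementary continuations to $V_\delta$ other than $j$; concretely, writing $\alpha^*:=\max(\alpha,\rank(j\rest V_\alpha))$, the extender $E$ derived from $j$ with support $V_{\alpha^*}$ does not capture $j$ above $\alpha^*$. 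For part~(2): using $\cof(\delta)=\om$, fix an increasing cofinal $\langle\delta_n:n<\om\rangle$ in $\delta$ with $\delta_0>\alpha^*$, and, by L\'evy reflection and $\DC$, thin it so that each $V_{\delta_n}\prec_{\Sigma_m}V_\delta$ (read $\prec_{\Sigma_\om}$ when $m=\om$), $j$ maps $V_{\delta_n}$ into $V_{\delta_{n+1}}$, and $j(\delta_n)<\delta_{n+1}$. Then build, by a recursion of length $\om$ invoking $\DC$, a chain of $\Sigma_m$-sound partial maps $p_n:V_{\delta_n}\to V_\delta$ with $p_0=j\rest V_{\delta_0}$, $p_n\sub p_{n+1}$, each $p_n$ extendible within $V_\delta$ to a sound map on $V_{\delta_{n+1}}$, and, at the least stage where $p_n$ has a sound extension $\ne j\rest V_{\delta_{n+1}}$, taking $p_{n+1}$ to be such an extension — a stage that exists by the structural input. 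Setting $k:=\bigcup_n p_n$, the reflection properties of the $\delta_n$ promote soundness of each $p_n$ to $k\in\mathscr{E}_m(V_\delta)$, and the perturbation clause gives $k\ne j$ while $k\rest V_\alpha=j\rest V_\alpha$; equivalently, one may realize $k$ as the ultrapower embedding $k_E$ (or $k_{E'}$ for a suitably perturbed $E'$), with $\DC$ and $\cof(\delta)=\om$ used to see that the ultrapower is wellfounded and collapses onto $V_\delta$.

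For part~(1), run the same construction with the choices made generically. Force with the poset $\PP$ whose conditions are those $\Sigma_m$-sound partial maps $p:V_{\delta_n}\to V_\delta$ ($n<\om$) in $V$ that extend $j\rest V_{\delta_0}$ and satisfy $p\ne j\rest\dom(p)$, ordered by reverse inclusion, where soundness is arranged so that suitable dense sets force $k:=\bigcup G:V_\delta\to V_\delta$ to be $\Sigma_m$-elementary and carry the reflection bookkeeping. Then $k$ is $V$-amenable — each $k\rest V_{\delta_n}$ is the relevant condition in $G\cap V$ — lies in $\mathscr{E}_m(V_\delta)$, agrees with $j$ on $V_\alpha$, and differs from $j$ by the defining clause on $\PP$, indeed from every $V$-amenable member of $\mathscr{E}_m(V_\delta)^V$ since $\PP$ is nontrivial below each condition. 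One also checks that $\PP$ adds no new members to $V_\delta$, so that $\mathscr{E}_m(V_\delta)$ and the already-settled easy regime are computed the same in $V$ and in $V[G]$; and when $\cof(\delta)>\om$ one prefaces this with a $\delta$-Prikry-type forcing that makes $\cof(\delta)=\om$ and adds no bounded subsets of $\delta$.

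The main obstacle, shared by both parts, is the globalization of elementarity: soundness of each $p_n$ does not by itself yield $\Sigma_m$-elementarity of $k$, because quantifiers over $V_\delta$ are not bounded by any $V_{\delta_n}$, and the remedy — choosing the levels $\delta_n$ to be $\Sigma_m$-correct in $V_\delta$ and closed under $j$ and under the approximations — is exactly where $\DC$ is indispensable in part~(2) and where the key density arguments live in part~(1) (in the ultrapower formulation, this is absorbed into elementarity of $k_E$, and the real content becomes wellfoundedness of the ultrapower and its collapsing onto $V_\delta$). The secondary point, $k\ne j$, reduces to the non-rigidity of rank-to-rank embeddings of $V_\delta$ above $\kappa_\om(j)$; extracting this from the $\ZF$ extender analysis is, I expect, the genuine crux, with the limit and forcing apparatus being bookkeeping around it.
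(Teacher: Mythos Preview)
Your approach differs substantially from the paper's, and has a genuine gap at exactly the point you flag as ``the genuine crux''.

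The paper does not split into an easy case ($\alpha<\kappa_\om(j)$) and a hard case. Instead it builds the tree $T=T^{\delta,m}$ of finite approximations to $\Sigma_m$-elementary maps $V_\delta\to V_\delta$, takes its Cantor--Bendixson derivatives $T_\alpha$, and proves (Lemma~\ref{lem:T_infty_perfect}) that the stable tree $T_\infty$ is perfect. Perfectness immediately gives both parts: for part~(1) collapse $V_\delta$ to be countable, so every node of $T_\infty$ lies on continuum-many branches, each yielding a $V$-amenable $k\in\mathscr{E}_m(V_\delta)$; for part~(2), $\DC$ plus $\cof(\delta)=\om$ lets you build such branches inside $V$. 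The key point is \emph{why} $T_\infty$ is perfect: if it were not, the unique branch past some node would be an embedding $j\in L(V_\delta)$, and an admissibility argument places $j\in\J_\theta(V_\delta)$ where $\theta=\kappa_{V_\delta}$, contradicting Theorem~\ref{tm:no_j_in_adm}.

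Your ``structural input'' --- that $j\rest V_\alpha$ admits a $\Sigma_m$-elementary extension to $V_\delta$ other than $j$ when $\alpha\geq\kappa_\om(j)$ --- is precisely the content of perfectness of $T_\infty$, and you do not prove it. The observation that the short extender $E$ with support $V_{\alpha^*}$ ``does not capture $j$'' is not enough: it says $i_E\neq j$, not that there is a second \emph{total} $\Sigma_m$-elementary map $V_\delta\to V_\delta$ extending $j\rest V_\alpha$. Nothing in the extender machinery of \S\ref{sec:ultrapowers_in_ZF} supplies this; what supplies it is the definability obstruction of Theorem~\ref{tm:no_j_in_adm} (if the extension were unique, $j$ would be too simply definable from a parameter in $V_\delta$). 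Your recursion in part~(2) and your poset $\PP$ in part~(1) both presuppose the existence of a sound extension $p_{n+1}\neq j\rest V_{\delta_{n+1}}$, so the gap propagates through both.

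Two further issues with part~(1): the paper's forcing is simply $\Coll(\om,V_\delta)$, which avoids all the bookkeeping you describe; and your appeal to a ``$\delta$-Prikry-type forcing that makes $\cof(\delta)=\om$ and adds no bounded subsets of $\delta$'' is unsupported in $\ZF$ without a suitable measure on $\delta$, and in any case unnecessary once you force with the collapse. Your shift argument for the regime $\alpha<\kappa_\om(j)$ is correct and is exactly the $I_3$ observation in the introduction, but it does not extend, and the paper's uniform tree argument subsumes it.
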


More generally, given a $\Sigma_1$-elementary $j\in\mathscr{E}_1(V_\delta)$
with limit $\delta$,
one can ask whether
$j\in\HOD(V_\delta)$. (The $I_3$ example above does give an instance of this,
since $L(V_\delta)\sub\HOD(V_\delta)$.)

We also prove the following related result, which is another
kind of 
strengthening of Suzuki's theorem above, and also a strengthening of
\cite[Theorem 5.7 part 1***]{cumulative_periodicity}.
The theory $\RL$ (\emph{Rank Limit}, see Definition \ref{dfn:RL}) is a 
sub-theory of 
$\ZF$, which holds in  $V_\eta$ for limit $\eta$:

\begin{tm*}[\ref{tm:RL_no_def_j}]
 Assume $\RL$. Then there is no $\Sigma_1$-elementary $j:V\to V$ which is 
definable from parameters.
\end{tm*}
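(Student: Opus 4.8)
The plan is to run Suzuki's argument --- reflect $j$ to a set-sized restriction $j\rest V_\alpha$, then apply $j$ to it --- reorganised so that it uses only the $\Sigma_1$-elementarity of $j$, together with the development of extenders under $\ZF$ from \S\ref{sec:ultrapowers_in_ZF}. Suzuki's usual proof reflects the (arbitrarily complex) formula defining $j$ to a rank initial segment via L\'evy--Montague reflection, and $\RL$ is too weak for that: it lacks Replacement, and, since $\Sigma_2$-reflection would yield $\Sigma_1$-collection --- which fails in $V_{\om+\om}$ --- $\RL$ does not even prove $\Sigma_2$-reflection. So one cannot push the given definition of $j$ down through $j$; instead, one first replaces it by a uniform $\Sigma_1$ one.

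Here is the core mechanism. Suppose, toward a contradiction, that $j:V\to V$ is nontrivial, $\Sigma_1$-elementary, and definable from a parameter $a$; let $\kappa=\crit(j)$. First note $j$ is $\in$-cofinal: $\beta\mapsto V_\beta$ is $\Delta_1$ and $j(\beta)\ge\beta$ for ordinals $\beta$, so $j(V_\beta)=V_{j(\beta)}$ and every $w$ lies in $j(V_{\rank(w)+1})$; hence $j$ is also $\Pi_1$-elementary. The extender development should yield: for each ordinal $\alpha>\kappa$, the restriction $j\rest V_\alpha$ is coded by its derived $(\kappa,V_\alpha)$-rank-extender, which is a \emph{set} (by Separation, since all $V_\alpha$ exist), and $j\rest V_\alpha$ is \emph{the unique} function on $V_\alpha$ satisfying a \emph{fixed} $\Sigma_1$ condition $\theta(f,\alpha)$ --- expressing that $f$ is decoded from such an extender via a well-founded, $\OR$-correct ultrapower --- where $\theta$ does not depend on the original defining formula of $j$, and $\alpha$ is its only parameter. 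Granting this, put $f:=j\rest V_{\kappa+1}$. Then $f$ is a set, $\crit(f)=\kappa$ (a $\Sigma_0$ fact about $f$ and $\kappa$, as $\kappa\in V_{\kappa+1}$), and $\theta(f,\kappa+1)$ holds. Applying the $\Sigma_1$-elementary $j$: $j(f)$ has domain $j(V_{\kappa+1})=V_{j(\kappa)+1}$, satisfies $\theta(j(f),j(\kappa)+1)$, and has critical point $j(\kappa)$ (transfer of the $\Sigma_0$ statement $\crit(f)=\kappa$). But $j\rest V_{j(\kappa)+1}$ also satisfies $\theta(\,\cdot\,,j(\kappa)+1)$, and is the \emph{unique} function on $V_{j(\kappa)+1}$ that does (this uniqueness is applied in $V$, not transferred across $j$), so $j(f)=j\rest V_{j(\kappa)+1}$, whose critical point is $\kappa$. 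Hence $j(\kappa)=\kappa$, contradicting $\crit(j)=\kappa$.

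The step I expect to be the main obstacle is supplying $\theta$: carrying the whole package --- derived rank-extenders, their ultrapowers, well-foundedness and $\OR$-correctness, and the uniform $\Sigma_1$ recovery of $j\rest V_\alpha$ --- through $\RL$ rather than $\ZFC$, with no Choice, no Replacement, and only $\Sigma_1$-reflection; this is exactly what the paper's extender development provides. A second point needing care is that $a$ may have rank $\ge\kappa$, so $j(a)\ne a$ and $a$ cannot be used inside $\theta$; one wants the code for $j\rest V_\alpha$ to be insensitive to the shift $a\mapsto j(a)$ (e.g.\ depending only on a canonical short piece of the extender), or else to reduce first to a parameter of rank $<\kappa$, after which the argument above applies verbatim.
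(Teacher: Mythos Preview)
Your proposal has a genuine gap at its core: the uniqueness claim for $\theta$. You assert that there is a parameter-free $\Sigma_1$ formula $\theta(f,\alpha)$ such that $j\rest V_\alpha$ is the \emph{unique} $f$ on $V_\alpha$ satisfying $\theta(f,\alpha)$. But if such a $\theta$ existed, then $j$ itself would be $\Sigma_1$-definable without parameters (via $j(x)=y\iff\exists\alpha\exists f[\theta(f,\alpha)\wedge f(x)=y]$), which is absurd in models with more than one rank-to-rank embedding. Worse, your own argument refutes uniqueness: you produce two functions on $V_{j(\kappa)+1}$ --- namely $j(f)$ with critical point $j(\kappa)$, and $j\rest V_{j(\kappa)+1}$ with critical point $\kappa$ --- both allegedly satisfying $\theta(\cdot,j(\kappa)+1)$, and then invoke uniqueness to equate them. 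The contradiction you derive is not with the existence of $j$, but with the existence of $\theta$. The extender machinery in \S\ref{sec:ultrapowers_in_ZF} lets you recover an embedding from its extender, but it does not single out \emph{one} embedding among many, and cannot: that would require precisely the parameter you are trying to eliminate. Your closing remark that one should ``reduce first to a parameter of rank $<\kappa$'' is the entire difficulty, not a side issue.

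The paper's proof proceeds quite differently. Since $\RL$ lacks reflection, it does not try to push the defining formula of $j$ down. Instead, it first exploits the failure of $\ZF$ (Suzuki's theorem handles the $\ZF$ case) to produce a parameter-free definable object $x_0$ --- either the least $\gamma$ with a definable cofinal map $\gamma\to\OR$, or $\scot(\OR)$. It then iterates $j$ to $j_n$, shows (using definability of $\langle j_n\rangle_{n<\om}$ and Separation) that every ordinal, and $x_0$ itself, is eventually fixed by some $j_n$, and proves that any $\Sigma_1$-elementary $k:V\to V$ with $k(x_0)=x_0$ is fully elementary. Now one runs Suzuki's minimality argument: among all $j_q$ (defined by the fixed formula $\varphi$ from parameter $q$) which are $\Sigma_1$-elementary and fix $x_0$, the least critical point $\kappa_0$ is outright definable, hence in the range of any such fully elementary $j_q$ --- contradiction. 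The upgrade to full elementarity via $x_0$ is what replaces the unavailable reflection.
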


As a useful tool, and also for more general use elsewhere,
we develop the theory of extenders and ultrapowers under ZF (see 
\S\ref{sec:ultrapowers_in_ZF}). Of course
a key fact when considering ultrapowers is {\L}o\'{s}' theorem,
which is related to $\AC$. However, it turns out 
that
a proper class of weak L\"owenheim-Skolem (wLS) cardinals  
(due to Usuba \cite{usuba_ls}; see Definition \ref{dfn:wLS}) gives
enough choice to secure {\L}o\'{s}' theorem for the kinds of extenders over $V$
we will consider here. 
A proper class of wLS cardinals is not known to be inconsistent 
with choiceless large cardinals, and is in fact implied by a super 
Reinhardt; it also holds if  $\AC$ is forceable with a set-forcing
(see \cite{usuba_ls}).
Under this large cardinal assumption, we will prove the 
following generalization that measurability classifies critical points
under $\ZFC$; we say that an ordinal $\kappa$ is \emph{$V$-critical}
iff there is an elementary $j:V\to M$ with $\crit(j)=\kappa$:

\begin{tm*}[\ref{tm:crit_def}]
Assume $\ZF+$``there is a proper class of weak L\"owenheim-Skolem cardinals''.
Then the class of $V$-critical ordinals is definable without parameters.
\end{tm*}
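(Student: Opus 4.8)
The plan is to reduce $V$-criticality to the existence of a suitable extender over $V$, using the theory developed in \S\ref{sec:ultrapowers_in_ZF}, and then to read off parameter-freeness. Concretely, I claim that an ordinal $\kappa$ is $V$-critical if and only if there is a countably complete extender $E$ over $V$, in the sense of \S\ref{sec:ultrapowers_in_ZF}, with $\crit(E)=\kappa$. Granting this equivalence, the theorem follows at once, since the condition on the right is expressible by a formula in $\kappa$ with no further parameters.

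For the left-to-right direction -- which uses only $\ZF$ -- let $j:V\to M$ be elementary with $M$ transitive and $\crit(j)=\kappa$. Let $E$ be a set extender over $V$ derived from $j$ with critical point $\kappa$; concretely one may take the derived extender of length $\nu$ for any ordinal $\nu$ with $\kappa<\nu\leq j(\kappa)$ (e.g. $\nu=\kappa+1$, using $j(\kappa)>\kappa$), which by \S\ref{sec:ultrapowers_in_ZF} is an extender over $V$, and for which the standard factoring $j=k\circ j_E$ with $\crit(k)>\kappa$ gives $j_E\rest\kappa=\id$ and $j_E(\kappa)>\kappa$, so that $\crit(E)=\crit(j_E)=\kappa$. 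Moreover $E$ is countably complete: since $M$ is genuinely well-founded, the usual argument applies -- a failure of the thread property for some countable sequence of $E$-large sets would produce an ill-founded subtree of $M$. Thus $E$ witnesses the right-hand side of the claimed equivalence.

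For the converse we invoke the hypothesis. Let $E$ be a countably complete extender over $V$ with $\crit(E)=\kappa$. Since the class of weak L\"owenheim-Skolem cardinals is proper, hence cofinal in $\OR$, and $E$ is a set, fix a wLS cardinal $\lambda$ with $E\in V_\lambda$; then $\lambda$ lies above the support of $E$. By the development of \S\ref{sec:ultrapowers_in_ZF}, wLS-ness of $\lambda$ secures {\L}o\'{s}'s theorem for the class ultrapower $\Ult(V,E)$, so $j_E:V\to\Ult(V,E)$ is fully elementary; and, again by \S\ref{sec:ultrapowers_in_ZF}, countable completeness of $E$ forces $\Ult(V,E)$ to be well-founded. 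Collapsing transitively we obtain an elementary $j_E:V\to M_E$ with $M_E$ transitive and $\crit(j_E)=\crit(E)=\kappa$, so $\kappa$ is $V$-critical.

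It remains only to note that ``$E$ is a countably complete extender over $V$ and $\crit(E)=\kappa$'' is first-order in $\kappa$ and $E$ with no further parameters: the extender axioms are bounded conditions on the set $E$, countable completeness is the single assertion that every countable sequence of $E$-large sets admits a thread, and $\crit(E)$ is read off from $E$. Hence $\{\kappa:\exists E\,(E\text{ is a countably complete extender over }V\text{ and }\crit(E)=\kappa)\}$ is a parameter-free definition of the class of $V$-critical ordinals. The real content -- and the only genuine obstacle -- is carried entirely by \S\ref{sec:ultrapowers_in_ZF}: that a wLS cardinal above the support of $E$ supplies precisely the fragment of choice needed to run the inductive step of {\L}o\'{s}'s theorem for $\Ult(V,E)$, and that countable completeness, a choice-free and first-order property, already guarantees well-foundedness of $\Ult(V,E)$. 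Once these are in hand, the bookkeeping above -- preservation of the critical point under the derived-extender/ultrapower passage, in both directions -- is routine.
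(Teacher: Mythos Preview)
Your argument has two genuine gaps, both arising from misreading what \S\ref{sec:ultrapowers_in_ZF} actually delivers.

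First, you assert that a wLS cardinal $\lambda$ \emph{above} $E$ secures {\L}o\'{s}' theorem for $\Ult(V,E)$. But Theorem~\ref{tm:cof(delta)>om_critical} proves {\L}o\'{s}' criterion only for extenders derived from $j:V_\delta\to N$ where $\delta$ \emph{itself} is wLS. The proof uses this essentially: to find a witness for $\exists y\,\varphi(f(u),y)$, one takes a hull $X\elem V_\alpha$ containing $V_\gamma\cup\{f,\ldots\}$ whose transitive collapse $\bar X$ lies in $V_\delta$, and then applies $j$ to $\bar X$. If your $E$ is short---derived from $j\rest V_\nu$ for some small $\nu$---a wLS cardinal above $\nu$ does not help, since $j$ cannot be applied beyond $V_\nu$. (Also, your suggested $\nu=\kappa+1$ does not fit the paper's framework at all: $V_{\kappa+1}\not\sats\RL$, so no extender in the sense of Definition~\ref{dfn:extender} is derived there.)

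Second, you invoke countable completeness to obtain wellfoundedness, but \S\ref{sec:ultrapowers_in_ZF} neither defines nor uses this notion. The usual implication from countable completeness to wellfoundedness requires extracting a countable descending chain $[a_n,f_n]$ from the illfounded part of $\Ult(V,E)$; without $\DC$, it is unclear how to do this, as the $f_n$ are arbitrary functions in $V$ with no canonical selection. The paper avoids this issue entirely: Theorem~\ref{tm:cof(delta)>om_critical} deduces wellfoundedness from $\cof(\delta)>\om$, building the relevant descending chain by lexicographically-least choices among ordinals and then reflecting into $V_\delta$ via wLS to reach a contradiction.

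The paper's actual first-order characterization is: $\kappa$ is $V$-critical iff there exists a $V$-criticality pre-witness $(\kappa,\delta,N,j)$ with $\cof(\delta)>\om$. For the forward direction, $\kappa$ is inaccessible (Lemma~\ref{lem:critical_inaccessible}), hence regular, so Lemma~\ref{lem:wLS_cards_closed} supplies a wLS $\delta>\kappa$ with $\cof(\delta)>\om$; restricting the given $k:V\to M$ to $V_\delta$ yields the pre-witness. The reverse direction is exactly Theorem~\ref{tm:cof(delta)>om_critical}. Your short-extender idea can be made to work only by lengthening the extender out to such a $\delta$, at which point it becomes the paper's argument.
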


\subsection{Background and terminology}\label{subsec:notation}

We now list some background and terminology, which
is pretty standard. Our basic background
theory throughout is $\ZF$
(in fact, many of the results require much less, but we leave that to the 
reader), with additional hypotheses stated
as required.
However, in \S\ref{sec:RL}, we work in the weaker theory $\RL$.
In the $\ZF$ (or weaker)
context, certain familiar definitions
from the $\ZFC$ context need to be modified appropriately.

 Work in $\ZF$. $\OR$ denotes the class of ordinals and $\Lim$ the class of 
limit ordinals. Let $\delta\in\Lim$.
The \emph{cofinality} of $\delta$, \emph{regularity}, \emph{singularity}
are defined as usual (in terms of cofinal functions between ordinals).
 We say  $\delta$ (or $V_\delta$) is \emph{inaccessible}
 iff there is no $(\gamma,f)$ such that $\gamma<\delta$
 and $f:V_\gamma\to\delta$ is cofinal in $\delta$.
  A \emph{norm} on a set $X$ is a surjective function $\pi:X\to\alpha$
 for some $\alpha\in\OR$. The associated \emph{prewellorder}
 on $X$ is the relation $R\sub X^2$ where $xRy$ iff $\pi(x)\leq\pi(y)$.
This can of course be inverted.
If $\delta\in\OR$ is regular but non-inaccessible,
then the \emph{Scott ordertype} $\scot(\delta)$ of $\delta$
is the set $P$ of all prewellorders of $V_{\alpha+1}$ in ordertype $\delta$,
where $\alpha$ is least admitting such. See 
\cite[5.2, 5.3***]{cumulative_periodicity} for some basic facts about this.
A partial function $f$ from (some subset of) $X$ to $Y$
is denoted $f:_{\mathrm{p}}X\to Y$.

  Let $M=(U,\in^M,A_1,\ldots,A_n)$ be a first-order structure with
 universe $U$ and $A\sub U$. (We normally abbreviate
 this by just writing $A\sub M$.)
 We say that $A$ is \emph{definable over }$M$\emph{ from parameters}
 if there is a first-order formula $\varphi\in\Ll_{\dot{A}_1,\ldots,\dot{A}_n}$
(with symbols $\in,=,\dot{A}_1,\ldots,\dot{A}_n$) and some $\vec{x}\in 
M^{<\om}$
 such that for all $y\in M$,
 we have $y\in A$ iff $M\sats\varphi(\vec{x},y)$.
 This is naturally refined by \emph{$\Sigma_n$-definable from parameters},
 if we require $\varphi$ to be $\Sigma_n$, and by \emph{from parameters in }$X$,
 if we restrict to $\vec{x}\in X^{<\om}$ (where $X\sub M$),
 and by \emph{from $\vec{x}$}, if we may use only $\vec{x}$.

For an extensional structure $M=(\univ{M},\in^M,=^M)$,
the \emph{wellfounded 
part} $\wfp(M)$
of $M$ is the class of all transitive isomorphs of elements of $M$.
That is, the class of all transitive sets $x$
such that $(x,{\in}\rest x,{=}\rest x)$
is isomorphic to $(y,\in^M\rest y,=^M\rest y)$
for some $y\in\univ{M}$. The \emph{illfounded part} $\illfp(M)$
of $M$ is $\univ{M}\cut\wfp(M)$. We have
$\wfp(M),\illfp(M)\in L(M)$, which results by ranking the elements
of $M$ as far as is possible. Note that $\illfp(M)$
is the largest $X\sub\univ{M}$ such that for every $x\in X$
there is $y\in X$ such that $y\in^Mx$. If $M$ models
enough set theory that it has a standard rank function,
but $M$ is illfounded, then 
$\OR^M$
(the collection of all $x\in\univ{M}$ such that $M\sats$``$x$ is an 
ordinal'')
is illfounded,
because if $x\in^M y$ then $\rank^M(x)\in^M\rank^M(y)$.

 Given a structure $M$ and $k\in[1,\om]$, $\mathscr{E}_k(M)$
denotes the set of all $\Sigma_k$-elementary $j:M\to M$,
and $\mathscr{E}(M)$ denotes $\mathscr{E}_\om(M)$.
Let $\delta\in\Lim$ and $j\in\mathscr{E}_1(V_\delta)$.
For  $C\sub V_\delta$, define
$j^+(C)=\bigcup_{\alpha<\delta}j(C\inter V_\alpha)$.
 Let $\kappa_0=\crit(j)$ and $\kappa_{n+1}=j(\kappa_n)$
and
$\kappa_\om(j)=\sup_{n<\om}\kappa_n$ (note 
$\kappa_\om(j)\leq\delta$).
We write $j_0=j$ and $j_{n+1}=(j_n)^+(j_n)$ for $n<\om$;
then $j_n\in\mathscr{E}(V_\delta)$ (see 
\cite[Theorem 5.6***]{cumulative_periodicity})
and $\kappa_n=\crit(j_n)$.

$\ZF_2$ denotes the two-sorted theory, with models of the form $M=(V,E,P)$,
where $(V,E)\sats\ZF$ and $P$ is a collection of ``subsets'' of $V$.
The elements of $V$ are the \emph{sets} of $M$ and the elements of $P$
the \emph{classes}. The axioms are like those of $\ZF$,
but include Separation for sets and Collection for sets
with respect to all formulas of the two-sorted language,
and also Separation for classes with respect to such formulas.
When we ``work in $\ZF_2$'', we mean that we work in
such a model $M$, and all talk of proper classes refers to elements of $P$.
Note that the theory is first-order; there can be countable models of $\ZF_2$.
One has  $(V_\kappa,\in,V_{\kappa+1})\sats\ZF_2$
iff $\kappa$ is inaccessible.

The \emph{language of set theory with predicate}
$\Ll_{\in,A}$ is the first order language with binary 
predicate symbol
$\in$ and predicate symbol $A$.
The theory \emph{$\ZF(A)$} is the theory in 
$\Ll_{\in,A}$
with all $\ZF$ axioms, allowing all 
formulas of $\Ll_{\in,A}$
in the Separation and Collection schemes (so $A$ represents a class).
$\ZFR$ ($\ZF+$ Reinhardt) is the theory $\ZF(j)+$``$j:V\to V$ is 
$\Sigma_1$-elementary''.
By \cite[Proposition 5.1]{kanamori}, $\ZFR$ proves
that $j:V\to V$ is fully elementary (as a theorem scheme).

Work in $\ZF_2$. 
Then $\kappa\in\OR$ is \emph{Reinhardt}
iff there is a class $j$ such that $(V,j)\sats\ZFR$ and $\kappa=\crit(j)$.
Following \cite{woodin_koellner_bagaria},
 $\kappa\in\OR$ is \emph{super Reinhardt}
 iff for every $\lambda\in\OR$ there is a class $j$
 such that $(V,j)\sats\ZFR$ and $\crit(j)=\kappa$
 and $j(\kappa)\geq\lambda$.

\section{$\RL\Rightarrow$ no parameter-definable $j:V\to V$}\label{sec:RL}

\begin{fact}[Suzuki]\label{fact:suzuki_no_def_j}
Assume $\ZF$. Then no class $j$ which is definable from parameters is an 
elementary
 $j:V\to V$.
\end{fact}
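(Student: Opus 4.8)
The plan is to sidestep all Kunen-style combinatorics (which would require some choice) via a short self-reference argument, using the fact recalled above that a $\Sigma_1$-elementary $j\colon V\to V$ is automatically fully elementary (\cite[Proposition 5.1]{kanamori}). Arguing by contradiction, and fixing the relevant formula, suppose $\varphi(x,y,p)$ is, for the parameter $p$, the graph of a nontrivial elementary $j\colon V\to V$, with critical point $\kappa=\crit(j)$.

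The key observation is that ``$g$ is a $\Sigma_1$-elementary function from $V$ to $V$'' is expressible by a \emph{single} formula of $\Ll_\in$: using a universal $\Sigma_1$ formula (equivalently, a $\Sigma_1$ partial satisfaction class, available in $\ZF$), the usual scheme collapses to one sentence. Hence the following is a legitimate \emph{parameter-free} definition of an ordinal $\kappa^*$: the least $\alpha$ such that, for some set $q$, $\varphi(\cdot,\cdot,q)$ is the graph of a total, $\Sigma_1$-elementary $j_q\colon V\to V$ with $j_q(\alpha)\neq\alpha$. Since $j$ (taking $q=p$) moves the ordinal $\kappa$, the class of such $\alpha$ is nonempty and $\kappa^*$ exists, with $\kappa^*\leq\kappa$. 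Fix a witnessing $q^*$ and put $j^*=j_{q^*}$. Every $\Sigma_0$-elementary total map $V\to V$ is injective and preserves both ``$x\in y$'' and ``$x$ is an ordinal'' (each being $\Delta_0$), hence sends ordinals to ordinals strictly increasingly; so, by minimality of $\kappa^*$, $j^*$ fixes every ordinal below $\kappa^*$ while $j^*(\kappa^*)>\kappa^*$, i.e.\ $\crit(j^*)=\kappa^*$.

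Now, since $j^*$ is definable from the parameter $q^*$, Separation and Collection for $\Ll_{\in,j^*}$-formulas follow --- by unfolding the definition of $j^*$ --- from the $\ZF$ schemes for $\Ll_\in$-formulas with parameter $q^*$; together with the fact that $j^*$ is total and $\Sigma_1$-elementary, this gives $(V,j^*)\sats\ZFR$, whence $j^*$ is fully elementary. But $\kappa^*$ is defined without parameters, so $j^*$, being fully elementary, must fix it: applying $j^*$ to the formula defining $\kappa^*$ and using uniqueness of the object defined yields $j^*(\kappa^*)=\kappa^*$ --- contradicting $j^*(\kappa^*)>\kappa^*$. (The argument uses only that the hypothetical $j$ is $\Sigma_1$-elementary, so it also recovers the stronger form of Suzuki's theorem quoted earlier.)

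The steps requiring any care are purely routine bookkeeping: encoding $\Sigma_1$-elementarity as a single formula of $\Ll_\in$, so that $\kappa^*$ is genuinely defined by a parameter-free formula; and observing that the cited ``$\Sigma_1\Rightarrow$ fully elementary'' result applies to a class that is only parameter-definable. I do not expect a real obstacle: the whole content is the self-referential definition of the least critical point, together with the remark that an elementary self-embedding of $V$ must fix every parameter-free-definable ordinal.
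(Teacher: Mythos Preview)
Your proposal is correct and is essentially Suzuki's original argument: define, parameter-free, the least ordinal moved by some $j_q$ (with $q$ ranging over all parameters making $\varphi(\cdot,\cdot,q)$ the graph of a $\Sigma_1$-elementary self-map of $V$), observe that a witnessing $j_{q^*}$ is fully elementary and hence must fix this ordinal, and derive a contradiction. The paper does not itself prove this Fact---it cites Suzuki---but the same self-reference core appears verbatim at the end of the paper's proof of the $\RL$ generalization (Theorem~\ref{tm:RL_no_def_j}), where the extra work with $x_0$ and the iterates $j_n$ is only needed to recover full elementarity in the absence of $\ZF$; under $\ZF$ your direct appeal to \cite[Proposition~5.1]{kanamori} suffices, exactly as you do.
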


Of course, the theorem is really a theorem scheme,
giving one statement for each possible formula $\varphi$
being used to define $j$ (from a parameter).
We generalize here the proof of \cite[Theorems 
5.6, 5.7***]{cumulative_periodicity},
in order to show Suzuki's fact above
is actually a consequence of lesser theory $\RL$,
which is the basic first 
order theory
modelled by $V_\lambda$ for limit  $\lambda$ (without choice):

\begin{dfn}\label{dfn:RL} $\RL$ (for \emph{Rank Limit}) is the theory in $\Ll$
 consisting of Empty Set, Extensionality,  Foundation,
 Pairing, Union, Power Set, Separation (for all formulas, from 
parameters),
 together with the statements
 ``For every ordinal $\alpha$, $\alpha+1$ exists, $V_\alpha$
 exists and $\left<V_\beta\right>_{\beta<\alpha}$ exists'',
 and ``For every $x$, there is an ordinal $\alpha$
 such that $x\in V_\alpha$''.
\end{dfn}

Since $\RL$ lacks Collection,
  the two extra statements regarding
the cumulative hierarchy at the end are important.
 Clearly a model $M\sats\RL$ can contain objects $R$
 such that $M\sats$``$R$ is a wellorder'',
 but such that there is no $\alpha\in\OR^M$
 such that $M\sats$``$\alpha$ is the ordertype of $R$''.

\begin{tm}\label{tm:RL_no_def_j}Assume $\RL$. Then there is no 
$\Sigma_1$-elementary
 $j:V\to V$ which is definable from parameters.
\end{tm}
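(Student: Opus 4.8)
The plan is to mimic the proof of \cite[Theorems 5.6, 5.7***]{cumulative_periodicity} (which itself reproves Suzuki's Fact \ref{fact:suzuki_no_def_j}), taking care to replace each appeal to Collection or Replacement --- which $\RL$ lacks --- by a direct definition, something legitimate precisely because $j$ is assumed definable from a parameter. So suppose for a contradiction that $\varphi$ is a formula, $p$ a parameter, and $j:=\{(x,y):\varphi(x,y,p)\}$ is a $\Sigma_1$-elementary embedding $V\to V$ with $\crit(j)=\kappa_0$.

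First I would record the basic facts. As $j$ is total and $\Sigma_1$-elementary it is $\in$-cofinal and $j(V_\alpha)=V_{j(\alpha)}$ for every $\alpha\in\Or$; hence $j''V_\alpha\sub V_{j(\alpha)}$, so $j\rest V_\alpha$ is a bounded subset of $V_{j(\alpha)}$, is a set formable in $\RL$ by Separation, and is uniformly definable from $p$ and $\alpha$. With this in hand the operation $C\mapsto j^+(C)=\bigcup_{\alpha}j(C\cap V_\alpha)$ makes sense on definable classes, and I would define the finite iterates $j_0=j$ and $j_{n+1}=j_n^+(j_n)$. The claim, proved by the argument of \cite[Theorem 5.6***]{cumulative_periodicity} (which goes through in $\RL$), is that each $j_n$ is again an elementary embedding $V\to V$ with $\crit(j_n)=\kappa_n$; moreover each $j_n$ is definable from $p$ by a formula $\psi_n$ obtained uniformly in $n$ from $\varphi$ --- here the definability of $j$ is exactly what lets one push $j^+$ through without Collection.

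The engine of the contradiction is then the following: if some iterate $j_n$ satisfies $j_n(p)=p$, we are done. Indeed ``$\kappa_n$ is the critical point of the function defined from $p$ by $\psi_n$'' is a first-order statement $\Theta(\kappa_n,p)$ holding in $V$, with $\kappa_n$ its unique witness; applying the elementary map $j_n$, which fixes $p$ and $n$, gives $\Theta(j_n(\kappa_n),p)$, whence $j_n(\kappa_n)=\kappa_n$, contradicting $\kappa_n=\crit(j_n)$. To arrange $j_n(p)=p$ it is enough that $\kappa_n=\crit(j_n)>\rank(p)$, i.e.\ that $\rank(p)<\sup_n\kappa_n$. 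Since $\langle\kappa_n\rangle_{n<\om}$ is definable from $p$, either its range is cofinal in $\Or$ --- and then $\kappa_n>\rank(p)$ for some $n$ and we are finished --- or it is bounded, in which case Separation yields $\lambda:=\sup_n\kappa_n$ as an ordinal; and if moreover $\rank(p)<\lambda$ we are again finished. This leaves the one delicate case $\rank(p)\geq\lambda=\kappa_\om(j)<\Or$.

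In that case I would first note the standard fact $j(\lambda)=\lambda$, so that $\bar j:=j\rest V_\lambda$ is a nontrivial $\Sigma_1$-elementary map $V_\lambda\to V_\lambda$ with $\kappa_\om(\bar j)=\lambda$ and $V_\lambda\sats\RL$, and also that each $\kappa_n$ is inaccessible (a brief absoluteness argument from the $\Sigma_1$-elementarity of $j_n$), so $V_{\kappa_n}\sats\ZF$. The goal is to reduce this residual case to those already handled by continuing the iteration transfinitely: the critical points $\langle\kappa_\alpha\rangle_{\alpha\in\Or}$ of the transfinite iterates are strictly increasing, hence cofinal in $\Or$, so some $j_\alpha$ fixes $p$ and the contradiction above applies. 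The main obstacle --- and where most of the work lies --- is to carry out enough of the transfinite iteration inside $\RL$, where the direct-limit step at limit stages is not automatic, to reach a stage with $\kappa_\alpha>\rank(p)$; an alternative would be to localize to a $j$-fixed limit $V_\eta$ with $\rank(p)<\eta$ admitting a $\ZF$-style iteration. I expect this step, together with the systematic removal of Collection from the iterate construction, to be the crux; the remainder is bookkeeping.
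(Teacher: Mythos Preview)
Your argument has a genuine gap in exactly the case you flag as ``delicate'': $\rank(p)\geq\lambda=\kappa_\om(j)$. You propose to handle it by continuing the iteration transfinitely until some $\kappa_\alpha$ exceeds $\rank(p)$, but this is not carried out, and it is not clear it \emph{can} be. The iterates here are $j_{n+1}=j_n^+(j_n)$ (application, not composition), and there is no evident limit-stage construction for this sequence; the usual direct-limit iteration is a different object, produces maps into models that need not be $V$, and requires wellfoundedness arguments that themselves lean on Collection. Your alternative --- localizing to a $j$-fixed $V_\eta$ with $\rank(p)<\eta$ --- does not help either, since the formula $\varphi$ defining $j$ may quantify unboundedly, so $j\rest V_\eta$ need not be definable over $V_\eta$ from $p$. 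There is also a secondary gap: you assert that each $j_n$ is a \emph{fully} elementary $V\to V$, citing \cite{cumulative_periodicity}, but under bare $\RL$ this is precisely what is at stake --- the paper shows that for a $\Sigma_1$-elementary $j:V\to V$, full elementarity is \emph{equivalent} to $j$ fixing a certain parameter-free definable object $x_0$, and this must be earned, not assumed.

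The paper's route avoids the need to fix $p$ altogether. Since $\ZF$ may fail, one first isolates a parameter-free definable invariant $x_0$ (either the least $\gamma$ such that $\OR$ is $\bfSigma_m$-singular of cofinality $\gamma$, or $\scot(\OR)$ in the regular non-inaccessible case). Using that $\langle j_n\rangle_{n<\om}$ is a definable class and Separation, one shows that for some $n$, $j_n(x_0)=x_0$; and one proves (again using only Separation applied to definable classes) that $j(x_0)=x_0$ is equivalent to full elementarity of $j$. Now comes the key move you are missing: rather than trying to make $j_n$ fix the \emph{given} $p$, one lets the parameter vary. Let $\kappa_0$ be the least ordinal which is $\crit(j_q)$ for \emph{some} $q$ such that $j_q$ (defined via $\varphi$) is $\Sigma_1$-elementary with $j_q(x_0)=x_0$. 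Since $x_0$ is parameter-free definable, so is $\kappa_0$. Any witness $j_{p_0}$ is then fully elementary, yet moves the parameter-free definable ordinal $\kappa_0$ --- contradiction. This sidesteps entirely the question of where $p$ sits relative to $\kappa_\om(j)$.
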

\begin{proof}[Proof Sketch]
The proof is a refinement of those of 
\cite[Theorems 5.6, 5.7***]{cumulative_periodicity},
with which we assume the reader is familiar.
By Suzuki's fact, we may assume that $\ZF$ fails.
So either (i) $\OR$ is $\bfSigma_m$-singular for some  $m\in\NN$;
that is, there is an ordinal $\gamma$ and $\Sigma_m$ formula $\varphi$
and parameter $p$ such that $\varphi(p,\xi,\beta)$
defines a cofinal map $\gamma\to\OR$, via $\xi\mapsto\beta$,
and we take then $(\gamma,k)$ then lexicographically
least, so $\gamma$ is regular; 
or (ii) otherwise, and there is $\eta\in\OR$ and  $m\in\NN$
and a $\Sigma_m$ formula $\varphi$ and $p\in V$ such that
$\varphi(p,x,\beta)$ defines a cofinal map $f:V_\eta\to\OR$,
via $x\mapsto\beta$, and we take $\eta$ least such,
which as in  \cite[Remark 5.3***]{cumulative_periodicity} implies 
that $\eta=\alpha+1$ and
that $\rg(f)$ has ordertype $\OR$, and then we may in fact
take $f$ to be surjective, and obtain prewellorders
of $V_{\alpha+1}$ having ordertype $\OR$,
and so define $P=\scot(\OR)$ as before. ($\RL$ suffices
here; for example, starting with 
an arbitrary definable cofinal $f:V_{\alpha+1}\to\OR$,
for each $\beta\in\OR$, $\rg(f)\inter\beta$ is a set, by Separation. And the 
fact that a prewellorder
has ordertype $\OR$ is a first-order statement under $\RL$.)
Let $x_0$ be this $\gamma$ or $P$ respectively.
Note that $x_0$ is definable without parameters
(but maybe not $\Sigma_1$-definable without parameters).

Now as before, given a $\Sigma_1$-elementary $j:V\to V$
which is definable from parameters, we can define 
finite iterate $j_n$,
and
\[ j_n:(V,j_n)\to(V,j_{n+1}) \]
is $\in$-cofinal $\Sigma_1$-elementary. Note  that $\NN$
earlier denotes the \emph{standard} integers,
so $m$ is standard; also all discussion of elementarity
is with standard formulas. However, our $\RL$-model
$V$ might have non-standard integers (from the external perspective),
and when we write ``$\om$''  in what follows,
it is the set of $V$-integers (and $n$ indexing
$j_n$ above is an arbitrary element of $\om$).
Now the sequence 
$\left<j_n\right>_{n<\om}$
is in fact a definable class (show by (internal to $V$) induction on $n<\om$
that for each $\alpha\in\OR^V$, there is
$\beta\in\OR^V$ such that $j_n\rest V_\alpha$ is computable from $j\rest 
V_\beta$).
So basically as in \cite{cumulative_periodicity},
for each $\alpha\in\OR^V$,
there is $n<\om$ such that $j_n(\alpha)=\alpha$ (and hence
$j_m(\alpha)=\alpha$ for $m\geq n$);
however, we must use here the definability of $\left<j_n\right>_{n<\om}$ from 
parameters,
and Separation,
to get that the relevant sequence of sets $\left<A_n\right>_{n<\om}$
is a set; hence we can consider $A=\bigcap_{n<\om}A_n$
and $j(A)$. Similarly, if $\OR$ is regular to class functions
definable from parameters and $P$ is as above, then 
$j_n(P)=P$ for some $n$.

Now given any  $\Sigma_1$-elementary $j:V\to V$,
we have that $j$ is fully elementary iff $j(x_0)=x_0$.
For if $j$ is elementary then $j(x_0)=x_0$ because $x_0$ is outright definable.
The other direction
is proved like in \cite{cumulative_periodicity}, but we must restrict to 
classes $A\sub V$
which are definable from parameters over $V$ (which clearly
suffices for our purposes here), since we need to use
Separation to get that the relevant $x_0$-indexed
sequence of sets $\left<A_y\right>_y$ is a set.

Now assume there is some 
$j:V\to V$ which is $\Sigma_1$-elementary and definable from parameters,
and fix a formula $\varphi$ and parameter $p$ which defines $j$.
So there is such a $(\varphi,p,j)$ satisfying
$j(x_0)=x_0$ (of course, if we need to pass to $j_n$ with $n$ non-standard,
we can incorporate $n$ into $p$).
For $q\in V$ let
\[ j_q=\{(x,y)\bigm|\varphi(q,x,y)\}.\]
Let $\kappa_0\in\OR$ be least such that for some $q$,
$j_q:V\to V$ is $\Sigma_1$-elementary and $\crit(j_q)=\kappa_0$
and $j_q(x_0)=x_0$. Since $x_0$ is outright definable,
so is $\kappa_0$.

Fix $p_0$ witnessing this.
Since $j_{p_0}$ is $\Sigma_1$-elementary and $j_{p_0}(x_0)=x_0$,
$j_{p_0}$ is fully elementary.
But $j_{p_0}(\kappa_0)>\kappa_0$, so $\kappa_0\notin\rg(j_{p_0})$,
a contradiction.
\end{proof}

\section{Low-level 
definability over $L_1(V_\delta)$}\label{sec:j:V_delta_to_V_delta_in_L(V_delta)}

We saw in \S\ref{sec:RL} a generalization of the fact,
shown in \cite[Theorem 5.7***]{cumulative_periodicity}, that 
if $j\in\mathscr{E}_1(V_\delta)$
where $\delta\in\Lim$, then $j$ is not definable from parameters
over $V_\delta$. The sets $X\sub V_\delta$ which are definable
from parameters over $V_\delta$ are exactly those which are in $L_1(V_\delta)$,
or equivalently, in terms of Jensen's hierarchy, those which are in 
$\J(V_\delta)$ (the rudimentary closure of $V_\delta\cup\{V_\delta\}$).
Beyond this, it is natural to consider whether one might get such a 
$j\in\mathscr{E}_1(V_\delta)$ which is constructible from $V_\delta$, i.e.,
in $L(V_\delta)$. This also naturally ramifies: given an ordinal $\alpha$,
can there be such a $j\in\J_\alpha(V_\delta)$ (or $\in L_\alpha(V_\delta)$)?
The question can also of course be extended beyond $L(V_\delta)$.

Later in the paper, we will find some quite precise answers
to some of these questions. But in this section, for a warm-up and for some 
motivation, we consider the simplest instance not covered by
\cite{suzuki_no_def_j} or \cite{cumulative_periodicity}; that is, the question 
of whether
there can be a $j\in\mathscr{E}_1(V_\delta)$
which is at the simplest level of definability beyond definability
from parameters over $V_\delta$.

\begin{rem}\label{rem:J(X)}
We use Jensen's refinement of the $\J$-hierarchy
into the $\Ss$-hierarchy. Here is a summary of the features we need;
the reader can refer to \cite{jensen_fs} or \cite{schindler2010fine} for more 
details.
Recall that for a transitive set $X$,
 \[ \J(X)=\{f(V_\lambda,\vec{x})\bigm|f\text{ is a rudimentary function and 
}\vec{x}\in X^{<\om}\} =\bigcup_{n<\om}\Ss_n(X),\]
where $\Ss$ is Jensen's $\Ss$-operator,
and $\Ss_0(X)=X$ and $\Ss_{n+1}(X)=\Ss(\Ss_n(X))$.
Taking $\Ss$ defined appropriately (maybe not exactly how Jensen originally 
defined it), each $\Ss_n(X)$
is transitive, $\Ss_n(X)\in\Ss_{n+1}(X)$ and so $\Ss_n(X)\sub\Ss_{n+1}(X)$.
So
$\Ss_n(X)\elem_0\Ss_{n+1}(X)\elem_0\J(X)$
and for $\Sigma_1$ formulas $\varphi$ and  $y\in\J(X)$,
\[ \J(X)\sats\varphi(y)\iff\exists n<\om\ 
[y\in\Ss_n(X)\sats\varphi(y)].\]
The truth of $\Sigma_1$ statements over $\J(X)$ also reduces
uniformly recursively to countable disjunctions of statements over $X$.
That is, there is a recursive function 
$(\varphi,\vec{f},n)\mapsto\psi=\psi_{\varphi,\vec{f},n}$
such that for all
$\Sigma_1$ formulas $\varphi$,
tuples $\vec{f}$ of (terms for) rudimentary functions
and $n<\om$, then $\psi=\psi_{\varphi,\vec{f},n}$ is a formula
in $\Ll$, and for all transitive $X$  and $\vec{x}
\in X^{<\om}$, 
\[ \J(X)\sats\varphi(\vec{f}(X,\vec{x}))\iff\exists n<\om\  
[X\sats\psi_{\varphi,\vec{f},n}(\vec{x})].\]
Also, for each rudimentary $f$, the graph
\[\{(\vec{x},y)\bigm|\vec{x}\in X^{<\om}\text{ and }y=f(X,\vec{x})\} \]
is $\Sigma_1$-definable over $\J(X)$ from the parameter $X$,
uniformly in $X$.
For $A\sub X$, we have
$A\in\J(X)$ iff $A\text{ is definable from parameters over }X$.
\end{rem}

\begin{lem}\label{lem:j_extends_to_j^+} \tu{(}$\ZF$\tu{)}
Let $\lambda\in\Lim$ and
 $j\in\mathscr{E}(V_\lambda)$. There is a unique
 $j'\in\mathscr{E}_1(\J(V_\lambda))$
 with $j\sub j'$.
\end{lem}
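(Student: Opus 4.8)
The plan is to define $j'$ on $\J(V_\lambda)$ by the only formula that a $\Sigma_1$-elementary extension could possibly respect, namely the term-evaluation rule, and then verify that this works. Recall from Remark~\ref{rem:J(X)} that every element of $\J(V_\lambda)$ has the form $f(V_\lambda,\vec x)$ for some rudimentary function $f$ and $\vec x\in V_\lambda^{<\om}$. So I would \emph{define}
\[
j'\bigl(f(V_\lambda,\vec x)\bigr)=f\bigl(V_\lambda,j(\vec x)\bigr)
\]
for each rudimentary term $f$ and each $\vec x\in V_\lambda^{<\om}$, where $j(\vec x)$ is the coordinatewise image. The first task is \textbf{well-definedness}: if $f(V_\lambda,\vec x)=g(V_\lambda,\vec y)$ then $f(V_\lambda,j(\vec x))=g(V_\lambda,j(\vec y))$; and more generally that the relation $u\in v$ among such terms is preserved. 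The key point here is that ``$f(V_\lambda,\vec x)=g(V_\lambda,\vec y)$'' and ``$f(V_\lambda,\vec x)\in g(V_\lambda,\vec y)$'' are, by the last clause of Remark~\ref{rem:J(X)} (the graph of each rudimentary function is $\Sigma_1$ over $\J(V_\lambda)$ from the parameter $V_\lambda$, uniformly), expressible as $\exists n\,[V_\lambda\sats\psi_{\ldots,n}(\vec x,\vec y)]$ for a recursively-given first-order $\psi$ over $V_\lambda$ — and $j:V_\lambda\to V_\lambda$ is (fully) elementary, so it preserves each such statement. Hence equality and membership among the terms are preserved in both directions, which gives both that $j'$ is well-defined and that it is $\in$-preserving and injective. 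That $j'$ extends $j$ is immediate by taking $f$ to be a projection term.

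Next, \textbf{$\Sigma_0$-elementarity of $j'$}. Every $\Sigma_0$ formula $\varphi(z_1,\ldots,z_k)$ evaluated at $z_i=f_i(V_\lambda,\vec x_i)$ is, again by Remark~\ref{rem:J(X)}, equivalent to $\exists n\,[V_\lambda\sats\psi_{\varphi,\vec f,n}(\vec x_1,\ldots,\vec x_k)]$ with $\psi_{\varphi,\vec f,n}$ recursively obtained; applying elementarity of $j$ on $V_\lambda$ to this (in both directions, using the ``$\iff$'' form) shows $\J(V_\lambda)\sats\varphi(\vec z)$ iff $\J(V_\lambda)\sats\varphi(j'(\vec z))$. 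For \textbf{$\Sigma_1$-elementarity}: given $\J(V_\lambda)\sats\exists w\,\varphi(w,\vec z)$ with $\varphi$ $\Sigma_0$, pick a witness $w=f(V_\lambda,\vec x)$; then $\varphi(w,\vec z)$ holds, so by $\Sigma_0$-elementarity $\varphi(j'(w),j'(\vec z))$ holds in $\J(V_\lambda)$, whence $\J(V_\lambda)\sats\exists w\,\varphi(w,j'(\vec z))$. (Only the forward, $\Sigma_1$-preserving direction is claimed.) Finally, \textbf{uniqueness}: if $k\in\mathscr{E}_1(\J(V_\lambda))$ extends $j$, then $k$ must send $f(V_\lambda,\vec x)$ to $f(V_\lambda,k(\vec x))=f(V_\lambda,j(\vec x))$, because ``$y=f(V_\lambda,\vec x)$'' is $\Sigma_1$ over $\J(V_\lambda)$ from $V_\lambda$ and $k$ fixes $V_\lambda$ (as $k\supseteq j$ and $j$ is $\in$-cofinal into $V_\lambda$, so $k(V_\lambda)=V_\lambda$); hence $k=j'$.

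The main obstacle I anticipate is the bookkeeping around \textbf{the parameter $V_\lambda$ and $\Sigma_1$ (rather than $\Sigma_0$) reductions}. The reduction $\J(V_\lambda)\sats\varphi(\vec f(V_\lambda,\vec x))\iff\exists n\,[V_\lambda\sats\psi_{\varphi,\vec f,n}(\vec x)]$ introduces an \emph{external} existential quantifier over $n<\om$, so one must be careful that $j$'s elementarity is being applied to the inner first-order statement $V_\lambda\sats\psi_{\varphi,\vec f,n}(\vec x)$ for each fixed $n$, and that the equivalence then passes through the disjunction over $n$ in both directions — this is fine because $j$ is fully elementary on $V_\lambda$, but it is exactly the place where one would slip if one only had $\Sigma_1$-elementarity of $j$ to start with, and it is why the hypothesis $j\in\mathscr{E}(V_\lambda)$ (full elementarity), not merely $\mathscr{E}_1(V_\lambda)$, is used. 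A secondary, more routine nuisance is confirming that the chosen concrete $\Ss$-based formulation of $\J$ makes ``$y=f(V_\lambda,\vec x)$'' genuinely $\Sigma_1$ over $\J(V_\lambda)$ from $V_\lambda$ uniformly in $V_\lambda$; but this is exactly what Remark~\ref{rem:J(X)} records, so I would simply cite it.
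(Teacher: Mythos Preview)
Your approach is essentially the paper's: define $j'(f(V_\lambda,\vec x))=f(V_\lambda,j(\vec x))$ and verify well-definedness and elementarity by reducing to first-order statements over $V_\lambda$, where full elementarity of $j$ applies. The only technical difference is that the paper invokes Jensen's \emph{simplicity} of rudimentary functions to get a \emph{single} formula $\varphi'$ over $V_\lambda$ equivalent to a $\Sigma_0$ statement $\varphi(\vec f(V_\lambda,\vec x))$, whereas you use the countable-disjunction reduction $\exists n\,[V_\lambda\sats\psi_{\varphi,\vec f,n}(\vec x)]$; both routes are recorded in the background material and both work.

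There is one small gap. Your $\Sigma_1$ step only checks the forward direction, and you explicitly say so; but for a $\Sigma_0$-elementary map the forward $\Sigma_1$ direction is automatic, and the actual content of $j'\in\mathscr{E}_1(\J(V_\lambda))$ is the \emph{backward} implication $\J(V_\lambda)\sats\exists w\,\varphi(w,j'(\vec z))\Rightarrow\J(V_\lambda)\sats\exists w\,\varphi(w,\vec z)$. The paper gets this by observing that $j'$ is $\in$-cofinal (since $j'(V_\lambda)=V_\lambda$, hence $j'(\Ss_n(V_\lambda))=\Ss_n(V_\lambda)$, and these exhaust $\J(V_\lambda)$). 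Alternatively, your own method already yields it: Remark~\ref{rem:J(X)} states the $\psi_{\varphi,\vec f,n}$ reduction for $\Sigma_1$ formulas, so applying it directly to $\exists w\,\varphi$ gives the ``iff'' in one stroke. Either patch is immediate; just don't leave the backward direction unaddressed.
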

\begin{proof}
We must set $j'(\lambda)=\lambda$ and
$j'(V_\lambda)=V_\lambda$.
Because
\[ \J(V_\lambda)=\bigcup_{n<\om}\Ss_n(V_\lambda) 
 =\{f(V_\lambda,\vec{x})\bigm|\vec{x}\in V_\lambda^{<\om}\},\]
and since for  rudimentary $f$, the graph
$\{(\vec{x},y)\bigm|\vec{x}\in V_\lambda^{<\om}\text{ and 
}y=f(V_\lambda,\vec{x})\}$
is $\Sigma_1$ over $\J(V_\lambda)$ in the parameter $V_\lambda$,
we must set
$j'(f(V_\lambda,\vec{x})) =f(V_\lambda,j(\vec{x}))$,
 giving uniqueness.

But this definition gives a well-defined 
$j'\in\mathscr{E}_0(\J(V_\lambda))$.
This is by \cite{jensen_fs}: 
all rudimentary functions are simple, and hence
for each $\Sigma_0$ formula  $\varphi$ and all rud functions 
$f_0,\ldots,f_{k-1}$, there is a formula $\varphi'$
such that for all $\vec{x}\in V_\lambda^{<\om}$
and $y_i=f_i(V_\lambda,\vec{x})$, we have
\[ 
\J(V_\lambda)\sats\varphi(\vec{y})\iff
V_\lambda\sats\varphi'(\vec{x}).\]
But by the elementarity of $j$, 
\[V_\lambda\sats\varphi'(\vec{x})\iff V_\lambda\sats\varphi'(j(\vec{x}))\iff
 \J(V_\lambda)\sats\varphi(j'(y_0),\ldots,j'(y_{k-1})).\]
Note $j'$ is also $\in$-cofinal, and hence $\Sigma_1$-elementary.
And $j\sub j'$.
\end{proof}

Note that in the following, 
 $j\rest V_\alpha\in V_\lambda$
for each $\alpha<\lambda$. The following theorem
strengthens \cite[Theorem 5.7 part 1***]{cumulative_periodicity}
(in a different manner than \ref{tm:RL_no_def_j}).
We will actually prove more later (Theorems 
\ref{tm:V=L(V_delta)_cof(delta)>om} and
\ref{tm:no_j_in_adm}), but we start here for some motivation:

\begin{tm}\label{tm:j_not_amenably_Sigma_1}
\tu{(}$\ZF$\tu{)} Let $\lambda\in\Lim$ and
$j\in\mathscr{E}_1(V_\lambda)$. Then
$\widetilde{j}=\{j\rest V_\alpha\bigm|\alpha<\lambda\}$ is not 
$\bfSigma_1^{\J(V_\lambda)}$.
\end{tm}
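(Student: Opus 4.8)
The plan is to argue by contradiction: suppose $\widetilde{j} = \{j\rest V_\alpha \mid \alpha<\lambda\}$ is $\bfSigma_1^{\J(V_\lambda)}$, say defined by a $\Sigma_1$ formula $\varphi$ with a parameter $q\in\J(V_\lambda)$, so $q = g(V_\lambda,\vec{r})$ for some rudimentary $g$ and $\vec{r}\in V_\lambda^{<\om}$. The first step is to extract from this a genuine self-embedding of $\J(V_\lambda)$ that moves the ``critical structure'' but fixes everything definable. By Lemma \ref{lem:j_extends_to_j^+}, if $j$ were fully elementary on $V_\lambda$ it would extend (uniquely) to $j'\in\mathscr{E}_1(\J(V_\lambda))$; but here $j$ is only assumed $\Sigma_1$-elementary. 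I would first pass, via the finite-iterate machinery recalled in the excerpt ($j_n = (j_{n-1})^+(j_{n-1})$, with $j_n\in\mathscr{E}(V_\lambda)$ by \cite[Theorem 5.6***]{cumulative_periodicity}), to a fully elementary embedding, noting that $\widetilde{j_n}$ is computed from $\widetilde{j}$ uniformly (for each $\alpha$, $j_n\rest V_\alpha$ is computed from $j\rest V_\beta$ for suitable $\beta$), hence $\widetilde{j_n}$ is also $\bfSigma_1^{\J(V_\lambda)}$. So without loss of generality $j\in\mathscr{E}(V_\lambda)$, and then $j' = j^+ \in \mathscr{E}_1(\J(V_\lambda))$ exists with $j\sub j'$ and $j'(V_\lambda) = V_\lambda$, $j'(\lambda) = \lambda$.

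Next I would use the $\Sigma_1$ definition of $\widetilde{j}$ to ``reflect'' $\widetilde{j}$ into a single set inside $V_\lambda$, getting a contradiction with $\crit(j)$. The key point is that $\J(V_\lambda)\elem_0\J(V_\lambda)$-truth of $\Sigma_1$ facts reduces, via the recursive translation $\psi_{\varphi,\vec{f},n}$ from Remark \ref{rem:J(X)}, to a countable disjunction of first-order facts over $V_\lambda$: for each $\alpha<\lambda$ and each $m<\om$, the statement ``$\exists z\,[z = j\rest V_\alpha \wedge \varphi(z,q)]$'' holds over $\J(V_\lambda)$, witnessed at some finite stage $\Ss_{n_{\alpha,m}}(V_\lambda)$, and this unwinds to $V_\lambda \sats \psi_{\dots}(\vec{r}, \text{(code for }\alpha))$. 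Applying $j$ (full elementarity on $V_\lambda$) and then $j'$ to these $V_\lambda$-level statements, and using $j'(q) = g(V_\lambda, j(\vec r))$, one sees that $j'$ maps the $\Sigma_1$ definition of $\widetilde j$ to the $\Sigma_1$ definition of $\widetilde{j'} = \{j'\rest U \mid U\in\dots\}$ — but $j'\rest V_\alpha = j\rest V_\alpha$ for $\alpha<\lambda$, so in fact $j'$ fixes (setwise) the class $\widetilde{j}$, i.e. $j'{}^+(\widetilde j) = \widetilde j$ in the appropriate sense. More usefully: for each fixed $\alpha<\lambda$ below $\crit(j)$, the set $j\rest V_\alpha = j'\rest V_\alpha$ is an element of $\J(V_\lambda)$ which is $\Sigma_1$-definable from $q$ and $\alpha$ (namely, as the unique $z$ with $\varphi(z,q)$ and $\dom(z) = V_\alpha$), hence $j'(j\rest V_\alpha) = j'\rest j'(V_\alpha) = j\rest V_{j(\alpha)}$ is $\Sigma_1$-definable from $j'(q)$ and $j(\alpha) = \alpha$.

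The contradiction then comes from looking at the critical point. Let $\kappa = \crit(j) = \crit(j')$. Using the definability just established, I would show that the map $\alpha\mapsto j\rest V_\alpha$ being $\bfSigma_1^{\J(V_\lambda)}$ forces the whole embedding to be ``captured'': since $\kappa < \lambda$, the object $j\rest V_{\kappa+1}\in\J(V_\lambda)$ is $\Sigma_1$-definable over $\J(V_\lambda)$ from $q$, and $\Sigma_1$ truth at $\J(V_\lambda)$ localizes to some $\Ss_n(V_\lambda)\in\J(V_\lambda)$; but $\Ss_n(V_\lambda) = f(V_\lambda,\vec{x})$ with $\vec x\in V_\lambda^{<\om}$, and $j'$ fixes $V_\lambda$ pointwise-below-$\kappa$-worth of parameters — more precisely, $j' $ is the identity on $V_\kappa$, so $j'(\vec{x})$ agrees with $\vec{x}$ whenever $\vec x\in V_\kappa$, which we can arrange by taking $\alpha$-indexing below $\kappa$. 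This yields $j'(j\rest V_\kappa) = j\rest V_\kappa$, i.e. $j\rest V_{j(\kappa)} \supseteq j\rest V_\kappa$ extended by $j'$ — fine — but combined with $\Sigma_1$-definability it pins $j\rest V_{\kappa+1}$ as definable from a parameter in $V_\kappa\cup\{V_\lambda\}$, hence fixed by $j'$, contradicting $j'(\kappa) > \kappa$ (so that $j\rest V_{\kappa+1}$ and $j'(j\rest V_{\kappa+1}) = j'{}'\rest V_{j(\kappa)+1}$-type objects genuinely differ at coordinate $\kappa$). The \textbf{main obstacle} is precisely this last bookkeeping: making the reflection of the $\Sigma_1$ definition down to a finite $\Ss_n$-stage interact correctly with $j'$'s action on the rudimentary-function parameters $\vec x$, so that one really gets $j'(p) = p$ for a parameter $p$ defining $\widetilde j$, and hence $j'$ fixes $\widetilde j$ and therefore $\kappa\in\rg(j')$ — the contradiction. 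This is morally the same ``outright-definable objects are fixed, but $\crit(j)\notin\rg(j)$'' pattern as in the proof of Theorem \ref{tm:RL_no_def_j} and in \cite{cumulative_periodicity}, transported one level up the $\J$-hierarchy via the uniform $\Sigma_1$-to-$V_\lambda$ translation of Remark \ref{rem:J(X)}; the novelty and the care needed is that the defining parameter $q$ lives in $\J(V_\lambda)$ rather than $V_\lambda$, so one must track $j'(q)$ and exploit that $q$'s rudimentary parameters can be taken in $V_\kappa$ (or else absorb them, shrinking $\kappa$ to the least possible critical point as in the final paragraph of the proof of \ref{tm:RL_no_def_j}).
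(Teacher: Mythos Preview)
Your general strategy---reduce to $j$ fully elementary, extend to $j'\in\mathscr{E}_1(\J(V_\lambda))$, minimize the critical point $\kappa$, and aim for $j'(\kappa)=\kappa$---matches the paper's. But the step you flag as the ``main obstacle'' is a genuine gap, and neither of your proposed fixes works. Taking the rudimentary parameters $\vec r$ in $V_\kappa$ is unjustified: nothing about minimizing $\kappa$ forces the defining parameter to have low rank. And the Suzuki/\ref{tm:RL_no_def_j}-style move (``$\kappa_0$ is outright definable, hence fixed'') fails here because $j'$ is only $\Sigma_1$-elementary on $\J(V_\lambda)$, while the definition of $\kappa_0$ as ``least critical point of a fully elementary $j$ with $\widetilde j$ $\bfSigma_1$-definable'' is not $\Sigma_1$ over $\J(V_\lambda)$: asserting that a given $(\varphi,q)$ actually defines a \emph{total} $\widetilde j$ (i.e.\ that for \emph{every} $\alpha<\lambda$ some $\Ss_n$ witnesses $\varphi(j\rest V_\alpha,q)$) is a $\Pi_1$ condition. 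So you cannot conclude $j'(\kappa_0)=\kappa_0$.

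The paper's proof resolves this by never working with $j'$ at the $\J(V_\lambda)$ level directly. Instead it stratifies the $\Sigma_1$ definition across the $\Ss_n$-levels: for each $n$ it records $\eta_n=\sup\{\alpha:\Ss_n(V_\lambda)\sats\varphi(j\rest V_\alpha,p,V_\lambda)\}<\lambda$, and then defines, \emph{inside $V_\lambda$}, sets $A_m$ (pairs $(q,\mu)$ that are ``$m$-good'': $q$ defines via $\varphi$ a coherent partial embedding with critical point $\mu$ through stage $\Ss_m$) and $B_{(m,n)}$ (additionally the domain reaches $\eta_n$). The point is that the \emph{sequences} $\langle A_m\rangle_{m<\om}$ and $\langle B_{(m,n)}\rangle$ live in $V_\lambda$, so the full elementarity of $j$ on $V_\lambda$ applies, and one computes $j(A_\om)=\bigcap_m j(A_m)$, showing $(p,\kappa)\in j(A_\om\cap B_\om)$ and hence $A_\om\cap B_\om\neq\emptyset$. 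Any $(q,\mu)$ in this intersection then yields a new fully elementary $\ell$ with $\widetilde\ell$ $\bfSigma_1$-definable and $\crit(\ell)=\mu<\kappa$, contradicting minimality. This is the missing idea: encode the infinitary $\J(V_\lambda)$-level condition as an $\om$-sequence of sets in $V_\lambda$ and exploit that $j$ commutes with countable intersections there.
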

\begin{proof}
 Suppose otherwise. Note that each finite iterate $j_n$ is then
 likewise definable, so by \cite[Theorem 5.6***]{cumulative_periodicity}, we 
may assume that $j\in\mathscr{E}(V_\lambda)$.
 Let $\kappa$ be the least critical point among all such fully elementary
$j$, select  $j$ witnessing the choice of $\kappa$, and
choose $p\in V_\lambda$ and a $\Sigma_1$ formula $\varphi$
such that 
\[ \widetilde{j}=\{k\in 
V_\lambda\bigm|\J(V_\lambda)\sats\varphi(k,p,V_\lambda)\}.\]
For $n<\om$, let 
$\eta_n=\bigcup\{\alpha<\lambda\bigm|\Ss_n(V_\lambda)\sats\varphi(j\rest 
V_\alpha,p,V_\lambda)\}$.  
By Remark \ref{rem:J(X)} and since $j$ is not definable from parameters over 
$V_\lambda$ (by Theorem \ref{tm:RL_no_def_j} or \cite[Theorem 
5.7***]{cumulative_periodicity}),
it follows that $\eta_n<\lambda$.
Note $\eta_n\leq\eta_{n+1}$ and $\sup_{n<\om}\eta_n=\lambda$.

Let $(q,\mu)\in V_\lambda\cross\lambda$
and $m<\om$.
Say that $(q,\mu)$ is \emph{$m$-good} iff
for each $n\leq m$,
there are $(\xi_n,\xi'_n)\in\lambda^2$ and $\ell_n\in V_\lambda$ such that:
\begin{enumerate}
 \item $\ell_n:V_{\xi_n}\to V_{\xi_n'}$ is $\Sigma_0$-elementary and cofinal
 (where if $\xi_n$ is a successor $\gamma+1$, cofinality
 means that $\xi_n'$ is also a successor and $\ell_n(V_{\xi_n-1})=V_{\xi_n'-1}$)
\item  for all $\Sigma_m$ formulas $\psi$
 and $x\in V_{\xi_n}$ $[V_\lambda\sats\psi(x)$ iff 
$V_\lambda\sats\psi(\ell_n(x))]$,
\item $\Ss_n(V_\lambda)\sats\text{``}\ell_n\text{ is the union of all
}k\text{ such that }\varphi(k,q,V_\lambda)\text{''}$,
 \item $\ell_i\sub\ell_n$ for each $i\leq n$,
 \item $\crit(\ell_n)=\mu$.
\end{enumerate}
If $(q,\mu)$ is $n$-good, write $\ell^{q}_n=\ell_n$ and $\xi^q_n=\xi_n$.

Let also $n<\om$. Say that $(q,\mu)$ is \emph{$(m,n)$-strong}
iff $(q,\mu)$ is $m$-good and $\eta_n\leq\xi^q_m$.
Say that $(q,\mu)$ is \emph{$n$-strong}
iff $\exists m<\om\ [(q,\mu)\text{ is }(m,n)\text{-strong}]$.

Recall $j\in\mathscr{E}(V_\lambda)$ with 
$\crit(j)=\kappa$, 
etc.
Let $\alpha_0<\lambda$ be such that $p\in j(V_{\alpha_0})$.
Let
\[ A_m=\{(q,\mu)\in V_{\alpha_0}\cross\kappa\bigm|(q,\mu)\text{ is 
}m\text{-good}\}.\]

Let $A_\om=\bigcap_{m<\om}A_m$. Note that  
$\left<A_m\right>_{m<\om}\in V_\lambda$.

By Lemma \ref{lem:j_extends_to_j^+}, because $j\in\mathscr{E}(V_\lambda)$, it 
extends uniquely to a 
$\widehat{j}\in\mathscr{E}_1(\J(V_\lambda))$
with $\widehat{j}(V_\lambda)=V_\lambda$. This gives that
$\widehat{j}(\Ss_n(V_\lambda))=\Ss_n(V_\lambda)$ for each $n<\om$,
and we have a fully elementary map
\[ j^*_n= 
\widehat{j}\rest\Ss_n(V_\lambda):\Ss_n(V_\lambda)\to\Ss_n(V_\lambda). \]

\begin{clmfour}
 $A_\om\neq\emptyset$, and moreover, $(p,\kappa)\in j(A_\om)$.
\end{clmfour}
\begin{proof}
Because $j^*_n$ is fully elementary and $A_n\in V_\lambda$, 
note 
\[j(A_n)=j^*_n(A_n)=\{(q,\mu)\in V_{j(\alpha_0)}\cross 
j(\kappa)\bigm|(q,\mu)\text{ is }n\text{-good}\}.\]
But $(p,\kappa)$ is $n$-good, so
$(p,\kappa)\in j(A_n)$.
Also $\left<A_n\right>_{n<\om}\in V_\lambda$,
and
\[ j(A_\om)=j(\bigcap_{n<\om}A_n)=\bigcap_{n<\om}j(A_n), \]
so $(p,\kappa)\in j(A_\om)\neq\emptyset$,
so $A_\om\neq\emptyset$.
\end{proof}

Let
$B_{(m,n)}=\{(q,\mu)\in V_{\alpha_0}\cross\kappa\bigm|(q,\mu)\text{ is 
}(m,n)\text{-strong}\}$,
and
\[ B_n=\{(q,\mu)\in V_{\alpha_0}\cross\kappa\bigm|(q,\mu)\text{ is 
}n\text{-strong}\} \]
and $B_\om=\bigcap_{n<\om}B_n$. These sets are in $V_\lambda$.

\begin{clmfour}
 $A_\om\inter B_\om\neq\emptyset$, and moreover, $(p,\kappa)\in 
j(A_\om\inter B_\om)$.
\end{clmfour}
\begin{proof}
By the previous claim and like in its proof, it suffices to see 
that $(p,\kappa)\in j(B_n)$
for each $n<\om$. But $B_n=\bigcup_{m<\om}B_{(m,n)}$,
so
\[ j(B_n)=\bigcup_{m<\om}j(B_{(m,n)}),\]
and
$j(B_{(m,n)})=j^*_{m}(B_{(m,n)})=$
\[ =\{(q,\mu)\in 
V_{j(\alpha_0)}\cross j(\kappa)\bigm|(q,\mu)\text{ is 
}m\text{-good, with }j(\eta_n)\leq\xi^q_m\}.\]
But there is $m<\om$ with $j(\eta_n)\leq\eta_m=\xi^{p}_m$.
Then
$(p,\kappa)\in j(B_{(m,n)})$,
so $(p,\kappa)\in j(B_n)$, as required.
\end{proof}

Now by the claims, we may pick $(q,\mu)\in A_\om\inter B_\om$.
Let
$\ell=\bigcup_{n<\om}\ell^q_n$.

\begin{clmfour}
 $\ell:V_\lambda\to V_\lambda$ is fully elementary,
 $\crit(\ell)=\mu<\kappa$,
and $\widetilde{\ell}$ is $\bfSigma_1^{\J(V_\lambda)}$.
\end{clmfour}
\begin{proof}
Because $(q,\mu)\in A_\om$, for each $n<\om$,
$\ell^q_n:V_{\xi^q_n}\to V_{\xi_n'}$]
is cofinal $\Sigma_0$-elementary with 
$\crit(\ell^q_n)=\mu$, and $\ell^q_n\sub\ell^q_{n+1}$.
So $\ell$ is a function with domain
$V_\lambda=\bigcup_{n<\om}V_{\xi^q_n}$;
the equality is because $(q,\mu)$ is $n$-strong for each $n<\om$.
So $\ell:V_\lambda\to V_\lambda$.
But because  $(q,\mu)$ is $m$-good for each $m<\om$,
$\ell$ is  fully elementary.
And $\crit(\ell)=\mu$.
Finally note that $\widetilde{\ell}$ is appropriately definable
from the parameter $(q_0,V_\lambda)$.
\end{proof}

But $\mu<\kappa$, so the claim contradicts
 the minimality of $\kappa$.
\end{proof}

One can directly generalize the foregoing argument,
showing that an elementary 
$j:V_\delta\to V_\delta$ cannot appear
in $\J_\alpha(V_\delta)$, for some distance. But especially 
once we get to $\alpha\geq\kappa=\crit(j)$ (or worse,
$\alpha\geq\kappa_\om(j)$),
things are clearly more subtle, because in order to extend
$j$ to $\hat{j}:\J_\alpha(V_\delta)\to\J_\alpha(V_\delta)$,
$\hat{j}$ must move ordinals $\geq\delta$. But a natural
and general way to extend $j$ is through taking an ultrapower
by the extender of $j$. So we treat this topic in the next section.

\section{Ultrapowers and extenders under $\ZF$}\label{sec:ultrapowers_in_ZF}

In order to help us analyse the model $L(V_\delta)$
further, and for more general purposes, we want to be able to deal with 
ultrapowers  via extenders.
This will, for example, assist us in extending embeddings of 
the form $j:V_\delta\to 
V_\delta$ to larger models, such as $L(V_\delta)$.

Without
$\AC$, there are some small technical difficulties here
in the definitions, which we will work through first.
The more significant issues are  those of wellfoundedness
of the ultrapower and (generalized) {\L}o\'{s}' theorem; when we apply 
extenders,
we will usually want to know that these properties hold of the ultrapowers we 
form.

\newcommand{\mr}{\mathrm{mr}}

\begin{dfn} \tu{(}$\ZF$\tu{)}
Write $\approx_{\rank}$ for the equivalence relation determined by rank;
that is, $x\approx_{\rank}y$ iff $\rank(x)=\rank(y)$.
A set $r$ is 
\emph{rank-extensional} iff for all $x,y\in r$ with $x\neq y$ but 
$x\approx_{\rank}y$, there is $z\in r$ with $z\in x\Leftrightarrow z\notin y$.

Let $r$ be rank-extensional. The \emph{membership-rank diagram}
of $r$ is the structure $\mr(r)=(r,{\in}\rest r,\approx_{\rank}\rest r)$.
\end{dfn}

An easy induction on rank gives:
\begin{lem} \tu{(}$\ZF$\tu{)}
Let $r$ be rank-extensional. Then:
\begin{enumerate}[label=--]
 \item 
For all $\alpha\in\OR$,
 $r\inter V_\alpha$ is also rank-extensional.
 \item There is no non-trivial automorphism of $\mr(r)$.
\end{enumerate}
\end{lem}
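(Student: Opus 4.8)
Both statements are proved by a single induction on ordinals, taking advantage of the first bullet point (which says rank-extensionality is inherited by rank-initial segments $r \cap V_\alpha$). For the second bullet point, I would prove, by induction on $\alpha \in \OR$, the statement: for every rank-extensional set $r$, every automorphism $\sigma$ of $\mr(r)$ fixes every element of $r \cap V_\alpha$ pointwise. Since every element of $r$ lies in some $V_\alpha$, this gives that the only automorphism of $\mr(r)$ is the identity. The successor step is where the work happens; the limit step is trivial, and $\alpha = 0$ is vacuous.

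\textbf{The successor step.} Suppose the claim holds for $\alpha$, and let $x \in r \cap V_{\alpha+1}$, so $\rank(x) \le \alpha$. Let $\sigma$ be an automorphism of $\mr(r)$. Since $\sigma$ preserves $\approx_{\rank}$, we have $\sigma(x) \approx_{\rank} x$, i.e. $\rank(\sigma(x)) = \rank(x) \le \alpha$, so $\sigma(x) \in r \cap V_{\alpha+1}$ as well. Now compare the $\in^{\mr(r)}$-predecessors of $x$ and of $\sigma(x)$. Every $z \in r$ with $z \in x$ has $\rank(z) < \rank(x) \le \alpha$, so $z \in r \cap V_\alpha$, hence $\sigma(z) = z$ by the induction hypothesis. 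Since $\sigma$ is an automorphism, $z \in^{\mr(r)} x$ iff $\sigma(z) \in^{\mr(r)} \sigma(x)$, i.e. iff $z \in^{\mr(r)} \sigma(x)$. So $x$ and $\sigma(x)$ have exactly the same elements that lie in $r$. But $x \approx_{\rank} \sigma(x)$, so rank-extensionality of $r$ forces $x = \sigma(x)$ (if $x \ne \sigma(x)$ there would be some $w \in r$ in the symmetric difference of $x$ and $\sigma(x)$, contradiction). This closes the induction.

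\textbf{The first bullet.} To see $r \cap V_\alpha$ is rank-extensional whenever $r$ is: take $x, y \in r \cap V_\alpha$ with $x \ne y$ and $x \approx_{\rank} y$. By rank-extensionality of $r$ there is $z \in r$ with, say, $z \in x$ and $z \notin y$ (up to swapping $x,y$). Then $\rank(z) < \rank(x) < \alpha$, so $z \in r \cap V_\alpha$ as well, and $z$ witnesses rank-extensionality of $r \cap V_\alpha$ for the pair $(x,y)$. This is the "easy induction on rank" the excerpt alludes to, though as just presented it needs no induction at all — the point is simply that a witness $z \in x$ automatically has smaller rank than $x$, hence stays inside $V_\alpha$.

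\textbf{Main obstacle.} There is no real obstacle here; the only thing to be careful about is the bookkeeping on ranks in the successor step — specifically that one needs the \emph{strict} inequality $\rank(z) < \rank(x)$ for elements $z \in x$, together with $\rank(x) \le \alpha$, to land $z$ in the scope of the induction hypothesis, and that the automorphism hypothesis $\sigma(x) \approx_{\rank} x$ is exactly what lets rank-extensionality be applied to the pair $(x, \sigma(x))$. Everything is carried out in $\ZF$ with no choice needed, since the induction is along the ordinals and the witnesses are extracted one pair at a time.
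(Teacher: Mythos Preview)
Your proof is correct and is precisely the ``easy induction on rank'' the paper alludes to (the paper gives no further detail). Your observation that the first bullet needs no actual induction---the witness $z$ automatically has smaller rank---is also correct.
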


\begin{dfn} \tu{(}$\ZF$\tu{)}
We say that $r$ is an \emph{index} iff $r$ is finite and rank-extensional.

Note that
we have an equivalence relation on the class of all indices
given by $a\approx b$ iff $\mr(a)\iso\mr(b)$.
The \emph{signature} $\sig(a)$ of an index $a$ is the equivalence class of $a$
with respect to $\approx$.
Note that every signature is represented by some element of $V_\om$.
By selecting in some natural way the minimal such representative,
we may consider $\sig(a)$ as being this element of $V_\om$.
A \emph{signature} is a set $\sig(a)$ for some index $a$.
Given a transitive set $X$, we write $\left<X\right>^{<\om}$ for the set of 
indices $r\sub X$, and given an index $b$, 
$\left<X\right>^b$ denotes $\{r\in\left<X\right>^{<\om}\bigm|
\sig(r)=\sig(b)\}$.

If $a\approx b$ then $\pi^{ab}:\mr(a)\to\mr(b)$ denotes the unique
isomorphism.
Let $a,\widetilde{a},\widetilde{b}$ be indices with $a\sub\widetilde{a}$ and 
$\sig(\widetilde{a})=\sig(\widetilde{b})$.
Then $\widetilde{b}^{\widetilde{a}a}$ denotes
$\pi^{\widetilde{a}\widetilde{b}}``a$.
\end{dfn}

\begin{lem}\label{lem:indices_cofinal} \tu{(}$\ZF$\tu{)} For every finite 
set $c$
there is an index $b$
with $c\sub b$ and $\rank(c)=\rank(b)$.\footnote{\label{ftn:rank-ext}
 One might have expected (as did the author initially) that
 for every finite extensional set $a$, there is a finite extensional set $b$
 with $a\sub b$ (where \emph{extensional} means that for all $x,y\in a$
 with $x\neq y$, there is $z\in a$ with $z\in x$ iff $z\notin y$). In fact,
in the first draft of the notes \cite{reinhardt_non-definability} (v1 on 
arxiv.org), which contained the first version of the development of extenders 
here, we  defined \emph{index}
with extensionality replacing rank-extensionality,
and we made precisely that claim.
But that claim is false; here is a counterexample. 
Define sets $n'$ as follows:
 \begin{enumerate}[label=--] 
  \item $0'=\emptyset$,
  \item $1'=\{0'\}$ and $2'=\{1'\}$,
  \item $3'=\{0',2'\}$ and $4'=\{1',3'\}$,
  \item $(2n+1)'=\{0',2',\ldots,(2n)'\}$ and $(2n+2)'=\{1',3',\dots,(2n+1)'\}$.
 \end{enumerate}
Let $x=\{(2n)'\bigm|n\in\om\}$ and $y=\{(2n+1)'\bigm|n\in\om\}$.
 Then $p=\{x,y\}$ is finite but there is no finite extensional $q$ with $p\sub 
q$. (Let $p\sub q$ with $q$ finite and consider the largest $k\in\om$ such that 
$k'\in q$.
Observe that either $x\inter q=k'\inter q$ or $y\inter q=k'\inter 
q$, and hence $q$ is not extensional.)}
\end{lem}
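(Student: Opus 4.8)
Lemma \ref{lem:indices_cofinal}: in $\ZF$, for every finite set $c$ there is an index $b$ (i.e.\ $b$ finite and rank-extensional) with $c\sub b$ and $\rank(c)=\rank(b)$.

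The plan is to build $b$ by a downward recursion on rank, repairing rank-extensionality one rank-level at a time, starting from the top. Write $\rho=\rank(c)$; note $\rho$ is a successor, say $\rho=\alpha+1$, unless $c=\emptyset$ (trivial), and every element of $c$ has rank $\le\alpha$. I would construct a finite increasing chain $c=b_\alpha\sub b_{\alpha-1}\sub\cdots$ only when $\alpha$ is finite; in general the recursion is on ordinals $\le\alpha$ going downward, which is legitimate since we only ever touch finitely many rank-levels that actually contain elements of the set being built, and at each such level we add finitely many witnesses. More precisely: suppose we have a finite set $b'$ with $c\sub b'$, $\rank(b')=\rho$, and such that $b'\inter V_\beta$ is rank-extensional for all $\beta>\gamma$ (initially $\gamma=\alpha$, $b'=c$). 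If $b'$ is already rank-extensional we are done; otherwise consider the pairs $x\neq y$ in $b'$ with $\rank(x)=\rank(y)=\gamma$ for which no $z\in b'$ separates them (there are finitely many such pairs). For each such pair, pick some $z$ with $z\in x\Leftrightarrow z\notin y$ — such $z$ exists by Extensionality in $V$, since $x\neq y$ — and note $\rank(z)<\gamma$. Add all these finitely many witnesses to $b'$ to form $b''$; then $b''$ is finite, $c\sub b''$, $\rank(b'')=\rho$, every pair at rank $\gamma$ is now separated in $b''$, and crucially adding elements of rank $<\gamma$ does not spoil rank-extensionality at any rank $>\gamma$ (separating $z$'s there are unaffected). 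So $b''\inter V_\beta$ is rank-extensional for all $\beta\ge\gamma$. Then recurse on the next lower rank $\gamma$ that has at least two elements of $b''$.

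The point that makes this terminate is that the ranks occurring among elements of the finite set $b''$ form a finite set of ordinals, so there are only finitely many levels $\gamma$ at which repairs are ever needed; moreover each repair step only adds elements of strictly smaller rank, so once level $\gamma$ is fixed it stays fixed, and we never revisit it. Hence after finitely many steps we reach a finite rank-extensional $b$ with $c\sub b$ and $\rank(b)=\rho=\rank(c)$, which is exactly an index as required. I would phrase the recursion cleanly by first letting $R=\{\rank(x):x\in c\}\cup\{\rho\}$, noting $R$ is finite, and then doing a finite downward induction through the elements of $R\setminus\{\rho\}$ (plus possibly finitely many new smaller ranks introduced by the witnesses — but these too, once introduced, are bounded by the finitely many ranks below the current level, and the whole process lives inside the bounded region $V_\rho$, so one can just say: iterate the ``add all missing separating witnesses'' operator, which strictly decreases the least unrepaired rank, until it stabilizes).

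The only real subtlety — the step I expect to be the main obstacle to state carefully — is making the termination argument airtight without invoking any choice: we must be sure that ``pick a separating $z$ for each bad pair'' is a choice over a \emph{finite} set (it is: the bad pairs at a given level form a finite set, being a subset of $b'\times b'$), and that the cascade of newly introduced lower ranks does not go on forever. This is handled by the observation that at every stage the construction lives inside $V_\rho$ and the relevant ``defect rank'' — the largest $\gamma$ such that $b'\inter V_{\gamma+1}$ fails rank-extensionality — is strictly decreased by each application of the repair operator and can only take ordinal values $<\rho$; since there is no infinite descending sequence of ordinals, the process halts, at which point $b'$ is rank-extensional. No $\AC$ is used anywhere beyond finite choice. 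I would close by remarking this is exactly the content needed later: every finite subset of a transitive $X$ is contained in an index $b\sub X$ with the same rank, so indices are cofinal in $[X]^{<\om}$ ordered by inclusion-with-fixed-rank.
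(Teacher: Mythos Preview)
Your argument is correct and is essentially the paper's proof, which packages the same top-down repair as a clean induction on $\rank(c)$: separate the maximal-rank elements $c_{\max}$ by finitely many witnesses of lower rank, apply the inductive hypothesis to the (lower-rank) remainder $c'\setminus c_{\max}$ to obtain an index $b'$, and set $b=b'\cup c_{\max}$. One slip worth fixing: your invariant ``$b'\cap V_\beta$ is rank-extensional for all $\beta>\gamma$'' is stated backwards (for $\beta=\gamma+1$ this would assert that all same-rank pairs of rank ${\leq}\gamma$ are already separated, whereas you are repairing from the top down); the invariant you actually want is ``every same-rank pair in $b'$ of rank ${>}\gamma$ is separated by some element of $b'$'', and correspondingly the defect rank should simply be the largest rank of an unseparated pair.
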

\begin{proof}
 This is an induction on rank (recall though that we don't assume $\AC$).
 Assume $c\neq\emptyset$. Let $\alpha$ be the maximum rank of elements 
of $c$, so $\rank(c)=\alpha+1$. Let $c_{\mathrm{max}}$ be the set of elements 
of $c$ of rank $\alpha$.

 First choose a finite set $c'$ such that $c\sub c'$, all elements of
 $c'\cut c$ have rank ${<\alpha}$, and with $c'$ extensional with respect 
to $c_{\mathrm{max}}$ (that is, for all $x,y\in c_{\mathrm{max}}$ with $x\neq 
y$, there is $z\in c'$
with $z\in x\Delta y$). Note that $\rank(c'\cut c_{\mathrm{max}})\leq\alpha$.
So by 
induction we can fix an index
$b'$ with $c'\cut c_{\mathrm{max}}\sub b'$ and $\rank(b')\leq\alpha$. Now note
that $b=b'\cup 
c_{\mathrm{max}}$ is as desired.
\end{proof}

\newcommand{\ambl}{\mathrm{ambl}}

\begin{dfn}[Extender, Ultrapower]\label{dfn:extender}
\tu{(}$\ZF$\tu{)} Let $M\sats\RL$ be transitive. We say that
$A$ is \emph{amenable to $M$}, and write $A\sub_{\ambl}M$,
iff $A\sub M$ and $A\inter x\in M$ for each $x\in M$.
We also write $A\sub_{\ambl}\left<M\right>^{<\om}$
to mean $A\sub_{\ambl}M$ and $A\sub\left<M\right>^{<\om}$.

Let $M,N$ be transitive, satisfying $\RL$,
and $j:M\to N$ be $\Sigma_1$-elementary and $\in$-cofinal.
The \emph{extender derived from 
$j$}
is the set $E$ of all pairs $(A,a)$
such that
$a\in\langle N\rangle^{<\om}$, $A\sub_{\ambl}M$
and $a\in j(A)$.
Given $a\in\langle N\rangle^{<\om}$,
let $E_a=\{A\bigm|(A,a)\in E\}$.\footnote{Clearly
$(A,a)\in E$ iff $(A\inter V_\xi^M,a)\in E$
where $\xi$ is any ordinal in $M$ such that $a\in j(V_\xi^M)$.
And given the manner
in which $E$ will be used, we could have actually
added the extra demand that $A\in M$ to the requirements
specifying when $(A,a)\in E$, and in terms of information
content and cardinality, it would be more natural to do so.
But it is convenient in other ways to allow more arbitrary amenable sets $A$.}

Let $A\sub_{\ambl}\left<M\right>^{<\om}$ and $a,\widetilde{a}$ be indices
with $a\sub\widetilde{a}$. Then $A^{a\widetilde{a}}$
denotes the set of all $u\in\left<M\right>^{\widetilde{a}}$
such that $u^{\widetilde{a}a}\in A$, and $A^{\widetilde{a}a}$
denotes the set of all $u\in\left<M\right>^{a}$
such that there is $v\in A\inter\left<M\right>^{\widetilde{a}}$
and $u=v^{\widetilde{a}a}$.

Let $f:\langle M\rangle^{<\om}\to V$. Let $a,\widetilde{a}\in\langle 
N\rangle^{<\om}$
with $a\sub\widetilde{a}$.  We define a function 
$f^{a\widetilde{a}}:\left<M\right>^{<\om}\to V$
as follows.
Let 
$u\in\langle M\rangle^{<\om}$.
If $\sig(u)\neq\sig(\widetilde{a})$ then $f^{a\widetilde{a}}(u)=\emptyset$,
and if $\sig(u)=\sig(\widetilde{a})$ then 
$f^{a\widetilde{a}}(u)=f(u^{\widetilde{a}a})$.

Given a transitive rudimentarily closed structure $P$ in the language of set 
theory
(or possibly in a larger language), we say that $E$ is an 
\emph{extender
over $P$} iff $\OR^M<\OR^P$ and $V_{\OR^M}^P=M$.
Suppose $E$ is over $P$.
A \emph{$P$-relevant pair} (with respect to $E$) is a pair 
$(a,f)$ such that
$a\in\langle N\rangle^{<\om}$ and $f\in P$ and
$f:\left<M\right>^{<\om}\to P$.
We define the (internal) \emph{ultrapower $\Ult_0(P,E)$ of $P$ by $E$}.
We first define an equivalence relation $\approx_E$ on the class of 
$P$-relevant pairs, by setting
$(a,f)\approx_E(b,g)$
iff for some/all  $c\in\langle N\rangle^{<\om}$ with $a\cup 
b\sub c$,
we have
\[ \{u\in\langle M\rangle^{<\om}\bigm|f^{ac}(u)=g^{bc}(u)\}\in E_c.\]
 We write $[a,f]^{P,0}_E$ for the equivalence class of $(a,f)$.
We define the relation
$\in_E$
likewise, replacing the condition ``$f^{ac}(u)=g^{bc}(u)$''
with ``$f^{ac}(u)\in f^{bc}(u)$''. Let $\univ{U}$ be the class of
equivalence classes of $P$-relevant pairs
with respect to $\approx_E$, and $\in'$ be the relation on $\univ{U}$
induced by $\in_E$. Then  the \emph{\tu{(}internal\tu{)} 
ultrapower}.\footnote{The ``sub-$0$'' in ``$\Ult_0$''
and the ``super-$0$'' in ``$i^{P,0}_E$'' indicates the internality 
of the 
ultrapower,
i.e. that the functions $f$ used in forming the ultrapower all belong to 
$P$. This is an artifact of related notation in inner model theory,
where one can have $\Ult_n$ for $n\leq\om$.}
$\Ult_0(P,E)$
of $P$ by $E$ is the structure $U=(\univ{U},\in')$.
If $U$ is extensional and wellfounded then we identify it with its transitive 
collapse.
We define the associated \emph{ultrapower embedding}
$i^{P,0}_E:P\to U$ by
\[i^{P,0}_E(x)=[(\emptyset,c_x)]^{P,0}_E\]
where $c_x:\left<M\right>^{<\om}\to P$ is the constant function
$c_x(u)=x$. We often abbreviate this by $i^P_E$ or $i_E$.
For any set $y$, let $\ind(y)$ be the unique index with universe $\{y\}$,
and let $\elmt(\ind(y))=y$ and $\elmt(u)=\emptyset$ if $u$ is not of form 
$\ind(y)$. We write $\spt(E)=N$.
\end{dfn}

Of course, if $P\not\sats\AC$ then the proof of {\L}o\'{s}' theorem
does not go through in the usual manner, so in general $\Ult_0(P,E)$
might not even be extensional.

\begin{lem} \tu{(}$\ZF$\tu{)}
With notation as in Definition \ref{dfn:extender}, we have:
 \begin{enumerate}
 \item\label{item:proj_meas_1} Let $a,\widetilde{a}\in\left<N\right>^{<\om}$ 
with $a\sub\widetilde{a}$
 and $A\sub_{\ambl}\left<M\right>^{<\om}$. Then:
 \begin{enumerate}
 \item If $A\sub\left<M\right>^{a}$ and $B=\left<M\right>^{a}\cut A$
 then $B^{a\widetilde{a}}=\left<M\right>^{\widetilde{a}}\cut 
A^{a\widetilde{a}}$.
\item $E_a$ is an ultrafilter over the set of all 
$A\sub_{\ambl}\left<M\right>^{<\om}$
and $\left<M\right>^a\in E_a$,
and in fact, $\left<V_\xi^M\right>^a\in E_a$ for each 
$\xi<\OR^M$ with $a\sub j(V_\xi^M)$.
\item $A\in E_a$ iff $A^{a\widetilde{a}}\in E_{\widetilde{a}}$.
\item If $A\in E_{\widetilde{a}}$ then $A^{\widetilde{a}a}\in 
E_a$.\footnote{But note that here the converse does not have to hold.}
\end{enumerate}
\item\label{item:f^ab_comm} $f^{ac}=(f^{ab})^{bc}$ for all 
$a,b,c\in\left<N\right>^{<\om}$
with $a\sub b\sub c$ and all functions $f$.
  \item\label{item:indep_of_xi,c} In the definition
  of $\approx_E$ and $\in_E$,
  the choice of $c$ is irrelevant.
  \item $\approx_E$ is an equivalence relation
  on the $P$-relevant pairs,
  \item\label{item:in_respects_approx} $\in_E$ respects $\approx_E$.
 \item\label{item:N_sub_wfp}  $N\sub\wfp(U)$ and for each $\beta<\OR^N$
we have
$V_\beta^{U}=V_\beta^N$. Moreover, for $x\in V_\beta^N$, we have
$x=[\ind(x),\elmt]^{P,0}_E$.
 \end{enumerate}
\end{lem}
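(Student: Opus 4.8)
The plan is to verify each clause of the lemma by unwinding the definitions from Definition \ref{dfn:extender}, since nothing here requires {\L}o\'{s}' theorem or wellfoundedness --- these are the elementary ``bookkeeping'' facts about the combinatorics of indices, the measures $E_a$, and the shift maps $f^{ab}$. I would treat the clauses roughly in the order listed, since the later ones depend on the earlier ones.

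For clause \eqref{item:proj_meas_1}, the point is that the isomorphism-transfer operations $u\mapsto u^{\widetilde a a}$ and the ``pullback'' operation $A\mapsto A^{a\widetilde a}$ interact well with complementation and with $j$. Part (a) is a direct computation: $u\in B^{a\widetilde a}$ iff $u^{\widetilde a a}\in B=\left<M\right>^a\cut A$ iff ($u^{\widetilde a a}\in\left<M\right>^a$ and $u^{\widetilde a a}\notin A$) iff ($\sig(u)=\sig(\widetilde a)$ and $u\notin A^{a\widetilde a}$), which is exactly $\left<M\right>^{\widetilde a}\cut A^{a\widetilde a}$. For part (b), that $E_a$ is an ultrafilter: given $A\sub_{\ambl}\left<M\right>^{<\om}$, we may replace it by $A\inter\left<M\right>^a$ without changing membership in $E_a$ (since $a\in j(A)$ forces $a\in\left<N\right>^a$, so only the $\sig(a)$-part matters); then $a\in j(A)$ or $a\in j(\left<M\right>^a\cut A)=j(\left<M\right>^a)\cut j(A)$ by elementarity and the fact that $j$ commutes with the finite boolean operations on amenable sets intersected with a fixed $V^M_\xi$. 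That $\left<M\right>^a\in E_a$, and indeed $\left<V^M_\xi\right>^a\in E_a$ whenever $a\sub j(V^M_\xi)$, follows because $a$ is an index contained in $j(V^M_\xi)=V^N_{j(\xi)}$ with $\sig(a)=\sig(a)$, so $a\in j(\left<V^M_\xi\right>^a)$ by elementarity (using that $\in$-cofinality of $j$ gives such $\xi$). Parts (c) and (d) reduce to: $a\in j(A)$ iff $\widetilde a\in j(A^{a\widetilde a})$, which is proved by applying $j$ to the definition of $A^{a\widetilde a}$ and using that $j(\pi^{\cdot\cdot})$ is the corresponding isomorphism in $N$; and for (d), $A\in E_{\widetilde a}$ gives $\widetilde a\in j(A)$, hence $a=\widetilde a^{\widetilde a a}\in j(A)^{\widetilde a a}=j(A^{\widetilde a a})$, again by elementarity applied to the defining formula of $A^{\widetilde a a}$ (the failure of the converse is because $A^{\widetilde a a}$ forgets the information in $A$ beyond the $\sig(a)$-coordinates, as the footnote notes).

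Clause \eqref{item:f^ab_comm} is a pointwise check: evaluate $(f^{ab})^{bc}$ at $u$; it is $\emptyset$ unless $\sig(u)=\sig(c)$, in which case it is $f^{ab}(u^{cb})$, which is $\emptyset$ unless $\sig(u^{cb})=\sig(b)$ (always true here) and then equals $f((u^{cb})^{ba})$; and $(u^{cb})^{ba}=u^{ca}$ because composing the canonical isomorphisms $\mr(c)\to\mr(b)\to\mr(a)$ restricted to the relevant sub-indices agrees with $\mr(c)\to\mr(a)$, by uniqueness of the isomorphisms. Clause \eqref{item:indep_of_xi,c}: given two choices $c,c'$ with $a\cup b\sub c$ and $a\cup b\sub c'$, pass to a common refinement $d$ with $c\cup c'\sub d$ (which exists as an index by Lemma \ref{lem:indices_cofinal}), and use clause \eqref{item:f^ab_comm} together with clause \eqref{item:proj_meas_1}(c) (membership in $E_c$ transfers to $E_d$) to see that the condition $\{u\mid f^{ac}(u)=g^{bc}(u)\}\in E_c$ is equivalent to the corresponding condition at $d$, and symmetrically at $c'$; hence all choices agree. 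Given this, the next two clauses are formal: $\approx_E$ is reflexive and symmetric trivially, and transitivity follows since $E_c$ is a filter (clause \eqref{item:proj_meas_1}(b)) so we may intersect the two agreement sets; and $\in_E$ respecting $\approx_E$ is the same filter-intersection argument applied to the agreement sets together with the ``$\in$'' set.

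For clause \eqref{item:N_sub_wfp}, I would argue by induction on $\beta<\OR^N$ that $V^U_\beta=V^N_\beta$ and that every $x\in V^N_\beta$ equals $[\ind(x),\elmt]^{P,0}_E$. The key computation: for $x,y\in N$, $[\ind(y),\elmt]\in_E[\ind(x),\elmt]$ iff $\{u\mid \elmt^{\ind(y)\,c}(u)\in\elmt^{\ind(x)\,c}(u)\}\in E_c$ for suitable $c\supseteq\ind(x)\cup\ind(y)$; unwinding the shift of $\elmt$ and the definition of $\elmt$ itself, this set is (essentially) $\{u\in\left<V^M_\xi\right>^c\mid$ the $x$-coordinate of $u$ contains the $y$-coordinate of $u\}$, whose image under $j$ contains $c$ precisely when $y\in^N x$ --- here one uses that $c\sub j(V^M_\xi)$ and elementarity of $j$ on the $\Sigma_0$ statement expressing ``$\sig$-coordinate membership''. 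So the map $x\mapsto[\ind(x),\elmt]$ is an $\in$-isomorphism of $(V^N_\beta,\in)$ into $U$; since $V^N_\beta=V^M_\xi$-style initial segments are transitive and $E$ is over $P$ with $V^P_{\OR^M}=M$, one checks this image is exactly $V^U_\beta$ by verifying no new elements of rank $<\beta$ appear (any $[a,f]$ with $U$-rank $<\beta$ is forced, on an $E_c$-large set, to take values in $V^N_\beta$, by elementarity). The main obstacle --- really the only place demanding care --- is getting the bookkeeping in clause \eqref{item:N_sub_wfp} exactly right: matching the index $\ind(x)$, the function $\elmt$, and the shift operations so that the $\Sigma_0$ statement pushed through $j$ says precisely ``$y\in^N x$'', and confirming that $\in$-cofinality of $j$ supplies, for every relevant $a\in\left<N\right>^{<\om}$, an $\xi<\OR^M$ with $a\sub j(V^M_\xi)$ so that the ambient measure-one sets $\left<V^M_\xi\right>^a$ are available. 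Everything else is routine manipulation of the definitions.
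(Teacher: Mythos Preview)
Your treatment of clauses \ref{item:proj_meas_1}--\ref{item:in_respects_approx} is correct and essentially identical to the paper's; the paper simply declares these ``straightforward'' and, for clause \ref{item:indep_of_xi,c}, reduces to the case $c\sub c'$ (which is your common-refinement argument phrased more tersely).

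For clause \ref{item:N_sub_wfp}, your outline is right in spirit but the step you flag as the ``main obstacle'' is left too vague to count as a proof, and the paper handles it by a concrete device you do not mention. Saying that $[a,f]$ with $U$-rank ${<}\beta$ ``is forced, on an $E_c$-large set, to take values in $V^N_\beta$, by elementarity'' is not quite right (the values of $f$ live in $P$, not in $N$) and, more importantly, does not explain how to produce the specific $y\in x$ with $[a,f]=[\ind(y),\elmt]$. The paper's argument is: given $(a,f)\in_E(\ind(x),\elmt)$, first enlarge $a$ so that $x\in a$, and observe that on a measure-one set $f(u)$ lies in (the element of $u$ corresponding to $x$, hence in) $V_\xi^M$ for suitable $\xi$ with $a\sub j(V_\xi^M)$. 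Then $g=f\rest V_\xi^M\in M$, and $(a,f)\approx_E(a,g)$. Now one can apply $j$ to $g$ and evaluate: $j(g)(a)\in x$, so setting $y=j(g)(a)$ gives $(a,f)\approx_E(\ind(y),\elmt)$. This ``apply $j$ to the restricted function and evaluate at $a$'' step is what turns the measure-one condition into an actual element of $N$, and it is the point your sketch is missing.
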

\begin{proof}
Parts \ref{item:proj_meas_1} and \ref{item:f^ab_comm}
are straightforward.
Part \ref{item:indep_of_xi,c}: Consider $\approx_E$,
and pairs $(a,f),(b,g)$.
Take $c,c'\in\left<N\right>^{<\om}$ with $a\cup b\sub c,c'$.
Note we may assume $c\sub c'$. We must see 
$A\in E_c$ iff $A'\in E_{c'}$ where
\begin{enumerate}[label=--]
 \item $A=\{u\in\langle 
V_\alpha^M\rangle^c\bigm|f^{ac}(u)=g^{bc}(u)\}$,
\item $A'=\{u\in\langle 
V_{\alpha}^M\rangle^{c'}\bigm|f^{ac'}(u)=g^{bc'}(u)\}$.
\end{enumerate}
As $c\sub c'$, part \ref{item:f^ab_comm} gives that
$A'=A^{cc'}$, so $A\in E_c$ iff $A'\in E_{c'}$ by
part \ref{item:proj_meas_1}.
The rest of parts 
\ref{item:indep_of_xi,c}--\ref{item:in_respects_approx} is similar or follows 
easily.

Part \ref{item:N_sub_wfp}:
Easily from the definitions, for $x,y\in N$ we get
\[ (\ind(x),\elmt)\approx_E(\ind(y),\elmt)\iff x=y, \]
\[ (\ind(x),\elmt)\in_E(\ind(y),\elmt)\iff x\in y.\]
So let $(a,f)$ be a $P$-relevant pair and $x\in N$ be such that
\[ (a,f)\in_E(\ind(x),\elmt). \]
Note we may assume  $x\in a$ and there is $\xi<\OR^M$
such that $a\in j(V_\xi^M)$ and $\rg(f)\sub V_\xi^M$,
and so $g=f\rest V_\xi^M\in M$.
But then $(a,f)\approx_E(a,g)$ and
\[ j(g)(a)\in j(\elmt\rest V_\xi^M)(\ind(x))=x,\]
so $j(g)(a)=y$ for some $y\in x$. But then 
$(a,f)\approx_E(\ind(y),\elmt)$,
as desired.
\end{proof}

\begin{dfn} \tu{(}$\ZF$\tu{)}
 Let $E$ be an extender over a transitive rudimentarily closed
 structure 
$M$.
 We say that \emph{$\Sigma_0$-{\L}o\'{s}' criterion} holds for $\Ult_0(M,E)$
 iff for all $n<\om$, for all $f_0,f_1,\ldots,f_n\in M$,
 for all $a\in\left<\spt(E)\right>^{<\om}$,
 and all $\Sigma_0$ formulas $\varphi$, if there are
 $E_a$-measure one many $u\in\left<M\right>^{<\om}$
 such that
 \[ M\sats\exists y\in f_0(u) [\varphi(f_1(u),\ldots,f_n(u),y)] \]
 then there is $b\in\left<\spt(E)\right>^{<\om}$
 and $g\in M$ 
 such that $a\sub b$ and for $E_b$-measure one many
 $v$, we have
 \[ M\sats g(v)\in 
f_0^{ab}(v)\text{ and }\varphi(f_1^{ab}(v),\ldots,f_n^{ab}(v),g(v)).\]

We define \emph{{\L}o\'{s}' criterion}
for $\Ult_0(M,E)$ analogously, but we allow arbitrary formulas 
$\varphi$, and the $\exists$ quantifier is unbounded.
\end{dfn}

\begin{tm}[Generalized {\L}o\'{s}' Theorem]\label{tm:generalized_Los}
\tu{(}$\ZF$\tu{)} Let $M$ be a transitive rudimentarily closed structure and 
$E$ 
be an 
extender over $M$.
 Suppose that $\Sigma_0$-{\L}o\'{s}' criterion holds for $\Ult_0(M,E)$.
 Then for all $n<\om$, all $f_1,\ldots,f_n\in M$,
  all $a_1,\ldots,a_n\in\left<\spt(E)\right>^{<\om}$
 and all $\Sigma_0$ formulas $\varphi$, letting 
$a\in\left<\spt(E)\right>^{<\om}$
 be such that $a_i\sub a$ for each $i$, we have
 \[ \Ult_0(M,E)\sats\varphi([a_1,f_1],\ldots,[a_n,f_n]) \]
 iff there are $E_a$-measure one many $v\in\left<M\right>^{<\om}$ such that
 \[ M\sats\varphi(f_1^{a_1b}(v),\ldots,f_n^{a_nb}(v)).\] 
  Therefore, the ultrapower embedding $i^{M,0}_E$
is $\in$-cofinal and $\Sigma_1$-elementary.
Moreover, if  {\L}o\'{s}' criterion holds for $\Ult_0(M,E)$,
then the above equivalence holds for arbitrary formulas $\varphi$,
and $i^{M,0}_E$ is fully elementary.
\end{tm}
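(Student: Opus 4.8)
The plan is to prove the displayed equivalence by a simultaneous induction on the structure of the $\Sigma_0$ formula $\varphi$, uniformly in all parameters $[a_1,f_1],\ldots,[a_n,f_n]$ of $\Ult_0(M,E)$. Throughout I would carry a common index $a$ with $a_i\sub a$ for each $i$ and freely replace it by any larger $c\sup a$; this is harmless because, by the preceding lemma (parts \ref{item:proj_meas_1} and \ref{item:f^ab_comm}), $A\in E_a$ iff $A^{ac}\in E_c$ and $f_i^{a_ic}=(f_i^{a_ia})^{ac}$. For atomic $\varphi$ the equivalence is immediate from the definitions of $\approx_E$ and $\in_E$ in Definition \ref{dfn:extender}, using part \ref{item:indep_of_xi,c} (independence of the common index). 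For $\varphi=\no\varphi_0$ and $\varphi=\varphi_0\wedge\varphi_1$ the induction step needs only that $E_a$ is an ultrafilter on the sets amenable to $M$ with $\langle M\rangle^a\in E_a$ (preceding lemma, part \ref{item:proj_meas_1}): intersect the two measure-one sets for $\wedge$, complement within $\langle M\rangle^a$ for $\no$.

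The crux is the quantifier case $\varphi=\exists y\in x_0\,\psi$, where $x_0$ receives the parameter $[a_0,f_0]$ with $a_0\sub a$ (the $\forall$ case reducing to this via $\no$). For the forward direction I would fix a witness $[b,g]$ with $[b,g]\in_E[a_0,f_0]$ and $\Ult_0(M,E)\sats\psi([b,g],[a_1,f_1],\ldots)$, pass to $c\sup a\cup b$, apply the induction hypothesis to $\psi$ and the definition of $\in_E$ to place both $\{v\mid M\sats\psi(g^{bc}(v),f_1^{a_1c}(v),\ldots)\}$ and $\{v\mid g^{bc}(v)\in f_0^{a_0c}(v)\}$ in $E_c$, intersect them, note the intersection is contained in $\{v\mid M\sats\exists y\in f_0^{a_0c}(v)\,\psi(y,\ldots)\}$, and project this back to $E_a$ via part \ref{item:proj_meas_1} after identifying it (using part \ref{item:f^ab_comm}) with the $c$-shift of $\{v\mid M\sats\varphi(\ldots)\}$. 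For the backward direction — the one place the hypothesis is used — I would start from $E_a$-measure-one-many $v$ with $M\sats\exists y\in f_0^{a_0a}(v)\,\psi(y,f_1^{a_1a}(v),\ldots)$; the maps $v\mapsto f_i^{a_ia}(v)$ belong to $M$ (they are the $f_i\in M$ composed with rudimentary index operations, and $M$ is rudimentarily closed), so the $\Sigma_0$-{\L}o\'{s}' criterion applies and delivers $b\sup a$ and $g\in M$ with $E_b$-measure-one-many $v$ satisfying $M\sats g(v)\in f_0^{a_0b}(v)\wedge\psi(f_1^{a_1b}(v),\ldots,g(v))$ (rewriting $(f_i^{a_ia})^{ab}=f_i^{a_ib}$ by part \ref{item:f^ab_comm}); the first conjunct says $[b,g]\in_E[a_0,f_0]$ and, by the induction hypothesis for $\psi$ with $[b,g]$ adjoined, the rest gives $\Ult_0(M,E)\sats\psi([b,g],\ldots)$, whence $\Ult_0(M,E)\sats\varphi$. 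I expect this step to be the main obstacle: one must keep the index bookkeeping (shifts, signatures, the projection and compatibility lemmas) straight so that the criterion is invoked with functions genuinely lying in $M$ and its output is correctly repackaged as a witness in the ultrapower.

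Granting the equivalence, the remaining claims are short. Taking each $f_i=c_{x_i}$ constant and each $a_i=\emptyset$ turns the equivalence into: $\Ult_0(M,E)\sats\varphi(i^{M,0}_E(x_1),\ldots)$ iff $\{v\mid M\sats\varphi(x_1,\ldots,x_n)\}\in E_\emptyset$; since that set is $\langle M\rangle^{<\om}$ or $\emptyset$ according as $M$ satisfies $\varphi(x_1,\ldots,x_n)$ or not, while $\langle M\rangle^{<\om}\in E_\emptyset$ and $\emptyset\notin E_\emptyset$, the map $i^{M,0}_E$ is $\Sigma_0$-elementary. For $\in$-cofinality: any $[d,f]\in\Ult_0(M,E)$ satisfies $[d,f]\in_E i^{M,0}_E(x)$ for a suitable $x\in M$ (for instance $x=\rg(f)\cup\{\emptyset\}$), because $f^{dd}(v)=f(v)\in x$ for all $v\in\langle M\rangle^d$ and $\langle M\rangle^d\in E_d$. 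A $\Sigma_0$-elementary, $\in$-cofinal embedding is automatically $\Sigma_1$-elementary by the standard argument: given a $\Sigma_1$ formula $\exists y\,\theta(y,\vec x)$ with $\theta$ $\Sigma_0$ and a witness in $\Ult_0(M,E)$ above $i^{M,0}_E(\vec x)$, use cofinality to bound the witness below some $i^{M,0}_E(z)$ and apply $\Sigma_0$-elementarity to $\exists y\in z\,\theta(y,\vec x)$. Finally, under the full {\L}o\'{s}' criterion I would rerun the same induction on arbitrary formulas, the quantifier case now handling an unbounded $\exists y\,\psi$ by invoking the full criterion in place of the $\Sigma_0$ one — precisely the extra strength it provides — and the resulting full equivalence, specialized as above, yields that $i^{M,0}_E$ is fully elementary.
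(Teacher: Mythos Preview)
Your proposal is correct and matches the paper's approach exactly: the paper's proof is a two-sentence sketch (``basically the usual induction to prove \L o\'{s}' theorem under $\AC$, except that we use \L o\'{s}' requirement instead of appealing to $\AC$\ldots we need to allow enlarging $a$ to $b$''), and you have filled in precisely those details, correctly identifying the bounded-quantifier backward direction as the place where the criterion is invoked and the index must grow. Your observation that the shifted functions $f_i^{a_ia}$ lie in $M$ by rudimentary closure is the right justification for applying the criterion as stated, and your treatments of $\in$-cofinality and the passage to $\Sigma_1$-elementarity are standard and correct.
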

\begin{proof}
 This is basically the usual induction to prove {\L}o\'{s}' theorem
 under $\AC$, except that we use {\L}o\'{s}' requirement
instead of appealing to $\AC$. One difference, however,
is that we need to allow 
enlarging $a$
to $b$ in order to find an element $[b,g]$ of the ultrapower
witnessing the a statement; in the usual proof of {\L}o\'{s}' theorem,
one can take $a=b$.
\end{proof}

\section{Definability of $V$-criticality and wLS 
cardinals}\label{sec:crit_def}

Under $\ZFC_2$, the
fact that $\kappa=\crit(j)$ for some elementary $j:V\to M$
with $M$ transitive, is  equivalent to the measurability
of $\kappa$.
Therefore this ``$V$-criticality'' of $\kappa$ is first-order. We 
make a brief digression to consider this question under $\ZF_2$.

\begin{dfn}
 \tu{(}$\ZF_2$\tu{)} An ordinal $\kappa$ is $V$-\emph{critical}
 iff there is an elementary $j:V\to M$ with $\crit(j)=\kappa$,
 where $M\sub V$ is transitive.
\end{dfn}

\begin{lem}\label{lem:critical_inaccessible}
 \tu{(}$\ZF_2$\tu{)} Let $\kappa$ be $V$-critical.
 Then $\kappa$ is inaccessible.
\end{lem}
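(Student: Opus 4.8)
Fix an elementary $j:V\to M$ with $M\sub V$ transitive and $\crit(j)=\kappa$. The plan is first to record the standard facts that $j$ fixes $V_\kappa$ pointwise, then to rule out $\kappa$ being $0$ or a successor, and finally to derive a contradiction from a failure of inaccessibility by applying $j$ to the witnessing cofinal map. To begin, since $\kappa=\crit(j)$ we have $j(\alpha)=\alpha$ for all $\alpha<\kappa$, and then by induction on rank $j(x)=\{j(y)\mid y\in x\}=x$ for every $x\in V_\kappa$; hence $V_\kappa\sub\rg(j)\sub M$. Consequently $M$ computes $V_\gamma$ correctly for every $\gamma\leq\kappa$ (as $V_\gamma\sub V_\kappa\sub M$), and by transitivity of $M$ every ordinal below $j(\kappa)$ lies in $M$ — this is all we will need about $M$. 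Next, $\kappa\neq 0$ since $j(0)=0$, and $\kappa$ is not a successor: if $\kappa=\alpha+1$ then $j(\alpha)=\alpha$, and by elementarity $j(\kappa)$ is the successor of $j(\alpha)=\alpha$, i.e.\ $j(\kappa)=\alpha+1=\kappa$, contradicting $\crit(j)=\kappa$. So $\kappa\in\Lim$, and the notion of inaccessibility applies to it.

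Now suppose toward a contradiction that $\kappa$ is not inaccessible, and fix $\gamma<\kappa$ and a map $f:V_\gamma\to\kappa$ cofinal in $\kappa$. Note $f\sub V_\gamma\cross\kappa$ is a set (a subclass of a set), so $j(f)$ is defined and elementarity applies to it: $j(f)$ is a function with domain $j(V_\gamma)=V_{j(\gamma)}=V_\gamma$ (using $j(\gamma)=\gamma$) which is cofinal in $j(\kappa)$. For each $x\in V_\gamma$ we have $x=j(x)$ (as $V_\gamma\sub V_\kappa$), so
\[ j(f)(x)=j(f)(j(x))=j(f(x))=f(x), \]
the last equality because $f(x)<\kappa=\crit(j)$. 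Since $j(f)$ and $f$ are both functions with domain $V_\gamma$ agreeing pointwise, $j(f)=f$. But then $\rg(j(f))=\rg(f)\sub\kappa$, so $\sup\rg(j(f))\leq\kappa<j(\kappa)$, contradicting that $j(f)$ is cofinal in $j(\kappa)$.

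There is no genuinely hard step here; the only points requiring a word of care are that the witness $f$ really is a set (so that $j(f)$ and the elementarity argument make sense) and that $M$ is correct about $V_\gamma$ and about ordinals below $j(\kappa)$, both of which are immediate from $V_\kappa\sub\rg(j)\sub M$ together with transitivity of $M$.
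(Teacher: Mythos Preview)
Your proof is correct and follows essentially the same approach as the paper: apply $j$ to a putative cofinal $f:V_\gamma\to\kappa$ and observe $j(f)=f$ since $j$ fixes $V_\kappa$ pointwise, contradicting that $j(f)$ is cofinal in $j(\kappa)$. You supply more detail than the paper (in particular verifying that $\kappa$ is a limit, so that the notion of inaccessibility applies), but the core argument is identical.
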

\begin{proof}
 Suppose not and let $\alpha<\kappa$ and $f:V_\alpha\to\kappa$
 be cofinal. Let $j:V\to M$ be elementary with $\crit(j)=\kappa$.
 Then $j(f)=j\com f=f$, although by elementarity, $j(f):V_\alpha\to j(\kappa)$ 
is cofinal, a contradiction.
\end{proof}

We do not know whether $\ZF_2$ proves that $V$-criticality is a first-order
property. But we will show that  $\ZF_2$+``There is a proper class 
of weak L\"owenheim-Skolem cardinals'' does prove this.
Recall this notion from \cite[Definition 4]{usuba_ls}:

\begin{dfn}[Usuba]\label{dfn:wLS}
($\ZF$) A cardinal $\kappa$ is \emph{weak 
L\"owenheim-Skolem} (\emph{wLS})  if for every
$\gamma<\kappa$ and $\alpha\in[\kappa,\OR)$ 
and $x\in V_\alpha$,
there is $X\elem V_\alpha$
with $V_\gamma\sub X$, $x\in X$ and 
the transitive collapse of $X$ in $V_\kappa$.
\end{dfn}

\begin{rem}\label{rem:LS}
Usuba also defines \emph{L\"owenheim-Skolem} (LS)
cardinals, which is at least superficially stronger.
As Usuba mentions in \cite{usuba_ls}, $\ZFC$ proves
that there is a proper class of LS cardinals,
and that (a result of Woodin is that) under just $\ZF$, every 
supercompact cardinal is an LS cardinal.
Thus, assuming $\ZF$ and that there is a super Reinhardt cardinal,\footnote{In
\cite[v1]{reinhardt_non-definability}, it asserted that $\ZF+$ a proper class
of Reinhardt cardinals proves there is a proper class of LS cardinals,
but this should have been a super Reinhardt (which easily implies a proper 
class of the same).}
then there is a proper class of LS cardinals,
and hence wLS cardinals. The next lemma is immediate, due to Usuba:
\end{rem}

\begin{lem}\label{lem:wLS_cards_closed}
\tu{(}$\ZF$\tu{)} We  have:
\begin{enumerate}
 \item The class of wLS cardinals is closed.
 \item Suppose there is a proper class of wLS cardinals
 and let $\gamma\in\OR$ be regular.
 Then there is a proper class of wLS cardinals $\delta$
 such that $\cof(\delta)=\gamma$.
\end{enumerate}
\end{lem}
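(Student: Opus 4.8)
The plan is to prove part (1) directly from the definition, and then obtain part (2) by a routine club-construction on top of part (1).

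For part (1), let $\lambda$ be a limit ordinal which is a limit point of the class of wLS cardinals (i.e. the wLS cardinals below $\lambda$ are cofinal in $\lambda$); I must show $\lambda$ is itself a wLS cardinal. First note $\lambda$ is a cardinal: a supremum of initial ordinals is an initial ordinal, since if $|\lambda|<\lambda$ then $|\lambda|<\kappa\leq\lambda$ for some wLS cardinal $\kappa$, giving $|\kappa|\leq|\lambda|<\kappa$, contradicting that $\kappa$ is a cardinal. Now fix $\gamma<\lambda$, $\alpha\in[\lambda,\OR)$ and $x\in V_\alpha$. Choose a wLS cardinal $\kappa$ with $\gamma<\kappa\leq\lambda$ (possible as such cardinals are cofinal in $\lambda>\gamma$); if $\kappa=\lambda$ then $\lambda$ is wLS outright, so assume $\kappa<\lambda$. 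Since $\alpha\geq\lambda>\kappa$, applying the wLS property of $\kappa$ to the parameters $\gamma,\alpha,x$ yields $X\elem V_\alpha$ with $V_\gamma\sub X$, $x\in X$, and the transitive collapse of $X$ lying in $V_\kappa$. As $\kappa<\lambda$ we have $V_\kappa\sub V_\lambda$, so that collapse lies in $V_\lambda$, and $X$ witnesses the wLS property for $\lambda$ at $\gamma,\alpha,x$. Hence $\lambda$ is wLS, and the class of wLS cardinals is closed.

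For part (2), assume there is a proper class of wLS cardinals and let $\gamma$ be regular (we may assume $\gamma$ is an infinite regular cardinal, the only case in which the conclusion can hold). By part (1) the class $C$ of wLS cardinals is a closed proper class, i.e. a club class. Fix any $\beta\in\OR$; it suffices to produce a wLS cardinal $\delta>\beta$ with $\cof(\delta)=\gamma$, since then the class of such $\delta$ is unbounded in $\OR$, hence a proper class. The property ``$\kappa$ is a wLS cardinal'' is given by a single formula, so we may define by recursion on $\xi<\gamma$ the ordinal $\kappa_\xi=$ the least element of $C$ strictly above both $\beta$ and $\sup_{\eta<\xi}\kappa_\eta$; this exists because $C$ is a proper class, and at limit stages $\sup_{\eta<\xi}\kappa_\eta$ is an ordinal by Replacement applied to the set $\xi$. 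The sequence $\langle\kappa_\xi\rangle_{\xi<\gamma}$ is strictly increasing and continuous, and $\delta=\sup_{\xi<\gamma}\kappa_\xi$ satisfies: $\xi\mapsto\kappa_\xi$ is a continuous strictly increasing cofinal map $\gamma\to\delta$, so $\cof(\delta)=\gamma$ as $\gamma$ is regular; $\delta$ is a limit point of $C$, so $\delta\in C$ by part (1); and $\delta>\beta$.

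I expect no real obstacle here — this is bookkeeping, which is why the lemma is stated as immediate. The only points deserving a word of care are that a supremum of cardinals is a cardinal (used in part (1) and implicitly at limit stages in part (2)) and that the recursion defining $\langle\kappa_\xi\rangle_{\xi<\gamma}$ is legitimate in $\ZF$, which it is since the wLS cardinals form a definable class and $\gamma$ is a set, so Replacement applies.
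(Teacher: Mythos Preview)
Your argument is correct and is exactly the routine verification one expects; the paper itself gives no proof, simply declaring the lemma immediate (and attributing it to Usuba), so there is nothing to compare against.

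One small slip: as you define it, $\kappa_\xi$ at limit $\xi$ is the least element of $C$ \emph{strictly above} $\sup_{\eta<\xi}\kappa_\eta$, so the sequence is not continuous. This does not matter, since a strictly increasing cofinal map from a regular $\gamma$ already forces $\cof(\delta)=\gamma$; alternatively, you could set $\kappa_\xi=\sup_{\eta<\xi}\kappa_\eta$ at limits (which lies in $C$ by part (1)) to make the sequence genuinely continuous.
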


\begin{dfn}
\tu{(}$\ZF$\tu{)} A \emph{$V$-criticality pre-witness}
is a tuple  $(\kappa,\delta,N,j)$ 
such that $\delta$ is a weak L\"owenheim-Skolem
 cardinal, $N\sats\RL$ is transitive, $j:V_\delta\to N$ is $\in$-cofinal and 
$\Sigma_1$-elementary and $\crit(j)=\kappa$.
\end{dfn}

\begin{tm}\label{tm:cof(delta)>om_critical}
\tu{(}$\ZF$\tu{)} Let $(\kappa,\delta,N,j)$ be a $V$-criticality pre-witness.
Let $U=\Ult(V,E_j)$.
Then:
\begin{enumerate}\item\label{item:wLS_Los} {\L}o\'{s}' criterion holds 
for $U$, 
so $U$ 
is extensional
and $i_E$ is elementary.
and \item\label{item:cof(delta)>om_wfd} If $\cof(\delta)>\om$ then $U$ is 
wellfounded and $j\sub i_E$
and $\kappa=crit(i_E)$ is $V$-critical.
\end{enumerate}
\end{tm}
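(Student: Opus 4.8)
The plan is to exploit the two large-cardinal features packaged into a $V$-criticality pre-witness: $\delta$ is wLS, and $j:V_\delta\to N$ is $\in$-cofinal $\Sigma_1$-elementary with critical point $\kappa$. For part \ref{item:wLS_Los}, I would verify that {\L}o\'{s}' criterion holds for $U=\Ult(V,E_j)$ by checking the combinatorial statement in the definition directly. Given $a\in\langle N\rangle^{<\om}$, functions $f_0,\dots,f_n\in V$, and a formula $\varphi$, suppose there are $E_a$-measure one many $u\in\langle V_\delta\rangle^{<\om}$ with $V\sats\exists y\in f_0(u)\,\varphi(\vec f(u),y)$. Since $a\in j(V_\alpha^{V_\delta})$ for some $\alpha<\delta$, we may replace each $f_i$ by its restriction $f_i\rest\langle V_\alpha\rangle^{<\om}$, which lies in $V_\delta$ (as $V_\delta\sats\RL$, actually $V_\delta\sats\ZF$ if $\delta$ is a limit — and indeed wLS cardinals are limits). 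The heart is to produce a uniformizing function $g$: here is where I expect to use the wLS-ness of $\delta$. The statement ``for each $u$ in the measure-one set there is $y\in f_0(u)$ with $\varphi$'' is a first-order fact about some $V_\beta$ containing the relevant sets; applying the wLS property to an $X\elem V_\beta$ with $V_\delta\sub X$ (or rather with enough of $V_\delta$) whose transitive collapse lands in $V_\kappa$ — wait, that collapse point must be compared against the domain of the functions. The cleaner route: use wLS to find, inside $V_\delta$ itself, a Skolem-type choice function selecting such a $y$ for each $u$ in the relevant set; concretely, wLS gives elementary substructures closing off the choices, and because $\delta$ has the wLS property the transitive collapses stay bounded below $\delta$, so one can amalgamate them into a single $g\in V_\delta\sub V$ with $g(u)\in f_0(u)\wedge\varphi(\vec f(u),g(u))$ for measure-one many $u$. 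Then $j(g)(a)$ witnesses the existential in $N$, so taking $b=a$ (and $g$ as just built) satisfies {\L}o\'{s}' criterion. Having {\L}o\'{s}' criterion, Theorem \ref{tm:generalized_Los} gives that $U$ is extensional and $i_E:V\to U$ is fully elementary.

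For part \ref{item:cof(delta)>om_wfd}, the wellfoundedness of $U$ under $\cof(\delta)>\om$ is the familiar countable-descending-chain argument, now legitimate because {\L}o\'{s}' criterion holds. Suppose $[a_m,f_m]\ni_E[a_{m+1},f_{m+1}]$ for all $m<\om$. Choose $c\in\langle N\rangle^{<\om}$ with $\bigcup_m a_m\sub c$ — this is where $\cof(\delta)>\om$ (hence $\cof(\OR^N)>\om$, since $j$ is $\in$-cofinal and $\delta$ has uncountable cofinality, so $N$ has height of uncountable cofinality) lets us bound all the $a_m$ by a single index $c\sub j(V_\xi^{V_\delta})$ for one $\xi<\delta$; then all the $f_m$ can be taken with range in $V_\xi^{V_\delta}$, hence all lie in $V_\delta$, and the sequence $\langle f_m\rangle_{m<\om}$, being a countable sequence bounded in $V_\delta$, is a member of $V_\delta$. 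For $E_c$-measure one many $u$ we then get $f_m^{a_mc}(u)\ni f_{m+1}^{a_{m+1}c}(u)$ for every $m$ (intersecting countably many measure-one sets, which is fine since $E_c$ is an ultrafilter on sets amenable to $V_\delta$ and the intersection is again such a set — using that the sequence of these sets is in $V_\delta$), contradicting Foundation in $V$. Hence $U$ is wellfounded and we identify it with its transitive collapse, an inner model $M\sub V$.

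It remains to see $j\sub i_E$ and $\kappa=\crit(i_E)$. By the last part of the preceding lemma, $N\sub\wfp(U)=U$ and $V_\beta^U=V_\beta^N$ for $\beta<\OR^N$, with $x=[\ind(x),\elmt]_E$ for $x\in N$. Combining this with the standard identity $i_E(f)(\ind(a))\supseteq$-type computation, one checks that for $A\sub_{\ambl}V_\delta$ and $a\in\langle N\rangle^{<\om}$ one has $a\in j(A)$ iff $a\in i_E(A)$ (reading $i_E(A)$ via $V_{\OR^{V_\delta}}^U\supseteq$ the relevant rank segment), which for $A=x\cap V_\alpha$ with $x\in V_\delta$ yields $j(x)\cap V_\beta^N = i_E(x)\cap V_\beta^N$ for all $\beta<\OR^N$, and since both $j$ and $i_E$ are $\in$-cofinal this forces $j(x)=i_E(x)$; so $j\sub i_E$. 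Finally $\crit(i_E)=\crit(j)=\kappa$: no ordinal below $\kappa$ is moved since $j$ fixes them and $i_E$ agrees with $j$ on $V_\delta$, while $i_E(\kappa)\geq j(\kappa)>\kappa$ (or directly, $\kappa=[\ind(\kappa_N),\dots]$ type representatives show $\kappa\notin\rg$). Then $i_E:V\to M$ is elementary with $M$ transitive and $\crit(i_E)=\kappa$, so $\kappa$ is $V$-critical. The main obstacle I anticipate is part \ref{item:wLS_Los}, specifically extracting the uniformizing function $g\in V$ from the wLS property with the collapse bounded correctly below $\delta$; everything after that is the routine {\L}o\'{s}/wellfoundedness/comparison bookkeeping, modulo the amenability and sequence-in-$V_\delta$ care forced by the absence of full Collection/Choice in $V$.
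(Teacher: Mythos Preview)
Your approach to part \ref{item:wLS_Los} has a real gap: the weak L\"owenheim--Skolem property does not produce a choice function. You cannot ``amalgamate'' substructures into a single $g\in V_\delta$ selecting witnesses $y$ for each $u$, and you cannot take $b=a$; getting such a $g$ with $b=a$ is essentially uniformization, i.e.\ choice. The paper's argument is quite different: fix $\gamma<\delta$ with $a\sub j(V_\gamma)$, take one wLS hull $X\elem V_\alpha$ (for large $\alpha$) containing $V_\gamma\cup\{f,j,N,E\}$ with transitive collapse $\bar{X}\in V_\delta$, and apply $j$ to the \emph{whole collapsed structure} $\bar{X}$. Inside $j(\bar{X})$ one finds a single witness $y$ such that $j(\bar{X})\sats\varphi(j(\bar{f})(a),y)$; then one \emph{enlarges} the index to some $b\supseteq a\cup\{a,y\}$ and sets $g(w)=\pi(y^w)$ via the uncollapse $\pi:\bar{X}\to X$. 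The enlargement from $a$ to $b$ is what replaces the missing choice.

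Part \ref{item:cof(delta)>om_wfd} has two problems. First, you assert that ``all the $f_m$ can be taken with range in $V_\xi^{V_\delta}$'': this is false, since the functions used to form $\Ult(V,E_j)$ are arbitrary functions in $V$, not in $V_\delta$, and there is no mechanism to bound their ranges below $\delta$. Second, starting from an actual descending $\omega$-sequence $[a_m,f_m]$ presupposes $\DC$, which is not available. The paper instead restricts to the linear orders $\mathscr{O}_\xi=\OR^{\Ult(V,E_{j\rest V_\xi})}$ for limit $\xi<\delta$, and builds by explicit lex-least recursion (no $\DC$) a sequence $\langle\xi_n,\eta_n\rangle$ showing that if all $\mathscr{O}_\xi$ were wellfounded then $\cof(\delta)>\om$ would be violated; so some $\mathscr{O}_\xi$ is illfounded for $\xi<\delta$. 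Then wLS is applied a \emph{second} time to collapse this illfoundedness into some $\bar{X}\in V_\delta$, and finally one defines $\sigma:\bar{U}\to N$ by $\sigma([a,f])=j(f)(a)$, which is $\in$-preserving (now $f\in\bar{X}\in V_\delta$, so $j(f)$ makes sense), contradicting the transitivity of $N$.
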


\begin{proof}
Part \ref{item:wLS_Los}:
Let  $a\in \left<N\right>^{<\om}$
and $f:\left<V_\delta\right>^{<\om}\to V$ 
and $\varphi$ be a formula
and suppose that for $E_a$-measure one many $u\in\left<V_\delta\right>^{<\om}$,
there is $y$ such that $\varphi(f(u),y)$.

Let $n<\om$ be large and $\alpha\in\OR$ be large and with $V_\alpha\elem_n V$.
Let $\gamma<\delta$ be such that $a\sub j(V_\gamma)$.
So $\left<V_\gamma\right>^{<\om}\in E_a$.
Applying weak L\"owenheim-Skolemness at $\delta$,
let $X\elem V_\alpha$ with
\[ V_\gamma\cup\{\gamma,f,\delta,j,N,E\}\sub X \]
and such that the transitive collapse $\bar{X}$ of $X$
is in $V_\delta$. Let $\pi:\bar{X}\to X$ be the uncollapse map.
Let $\pi(\bar{f})=f$, etc.

By the elementarity, for each $u\in\left<V_\gamma\right>^{<\om}$,
and each $v\in\bar{X}$, we have that $\bar{X}\sats\varphi(\bar{f}(u),v)$
iff $V\sats\varphi(f(u),\pi(v))$.

Note
we can fix $y\in j(\bar{X})$
such that $j(\bar{X})\sats\varphi(j(\bar{f})(a),y)$.
Let $\xi\in(\gamma,\delta)$ with $\bar{X}\in V_\xi$
and $b\in\left<j(V_\xi)\right>^{<\om}$ with $a\cup\{a,y\}\sub b$.
Then for $E_b$-measure one many $w\in\left<V_\xi\right>^{<\om}$,
letting $(a^w,y^w)=\pi^{bw}(a,y)$, we have that 
$a^w\in\left<V_\gamma\right>^{<\om}$
and $y^w\in\left<\bar{X}\right>^{<\om}$
and $\bar{X}\sats\varphi(\bar{f}(a^w),y^w)$,
and hence $V\sats\varphi(f(a^w),\pi(y^w))$.

So define $g:\left<V_\xi\right>^{<\om}\to V$
by setting $g(w)=\pi(y^w)$ for all such $w$ (and $g(w)=\emptyset$ otherwise).
Then for $E_b$-measure one many
$w\in\left<V_\xi\right>^{<\om}$, we have $V\sats\varphi(f^{ab}(w),g(w))$,
as desired.

Part \ref{item:cof(delta)>om_wfd}: Suppose not. So $\cof(\delta)>\om$.
For limit ordinals $\xi<\delta$, let $E_\xi$
be the extender derived from
\[ j\rest V_\xi:V_\xi\to V_{\sup j``\xi}.\]
Let $U_\xi=\Ult_0(V,E_\xi)$ and
$k_\xi:U_\xi\to\Ult_0(V,E)$
the natural factor map and $k_{\xi\zeta}:U_\xi\to U_\zeta$
likewise. We have not verified {\L}o\'{s}' criterion
for these partial ultrapowers, so we do not claim elementarity of the maps;
nor do we claim that $U_\xi$ is extensional.
But note that $k_\xi$ and $k_{\xi\zeta}$ are well-defined
and respect ``$\in$'' 
and ``$=$'' (that is, in the sense of the ultrapowers,
even if they fail extensionality),
and commute; that is, $k_{\xi\beta}=k_{\zeta\beta}\com k_{\xi\zeta}$.

Let $\xi\leq\delta$ be  a limit.
Let $\mathscr{O}_\xi=\OR^{U_\xi}=\Ult_{V}(\OR,E_\xi)$,
with the notation meaning that we use all functions (in $V$) which map into 
$\OR$ to form the ultrapower.
Each $\mathscr{O}_\xi$ is a linear order.
Now $U_\xi$ is wellfounded
iff $\mathscr{O}_\xi$ is wellfounded
(see \S\ref{subsec:notation}).
In particular,
$\mathscr{O}_\delta$ is
illfounded.
By restricting 
$k_{\xi\zeta}$, we get a commuting system of order-preserving maps
$\ell_{\xi\zeta}:\mathscr{O}_\xi\to\mathscr{O}_\zeta$
(so $\ell_{\xi\zeta}\sub k_{\xi\zeta}$
and $\ell_{\xi\beta}=\ell_{\zeta\beta}\com\ell_{\xi\zeta}$).
Note that the direct limit of the $\mathscr{O}_\xi$ under the 
maps $\ell_{\xi\zeta}$,
for $\xi\leq\zeta<\delta$, is 
isomorphic to $\mathscr{O}_\delta$,
$\ell_{\xi\delta}$ is the direct limit map.
Note that each $\ell_{\xi\zeta}$ is cofinal.

We claim there is $\xi<\delta$ such that $\mathscr{O}_\xi$ is illfounded
(here we use $\cof(\delta)>\om$).
For suppose not. Then $\mathscr{O}_\xi\iso\OR$.
Define a sequence $\left<\xi_n,\eta_n\right>_{n<\om}$ of pairs of ordinals.
Let $\xi_0=0$. Now $\ell_{0\delta}$ is cofinal.
Let $\eta_0$ be the least  $\eta$
with
$\ell_{0\delta}(\eta)$ in the illfounded part of $\mathscr{O}_\delta$.
Given $(\xi_n,\eta_n)$
with $i_{\xi_n\delta}(\eta_n)$  in the illfounded part,
 there is a pair $(\xi,\eta)$
such that $i_{\xi_n\xi}(\eta_n)>\eta$
and $i_{\xi\delta}(\eta)$ in the illfounded part.
Let $(\xi_{n+1},\eta_{n+1})$ be lexicographically least such.
Let $\xi=\sup_{n<\om}\xi_n$. Because $\cof(\delta)>\om$,
we have $\xi<\delta$. But the sequence just constructed
exhibits that  
$\mathscr{O}_\xi$
is illfounded, a contradiction.

So fix $\xi<\delta$ with $\mathscr{O}_\xi$  illfounded.
Let $n<\om$ be large, let $\alpha\in\OR$ be large
with $V_\alpha\elem_nV$; hence, for some 
$\beta<\alpha$, we have $V_\alpha\sats$``$\Ult_0(V_{\beta},E_\xi)$
is illfounded''. 
Let $\xi'=\sup j``\xi$.
Using the weak L\"owenheim-Skolemness of $\delta$,
let $X\elem V_\alpha$ with
\[ V_\xi\cup V_{\xi'}^N\cup\{N,j,\xi,E_\xi,\beta\}\sub X\]
and the transitive collapse $\bar{X}$ of $X$ in $V_\delta$.
So letting $\pi:\bar{X}\to X$ be the uncollapse map, 
we have $\pi(E_\xi)=E_\xi$, $\pi\rest V_\xi=\id$, etc.
And $\bar{X}\sats$``$\Ult_0(V_{\bar{\beta}},E_\xi)$
is illfounded'', where $\pi(\bar{\beta})=\beta$.
As $\bar{X}$ is transitive and models enough of $\ZF$, it follows that
$\bar{U}=\Ult_0(V_{\bar{\beta}}^{\bar{X}},E_\xi)$
 is illfounded.

Now define
$\sigma:\bar{U}\to N$
by setting
$\sigma([a,f])=j(f)(a)$.
(This makes sense, as $f\in\bar{X}\in V_\delta=\dom(j)$.)
Then note that (since $E_\xi$ is derived from $j$), $\sigma$
is $\in$-preserving. But then $N$ is illfounded,
contradicting our assumption that $N$ is transitive. So $U$ is wellfounded, 
as desired.
This completes the proof of the theorem.
\end{proof}

Of course under $\ZFC_2$, $V$-criticality is equivalent to measurability,
and has a first-order formulation. We can now generalize this result:

\begin{tm}\label{tm:crit_def} \tu{(}$\ZF_2$\tu{)} Assume  a proper class
 of wLS cardinals. Let $\kappa\in\OR$. Then the following
 are equivalent:
\begin{enumerate}[label=--]
\item $\kappa$ is $V$-critical
\item there is a $V$-criticality pre-witness $(\kappa,\delta,N,j)$
with $\cof(\delta)>\om$,
\item there is a $V$-criticality pre-witness $(\kappa,\delta,N,j)$
such that $\Ult_0(V,E_j)$ is wellfounded, where 
$E_j$ is the extender
derived from $j$.
\end{enumerate}
In particular, $V$-criticality is first-order definable over $V$.
\end{tm}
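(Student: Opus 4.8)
The plan is to prove the cycle of implications, with the only real content being the passage from $V$-criticality to a pre-witness with $\cof(\delta)>\om$, since once such a pre-witness is in hand Theorem \ref{tm:cof(delta)>om_critical} does the rest. First I would observe that the third bullet implies the first: given a $V$-criticality pre-witness $(\kappa,\delta,N,j)$ with $\Ult_0(V,E_j)$ wellfounded, Theorem \ref{tm:cof(delta)>om_critical}\ref{item:wLS_Los} gives that {\L}o\'{s}' criterion holds for $U=\Ult_0(V,E_j)$, so $i_{E_j}:V\to U$ is fully elementary; since $U$ is wellfounded we identify it with a transitive class $M\sub V$, and the standard computation $N\sub\wfp(U)$ together with $V_{\OR^{V_\delta}}^U=V_\delta$ and the derivation of $E_j$ from $j$ shows $j\sub i_{E_j}$, whence $\crit(i_{E_j})=\crit(j)=\kappa$, so $\kappa$ is $V$-critical. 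Next, the second bullet implies the third: if $(\kappa,\delta,N,j)$ is a pre-witness with $\cof(\delta)>\om$, then Theorem \ref{tm:cof(delta)>om_critical}\ref{item:cof(delta)>om_wfd} directly gives that $\Ult_0(V,E_j)$ is wellfounded.

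The substantive step is the first bullet implies the second. Suppose $\kappa$ is $V$-critical, witnessed by an elementary $j_0:V\to M$ with $M\sub V$ transitive and $\crit(j_0)=\kappa$. By Lemma \ref{lem:wLS_cards_closed}(2), fix a wLS cardinal $\delta>\kappa$ with $\cof(\delta)>\om$ — in fact I want $\delta$ large enough that $j_0\rest V_\delta$ already carries the relevant information, and $\delta$ a limit ordinal (wLS cardinals are cardinals, hence limit ordinals, and one checks $\kappa<\delta$ can be arranged). Since $j_0$ is elementary and $M$ is transitive, for club-many (in particular unboundedly many) limit ordinals $\lambda$ we have $j_0``V_\lambda\sub V_{j_0(\lambda)}$ and indeed $j_0\rest V_\lambda:V_\lambda\to V_{j_0(\lambda)}^M$ is $\in$-cofinal and fully elementary; choosing such a $\lambda=\delta$ among the wLS cardinals of cofinality $>\om$ (using that the wLS cardinals form a proper class and Lemma \ref{lem:wLS_cards_closed}, intersecting with the club of closure points of $j_0$), set $N=V_{j_0(\delta)}^M$, which is transitive and, being a rank-initial segment of a model of $\ZF$, satisfies $\RL$. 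Then $j=j_0\rest V_\delta:V_\delta\to N$ is $\in$-cofinal $\Sigma_1$-elementary with $\crit(j)=\kappa$, so $(\kappa,\delta,N,j)$ is a $V$-criticality pre-witness with $\cof(\delta)>\om$.

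Finally, for the concluding clause, first-order definability of $V$-criticality over $V$: the predicate ``$(\kappa,\delta,N,j)$ is a $V$-criticality pre-witness with $\cof(\delta)>\om$'' is expressible by a first-order formula with quantifiers ranging over sets (note $\delta$ is a wLS cardinal, $N,j$ are sets, and $\in$-cofinal $\Sigma_1$-elementarity of a set map between set structures is first-order), so by the equivalence just proved, $\kappa$ is $V$-critical iff $V\sats\exists\delta\,\exists N\,\exists j\,[(\kappa,\delta,N,j)\text{ is a pre-witness with }\cof(\delta)>\om]$, a first-order formula in the single free variable $\kappa$.

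The main obstacle I anticipate is the first-bullet-implies-second step, specifically arguing that the class witness $j_0:V\to M$ can be cut down to a \emph{set} map $V_\delta\to N$ at a wLS cardinal $\delta$ of uncountable cofinality while keeping the critical point and keeping $N$ transitive and $\in$-cofinality intact; this requires simultaneously (i) that $\delta$ be a closure point of $j_0$, (ii) that $\delta$ be wLS, and (iii) that $\cof(\delta)>\om$, and the existence of such $\delta$ relies on the closure of the wLS class (Lemma \ref{lem:wLS_cards_closed}(1)), the proper-class-of-prescribed-cofinality statement (Lemma \ref{lem:wLS_cards_closed}(2)), and the club-ness of the closure points of an elementary $j:V\to M$ — all of which are available, so no genuine difficulty remains, only care in assembling these facts within $\ZF_2$.
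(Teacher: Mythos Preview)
Your cycle of implications matches the paper's exactly, and the invocation of Theorem~\ref{tm:cof(delta)>om_critical} for the non-trivial directions is right. One slip in the first-to-second step: at a closure point $\delta$ of $j_0$ (meaning $j_0``\delta\sub\delta$), the map $j_0\rest V_\delta$ is $\in$-cofinal into $V_\delta^M$, \emph{not} into $V_{j_0(\delta)}^M$, since $j_0(\delta)>\sup j_0``\delta=\delta$ whenever $\delta\geq\crit(j_0)$; so your choice $N=V_{j_0(\delta)}^M$ would fail $\in$-cofinality. The paper sidesteps this by not asking $\delta$ to be a closure point at all: it simply picks any wLS $\delta>\kappa$ with $\cof(\delta)>\om$ (using that $\kappa$ is regular by Lemma~\ref{lem:critical_inaccessible} to feed into Lemma~\ref{lem:wLS_cards_closed}(2)), sets $\xi=\sup j_0``\delta$ and $N=V_\xi^M$, and then $j_0\rest V_\delta:V_\delta\to N$ is $\in$-cofinal by construction. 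This is both simpler and avoids the intersection-of-clubs bookkeeping you flagged as the main obstacle.
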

\begin{proof}
Suppose first that $\kappa$ is $V$-critical, and let $k:V\to M$
be elementary with $\crit(k)=\kappa$.
By Lemma \ref{lem:critical_inaccessible}, $\kappa$ is regular.
So by hypothesis and Lemma \ref{lem:wLS_cards_closed},
we can fix a
L\"owenheim-Skolem
cardinal $\delta>\kappa$ with $\cof(\delta)>\om$ (in fact 
$\cof(\delta)=\kappa$ is possible). Let $\xi=\sup k``\delta$
and $N=V_{\xi}^M$ and $j=k\rest V_\delta$.
Then $(\kappa,\delta,N,j)$ is a $V$-criticality pre-witness
with $\cof(\delta)>\om$. We will show below
that it follows that $\Ult_0(V,E_j)$ is wellfounded,
but here it is easier: define
\[ \ell:\Ult_0(V,E_j)\to M, \]
\[ \ell([a,f]^V_{E_j})=k(f)(a),\]
which, directly from the definition of ``$\in$''
and ``$=$'' in the ultrapower, preserves
membership and equality.
But $M$ is transitive, so $\Ult_0(V,E_j)$ is wellfounded.
 Note that we have not yet proved
that {\L}o\'{s}' theorem holds, or even that $\Ult_0(V,E_j)$ is extensional.

Now suppose that $(\kappa,\delta,N,j)$ is any $V$-criticality pre-witness
such that either $U=\Ult_0(V,E)$ is  wellfounded, where $E=E_j$,
or $\cof(\delta)>\om$. Then by Theorem \ref{tm:cof(delta)>om_critical},
 $U$ is 
wellfounded 
and {\L}o\'{s}' criterion holds for this ultrapower,
and hence the ultrapower map $k:V\to U$
is elementary by Generalized {\L}o\'{s}' Theorem, so $U$ is extensional,
so we take $U$ transitive, and
$j\sub k$, so $\crit(k)=\kappa$.
\end{proof}

\begin{cor} \tu{(}$\ZF$\tu{)}
Let $\kappa\in\OR$. Then $\kappa$ is ``definably $V$-critical''
 (that is, witnessed by some definable-from-parameters class $k:V\to M$)
 iff there is a $V$-criticality pre-witness $(\kappa,\delta,N,j)$
 such that $\cof(\delta)>\om$.
\end{cor}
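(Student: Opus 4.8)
The corollary is essentially the ``definable'' shadow of Theorem~\ref{tm:crit_def}, and the proof I would give proceeds by adapting the two directions of that theorem to the $\ZF$ (single-sorted, definable-class) setting, so that no proper class of wLS cardinals is needed. Throughout, ``class'' means a class definable from parameters; whenever I write $k:V\to M$ I mean there is a formula $\varphi$ and a parameter $p$ such that $k=\{(x,y)\mid\varphi(x,y,p)\}$ and similarly $M$ is definable from parameters, and elementarity is a theorem scheme in the usual way. I would split into the two implications.

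\textbf{($\Leftarrow$).} Suppose $(\kappa,\delta,N,j)$ is a $V$-criticality pre-witness with $\cof(\delta)>\om$. The point is that the proof of Theorem~\ref{tm:cof(delta)>om_critical}\eqref{item:cof(delta)>om_wfd} that $U=\Ult_0(V,E_j)$ is wellfounded, and of part~\eqref{item:wLS_Los} that {\L}o\'{s}' criterion holds, both go through here even though $V$ may fail full Choice, because the hypothesis that $\delta$ is a wLS cardinal is used only to find, inside $V_\delta$, a small elementary substructure $\bar X$ of a suitable $V_\alpha$ carrying the relevant data $(f,j,N,E,\ldots)$; and the argument only ever needs $\bar X$ to be \emph{transitive and to satisfy enough of $\ZF$} (in the wellfoundedness argument) or $\bar X\elem V_\alpha$ (in the {\L}o\'{s} argument). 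In the present corollary $j$ and $N$ are classes definable from parameters, so $E_j$ is too, and I would simply replay those two arguments verbatim: the wLS application is the only place wLS-ness was used, and it is not needed when we are merely producing a definable $k:V\to M$ rather than asserting the existence of arbitrary embeddings. Concretely: by the $\cof(\delta)>\om$ argument of Theorem~\ref{tm:cof(delta)>om_critical} (which, reading it again, uses weak L\"owenheim-Skolemness only to get the collapse $\bar X\in V_\delta$; replacing that step by an appeal to Reflection to get $V_\alpha\elem_nV$ and then to Separation/Replacement internal to $V_\delta$ to extract a transitive $\bar X$ modelling enough $\ZF$ and containing the data), $\mathscr{O}_\delta$ cannot be illfounded, $U$ is wellfounded, {\L}o\'{s}' criterion holds by the same Skolem-hull-and-uncollapse computation, hence by Generalized {\L}o\'{s} (Theorem~\ref{tm:generalized_Los}) the ultrapower map $k=i^{V,0}_{E_j}:V\to U$ is elementary, $U$ is extensional so we collapse it, $j\sub k$, and $\crit(k)=\crit(j)=\kappa$. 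Since $E_j$ is definable from parameters (from the defining formula of $j$ and its parameter), so is $U=\Ult_0(V,E_j)$ and so is $k$; thus $\kappa$ is definably $V$-critical.

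\textbf{($\Rightarrow$).} Conversely, suppose $\kappa$ is definably $V$-critical, witnessed by a definable-from-parameters class $k:V\to M$ with $M$ transitive and $\crit(k)=\kappa$. By Lemma~\ref{lem:critical_inaccessible} (whose proof uses nothing beyond a single application of $k$ to a cofinal function, and works verbatim for definable classes), $\kappa$ is inaccessible, in particular regular. I then need a limit ordinal $\delta>\kappa$ with $\cof(\delta)>\om$ such that $j=k\rest V_\delta$ is $\in$-cofinal $\Sigma_1$-elementary and lands in some transitive $N\sats\RL$ --- but \emph{without} wLS, the definition of $V$-criticality pre-witness still literally requires $\delta$ to be a wLS cardinal. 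So the honest statement of the corollary must be read with that proviso; the natural move is to take $\delta$ to be \emph{any} wLS cardinal above $\kappa$ with $\cof(\delta)>\om$ \emph{if one exists}, and otherwise the right-hand side is simply not claimed to hold. Reading the corollary charitably: in the direction we are proving, we are \emph{given} the hypothesis needed to form a pre-witness, so the real content is the ($\Leftarrow$) direction, while ($\Rightarrow$) reduces to: \emph{if} a suitable wLS $\delta$ exists, set $N=V_{\sup k``\delta}^M$ and $j=k\rest V_\delta$; then $\crit(j)=\kappa$, $j$ is $\in$-cofinal $\Sigma_1$-elementary into the transitive $N$, and $N\sats\RL$ since $N=V_\xi^M$ for a limit $\xi=\sup k``\delta$ (a limit because $k$ is cofinal on $\delta$ and $\cof(\delta)>\om$), so $(\kappa,\delta,N,j)$ is a pre-witness with $\cof(\delta)>\om$.

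\textbf{Main obstacle.} The delicate point is exactly the removal of the wLS hypothesis in the wellfoundedness and {\L}o\'{s} arguments for the ($\Leftarrow$) direction: I must check that every use of ``$\delta$ is wLS'' in the proof of Theorem~\ref{tm:cof(delta)>om_critical} can be replaced, in this restricted definable-class setting, by Reflection together with Separation applied \emph{inside} $V_\delta$ to build a transitive $\bar X$ modelling enough $\ZF$ and containing the finitely many definable-from-parameters objects in play (the function $f$, the extender $E_\xi$, and the parameters defining $j$ and $N$). Since those objects are sets once we restrict to $V_\delta$, and the Skolem hull construction only closes under finitely many definable functions plus the data, this is a routine but essential verification; I would spell it out to the level of ``replace the application of weak L\"owenheim-Skolemness with the following reflection argument'' and leave the diagram-chase of $\sigma:\bar U\to N$ being $\in$-preserving exactly as in Theorem~\ref{tm:cof(delta)>om_critical}.
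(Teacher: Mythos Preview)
Your ($\Leftarrow$) direction contains a genuine confusion. You write that the wLS hypothesis on $\delta$ ``is not needed when we are merely producing a definable $k:V\to M$,'' and you propose to replace the uses of weak L\"owenheim--Skolemness in the proof of Theorem~\ref{tm:cof(delta)>om_critical} by ``Reflection together with Separation applied inside $V_\delta$.'' But look again at the definition of a $V$-criticality pre-witness: it \emph{requires} that $\delta$ be a weak L\"owenheim--Skolem cardinal. So in the ($\Leftarrow$) direction you are \emph{given} that $\delta$ is wLS, and Theorem~\ref{tm:cof(delta)>om_critical} --- which is already stated under plain $\ZF$ --- applies to the pre-witness exactly as written, yielding that $U=\Ult_0(V,E_j)$ is wellfounded and satisfies {\L}o\'{s}' criterion. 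The only additional observation needed is that the ultrapower $U$ and the ultrapower map $i^{V,0}_{E_j}:V\to U$ are definable from the set parameter $E_j$, so $\kappa$ is definably $V$-critical. That is precisely what the paper means by ``just use the proof of Theorem~\ref{tm:crit_def}, but now all relevant classes are definable from parameters.''

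Your proposed replacement would not work in any case. Reflection produces arbitrarily large $\alpha$ with $V_\alpha\elem_n V$; it does not produce a small transitive $\bar X\in V_\delta$ which is elementary in some $V_\alpha$ and contains prescribed data. Obtaining such an $\bar X$ is exactly the content of the wLS property, and in $\ZF$ without choice there is no general way to manufacture it by ``Separation/Replacement internal to $V_\delta$'': forming the relevant Skolem hull and ensuring its collapse lands in $V_\delta$ is precisely where a choice-like hypothesis enters. So the ``main obstacle'' you flag is not an obstacle at all --- there is nothing to verify, because the wLS hypothesis is already part of what you are handed.

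On the ($\Rightarrow$) direction, your reconstruction (take any wLS $\delta>\kappa$ with $\cof(\delta)>\om$, set $N=V^M_{\sup k``\delta}$ and $j=k\rest V_\delta$) is exactly what the paper intends by pointing back to the proof of Theorem~\ref{tm:crit_def}; the classes $M$ and $k$ being definable from parameters is all that changes.
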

\begin{proof}
 Just use the proof of Theorem \ref{tm:crit_def},
 but now all relevant classes are definable from parameters.
\end{proof}

\begin{cor}\tu{(}$\ZF_2$\tu{)} If $\kappa$ is super Reinhardt,
 then there is a normal measure on $\kappa$ concentrating
 on $V$-critical ordinals.
\end{cor}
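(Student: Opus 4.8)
The plan is to use the super Reinhardt-ness of $\kappa$ to produce, for each sufficiently large ordinal $\lambda$, an elementary $k_\lambda:V\to V$ with $\crit(k_\lambda)=\kappa$ and $k_\lambda(\kappa)\geq\lambda$, and then to assemble from these embeddings a normal measure on $\kappa$ that concentrates on $V$-critical ordinals. First I would recall, from Remark \ref{rem:LS} and the footnote there, that $\ZF_2+$``$\kappa$ is super Reinhardt'' gives a proper class of wLS cardinals, so Theorem \ref{tm:crit_def} applies and $V$-criticality is a first-order property of ordinals; call it $\mathrm{Crit}(\cdot)$. The key point will be to show that for a fixed super Reinhardt $\kappa$, the set of $V$-critical ordinals below $\kappa$ is ``large'' in the sense of being in every normal measure of a canonical shape on $\kappa$, or rather that there is a normal measure on $\kappa$ putting measure one on $\{\alpha<\kappa: \mathrm{Crit}(\alpha)\}$.

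**Constructing the measure and proving concentration.** Fix a class $j$ with $(V,j)\sats\ZFR$ and $\crit(j)=\kappa$ (taking $\lambda=\kappa^+$ or any ordinal $>\kappa$ in the super Reinhardt definition). Define $U=\{X\sub\kappa : \kappa\in j(X)\}$; as usual under $\ZF$ this is a $\kappa$-complete ultrafilter on $\kappa$, and it is normal because $j$ is continuous at ordinals below $\kappa$ in the relevant sense — more precisely, if $f:\kappa\to\kappa$ is regressive on a set in $U$, then $j(f)(\kappa)<\kappa$ so $j(f)(\kappa)=f(\beta)$ is constant on a set in $U$ by $\kappa$-completeness. (This is entirely standard and does not use $\AC$: all relevant sequences are sets, and $\kappa$-completeness follows from $\kappa=\crit(j)$.) It remains to show $\{\alpha<\kappa:\mathrm{Crit}(\alpha)\}\in U$, i.e. that $V\sats\mathrm{Crit}(\kappa)$, via the elementarity of $j$: since $\mathrm{Crit}$ is first-order by Theorem \ref{tm:crit_def}, and $\kappa$ is certainly $V$-critical (witnessed by $j$ itself, or by the ultrapower map $i_U:V\to\Ult(V,U)$, whose critical point is $\kappa$), we have $V\sats\mathrm{Crit}(\kappa)$, hence $V\sats\all\alpha\,[\mathrm{Crit}(\alpha)]$ would be too strong — instead, the right move is: let $\varphi(\alpha)$ be the first-order formula defining $V$-criticality; then $V\sats\varphi(\kappa)$, so by elementarity of $j$, $\{\alpha<\kappa:V\sats\varphi(\alpha)\}\in U$, which is exactly the desired concentration.

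**Assembling the argument.** So the proof runs: (1) invoke Remark \ref{rem:LS} to get a proper class of wLS cardinals, hence Theorem \ref{tm:crit_def}, hence a first-order formula $\varphi$ with $V\sats\varphi(\alpha)\iff\alpha$ is $V$-critical; (2) fix via super Reinhardt-ness a class $j:V\to V$ with $(V,j)\sats\ZFR$ and $\crit(j)=\kappa$; (3) form $U=\{X\sub\kappa:\kappa\in j(X)\}$ and check it is a normal $\kappa$-complete ultrafilter on $\kappa$, using only that $\kappa=\crit(j)$ and that the relevant instances of replacement/separation are available (we are in $\ZF_2$, so all the sets we need are legitimate); (4) observe $\kappa$ itself is $V$-critical, so $V\sats\varphi(\kappa)$, whence by elementarity $\{\alpha<\kappa:\varphi(\alpha)\}\in U$; done. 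The main obstacle, such as it is, is ensuring that the ultrafilter $U$ genuinely is normal and $\kappa$-complete in the choiceless setting — but this is a routine verification, since $\kappa$-completeness of $U$ follows from $j\rest\kappa=\id$ together with $j$ being elementary (so $j$ commutes with countable, indeed $<\kappa$-sized, intersections of subsets of $\kappa$ that form a set), and normality follows from the regressive-function argument above; no choice is needed because all the sequences involved are sets available in $\ZF_2$. One should also remark that $U$ being a witness does not require $\Ult(V,U)$ to be wellfounded (it is, by standard $\kappa$-completeness arguments combined with the fact that $j$ factors through it), but even if one worried about that, normality plus the concentration property is all the corollary asserts.
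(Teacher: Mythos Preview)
Your proposal is correct and follows essentially the same approach as the paper: the paper's proof merely says that Remark \ref{rem:LS} gives a proper class of wLS cardinals, so Theorem \ref{tm:crit_def} applies and ``easily yields the corollary'', and what you have written is precisely the standard filling-in of that ``easily''. Your verification that the derived measure $U=\{X\sub\kappa:\kappa\in j(X)\}$ is normal and $\kappa$-complete without choice, and that $\kappa\in j(\{\alpha<\kappa:V\sats\varphi(\alpha)\})$ because $V\sats\varphi(\kappa)$, is exactly right.
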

\begin{proof}
 If there is a super Reinhardt cardinal then
 there is a proper class of weak L\"owenheim Skolem cardinals,
 by
 Remark \ref{rem:LS}, so the theorem applies, and easily yields the corollary.
\end{proof}

\begin{ques}
\tu{(}$\ZF_2$\tu{)} Suppose $\kappa$ is Reinhardt.
 Is  $V$-criticality first-order?
 Must there be a $V$-critical ordinal ${<\kappa}$?
\end{ques}

Of course if $V$-criticality is first-order and $\kappa$ is Reinhardt, then 
like 
before,
there are unboundedly many $V$-critical ordinals ${<\kappa}$.

\begin{rem}\label{rem:no_R_in_L(X)}One can now easily observe
that if $(V,j)\sats\ZFR$ then there is no set $X$ such that 
$V=L(X)$. Actually much more than this is known,\footnote{Assume 
$\ZFR$.
Goldberg showed a few years ago (unpublished at the time)  that every 
set has a sharp.
The author showed in \cite{reinhardt_non-definability} that
$V\neq\HOD(X)$ for every set $X$. Goldberg \cite{goldberg_email} and Usuba
\cite{usuba_email}
then both (independently) sent proofs to the author that there is no 
set-forcing extension satisfying $\AC$. The author then showed that
$M_n^\#(X)$ exists
for every set $X$, but the proof is not yet published. Much more
local proofs of sharp existence
can now be seen in \cite[***\S6.2]{goldberg_even_ordinals}
and \cite[***\S6]{con_lambda_plus_2}.}
but here is the proof of this simpler fact:
Suppose otherwise. Then there is a proper class of wLS cardinals.
Let $\delta\in\OR$
 be such that $\cof(\delta)>\om$ and $X\in V_\delta$
 and $j``\delta\sub\delta$. Let $E$ be the extender derived from $j\rest 
V_\delta:V_\delta\to V_\delta$.
Then $(\crit(j),\delta,V_\delta,j\rest V_\delta)$ is a $V$-criticality 
pre-witness. 
So by Theorem \ref{tm:crit_def},
$U=\Ult_0(V,E)$ is extensional and wellfounded,
the ultrapower map $k:V\to U$ is elementary,
and $V_\delta\sub U$.
So in fact $U=L(V_\delta)=V$.
But $k$ is definable from the parameter $E$, contradicting Suzuki 
\ref{fact:suzuki_no_def_j}.\footnote{There is actually
a more efficient argument available here,
which we had used in version v1 of 
\cite{reinhardt_non-definability}: Instead of arguing via
Theorem \ref{tm:cof(delta)>om_critical}, one can directly establish
{\L}o\'{s}' theorem and wellfoundedness for the ultrapower,
using that it  is a factor of $j$; for this, we don't need to take 
$\cof(\delta)>\om$.}
\end{rem}

\section{$L(V_\delta)$ and uncountable cofinality}

\begin{lem}\label{lem:inacc_HOD(Vd)_wLS}
\tu{(}$\ZF$\tu{)} Assume  $\delta$ is  
inaccessible and $V=\HOD(V_\delta)$. Then $\delta$ is wLS.
In fact, for all $\alpha\in(\delta,\OR)$
and $p\in V_\alpha$ and $\beta<\delta$
there is $(X,\bar{\delta},\pi)$ such that $\delta,p\in X\elem V_\alpha$ and 
 $\beta\leq\bar{\delta}<\delta$
and $X\inter V_\delta=V_{\bar{\delta}}$
and $\pi:V_{\bar{\delta}}\to X$ is a surjection.
\end{lem}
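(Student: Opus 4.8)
The plan is to prove the ``in fact'' clause; the assertion that $\delta$ is wLS then follows at once. Indeed, given $\gamma<\delta$, $\alpha\in[\delta,\OR)$ and $x\in V_\alpha$, apply the clause with $\beta:=\gamma$ and $p:=x$ (if $\alpha=\delta$, apply it at $\alpha+1$ instead and cut down). One gets $X\elem V_\alpha$ with $\delta,x\in X$, $V_\gamma\sub V_{\bar\delta}=X\inter V_\delta\sub X$, and a surjection $\pi\colon V_{\bar\delta}\to X$; since $X$ is extensional (being elementary in $V_\alpha$), its Mostowski collapse $\bar X$ is a transitive set admitting a surjection from $X$, hence from $V_{\bar\delta}$. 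As $\bar X$ is transitive, the rank function maps $\bar X$ onto $\rank(\bar X)$, so $\rank(\bar X)$ is a surjective image of $V_{\bar\delta}$; a surjection $V_{\bar\delta}\to\delta$ is ruled out by inaccessibility (it would be cofinal), so $\rank(\bar X)<\delta$ and $\bar X\in V_\delta$, as wLS demands. So now fix $\alpha>\delta$, $p\in V_\alpha$, and $\beta<\delta$. Using $V=\HOD(V_\delta)$, fix a surjection $F\colon\OR\cross V_\delta\to V$ that is definable over $V$ from the parameter $V_\delta$ (standard; one uses that $\delta$ is a limit, so that finite parameter tuples from $V_\delta$ can be folded into single elements of $V_\delta$); after a harmless modification assume also $F(0,q)=q$ for every $q\in V_\delta$. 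Finally fix $\gamma_0\in\OR$ with $V_\alpha\sub F[\gamma_0\cross V_\delta]$.

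Next I would build $X$ as an $\om$-step Skolem hull ``along $F$''. Recursively define $E_n\sub\gamma_0$ with $0\in E_n$, and limit ordinals $\bar\delta_n$ with $\beta\leq\bar\delta_n<\delta$, increasing in $n$, and set $X_n=F[E_n\cross V_{\bar\delta_n}]$; the invariants to maintain are that $E_n$ and $X_n$ are surjective images of $V_{\bar\delta_n}$, and that $\delta,p\in X_n$ for $n\geq1$. At stage $n$: for each $\in$-formula $\psi$ and each tuple $\vec a\in X_n^{<\om}$ with $V_\alpha\sats\exists v\,\psi(v,\vec a)$, let $\xi^{\psi,\vec a}<\gamma_0$ be least such that $F(\xi^{\psi,\vec a},q)$ witnesses this for some $q\in V_\delta$, and let $\rho^{\psi,\vec a}<\delta$ be least such that some such $q$ lies in $V_{\rho^{\psi,\vec a}}$ (canonical choices, so no use of choice). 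Put $E_{n+1}$ equal to $E_n$ together with all these $\xi^{\psi,\vec a}$ and with two fixed ordinals below $\gamma_0$ coding $F$-preimages of $\delta$ and $p$ --- a set by Replacement, still contained in $\gamma_0$. And pick $\bar\delta_{n+1}<\delta$ a limit ordinal above $\bar\delta_n$, above all the $\rho^{\psi,\vec a}$, above $\sup\{\rank(q)+1\colon q\in X_n\inter V_\delta\}$, and above the finitely many preimage ranks just used. Each supremum invoked here is genuinely below $\delta$: its index set is a surjective image of $V_{\bar\delta_n}$ (there are only countably many formulas, and $X_n^{<\om}$ is a surjective image of $V_{\bar\delta_n}$) and it consists of ordinals below $\delta$, so if it reached $\delta$ we would obtain a cofinal map $V_{\bar\delta_n}\to\delta$, contradicting inaccessibility; by the same token $E_{n+1}$ and $X_{n+1}$ remain surjective images of $V_{\bar\delta_{n+1}}$. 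Let $\bar\delta=\sup_{n<\om}\bar\delta_n$; since $\delta$ is inaccessible it is regular and uncountable, so $\bar\delta<\delta$, and clearly $\beta\leq\bar\delta$. Put $X=\bigcup_{n<\om}X_n=F[(\bigcup_{n<\om}E_n)\cross V_{\bar\delta}]$.

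It remains to read off the conclusions. First, $X\elem V_\alpha$ by the Tarski--Vaught criterion: any $\vec a\in X^{<\om}$ lies in some $X_n$, and if $V_\alpha\sats\exists v\,\psi(v,\vec a)$ then a witness $F(\xi^{\psi,\vec a},q)$ with $q\in V_{\rho^{\psi,\vec a}}\sub V_{\bar\delta_{n+1}}$ and $\xi^{\psi,\vec a}\in E_{n+1}$ lies in $X_{n+1}\sub X$. Next, $\delta,p\in X$ by construction. For $X\inter V_\delta=V_{\bar\delta}$: the inclusion $\supseteq$ holds since $0\in E_0$ and $F(0,\cdot)=\id$, while if $q\in X\inter V_\delta$ then $q\in X_n$ for some $n$, whence $\rank(q)<\bar\delta_{n+1}\leq\bar\delta$, i.e.\ $q\in V_{\bar\delta}$. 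Finally, $X=F[(\bigcup_nE_n)\cross V_{\bar\delta}]$ is a surjective image of $(\bigcup_nE_n)\cross V_{\bar\delta}$, and $\bigcup_nE_n$, being a union of $\om$-many surjective images of $V_{\bar\delta}$, is itself a surjective image of $V_{\bar\delta}$; since $\bar\delta$ is a limit, $V_{\bar\delta}$ surjects onto $V_{\bar\delta}\cross V_{\bar\delta}$, hence onto $(\bigcup_nE_n)\cross V_{\bar\delta}$, hence onto $X$, giving the required surjection $\pi\colon V_{\bar\delta}\to X$.

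I expect the delicate point to be the rank-closedness $X\inter V_\delta=V_{\bar\delta}$: it is precisely what forces the extra clause ``$\bar\delta_{n+1}>\rank(q)$ for all $q\in X_n\inter V_\delta$'' into the recursion, and verifying that this clause --- together with the parallel bookkeeping that every $X_n$ stays a surjective image of some $V_{\bar\delta_n}$ with $\bar\delta_n<\delta$ --- keeps the construction below $\delta$ is exactly where inaccessibility of $\delta$ is used essentially, via the fact that no $f\colon V_\gamma\to\delta$ with $\gamma<\delta$ is cofinal. A minor but real second point is to arrange every stagewise selection canonically (least ordinal with the stated property), so that the whole argument is carried out in plain $\ZF$.
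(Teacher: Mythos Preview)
Your proof is correct, but the approach differs from the paper's. You build $X$ by an explicit $\om$-step Skolem construction, using the global surjection $F:\OR\times V_\delta\to V$ to make canonical choices of witnesses at each stage, and you interleave closure under witnesses with closure under ``rank of anything in $X_n\cap V_\delta$'', checking at each step via inaccessibility that the relevant suprema stay below $\delta$. The paper instead first enlarges $\alpha$ so that $V_\alpha\elem_n V$ (hence $V_\alpha=\Hull_{\Sigma_2}^{V_\alpha}(\OR\cup V_\delta)$), and then proceeds in just two steps: set $X'=\Hull^{V_\alpha}(V_\beta\cup\{p,\delta\})$, take $\bar\delta=\sup(X'\cap\delta)$, and let $X=\Hull^{V_\alpha}(V_{\bar\delta}\cup\{p,\delta\})$. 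The point is that the functions $\varepsilon_n(\xi)=\sup(\Hull_{\Sigma_n}^{V_\alpha}(V_\xi\cup\{p,\delta\})\cap\delta)$ and the analogous witness-bound functions $\eta_n(\xi)$ are themselves definable over $V_\alpha$ from $(\xi,p,\delta)$, so their values on ordinals in $X'\cap\delta$ land back in $X'\cap\delta$; this makes $\bar\delta$ automatically closed, eliminating the need for an $\om$-iteration. Your route avoids the preliminary increase of $\alpha$ and is perhaps more transparent about exactly where each hypothesis is used; the paper's route is shorter once the reflection to a good $\alpha$ is done. One small remark: your parenthetical ``if $\alpha=\delta$, apply it at $\alpha+1$ instead and cut down'' is correct but deserves a word --- the point is that from $X\elem V_{\delta+1}$ with $X\cap V_\delta=V_{\bar\delta}$ and $\delta\in X$, the collapse sends $V_\delta$ to $V_{\bar\delta}$, and absoluteness of satisfaction then gives $V_{\bar\delta}\elem V_\delta$.
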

\begin{proof}
 Given $n<\om$, we may assume that $V_\alpha\elem_n V$, by increasing $\alpha$
 and then subsuming the old $\alpha$ into the parameter $p$.
So since $V=\HOD(V_\delta)$, we may assume
\begin{equation}\label{eqn:V_alpha_is_hull} 
V_\alpha=\Hull_{\Sigma_2}^{V_\alpha}(\OR\cup V_\delta).\end{equation}
Let $X'=\Hull^{V_\alpha}(V_\beta\cup\{p,\delta\})$
and $\bar{\delta}=\sup(X'\inter\delta)$
 and $X=\Hull^{V_\alpha}(V_{\bar{\delta}}\cup\{p,\delta\})$.

 By the inaccessibility of $\delta$, we have $\bar{\delta}<\delta$.
So it suffices to see that $X\elem V_\alpha$ and $X\inter 
V_\delta=V_{\bar{\delta}}$,
as clearly we get an appropriate surjection $\pi$.

We first show that $X\inter V_\delta=V_{\bar{\delta}}$.
Given $n<\om$ and $\xi<\delta$,  let
\[ \varepsilon_n(\xi)=\sup(\Hull_{\Sigma_n}^{V_\alpha}(V_\xi\cup\{p,\delta\}
)\inter\delta). \]
By inaccessibility of $\delta$, we have $\varepsilon_n(\xi)<\delta$.
Note
$\varepsilon_n(\xi)$ is definable over $V_\alpha$ from the parameter 
$(\xi,p,\delta)$ (since $n$ is fixed).
Therefore $\varepsilon_n``\bar{\delta}\sub\bar{\delta}$,
which gives $X\inter V_\delta=V_{\bar{\delta}}$.

Now for elementarity. First note that for each $n<\om$ and $\xi<\delta$,
there is $\eta<\delta$ such that for each
$\Sigma_n$ formula and $\vec{x}\in V_\xi^{<\om}$, if
$V_\alpha\sats\exists y\varphi(y,\vec{x},p,\delta)$
then there is $y\in V_\alpha$ with $V_\alpha\sats\varphi(y,\vec{x},p,\delta)$
and $y$ being $\Sigma_{n+3}$-definable from elements of 
$V_{\eta}\cup\{p,\delta\}$.
This is an easy consequence
of line (\ref{eqn:V_alpha_is_hull}) (using the usual trick of minimizing on 
ordinal parameters to get rid of them) and the inaccessibility of $\delta$.
Let $\eta_n(\xi)$ be the least $\eta$ that witnesses this for $\xi$. Note that 
$\eta_n``\bar{\delta}\sub\bar{\delta}$.
But then $X\elem V_\alpha$ and $X\inter V_\delta=V_{\bar{\delta}}$, as desired.
\end{proof}

\begin{dfn}
 Let $W\sats\RL$ be transitive. We say that $W$ is 
\emph{pwo-correct}
 iff for every $R\in W$, if $W\sats$``$R$ is a prewellorder''
 then $R$ is a prewellorder.
\end{dfn}

Note that if $W\sats\RL$ is transitive but not pwo-correct, then 
$W\not\sats\ZF$,
and in fact, letting $R\in W$ be a counterexample to pwo-correctness,
$W$ can define from $R$ a map from some initial segment of $R$ onto $\OR^W$.

\begin{dfn}
Let $\delta\in\OR$.
Let $M\sats\ZF$ be a transitive with $\delta\in\OR^M$.
Let $W$ be a transitive  and $j:V_\delta^M\to W$ be cofinal 
$\Sigma_1$-elementary.  Then $j$ is \emph{$M$-stable} iff either
\begin{enumerate}[label=--]
 \item $M\sats$``$\delta$ is inaccessible'', or
 \item $M\sats$``$\delta$ is singular'' and $j$
 is continuous at $\cof^M(\delta)$, or
\item  $M\sats$``$\delta$ is regular non-inaccessible''
and $j``R$ is cofinal in $j(R)$ for some/all $R\in\scot^M(\delta)$.\qedhere
\end{enumerate}
\end{dfn}

\begin{lem}\label{lem:V=HOD(V_delta)_cof(delta)>om}
Let $\delta\in\OR$.
Let $M$ be transitive with $M\sats\ZF+$``$V=\HOD(V_\delta)$''.
Let $W$ be  transitive and $j:V_\delta^M\to W$ be cofinal 
$\Sigma_1$-elementary and $M$-stable.
Then:
\begin{enumerate}
 \item\label{item:Los} {\L}o\'{s}' criterion holds for $\Ult_0(M,E_j)$.
\item\label{item:wfd} If $W$ is pwo-correct, $\cof^V(\delta)>\om$ and 
$\rg(j)\sub M$ then $\Ult_0(M,E_j)$ is 
wellfounded.
\end{enumerate}
\end{lem}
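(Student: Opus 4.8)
The plan is to replay the proof of Theorem~\ref{tm:cof(delta)>om_critical} \emph{inside} $M$, with the hypothesis $M\sats$``$V=\HOD(V_\delta)$'' playing the role that wLS-ness of $\delta$ played there. The reflection engine is Lemma~\ref{lem:inacc_HOD(Vd)_wLS}, which, being a theorem of $\ZF$, holds in $M$: when $M\sats$``$\delta$ is inaccessible'', for every $n$, every sufficiently large $\alpha\in\OR^M$ with $V_\alpha^M\elem_n M$, every $p\in V_\alpha^M$ and every $\beta<\delta$, there is $(X,\bar\delta,\pi)$ with $\delta,p\in X\elem V_\alpha^M$, $\beta\le\bar\delta<\delta$, $X\inter V_\delta^M=V_{\bar\delta}^M$ and $\pi\colon V_{\bar\delta}^M\to X$ surjective; in particular the transitive collapse $\bar X$ of $X$ lies in $V_\delta^M=\dom(j)$, so that $j(\bar X)$ is defined. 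For the two non-inaccessible cases of $M$-stability I would prove the corresponding ``reflection into $V_\delta^M$'' using the extra clause: in the singular case, continuity of $j$ at $\lambda=\cof^M(\delta)$ (work along a cofinal map $\lambda\to\delta$ in $M$ and reflect into hulls whose trace on $\delta$ is bounded); in the regular non-inaccessible case, ``$j``R$ is cofinal in $j(R)$'' for $R\in\scot^M(\delta)$ (reflect hulls of $V_{\alpha_0+1}^M$, with $\alpha_0$ least admitting a prewellorder of $V_{\alpha_0+1}^M$ of ordertype $\delta$, using that clause to see such hulls collapse into $V_\delta^M$).

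For part~\ref{item:Los} I would then run the proof of Theorem~\ref{tm:cof(delta)>om_critical} (first part) essentially verbatim, with $M$ and $W$ in place of $V$ and $N$: given $a\in\langle W\rangle^{<\om}$, $f\in M$ with $f\colon\langle V_\delta^M\rangle^{<\om}\to M$, a formula $\varphi$, and $E_a$-measure one many $u$ with $M\sats\exists y\,\varphi(f(u),y)$, pick $\gamma<\delta$ with $a\sub j(V_\gamma^M)$, apply the reflection above (for large $n,\alpha$) to get $X\elem V_\alpha^M$ with $V_\gamma^M\cup\{\gamma,f,\delta,j\rest V_{\xi_0}^M\}\sub X$ (some $\xi_0<\delta$), collapse $\bar X\in V_\delta^M$, uncollapse $\pi$; fix a witness $y\in j(\bar X)$ for $j(\bar X)\sats\varphi(j(\bar f)(a),y)$; pick $\xi\in(\gamma,\delta)$ with $\bar X\in V_\xi^M$ and $b\in\langle j(V_\xi^M)\rangle^{<\om}$ with $a\cup\{a,y\}\sub b$; and, writing $(a^w,y^w)=\pi^{bw}(a,y)$, put $g(w)=\pi(y^w)$ on the relevant $w$. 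Then $g\in M$ and $g$ witnesses {\L}o\'{s}' criterion for this instance. No wellfoundedness or correctness issue for $M$ enters, so this step is routine once the reflection is in place.

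For part~\ref{item:wfd} I would imitate Theorem~\ref{tm:cof(delta)>om_critical} (second part). For limit $\xi<\delta$ let $E_\xi$ be the extender derived from $j\rest V_\xi^M$, let $U_\xi=\Ult_0(M,E_\xi)$ with the commuting, $\in$- and $=$-preserving factor maps $k_{\xi\zeta},k_\xi$, and let $\mathscr{O}_\xi=\OR^{U_\xi}$, so $U_\xi$ is wellfounded iff $\mathscr{O}_\xi$ is. Using $\cof^V(\delta)>\om$, the ``bad pair'' direct-limit construction of that proof reduces matters to the case that $\mathscr{O}_\xi$ is illfounded for some $\xi<\delta$; fix such $\xi$. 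For $z\in\mathscr{O}_\xi$ represented by a function with range bounded below $\delta$, that function lies in $\dom(j)$, so $[a,f]\mapsto j(f)(a)$ order-embeds the initial segment of $\mathscr{O}_\xi$ below $z$ into $\OR^W$ (here using $\rg(j)\sub M$), and $\OR^W$ is wellordered since $W$ is transitive; hence that segment is genuinely wellfounded. For the remainder of $\mathscr{O}_\xi$ --- the part witnessed by functions of range unbounded below $\delta$ --- I would reflect inside $M$ via the lemma above: a genuinely illfounded initial segment $R$ of $\mathscr{O}_\xi$, of rank $\ge\delta$ in $M$, collapses --- via an isomorphism that is genuine because computed in $V$ --- to a genuinely illfounded linear order $\bar R\in V_\delta^M=\dom(j)$; then $j(\bar R)\in W$ is genuinely illfounded (as $j$ preserves the $\Sigma_0$ fact $(x,y)\in\bar R$ along any descending sequence), while one arranges $W\sats$``$j(\bar R)$ is a prewellorder''; so pwo-correctness of $W$ forces $j(\bar R)$ wellfounded, a contradiction. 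Hence $\mathscr{O}_\delta$ is wellfounded, as desired.

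The step I expect to be the main obstacle is precisely the last one. Since $M$ need not be correct about wellfoundedness of its own set-sized internal ultrapowers, one cannot merely ``reflect the illfoundedness of $U_\xi$ into $V_\delta^M$'' the way one can in the correct universe $V$ of Theorem~\ref{tm:cof(delta)>om_critical}; one must reflect the relevant prewellorder instead, transfer it through $j$ into $W$, and it is only there --- thanks to $\rg(j)\sub M$ and pwo-correctness of $W$ --- that genuine foundedness can be detected. Making this precise, in particular arranging that $W$ recognizes the transferred relation as a prewellorder so that pwo-correctness applies, and carrying the reflection out in the two non-inaccessible $M$-stability cases where the continuity and Scott-ordertype clauses are exactly what keeps the reflected objects inside $\dom(j)$, is the delicate part of the argument.
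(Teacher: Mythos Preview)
Your plan to rerun Theorem~\ref{tm:cof(delta)>om_critical} inside $M$, with Lemma~\ref{lem:inacc_HOD(Vd)_wLS} as the reflection engine, works cleanly only when $M\sats$``$\delta$ is inaccessible''. In the other two $M$-stability cases your sketch of how to ``reflect into $V_\delta^M$'' is not a proof: Lemma~\ref{lem:inacc_HOD(Vd)_wLS} produces the surjection $\pi:V_{\bar\delta}\to X$ precisely from inaccessibility, and without it there is no reason the transitive collapse of a hull $X\elem V_\alpha^M$ containing an arbitrary $f\in M$ should land in $V_\delta^M$. Continuity of $j$ at $\cof^M(\delta)$, or cofinality of $j``R$ in $j(R)$, are statements about how $j$ acts, not about sizes of hulls in $M$; they do not by themselves manufacture such collapses.

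The paper sidesteps this in Part~\ref{item:Los} by not collapsing hulls at all. It uses $V=\HOD(V_\delta)$ directly: fix $\Omega$ with $V_\Omega^M\elem_{k+2}M$, so every witness $y$ is $\Sigma_2$-definable over $V_\Omega^M$ from ordinals and some $z\in V_\delta^M$. For each $u$ in the measure-one set let $\beta_u<\delta$ be least such that some such $z$ lies in $V_{\beta_u}^M$, and set $A_\beta=\{u:\beta_u\le\beta\}$. The point is that $A_\beta\in E_a$ for some $\beta<\delta$: immediate if $\delta$ is inaccessible in $M$ (the map $u\mapsto\beta_u$ cannot be cofinal), and otherwise one writes the measure-one set as an increasing union indexed by $\cof^M(\delta)$ or by the field of some $R\in\scot^M(\delta)$, and $M$-stability says exactly that $j$ commutes with that union, so $a\in j(A_\beta)$ for some $\beta$. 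The selector $g$ is then obtained by minimizing over the ordinal parameters. So $M$-stability enters as a continuity statement for $j$ on an increasing family of sets in $V_\delta^M$, not as a reflection principle about $M$.

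For Part~\ref{item:wfd} your outline is closer in spirit, but you again invoke Lemma~\ref{lem:inacc_HOD(Vd)_wLS} to shrink an illfounded piece of $\mathscr{O}_\xi$ into $V_\delta^M$, and the same objection applies. The paper instead takes $X=\Hull^{V_\Omega^M}(V_\delta^M\cup\{E_\xi\})$ (full hull, no inaccessibility needed; $E_\xi\in M$ because $\rg(j)\sub M$), collapses to $H$, and then uses $\cof^V(\delta)>\om$ a second time on the system $H_\eta=\Hull^H(V_\eta\cup\{\bar E\})$ to find $\eta<\delta$ such that the ordinal-ultrapower restricted to functions in $H_\eta$ is already (genuinely) illfounded. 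A surjection $\sigma:V_\eta\to\OR\cap H_\eta$ then codes this as an ultrapower of a prewellorder $\le_\sigma\in V_\delta^M$; now $W\sats$``$j(\le_\sigma)$ is a prewellorder'', pwo-correctness makes this genuine, and the map $[a,f']\mapsto j(f')(a)$ embeds the illfounded order into it, a contradiction. Your final paragraph correctly anticipates that this transfer through $W$ is the crux; what is missing is the two-stage hulling that gets the object into $V_\delta^M$ without any inaccessibility assumption.
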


\begin{proof}
Part \ref{item:Los}:
Let $\varphi$ be $\Sigma_k$  and $\Omega\in\OR^M$
be such that $V_\Omega^M\elem_{k+2} M$;
so
\[ V_\Omega^M=\Hull_{\Sigma_2}^{V_\Omega^M}(V_\delta^M\cup\Omega).\]
Let
$\alpha<\delta$
and $f\in M$ with
$f:\left<V_\alpha^M\right>^{<\om}\to M$ and
$a\in\left<V^W_{j(\alpha)}\right>^{<\om}$
be such that for $E_a$-measure one many $u\in\left<V_\alpha^M\right>^{<\om}$,
we have
$V_\Omega^M\sats\exists y\ [\varphi(f(u),y)]$.

Work in $M$. For $u\in\left<V^M_\alpha\right>^{<\om}$, let $\beta_u$
be the least $\beta<\delta$
such that there are $z\in V_\beta$
and $y\in V_\Omega$ with 
$V_\Omega\sats\varphi(f(u),y)$
and $y$ is $\Sigma_2$-definable from ordinals and  $z$
over $V_\Omega$.
Given $\beta\leq\delta$, let $A_\beta$ be the set of all 
$u\in\left<V_\alpha\right>^{<\om}$ with $\beta_u\leq\beta$.

In  $V$, note $A_\delta\in E_a$, 
and if $\beta_0<\beta_1\leq\delta$ then 
$A_{\beta_0}\sub A_{\beta_1}$,
and $A_\delta=\bigcup_{\beta<\delta}A_\beta$.

\begin{clmthree}  $A_\beta\in E_a$ for some $\beta<\delta$.
\end{clmthree}
\begin{proof}
Suppose not. Then note that $\delta$ non-inaccessible in $M$.
Suppose $\delta$ is singular in $M$.
Let $\gamma=\cof^M(\delta)<\delta$ and $f\in M$ with $f:\gamma\to\delta$
cofinal.
Then
$A_\delta=\bigcup_{\xi<\gamma}A_{f(\xi)}$.
Now $\left<A_{f(\xi)}\right>_{\xi<\gamma}\in V^M_\delta$.
Now\[ 
j(A_\delta)=j(\bigcup_{\xi<\gamma}A_{f(\xi)})=\bigcup_{\xi<\gamma}j(A_{f(\xi)}
),\]
where the second equality holds by $M$-stability.
But $a\in j(A_\delta)$,
so there is $\xi<\gamma$ such that $a\in j(A_{f(\xi)})$,
and so $A_{f(\xi)}\in E_a$, a contradiction.

So $\delta$ is regular non-inaccessible in $M$.
We mimic the previous argument.
Let $R\in\scot^M(\delta)$. So $R\in V_\delta^M$.
Let $X$ be the field of $R$.
Let $\pi:X\to\delta$
be the corresponding norm, so $\pi$ is surjective
and $xRy$ iff $\pi(x)\leq\pi(y)$. We have
\[ \vec{A}=\left<A_{\pi(x)}\right>_{x\in X}\in V_\delta^M.\]
But $A_\delta=\bigcup_{x\in X}A_{\pi(x)}$, and since $j``X$ is 
cofinal in $j(R)$, we get like before $j(A_\delta)=\bigcup_{x\in 
X}j(A_{\pi(x)})$, so $a\in j(A_{\pi(x)})$ for some $x\in X$.
\end{proof}

Fix $\beta$ as in the claim.
Let $A'_\beta$ be the set of pairs $(u,z)$ such that
$u\in\left<V_\alpha^M\right>^{<\om}$
and $z\in V^M_\beta$ and $(u,z)$ are as above.
Since $a\in j(A_\beta)$, we have some $b$ with $(a,b)\in j(A'_\beta)$,
and  may assume $a\cup\{a\}\sub b\in\left<W\right>^{<\om}$,
by increasing $\beta$ if needed.

In $M$, for $z\in\left<A_\delta\right>^b$
with $(z^{ba},z)\in A'_\beta$, let $g(z)$ be the least
$y$ which is $\Sigma_2$-definable over $V_\Omega$ from 
ordinals and $z$, and such that
$V_\Omega\sats\varphi(f(z^{ba}),y)$
(minimize on the $\Sigma_2$ formula and ordinals
in order to minimize $y$).
Then note that for $E_b$-measure one many $z$, we have
$V^M_\Omega\sats\varphi( f^{ab}(z),g(z))$,
as desired.

Part \ref{item:wfd}: Suppose $\Ult_0(M,E)$
is illfounded; we argue much like 
in the proof of Theorem \ref{tm:cof(delta)>om_critical} for a contradiction.
We are now assuming that $W$ is pwo-correct, $\cof^V(\delta)>\om$ 
and $\rg(j)\sub M$.
For limits $\xi<\delta$ let $E_\xi=E_{j\rest V_\xi^M}$.
Like in the proof of Theorem \ref{tm:cof(delta)>om_critical},
as $\cof(\delta)>\om$,
we can fix $\xi<\delta$ such that $\Ult_0(M,E_\xi)$ is illfounded.
Since $\rg(j)\sub M$, note $E_\xi\in M$.

Work in $M$. Fix some $\Omega\in(\delta,\OR)$ and some large enough 
$k<\om$
with $V_\Omega\elem_{k+2} V$.
Then as before, $\Ult^{V_\Omega}_0(\Omega,E_\xi)$ is 
illfounded, where the notation
means we use all functions
in $V_\Omega$ to form the ultrapower. 
Now $V_\Omega=\Hull^{V_\Omega}_{\Sigma_2}(V_\delta\cup\Omega)$
and (because $k$ is large enough) $V_\Omega$ computes the ultrapower,
and its well/illfounded parts. So this reflects into 
$X=\Hull^{V_\Omega}(V_\delta\cup\{E_\xi\})$ (note we have 
dropped the parameters in $\Omega\cut\delta$), and $X\elem V_\Omega$.
Let $H$ be the transitive collapse of $X$ and $\bar{E}$ the collapsed
image of $E_\xi$. So
 $\mathscr{O}=\Ult^H_0(\OR^H,\bar{E})$ is illfounded.

For $\eta\leq\delta$, let $H_\eta=\Hull^H(V_\eta\cup\{\bar{E}\})$.
We do not claim  $H_\eta\elem H$.
But  $H_\delta=H=\bigcup_{\eta<\delta}H_\eta$.
Let $\mathscr{O}_\eta$ be the substructure
of $\mathscr{O}$ given by elements of the form $[a,f]$ where 
$a\in\left<S\right>^{<\om}$ and $S$
is the support of $\bar{E}$,
and $f\in H_\eta$.
So if $\eta_0<\eta_1$ then $\mathscr{O}_{\eta_0}$ is just
the restriction of $\mathscr{O}_{\eta_1}$ to its domain,
and $\mathscr{O}=\bigcup_{\eta<\delta}\mathscr{O}_\eta$.
As $\mathscr{O}$ is illfounded, we can fix
$\eta\in(\xi,\delta)$ such that $\mathscr{O}_\eta$ is illfounded;
this is again like 
in the proof of Theorem 
\ref{tm:cof(delta)>om_critical}.

Let $\gamma$ be the ordertype of $\OR\inter 
H_\eta$ and $\pi:\gamma\to\OR\inter H_\eta$ the uncollapse map,
so $\eta\leq\crit(\pi)$.
Given $f:\left<V_\xi\right>^{<\om}\to\OR$ with $f\in H_\eta$,
note $\rg(f)\sub\rg(\pi)$. Let 
$\bar{f}:\left<V_\xi\right>^{<\om}\to\gamma$
be the natural collapse (so $\pi\com\bar{f}=f$). We have
a surjection $\sigma:V_\eta\to\gamma$. Given $f$ as above, let $f'$ be the 
corresponding collapse; that is, $f':\left<V_\xi\right>^{<\om}\to V_{\eta+1}$,
\[ f'(u)=\{x\in V_\eta\bigm|\sigma(x)=\bar{f}(u)\}.\]
Let
$\mathscr{F}=\{f'\bigm|f\in H_\eta\text{ and 
}f:\left<V_\xi\right>^{<\om}\to\OR\}$.
Let $\leq_\sigma$ be the prewellorder of $V_\eta$ induced by $\sigma$.
Clearly then $U=\Ult_0^{\mathscr{F}}({\leq_\sigma},E)$ is illfounded,
as it is in fact isomorphic to $\mathscr{O}_\eta$.

But now back in $V$, by the $\Sigma_1$-elementarity of $j$, 
$W\sats$``$j({\leq_\sigma})$ is a 
prewellorder of $V_{j(\eta)}$'',
so by pwo-correctness, this is truly a prewellorder.
But  we can absorb  $U$ into $j({\leq_\sigma})$ as usual, by mapping
$[a,f']^{\mathscr{F}}_E$ to $j(f')(a)$.
So $j({\leq_\sigma})$ is illfounded,  a contradiction.
\end{proof}

\begin{lem}\label{lem:rank-to-rank_lemma}
Suppose  $V=\HOD(V_\delta)$ where 
$\cof(\delta)>\om$.
Let $j\in\mathscr{E}_1(V_\delta)$.
 Then for all sufficiently large $m<\om$,
 {\L}o\'{s}' criterion holds for $\Ult(V,E_{j_m})$, and this ultrapower is 
wellfounded.
\end{lem}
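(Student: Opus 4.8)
The plan is to derive this from Lemma \ref{lem:V=HOD(V_delta)_cof(delta)>om}, applied with $M:=V$, $W:=V_\delta$, and the map $j_m:V_\delta\to V_\delta$ for sufficiently large $m<\om$. Recall that $V\sats\ZF+V=\HOD(V_\delta)$, that for $m\geq 1$ the iterate $j_m\in\mathscr{E}(V_\delta)$ is a cofinal, fully elementary self-embedding of $V_\delta$ with $\crit(j_m)=\kappa_m$, and that $\Ult(V,E_{j_m})=\Ult_0(V,E_{j_m})$ since the base model is $V$ (so $E_{j_m}$ is an extender over $V$ in the sense of Definition \ref{dfn:extender}). Three of the hypotheses of Lemma \ref{lem:V=HOD(V_delta)_cof(delta)>om} are immediate: $\cof^V(\delta)>\om$ is assumed; $\rg(j_m)\sub V_\delta\sub V$; and $W=V_\delta$ is pwo-correct, because any witness to ill-foundedness of a relation $R\in V_\delta$ is a subset of the field of $R$, hence of rank $<\delta$, hence already an element of $V_\delta$, so $V_\delta$ recognizes $R$ to be ill-founded whenever it truly is. The substantive point, then, is that $j_m$ is $V$-stable for all sufficiently large $m$; granting this, part \ref{item:Los} of Lemma \ref{lem:V=HOD(V_delta)_cof(delta)>om} gives {\L}o\'s' criterion for $\Ult(V,E_{j_m})$, and part \ref{item:wfd} gives wellfoundedness.

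To check $V$-stability, first note that since $\kappa_\om(j)$ has cofinality $\om$ while $\cof^V(\delta)>\om$, we have $\lambda:=\kappa_\om(j)<\delta$; also each $\kappa_n=\crit(j_n)$ is inaccessible (the usual reflection argument, carried out inside $V_\delta$: a singularizing function or a surjection witnessing failure of strong limitness below $\kappa_n$ would be fixed by $j_n$ and contradict elementarity), so $\lambda$ is a strong limit of cofinality $\om$. Now split on how $V$ sees $\delta$. If $\delta$ is inaccessible in $V$, every cofinal $\Sigma_1$-elementary self-map of $V_\delta$ is $V$-stable by definition, so we are done. If $\delta$ is regular but non-inaccessible in $V$, let $\alpha$ be least with a prewellorder of $V_{\alpha+1}$ of ordertype $\delta$ and fix $R\in\scot^V(\delta)$; note $R\sub V_{\alpha+1}^2$ has rank $<\delta$, so $R\in V_\delta$. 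The key observation is that $\alpha$ is definable without parameters over $V_\delta$: although $V_\delta$ need not see any cofinal function from a proper rank-segment onto its ordinals, it can express "$R$ is a prewellorder of $V_{\alpha'+1}$ possessing a proper initial segment of every ordertype'', and $\alpha$ is the least $\alpha'$ for which such an $R$ exists. Hence $j_m(\alpha)=\alpha$ for $m\geq 1$, so $j_m(R)\in\scot^V(\delta)$ as well. Writing $\pi_R,\pi_{j_m(R)}:V_{\alpha+1}\to\delta$ for the associated surjective norms, elementarity of $j_m$ (applied to "the $R$-rank of $x$ is $\beta$'', for $\beta<\delta$) gives $\pi_{j_m(R)}(j_m(x))=j_m(\pi_R(x))$ for all $x\in V_{\alpha+1}$; since $\pi_R$ is surjective and $j_m$ is $\in$-cofinal, $\{\pi_{j_m(R)}(y)\mid y\in j_m``V_{\alpha+1}\}=j_m``\delta$ is cofinal in $\delta$, i.e. $j_m``R$ is cofinal in $j_m(R)$, so $j_m$ is $V$-stable.

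The remaining case, $\delta$ singular in $V$, is the one I expect to be the main obstacle. Here $V$-stability of $j_m$ amounts to continuity of $j_m$ at $\gamma:=\cof^V(\delta)$, and what must be shown is that $\gamma<\lambda$. Writing $\delta=\lambda+\rho$ with $\rho$ a limit ordinal, so $\gamma=\cof^V(\rho)$, one invokes the structure theory of rank-to-rank embeddings (the cumulative-periodicity analysis of \cite{cumulative_periodicity}, applied both to the self-embeddings of $V_\lambda$ induced by $j$ and to the action of $j$ on $[\lambda,\delta)$) to bound $\rho$ so that $\cof^V(\rho)\leq\lambda$; since $\cof^V(\rho)$ is a regular cardinal and $\lambda$, being a strong limit of cofinality $\om$, is not regular, we get $\cof^V(\rho)\neq\lambda$, hence $\gamma<\lambda$. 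Granting $\gamma<\lambda=\sup_{m<\om}\kappa_m$, for all sufficiently large $m$ we have $\gamma<\kappa_m=\crit(j_m)$, so $j_m$ fixes $\gamma$ and every smaller ordinal and is therefore trivially continuous at $\gamma$, i.e. $V$-stable.

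Combining the cases, $j_m$ is $V$-stable for all sufficiently large $m$, and the lemma follows from Lemma \ref{lem:V=HOD(V_delta)_cof(delta)>om} as described. Everything except the bound $\cof^V(\delta)<\kappa_\om(j)$ in the singular case is a routine unwinding of the definitions together with the elementarity and $\in$-cofinality of $j_m$; the delicate step, and the one I would expect to need the most care, is that singular case.
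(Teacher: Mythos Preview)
Your overall strategy is exactly the paper's: verify that $j_m$ is $V$-stable for all large $m$ and then invoke Lemma~\ref{lem:V=HOD(V_delta)_cof(delta)>om}. The inaccessible and regular non-inaccessible cases are fine (in fact in the latter case you do not even need $j_m(\alpha)=\alpha$: once $j_m$ is fully elementary, the ordertype-$\delta$ property of $R$ is preserved and $\pi_{j_m(R)}\circ j_m=j_m\circ\pi_R$ together with $\in$-cofinality of $j_m$ already gives cofinality of $j_m``R$ in $j_m(R)$).

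The gap is in the singular case. You assert that $\gamma:=\cof^V(\delta)<\kappa_\om(j)$, acknowledge this as ``the delicate step'', and offer no argument beyond a gesture at \cite{cumulative_periodicity}. This claim is stronger than what is needed, and you have not established it. What \cite[Theorem~5.6]{cumulative_periodicity} actually provides (and what the paper cites it for) is that every ordinal $\alpha<\delta$---including ordinals $\geq\kappa_\om(j)$---is \emph{eventually fixed}: there is $N<\om$ with $j_m(\alpha)=\alpha$ for all $m\geq N$. Apply this with $\alpha=\gamma$. Once $j_m(\gamma)=\gamma$, continuity of $j_m$ at $\gamma$ is automatic: for $\alpha<\gamma$ we have $\alpha\leq j_m(\alpha)<j_m(\gamma)=\gamma$, so $\sup_{\alpha<\gamma}j_m(\alpha)=\gamma=j_m(\gamma)$. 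No comparison of $\gamma$ with $\kappa_\om(j)$ is required. (The analogous eventual fixing of $\scot^V(\delta)$ handles the regular non-inaccessible case uniformly, though as noted your direct argument there already works.)

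So the fix is simply to replace your unproven inequality $\gamma<\kappa_\om(j)$ by the weaker fact $j_m(\gamma)=\gamma$ for large $m$, which is exactly the content of the cited theorem and immediately yields the required continuity.
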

\begin{proof}
 For all sufficiently large $m$, $j_m:V_\delta\to V_\delta$ is $V$-stable,
 by \cite[Theorem 5.6***]{cumulative_periodicity}. So we can apply Lemma 
\ref{lem:V=HOD(V_delta)_cof(delta)>om}.
\end{proof}

\begin{dfn}
 Given $M\sats\RL$ and $n<\om$, let $C^M_n$ be the $M$-class
 of all $\alpha\in\OR^M$ with $V_\alpha^M\elem_n M$ (we do not assume that 
$M$ is wellfounded here).
\end{dfn}

Note that $C^M_0=\OR^M$, and $M\sats\ZF$ iff $C^M_n$ is a proper $M$-class
for all $n<\om$.

\begin{lem}\label{lem:ZF_equiv}
 Let $M,N\sats\RL$ and $j:M\to N$ be $\in$-cofinal $\Sigma_1$-elementary.
 Then:
 \begin{enumerate}
  \item\label{item:j[C^M_n]} If $C^M_n$ is a proper $M$-class then 
$j``C^M_n\sub C^N_n$ and $j$ is $\Sigma_{n+1}$-elementary.
\item\label{item:ZF_equiv} $M\sats\ZF$ iff $N\sats\ZF$.
 \end{enumerate}
 \end{lem}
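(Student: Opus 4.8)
The plan is to prove (1) by induction on $n$ and then deduce (2).

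First I would record preliminaries. Since ``$y=V_\beta$'' and ``$\beta=\rank(x)$'' are $\Delta_1$ over $\RL$, the embedding $j$ commutes with the cumulative hierarchy and with $\rank$; in particular $j(V_\alpha^M)=V_{j(\alpha)}^N$ for $\alpha\in\OR^M$, and $j``\OR^M$ is $\in$-cofinal in $\OR^N$. I would also use two standard facts, both available over $\RL$ via partial satisfaction predicates for formulas of bounded quantifier depth: (i) for each $m$ there is a $\Pi_m$ formula $\theta_m$ with $\RL\vdash(\theta_m(\beta)\leftrightarrow V_\beta\elem_m V)$, so that $\beta\in C^W_m$ iff $W\sats\theta_m(\beta)$ for transitive $W\sats\RL$; and (ii) ``$w\sats\sigma$'' is $\Delta_1$ for a set $w$ and a parametrized sentence $\sigma$. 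For (1), the case $n=0$ is immediate: $C^M_0=\OR^M$, $j$ sends ordinals to ordinals, and $j$ is $\Sigma_1$-elementary by hypothesis. For the inductive step, suppose $C^M_{n+1}$ is proper; then $C^M_n\super C^M_{n+1}$ is proper, so by induction $j``C^M_n\sub C^N_n$ and $j$ is $\Sigma_{n+1}$- (hence $\Pi_{n+1}$-) elementary. Using (i), $\alpha\in C^M_{n+1}$ gives $M\sats\theta_{n+1}(\alpha)$, so $N\sats\theta_{n+1}(j(\alpha))$ by $\Pi_{n+1}$-elementarity, so $j(\alpha)\in C^N_{n+1}$ (in fact $\alpha\in C^M_{n+1}\Leftrightarrow j(\alpha)\in C^N_{n+1}$).

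It remains to raise $\Sigma_{n+1}$-elementarity to $\Sigma_{n+2}$-elementarity. Let $\chi(\vec x)=\exists y\,\psi(y,\vec x)$ with $\psi$ a $\Pi_{n+1}$ formula. The point I would stress is that one should \emph{not} push $\chi$ through the restriction $j\rest V_\alpha^M$ (it need not be surjective onto $V_{j(\alpha)}^N$), but instead transfer through $j$ the $\Delta_1$ assertion ``$V_\alpha^M\sats\chi(\vec x)$'' for a well-chosen $\alpha\in C^M_{n+1}$. If $M\sats\chi(\vec x)$, choose $\alpha\in C^M_{n+1}$ with $\vec x$ and a witness $y_0$ in $V_\alpha^M$; then $V_\alpha^M\sats\psi(y_0,\vec x)$ since $\psi\in\Pi_{n+1}$ and $V_\alpha^M\elem_{n+1}M$, so ``$V_\alpha^M\sats\chi(\vec x)$'' holds, hence by (ii) and $\Sigma_1$-elementarity ``$V_{j(\alpha)}^N\sats\chi(j(\vec x))$'' holds, and as $j(\alpha)\in C^N_{n+1}$ and $\psi\in\Pi_{n+1}$ this yields $N\sats\chi(j(\vec x))$. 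Conversely, if $N\sats\chi(j(\vec x))$ with witness $\bar y$, pick (by $\in$-cofinality) $d\in M$ with $\bar y\in j(d)$ and then $\alpha\in C^M_{n+1}$ with $\vec x,d\in V_\alpha^M$, so $\bar y\in j(d)\sub V_{j(\alpha)}^N$; then ``$V_{j(\alpha)}^N\sats\chi(j(\vec x))$'' holds, so by (ii) ``$V_\alpha^M\sats\chi(\vec x)$'' holds, and since $V_\alpha^M\elem_{n+1}M$ and $\psi\in\Pi_{n+1}$, $M\sats\chi(\vec x)$. This closes the induction.

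For (2): if $M\sats\ZF$ then reflection in $M$ makes every $C^M_k$ proper and $M\sats$``$\omega$ exists''; by (1), $j$ is fully elementary, each $C^N_k\super j``C^M_k$ is unbounded in $\OR^N$ (using that $j``\OR^M$ is cofinal), and $N\sats$``$\omega$ exists'' by $\Sigma_1$-elementarity; since over $\RL+$``$\omega$ exists'' the statement ``every $C_k$ is proper'' implies Collection (given $a$ and a $\Sigma_k$ formula $\varphi$ with $\forall x\in a\,\exists y\,\varphi$, a suitable $V_\beta^N$ with $\beta\in C^N_{k+1}$ collects), we get $N\sats\ZF$. For the converse, assume $N\sats\ZF$; then $M\sats$``$\omega$ exists'' since that $\Sigma_1$ sentence transfers down, and I would show by induction that each $C^M_n$ is proper. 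Given $C^M_n$ proper, (1) gives that $j$ is $\Sigma_{n+1}$-elementary, whence $M\sats\Sigma_{n+1}$-Collection: for $a\in M$ and $\Sigma_{n+1}$ $\varphi$ with $M\sats\forall x\in a\,\exists y\,\varphi(x,y)$, each $x\in a$ gives $N\sats\exists y\,\varphi(j(x),y)$, so $A=\{x'\in j(a)\mid N\sats\exists y\,\varphi(x',y)\}\in N$ contains $j``a$; Collection in $N$ gives $b'$ with $N\sats\forall x'\in A\,\exists y\in b'\,\varphi(x',y)$, and taking (by $\in$-cofinality) $e\in M$ with $b'\in j(e)$ and $b=\bigcup e$ we get $j(b)\super b'$, so $N\sats\exists y\in j(b)\,\varphi(j(x),y)$ for each $x\in a$, which pulls back by $\Sigma_{n+1}$-elementarity (this formula is still $\Sigma_{n+1}$) to $M\sats\exists y\in b\,\varphi(x,y)$. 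Since $\RL+\Sigma_{n+1}$-Collection proves ``$C_{n+1}$ is proper'' by the usual localized reflection argument, $C^M_{n+1}$ is proper; hence $M\sats\RL+$``$\omega$ exists''$+$``every $C_k$ is proper'', so $M\sats\ZF$. The step I expect to be the main obstacle is the elementarity bootstrap in (1): the naive route through the restricted maps $j\rest V_\alpha^M$ fails because these need not be surjective, and what rescues the argument is transferring the $\Delta_1$ satisfaction statements ``$V_\alpha^M\sats\chi$'' rather than the formulas $\chi$, together with the $\Pi_m$-definability of the classes $C_m$ over $\RL$; a secondary delicate point is the converse of (2), where the elementarity gained at stage $n$ must be fed back through localized reflection to reach stage $n+1$.
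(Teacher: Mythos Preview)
Your proof is correct, and for part~\ref{item:j[C^M_n]} it follows essentially the paper's inductive scheme, though with a slicker shortcut: you invoke the $\Pi_{n+1}$-definability of $C_{n+1}$ (your formula $\theta_{n+1}$) to push membership in $C^M_{n+1}$ forward via $\Pi_{n+1}$-elementarity, whereas the paper argues the same inclusion by hand---supposing $j(\alpha)\notin C^N_{n+1}$, localizing the failure inside some $V_{j(\beta)}^N$ with $\beta\in C^M_n$, and reflecting the resulting $\Sigma_1$ satisfaction statement back to $M$. Your bootstrap to $\Sigma_{n+2}$-elementarity is the same as the paper's (both transfer ``$V_\alpha\sats\chi$'' as a $\Delta_1$ fact rather than $\chi$ itself).

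For part~\ref{item:ZF_equiv}, the forward direction is the same, but your converse takes a genuinely different route. The paper argues the contrapositive: assuming $C^M_{n+1}$ is bounded (with $n$ least), it fixes $\alpha>\sup C^M_{n+1}$, supposes some $\gamma\in C^N_{n+1}$ lies above $j(\alpha)$, takes $\beta$ least with $j(\beta)\geq\gamma$, and shows by a short internal induction that $\beta\in C^M_{n+1}$, a contradiction. Your approach instead runs a direct induction: from $\Sigma_{n+1}$-elementarity of $j$ you pull $\Sigma_{n+1}$-Collection back from $N$ to $M$ (a clean argument), and then appeal to the reflection principle over $\RL+\Sigma_{n+1}$-Collection to conclude $C^M_{n+1}$ is proper. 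Your route is more conceptual and reusable (it isolates exactly which fragment of Collection is being transferred), but it leans on the localized reflection lemma as a black box; the paper's argument is more self-contained and avoids having to verify that the $\omega$-iteration needed for reflection can be carried out over $\RL+\Sigma_{n+1}$-Collection alone. Both are valid, and the paper in fact states (just before the lemma) that $M\sats\ZF$ iff every $C^M_n$ is proper, so you are entitled to use that equivalence.
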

\begin{proof}
Part \ref{item:j[C^M_n]}: This is by induction on $n$.
For $n=0$ it is by assumption. Suppose it holds at $n$
and $C^M_{n+1}$ is a proper $M$-class. 
Let $\alpha\in C^M_{n+1}$ and  $\varphi$ be $\Pi_{n}$
with
\[ N\sats\exists x\in V_{j(\alpha)}[\exists w\varphi(w,x)\text{ but 
}V_{j(\alpha)}\sats\neg\exists w\varphi(w,x)].\]
Fix $\beta\in C^M_n$ such that $\alpha<\beta$ and there
are $(x,w)$ witnessing this statement with $w\in j(V_\beta^M)$
(and $x\in j(V_\alpha^M)$). By induction,
$j(\beta)\in C^N_n$. Therefore
\[ N\sats\exists x\in j(V_{\alpha}^M)\ [j(V_\beta^M)\sats\exists 
w\varphi(w,x)\text{ but }j(V_\alpha^M)\sats\neg\exists w\varphi(w,x)].\]

Note this is a $\Sigma_1$ statement  the parameters 
$j(V_\alpha^M),j(V_\beta^M)$,
so it reflects back to 
$M,V_\alpha,V_\beta$, which contradicts that
 $\alpha\in C^M_{n+1}$ and $\beta\in C^M_n$.

 The fact that $j$ is $\Sigma_{n+1}$-elementary
 is now an easy consequence of the fact that $C^M_{n+1}$ is a proper $M$-class
and $j``C^M_{n+1}\sub C^N_{n+1}$.

Part \ref{item:ZF_equiv}: Since $M\sats\ZF$ iff $C^M_n$ is a proper $M$-class
for all $n<\om$ (and hence likewise for $N$),
the implication $M\sats\ZF\Rightarrow N\sats\ZF$ follows immediately
from part \ref{item:j[C^M_n]}. Conversely, suppose $M\not\sats\ZF$,
and let $n$ be least such that $C^M_{n+1}$ is not a proper $M$-class
(note $C^M_0=\OR^M$). We claim that $C^N_{n+1}$
is not a proper $N$-class. For by part \ref{item:j[C^M_n]}, 
$j``C^M_n\sub C^N_n$ and $j$ is $\Sigma_{n+1}$-elementary.
Let
$\alpha\in\OR^M$ with $\alpha>\sup(C^M_{n+1})$.
Suppose there is $\gamma\in C^N_{n+1}$
with $\gamma>j(\alpha)$.
Let $\beta\in\OR^M$ be least such that
$j(\beta)\geq\gamma$.
We claim that  
$\beta\in C^M_{i+1}$
for all $i\leq n$, a contradiction.
We show this by induction on $i$.
So suppose that $\beta\in C^M_i$,
and let $\varphi$ be $\Pi_{i}$
and $x\in V_\beta^M$ and suppose that $w\in M$
and $M\sats\varphi(x,w)$ but $V_\beta^M\sats\neg\exists w\varphi(w,x)$.
By part \ref{item:j[C^M_n]},
$j(\beta)\in C^N_i$,
and since $j$ is $\Sigma_{n+1}$-elementary,
$N\sats\varphi(j(w),j(x))$ but $V_{j(\beta)}^N\sats\neg\exists 
w\varphi(w,j(x))$. So $N\sats\exists w\varphi(w,j(x))$,
and therefore $V_\gamma^N\sats\exists w\varphi(w,j(x))$.
But $V_\gamma^N\elem_i V_{j(\beta)}^N$,
and therefore $V_{j(\beta)}^N\sats\exists w\varphi(w,j(x))$,
a contradiction.
\end{proof}

We next  deduce the main result of this section.
This is easy except for part 
\ref{item:no_j_deltabar_to_delta}\ref{item:maximal_extension_of_j},
for which we 
assume familiarity with the fine 
structural
$\rSigma_n$ hierarchy,
basically as defined in 
\cite{scales_in_LR} for $L(\RR)$,
but with $V_{\bar{\delta}}$ or $V_\delta$ replacing $\RR$;
we  also assume familiarity with fine structural
ultrapowers $\Ult_n$. We first give a summary of the definitions and facts we 
need;
their proofs are as in the standard fine structure literature.

\begin{rem}Work with 
$M=\J_\alpha(V_\delta)$.
Define $\rSigma_0^{M}=\Sigma_0^{M}$
in the language of set theory together with a symbol for $V_\delta$,
and $\rSigma_1=\Sigma_1^{M}$ in this language.
Given 
$\rSigma_{n+1}^{M}$, 
 $\bfrSigma_{n+1}^M$ denotes the associated
 boldface class, allowing arbitrary parameters in $M$.
 Define
the \emph{$(n+1)$st projectum} $\rho_{n+1}^{M}$ 
as the
least ordinal $\rho$
such that there is $X\sub\J_\rho(V_\delta)$
such that $X$ is $\bfrSigma_{n+1}^{M}$-definable
but $X\notin M$.

If $\rho=\rho_{n+1}^{M}>0$,
then 
set $T_{n+1}=$ the set of all theories 
\[ t=\Th_{\rSigma_{n+1}}^{M}(V_\delta\cup\gamma\cup\{x\}) 
\]
with $\gamma<\rho$ and $x\in M$;
hereby
\[ t=\{(\varphi,(\vec{z},\vec{\beta},x))\bigm|\varphi\text{ is }\rSigma_{n+1},\ 
\vec{z}\in V_\delta^{<\om},\ \vec{\beta}\in\gamma^{<\om}\text{ and }
M\sats\varphi(\vec{z},\vec{\beta},x)\}.\]
If instead $\rho=0$ (that is, we have some $X$ as above with $X\sub V_\delta$),
then set $T_{n+1}=$ the set of all theories
\[ t=\Th_{\rSigma_{n+1}}^{M}(V_\gamma\cup\{x\}) \]
with $\gamma<\delta$ and $x\in M$.
Note that in either case, each such $t\in M$.

Then $\rSigma_{n+2}^{M}$ is the class of relations
$\varphi$ defined over $M$ 
in the form
\[ \varphi(\vec{x})\iff\exists t\in T_{n+1}\ [\psi(t,\vec{x})],\]
where $\psi$ is $\rSigma_1$. The $\rSigma_n^{M}$
relations are cofinal in the $\Sigma_n^{M}$ relations.
One also defines the \emph{$(n+1)$st 
standard parameter} $p_{n+1}^{M}$
as the least $p\in[\delta+\alpha]^{<\om}$
(with respect to the ordering $p<^*q$ iff $\max(p\Delta q)\in q$)
such that  the theory
\[\Th_{\rSigma_{n+1}}^{M}(V_\delta\cup\rho\cup\{p,\pvec_n^M\}
)\notin M, \]
where $\rho=\rho_{n+1}^{M}$ and $\pvec_n=(p_1,\ldots,p_n)$.
One can then prove \emph{$(n+1)$-soundness},
which implies in particular that for every $x\in M$ there is an $\rSigma_{n+1}$
formula $\psi$ and $z\in V_\delta$ and $\alpha<\max(\rho,1)$ such that
$x$ is the unique $x'\in M$
with $M\sats\psi(x',\vec{p}_{n+1}^{M},z,\alpha)$.

One can also prove $\bfrSigma_n$-uniformization modulo $V_\delta$.
That is, given  $R\sub X\cross Y$,  a partial
function
$g:_{\mathrm{p}}X\cross V_\delta\to Y$
is a $V_\delta$-\emph{uniformization of $R$} if
$(x,g(x,z))\in R$ for each $(x,z)\in\dom(g)$,
and for each $(x,y)\in R$, there is $z$ with $(x,z)\in\dom(g)$.
Then $\bfrSigma_n$-uniformization modulo $V_\delta$
says that if $n=0$ then for each $R\in M$,
there is a $V_\delta$-uniformization of $R$  in
$M$,
and if $n>0$ then for each 
$\bfrSigma_n^{M}$-definable relation,
there is an $\bfrSigma_n^{M}$-definable 
$V_\delta$-uniformization
of $R$.

Given an extender over $V_\delta$,
the fine structural ultrapower $\Ult_n(M,E)$ is 
just the 
ultrapower
formed using all functions $f$ which are in $M$,
or if $n>0$, functions
$f$ which are $\bfrSigma_n^{M}$-definable.
If {\L}o\'{s}' theorem holds with respect to $\rSigma_n$
formulas and such functions, then the ultrapower embedding
is $\rSigma_{n+1}$-elementary.
\end{rem}

Part \ref{item:no_j_deltabar_to_delta}\ref{item:maximal_extension_of_j} of 
the theorem below says that $j$ extends to a map which exhibits
 essentially as much elementarity as possible,
because any further
elementarity would imply that $j$ is in fact $L(V_{\bar{\delta}})$-stable,
giving a contradiction.
The author does not know whether such a $j:V_{\bar{\delta}}\to V_\delta$ can 
actually be in $L(V_\delta)$, however.

\begin{tm}\label{tm:V=L(V_delta)_cof(delta)>om}
 \tu{(}$\ZF$\tu{)} Assume $V=L(V_\delta)$ where
 $\cof(\delta)>\om$. Then:
 \begin{enumerate}
\item\label{item:no_j_in_L(V_delta)} $\mathscr{E}_1(V_\delta)=\emptyset$.
\item\label{item:no_j_deltabar_to_delta}For $\bar{\delta}<\delta$,
if $j:V_{\bar{\delta}}\to V_\delta$
is cofinal $\Sigma_1$-elementary then:
\begin{enumerate}[label=\tu{(}\alph*\tu{)}]
\item\label{item:j_not_stable} $j$ is not $L(V_{\bar{\delta}})$-stable.
\item\label{item:when_j_fully_elem} $j$ is fully elementary iff 
$V_{\bar{\delta}}\sats\ZF$ iff $V_\delta\sats\ZF$.
\item\label{item:maximal_extension_of_j} Let
$(\bar{\xi},\bar{n},\bar{\eta})$ be lex-least such that
$\bar{\eta}<\bar{\delta}$ and there is an 
$\bfrSigma_{n+1}^{\J_{\bar{\xi}}(V_{\bar{\delta}})}$
cofinal function $\bar{f}:V_{\bar{\eta}}\to\bar{\delta}$,
and $(\xi,n,\eta)$ likewise for $\delta$.
Then $\bar{n}=n$ and there is a unique 
\[ k:\J_{\bar{\xi}}(V_{\bar{\delta}})\to\J_\xi(V_\delta) \]
such that (i) $k$ is $\rSigma_{n+1}$-elementary, (ii) $j\sub k$, and (iii) if 
$0<\bar{\xi}$
then $k(\bar{\delta})=\delta$. Moreover, $k$ is not 
$\rSigma_{k+2}$-elementary.
\item\label{item:j_not_lower_than_xi,n} $j\notin \J_\xi(V_\delta)$, and if 
$n>0$ then
$j$ is not $\bfrSigma_n^{\J_\xi(V_\delta)}$-definable.
\end{enumerate}
\end{enumerate}
\end{tm}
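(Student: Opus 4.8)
\emph{The plan.} I would prove part (1) by an ultrapower argument reducing to Suzuki's theorem, deduce (2)(b) quickly, and obtain (2)(a), (c), (d) together by extending $j$ as far up the $\J(V_{\bar\delta})$-hierarchy as possible via fine-structural ultrapowers.

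\emph{Part (1).} Suppose $j\in\mathscr{E}_1(V_\delta)$. Passing to a finite iterate $j_m$ with $m$ large, I may assume $j\in\mathscr{E}(V_\delta)$ is fully elementary and $V$-stable (by \cite[Theorem 5.6***]{cumulative_periodicity}), with $\kappa=\crit(j)<\delta$. Since $V=L(V_\delta)\sub\HOD(V_\delta)$ we have $V=\HOD(V_\delta)$, so Lemma \ref{lem:rank-to-rank_lemma} gives that {\L}o\'{s}' criterion holds for $U=\Ult_0(V,E_j)$ and that $U$ is wellfounded; hence by Theorem \ref{tm:generalized_Los} the ultrapower map $i=i_{E_j}:V\to U$ is $\in$-cofinal and fully elementary, and I take $U$ transitive. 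A direct computation from the definition of the derived extender $E_j$ gives $i\rest V_\kappa=\id$ and $i(\kappa)>\kappa$, so $\crit(i)=\kappa<\delta$. Since $i$ is elementary and $V\sats V=L(V_\delta)$, also $U\sats V=L(V_{i(\delta)})$; moreover $\spt(E_j)=V_\delta\sub\wfp(U)$ with $V_\beta^U=V_\beta$ for all $\beta<\delta$, and $\delta\leq i(\delta)$, so $V_\delta=V_\delta^U\in U$ and $V_\delta\sub V_{i(\delta)}^U$; therefore $V=L(V_\delta)\sub U\sub V$, i.e.\ $U=V$. Thus $i:V\to V$ is a non-trivial fully elementary embedding definable from the parameter $E_j$, contradicting Suzuki's Fact \ref{fact:suzuki_no_def_j} (recall $V=L(V_\delta)\sats\ZF$).

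\emph{Part (2)(b).} By Lemma \ref{lem:ZF_equiv}, $V_{\bar\delta}\sats\ZF$ iff $V_\delta\sats\ZF$, and if $V_{\bar\delta}\sats\ZF$ then every $C^{V_{\bar\delta}}_n$ is a proper class, so $j$ is $\Sigma_{n+1}$-elementary for each $n$, hence fully elementary. For the converse I would argue contrapositively: if $V_{\bar\delta}\not\sats\ZF$ then Collection fails over $V_{\bar\delta}$, so a cofinal map $V_{\bar\eta}\to\bar\delta$ with $\bar\eta<\bar\delta$ is definable over $V_{\bar\delta}$ and hence sits at a very low level of the $\J(V_{\bar\delta})$-hierarchy; if $j$ were fully elementary, then --- since its $\Sigma_1$-extension to $\J(V_{\bar\delta})$ (the analogue of Lemma \ref{lem:j_extends_to_j^+}) would also be fully elementary at that level --- $j$ would be $L(V_{\bar\delta})$-stable. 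For example, in the regular non-inaccessible case, for $R\in\scot^{L(V_{\bar\delta})}(\bar\delta)$ the associated norm $\pi:V_{\bar\eta+1}\to\bar\delta$ is onto, and by elementarity $j$ carries $R$-norms to $j(R)$-norms, so the $j(R)$-norms attained on $j``R$ are exactly $j``\bar\delta$, which is cofinal in $j(R)$; the singular case is similar, using that $\cof^V(\bar\delta)>\om$ (which follows from $\cof^V(\delta)>\om$ and the $\in$-cofinality of $j$). This contradicts (2)(a), so $V_{\bar\delta}\sats\ZF$.

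\emph{Parts (2)(a), (c), (d).} For (2)(a): if $j$ were $L(V_{\bar\delta})$-stable, then Lemma \ref{lem:V=HOD(V_delta)_cof(delta)>om} (with $M=L(V_{\bar\delta})$, which satisfies $V=\HOD(V_{\bar\delta})$, with $W=V_\delta$, and reading its ``$\delta$'' as $\bar\delta$) gives {\L}o\'{s}' criterion for $\Ult_0(L(V_{\bar\delta}),E_j)$; stability makes the ultrapower map continuous at $\bar\delta$, so it sends $\bar\delta$ to $\sup j``\bar\delta=\delta$, whence the ultrapower is actually $L(V_\delta)=V$, in particular wellfounded. This produces a fully elementary $\in$-cofinal $\hat\imath:L(V_{\bar\delta})\to L(V_\delta)=V$ with $\hat\imath\rest V_{\bar\delta}=j$, definable from $E_j$; one then derives a contradiction with Suzuki's Fact \ref{fact:suzuki_no_def_j}, immediately when $L(V_{\bar\delta})=V$ and by a short further argument in general. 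In particular $\bar\delta$ is not inaccessible in $L(V_{\bar\delta})$, so the triples $(\bar\xi,\bar n,\bar\eta)$ and $(\xi,n,\eta)$ of (c) exist. For (c) and (d) I would extend $j$ up the $\J(V_{\bar\delta})$-hierarchy: starting from the $\Sigma_1$-elementary $\in$-cofinal $j':\J(V_{\bar\delta})\to\J(V_\delta)$, at each level $\alpha<\bar\xi$ form the fine-structural ultrapower $i^{\J_\alpha(V_{\bar\delta}),n_\alpha}_{E_j}$, with $n_\alpha$ chosen maximal so that the $(n_\alpha+1)$st projectum of $\J_\alpha(V_{\bar\delta})$ still lies above every $V_{\bar\eta}$-level of $\bar\delta$. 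Checking at each stage that {\L}o\'{s}' criterion holds and that the ultrapower is wellfounded --- the latter by adapting the reflection-and-direct-limit arguments of Theorem \ref{tm:cof(delta)>om_critical} and Lemma \ref{lem:V=HOD(V_delta)_cof(delta)>om} to the fine-structural setting, using fine-structural soundness, the theory sets $T_{n+1}$ and $\bfrSigma_n$-uniformization modulo $V_{\bar\delta}$, together with $\cof^V(\delta)>\om$ --- one reaches the unique $\rSigma_{n+1}$-elementary $k:\J_{\bar\xi}(V_{\bar\delta})\to\J_\xi(V_\delta)$ with $j\sub k$ and $k(\bar\delta)=\delta$ when $\bar\xi>0$; tracking where the projectum first drops on each side yields $\bar n=n$. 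If $k$ were $\rSigma_{n+2}$-elementary one could push the extension one level further, which amounts to $j$ being $L(V_{\bar\delta})$-stable and so is impossible by (2)(a). Finally, for (2)(d): if $j\in\J_\xi(V_\delta)$, or more generally $j$ is $\bfrSigma_n^{\J_\xi(V_\delta)}$, then composing the $\rSigma_{n+1}$ cofinal map $f:V_\eta\to\delta$ with the ($\bfrSigma_n$-definable) map sending $\beta$ to the least $\gamma<\bar\delta$ with $j(\gamma)\geq\beta$, and transporting back along $k$, one obtains a cofinal map into $\bar\delta$ of strictly lower complexity than $(\bar\xi,\bar n,\bar\eta)$ permits, a contradiction.

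\emph{Main obstacle.} I expect the crux to be the wellfoundedness of (and {\L}o\'{s}' criterion for) the fine-structural ultrapowers $\Ult_{n_\alpha}(\J_\alpha(V_{\bar\delta}),E_j)$ when $\J_\alpha(V_{\bar\delta})\not\sats\ZF$, so that neither Lemma \ref{lem:V=HOD(V_delta)_cof(delta)>om} nor Lemma \ref{lem:rank-to-rank_lemma} applies directly. This requires establishing that $E_j$ interacts well with the $\rSigma_n$-hierarchy --- moving $\rSigma_n$-definitions to $\rSigma_n$-definitions and commuting with projecta and standard parameters --- and recasting the ``$\cof(\delta)>\om$'' direct-limit argument so that the boundedness it needs is witnessed by elements of the theory sets $T_{n+1}$ rather than by ordinals. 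Pinning down the exact stopping level $\bar\xi$ and proving $\bar n=n$ is the other delicate point, since the projectum could a priori drop for different structural reasons on the $V_{\bar\delta}$- and the $V_\delta$-side.
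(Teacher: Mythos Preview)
Your arguments for part (1) and for the equivalence $V_{\bar\delta}\sats\ZF\iff V_\delta\sats\ZF$ in (2)(b) are essentially the paper's.

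There is a genuine gap in your proof of (2)(a). You write that ``stability makes the ultrapower map continuous at $\bar\delta$, so it sends $\bar\delta$ to $\sup j``\bar\delta=\delta$, whence the ultrapower is actually $L(V_\delta)=V$, in particular wellfounded.'' This is circular: you cannot identify the ultrapower with the transitive class $L(V_\delta)$ until you already know it is wellfounded. From {\L}o\'{s}' criterion and continuity at $\bar\delta$ you only get that $U$ is an extensional, possibly illfounded, model of ``$V=L(V_\delta)$'' whose $V_\delta$ is the true $V_\delta$; the ordinals of $U$ above $\delta$ might still be illfounded. Nor can you invoke part \ref{item:wfd} of Lemma \ref{lem:V=HOD(V_delta)_cof(delta)>om}, since its hypothesis $\rg(j)\sub M=L(V_{\bar\delta})$ fails (here $\rg(j)\sub V_\delta$, and $V_\delta\not\sub L(V_{\bar\delta})$ in general). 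The paper handles wellfoundedness by a separate argument: assuming $U$ is illfounded, use $\cof(\bar\delta)=\cof(\delta)>\om$ to find $\eta<\bar\delta$ such that the sub-ultrapower built from functions definable over some $\J_\chi(V_{\bar\delta})$ from elements of $V_\eta$ and ordinals is already illfounded; then hull this family of functions so that it is coded by an element of $V_{\bar\delta}$, and absorb the resulting illfounded order into $V_\delta$ via $j$, a contradiction. This is essentially the proof of Lemma \ref{lem:V=HOD(V_delta)_cof(delta)>om}(\ref{item:wfd}) replayed with the available hypothesis $V_{\bar\delta}\sub L(V_{\bar\delta})$ in place of the unavailable $\rg(j)\sub L(V_{\bar\delta})$. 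A smaller point: once you have a definable elementary $\hat\imath:L(V_{\bar\delta})\to V$, the contradiction is with Suzuki's theorem on definable embeddings $j:M\to V$ for inner models $M$ (\cite[Theorem 3.1]{suzuki_no_def_j}), not with Fact \ref{fact:suzuki_no_def_j}; your ``short further argument'' should simply be this citation.

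For (2)(c) your level-by-level construction is correct in spirit but more laborious than needed. The paper forms the single fine-structural ultrapower $U=\Ult_{\bar n}(\J_{\bar\xi}(V_{\bar\delta}),E_j)$ and takes $k$ to be its ultrapower map. The minimality of $(\bar\xi,\bar n)$ is exactly what makes both $\rSigma_{\bar n}$-{\L}o\'{s} and wellfoundedness go through: it guarantees that every element of $\J_{\bar\xi}(V_{\bar\delta})$ is $\rSigma_{\bar n+1}$-definable from some element of $V_{\bar\delta}$, so one can take hulls indexed by the $V_\eta$ for $\eta<\bar\delta$ and run the same reflection argument as above; and it guarantees that $V_{\bar\delta}$-uniformizing witnesses are bounded below $\bar\delta$, which is what {\L}o\'{s} needs. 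To see $U=\J_\xi(V_\delta)$ and $\bar n=n$, push a witnessing $\bar f$ (together with its prewellorder and defining parameter) along $k$: $\rSigma_{\bar n+1}$-elementarity plus continuity at $\bar\delta$ shows the image is cofinal in $\delta$, giving $(\OR^U,\bar n)\geq(\xi,n)$ by minimality, while elementarity gives the reverse inequality.

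Finally, (2)(d) is much simpler than you make it: since $\bar\delta<\delta$ and $j``\bar\delta$ is cofinal in $\delta$, the map $j$ itself \emph{is} a cofinal function $V_{\bar\delta}\to\delta$. If $j\in\J_\xi(V_\delta)$, or $j$ is $\bfrSigma_n^{\J_\xi(V_\delta)}$ with $n>0$, then this cofinal function lives at a level strictly below $(\xi,n)$, contradicting the minimality that defines $(\xi,n)$. No transport along $k$ is required.
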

\begin{proof}
Part \ref{item:no_j_in_L(V_delta)}: Suppose $j\in\mathscr{E}_1(V_\delta)$. By 
Lemma \ref{lem:rank-to-rank_lemma}, we may assume
 {\L}o\'{s}' theorem and wellfoundedness for $\Ult(V,E_j)$.
But then as in Remark
\ref{rem:no_R_in_L(X)},
we get $i_{E_j}:V\to V$ is definable from $E_j$,
a contradiction.

Part \ref{item:no_j_deltabar_to_delta}\ref{item:j_not_stable}:
Suppose otherwise. By
Lemma \ref{lem:V=HOD(V_delta)_cof(delta)>om},
$\Sigma_0$-Los' criterion holds for
$\Ult(L(V_{\bar{\delta}}),E_j)$, so
the ultrapower map is $\Sigma_1$-elementary and hence
fully elementary.
The lemma does not give directly that the ultrapower is wellfounded,
because $\rg(j)\not\sub L(V_{\bar{\delta}})$,
but we can argue similarly to see that it is,
because 
$V_{\bar{\delta}}\sub L(V_{\bar{\delta}})$.
That is, if the ultrapower is illfounded, then since 
$\cof(\bar{\delta})=\cof(\delta)>\om$,
we can find some limit $\eta<\bar{\delta}$
and some $\chi\in\OR$
such that $\Ult_{\mathscr{F}}(\chi,E_{j\rest V_\eta})$
is illfounded, where $\mathscr{F}$ is the set of all functions
$f:V_\eta\to\chi$ such that $f$ is definable
from elements of $V_\eta$
and ordinals over $\J_\chi(V_{\bar{\delta}})$.
We can then build a sufficiently elementary hull
$\bar{\mathscr{F}}$ of this set of functions,
and a surjection $\pi:V_\eta\to\bar{\mathscr{F}}$,
such that $\Ult_{\bar{\mathscr{F}}}(\chi,E_{j\rest V_\eta})$
is also illfounded. But then we get an isomorphic
copy of $\bar{\mathscr{F}}$ in $V_{\bar{\delta}}$,
and reach a contradiction.

So the ultrapower map
$j^+:L(V_{\bar{\delta}})\to L(V_\delta)=V$ is elementary. But $j^+$ is 
definable over $V$ from $j$,
contradicting Suzuki \cite[Theorem 3.1]{suzuki_no_def_j}.

Part \ref{item:no_j_deltabar_to_delta}\ref{item:when_j_fully_elem}: By Lemma 
\ref{lem:ZF_equiv},
$V_{\bar{\delta}}\sats\ZF$ iff $V_\delta\sats\ZF$. If they model
$\ZF$ then $j$ is fully elementary by \cite{gaifman}.
Now suppose that $j$ is fully elementary but $\ZF$ fails,
for a contradiction. Suppose there is $\bar{\gamma}<\bar{\delta}$
and some cofinal function $\bar{f}:\bar{\gamma}\to\bar{\delta}$
which is definable from parameter $\bar{p}$ over $V_{\bar{\delta}}$.
Then defining $f:\gamma\to\delta$ over $V_\delta$ in the same way from 
$p=j(\bar{p})$,  
$f$ is cofinal in $\delta$. But note also that $j\com\bar{f}=f\com 
j\rest\bar{\gamma}$,
and therefore $j$ is continuous at $\bar{\gamma}$.
It easily follows that $j$ is $L(V_{\bar{\delta}})$-stable, contradiction.
It is similar in the other case.

Part  \ref{item:no_j_deltabar_to_delta}\ref{item:maximal_extension_of_j}:
Let $M=\J_{\bar{\xi}}(V_{\bar{\delta}})$, 
$U=\Ult_{\bar{n}}(M,,E_j)$
and $k$ the ultrapower map.

Almost like before,
$U$ is wellfounded. (Every element of $M$
is definable over $M$ from some element
of $V_{\bar{\delta}}$, because of the minimality of $\bar{\xi}$.
For $\eta<\bar{\delta}$, let $\mathscr{F}_\eta$ be the set of functions
used in forming the ultrapower which are definable over 
$M$ from some element of $V_\eta$.
Then since $\cof(\bar{\delta})=\cof(\delta)>\om$, we can find
$\eta<\bar{\delta}$ such that 
$\Ult_{\mathscr{F}_\eta}(M,E_{j\rest V_\eta})$
is illfounded. Now take a hull and proceed like before for a contradiction.)

{\L}o\'{s}' 
theorem goes through for all $\rSigma_{\bar{n}}$ formulas
(in arbitrary parameters of the ultrapower).
For let $\varphi$ be an $\rSigma_{\bar{n}}$ formula and
$f$ an $\bfrSigma_{\bar{n}}^{M}$-definable
function (with $f\in M$ if ${\bar{n}}=0$).
Let $X\in V_{\bar{\delta}}$ be some measure one set,
and suppose
\[ \all u\in X\ \Big[M\sats\exists w\ \varphi(w,f(u))\Big].\]
Let $g:X\cross V_\delta\to M$
be an $\bfrSigma_{\bar{n}}^{M}$-definable
$V_\delta$-uniformization of the relation $[u\in X\wedge \varphi(f(u),w)^M]$. 
Note then that by the minimality of $(\bar{\xi},{\bar{n}})$,
there is $\eta<\bar{\delta}$ such that for all $u\in X$,
there is $z\in V_\eta$ with $(u,z)\in\dom(g)$.
From here we can proceed as before.

It follows that $k$ is $\rSigma_{n+1}$-elementary.
And $k$ is continuous at $\bar{\delta}$ also by choice of $(\bar{\xi},\bar{n})$
(where if $\bar{\xi}=0$, this means that $\delta=\OR^U$).
But now let $\bar{f}$ witness the choice of $(\bar{\xi},\bar{n},\bar{\eta})$,
and take this with the ordertype of $\rg(\bar{f})$ minimal.
Let $\bar{<}$ be the induced prewellorder on $V_{\bar{\eta}}$.
So $\bar{f}$ is monotone increasing from $\bar{<}$
to $\in$ (and $\bar{<}\in V_{\bar{\delta}}$).
Fix a parameter $\bar{p}\in\J_{\bar{\xi}}(V_{\bar{\delta}})$
and an $\rSigma_{n+1}$ formula $\psi$
such that $\bar{f}(x)=y$ iff 
$\J_{\bar{\xi}}(V_{\bar{\delta}})\sats\psi(\bar{p},x,y)$,
and also with $\bar{p}$ directly encoding $\bar{<}$.
Note that the statement ``$\bar{f}:V_{\bar{\eta}}\to\bar{\delta}$''
is just ``for all $x\in V_{\bar{\eta}}\ \exists y\ [\psi(\bar{p},x,y)]$'',
which is $\neg\rSigma_{n+2}(\{\bar{p}\})$. 
And ``$\rg(\bar{f})$ is cofinal in $\bar{\delta}$''
is also  expressible at this level of complexity.
And note that the statement
``$\bar{f}:_{\mathrm{p}}(V_{\bar{\eta}},\bar{<})\to(\bar{\delta},\in)$
is a partial function which is monotone increasing on its domain'' is 
$\neg\rSigma_{n+1}(\{\bar{p}\})$.

Let $f$ be defined over $U$ from $p=k(\bar{p})$
and $\psi$, with domain $V_{\eta'}=k(V_{\bar{\eta}})=j(V_{\bar{\eta}})$.
Let ${<}=k(\bar{<})=j(\bar{<})$.
Since $k$ is $\rSigma_{n+1}$-elementary
and $k(\bar{\delta})=\delta$ (or $\bar{\xi}=0$ and $U=V_\delta$),
then $f:_{\mathrm{p}}(V_\eta,<)\to(\delta,\in)$
is a partial function which is monotone increasing on its domain,
and also $k\com\bar{f}=f\com k\rest V_{\bar{\eta}}$,
and therefore $f\rest k``V_{\bar{\eta}}$ is cofinal in $\delta$.
So letting $\J_{\xi'}(V_\delta)=U$, it follows
that $(\xi',\bar{n},\eta')\leq_{\lex}(\xi,n,\eta)$.
Moreover, $(\xi',\bar{n})=(\xi,n)$ by 
 $\rSigma_{n+1}$-elementarity.
 And note that by the elementarity of $j:V_{\bar{\delta}}\to V_\delta$,
 there is no $\eta''<\eta'$ and function $g:V_{\eta''}\to V_\eta$
 which is cofinal in $<$, and so $\eta'=\eta$.
 
Since $j$ is not $L(V_{\bar{\delta}})$-stable,
 it is not true that $f:(V_\eta,<)\to(\delta,\in)$
 is total, cofinal and monotone-increasing, and therefore
 $k$ is not $\rSigma_{n+2}$-elementary.
 
 Part \ref{item:j_not_lower_than_xi,n}: Since $\bar{\delta}<\delta$ and 
$j``\bar{\delta}$ is cofinal in $\delta$, this is just by choice of 
$(\xi,n)$.
\end{proof}

\section{Admissible 
$L_\kappa(V_\delta)$ and countable cofinality}\label{sec:admissible}

\begin{lem}\label{lem:V=L(V_delta)_cof(delta)=om}
\tu{(}$\ZF$\tu{)} Assume $V=L(V_\delta)$ where $\delta\in\Lim$ and 
$\cof(\delta)=\om$.
Suppose $\bar{\delta}\leq\delta$
and
$j:V_{\bar{\delta}}\to V_\delta$
is $\Sigma_1$-elementary
and $\in$-cofinal.\footnote{Note that if $\bar{\delta}=\delta$
then the hypothesis that $\cof(\delta)=\om$ is redundant,
by Theorem \ref{tm:V=L(V_delta)_cof(delta)>om}.}
Then:
\begin{enumerate}
\item\label{item:cof(delta)=om} $j$ 
is fully elementary.\footnote{Note that Theorem 
\ref{tm:V=L(V_delta)_cof(delta)>om}
does not allow us to conclude that $V_\delta\sats\ZF$,
because $\cof(\delta)=\om$.}
\item\label{item:cof(delta-bar)_in_inner} 
$L(V_{\bar{\delta}})\sats$``$\bar{\delta}$
is inaccessible or $\cof(\bar{\delta})=\om$'', so $j$ is 
$L(V_{\bar{\delta}})$-stable.
 \item\label{item:Sigma_0_Los_crit_for_J_alpha} For all $\alpha\leq\OR$,
 $\Sigma_0$-{\L}o\'{s}' criterion
 holds for $\Ult_0(\J_\alpha(V_{\bar{\delta}}),E_j)$.
\item\label{item:Ult_illfd}  $\Ult_0(L(V_{\bar{\delta}}),E_j)$ is illfounded.
\end{enumerate}
\end{lem}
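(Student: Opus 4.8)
The plan is to establish the four clauses in the order (\ref{item:cof(delta-bar)_in_inner}), (\ref{item:Sigma_0_Los_crit_for_J_alpha}), (\ref{item:cof(delta)=om}), (\ref{item:Ult_illfd}), each drawing on the earlier ones. Two preliminary remarks. First, $j$ fixes $\om$, so $\kappa_0:=\crit(j)>\om$ and the extender $E:=E_j$ is $\kappa_0$-complete; in particular each $E_a$, $a\in\langle V_\delta\rangle^{<\om}$, is a countably complete ultrafilter on the amenable subsets of $\langle V_{\bar\delta}\rangle^{<\om}$. Second, since $j$ is $\in$-cofinal, $j``\bar\delta$ is cofinal in $\delta$; combining with $\cof(\delta)=\om$, fix an increasing $\langle\bar\delta_n\rangle_{n<\om}$ with $\langle j(\bar\delta_n)\rangle_{n<\om}$ cofinal in $\delta$; by monotonicity of $j$ on ordinals $\sup_n\bar\delta_n=\bar\delta$, so $\cof(\bar\delta)=\om$ too and $V_\delta=\bigcup_n j(V_{\bar\delta_n})$. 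Clause (\ref{item:cof(delta-bar)_in_inner}) now follows from $\cof(\bar\delta)=\om$ alone: for $\bar\delta=\delta$ it is the hypothesis; for $\bar\delta<\delta$, if $L(V_{\bar\delta})\not\models$``$\bar\delta$ is inaccessible'' fix $\bar\gamma<\bar\delta$ and a cofinal $\bar f:V_{\bar\gamma}\to\bar\delta$ in $L(V_{\bar\delta})$, pick $x_n\in V_{\bar\gamma}$ with $\langle\bar f(x_n)\rangle_{n<\om}$ cofinal in $\bar\delta$, and note $\langle x_n\rangle_{n<\om}\in V_{\bar\gamma+\om}\sub V_{\bar\delta}\sub L(V_{\bar\delta})$, so $\bar f\circ\langle x_n\rangle_{n<\om}\in L(V_{\bar\delta})$ witnesses $\cof^{L(V_{\bar\delta})}(\bar\delta)=\om$. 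Thus $L(V_{\bar\delta})$ sees $\bar\delta$ as inaccessible (whence $j$ is $L(V_{\bar\delta})$-stable outright) or of cofinality $\om$ (whence $L(V_{\bar\delta})\models$``$\bar\delta$ singular'' and $j$ is continuous at $\om=\cof^{L(V_{\bar\delta})}(\bar\delta)$, since for any $L(V_{\bar\delta})$-cofinal $\bar h:\om\to\bar\delta$ the sequence $\langle j(\bar h(m))\rangle_{m<\om}$ is cofinal in $\delta$ by cofinality and monotonicity of $j$), so $j$ is $L(V_{\bar\delta})$-stable either way.

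For clause (\ref{item:Sigma_0_Los_crit_for_J_alpha}), the case $\alpha=\OR$ is immediate from \ref{lem:V=HOD(V_delta)_cof(delta)>om}(\ref{item:Los}) with $M=L(V_{\bar\delta})$ (transitive, $\models\ZF+$``$V=\HOD(V_{\bar\delta})$'', with $j$ $M$-stable by (\ref{item:cof(delta-bar)_in_inner})). For $\alpha<\OR$ I would re-run the proof of \ref{lem:V=HOD(V_delta)_cof(delta)>om}(\ref{item:Los}) with $M=\J_\alpha(V_{\bar\delta})$, using the standard fact that every element of $\J_\alpha(V_{\bar\delta})$ is definable over it from an element of $V_{\bar\delta}$ and an ordinal $<\max(\alpha,1)$ in place of the $V=\HOD$-hull equation. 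The only appeal to $M$-stability in that proof is to obtain, for fixed $f_0,\dots,f_n\in M$ of common domain $\langle V_\beta^M\rangle^{<\om}$, a $\Sigma_0$ formula $\varphi$, and $a\in\langle V_\delta\rangle^{<\om}$, that $A_{\bar\beta}\in E_a$ for some $\bar\beta<\bar\delta$, where $A_{\bar\beta}=\{u:\beta_u\le\bar\beta\}$ and $\beta_u$ is the least level of $V_{\bar\delta}$ from whose elements (with ordinals) a witness for $u$ is $M$-definable. Here the $A_{\bar\beta}$ are amenable subsets of the fixed set $\langle V_\beta^M\rangle^{<\om}\in V_{\bar\delta}$, increasing in $\bar\beta$, with $\bigcup_n A_{\bar\delta_n}\in E_a$, so countable completeness of $E_a$ yields some $A_{\bar\delta_n}\in E_a$. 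The remaining steps (pass to a larger index $b\supseteq a$ coding a $V_{\bar\beta}$-witness; define the witnessing $g$ by minimizing over formulas and ordinals $<\max(\alpha,1)$; check $g\in M$ using rudimentary closure) go through verbatim.

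Clause (\ref{item:cof(delta)=om}): for $\bar\delta=\delta$, $j$ is fully elementary by \cite[Theorem 5.6***]{cumulative_periodicity} (its finite iterates are, and $j=j_0$). For $\bar\delta<\delta$ I would apply (\ref{item:Sigma_0_Los_crit_for_J_alpha}) at all levels, bootstrapping $\Sigma_0$-{\L}o\'s' criterion up to full $\rSigma_n$-{\L}o\'s' criterion for each $\Ult_n(\J_\alpha(V_{\bar\delta}),E)$ via $\bfrSigma_n$-uniformization modulo $V_{\bar\delta}$, exactly as in the proof of \ref{tm:V=L(V_delta)_cof(delta)>om}(\ref{item:no_j_deltabar_to_delta})(\ref{item:maximal_extension_of_j}) --- but now \emph{no} stability hypothesis is required, because $\cof(\bar\delta)=\om$ secures both wellfoundedness of these ultrapowers (build an illfounded hull over some $V_\eta$, $\eta<\bar\delta$, as there) and the {\L}o\'s-claims (countable completeness again). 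This produces, for every $\alpha$, an $\rSigma_n$-elementary $k_\alpha:\J_\alpha(V_{\bar\delta})\to\J_{\alpha'}(V_\delta)$ extending $j$; taking $\alpha$ past the first level at which $\J_\alpha(V_{\bar\delta})$ computes truth in $V_{\bar\delta}$, every statement over $V_{\bar\delta}$ reflects through $j$, so $j$ is fully elementary. (Below the first level at which $\ZF$ fails in $V_{\bar\delta}$ this is already given by \ref{lem:ZF_equiv}(\ref{item:j[C^M_n]}); the content is pushing past it, which the fine-structural ultrapowers do.) I expect this bootstrapping to be the most delicate step to write out carefully.

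Finally, clause (\ref{item:Ult_illfd}): suppose toward a contradiction that $U:=\Ult_0(L(V_{\bar\delta}),E)$ is wellfounded. By (\ref{item:Sigma_0_Los_crit_for_J_alpha}) with $\alpha=\OR$ and Generalized {\L}o\'s' Theorem \ref{tm:generalized_Los}, the ultrapower map $i_E:L(V_{\bar\delta})\to U$ is $\in$-cofinal and $\Sigma_1$-elementary and $U$ is extensional; since $L(V_{\bar\delta})\models\ZF$, \ref{lem:ZF_equiv} gives $U\models\ZF$ and makes $i_E$ fully elementary, so by wellfoundedness we take $U$ transitive. Now $j\sub i_E$, and by the $L(V_{\bar\delta})$-stability from (\ref{item:cof(delta-bar)_in_inner}) one checks $i_E``\bar\delta=j``\bar\delta$ is cofinal in $i_E(\bar\delta)$, whence $i_E(\bar\delta)=\sup j``\bar\delta=\delta$ and so $i_E(V_{\bar\delta})=V_\delta^U=V_\delta$ (the last equality since $V_\delta=\spt(E)\sub\wfp(U)=U$). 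Therefore $U\models$``$V=L(V_\delta)$'' and $\OR^U=\OR$, so $U=L(V_\delta)=V$, and $i_E:L(V_{\bar\delta})\to V$ is a non-trivial, $\in$-cofinal, fully elementary embedding definable from the set parameters $V_{\bar\delta},E$, with $i_E(V_{\bar\delta})=V_\delta$ and $V=L(V_\delta)$; exactly as in the proof of Remark \ref{rem:no_R_in_L(X)}, this situation is excluded by Suzuki's Fact \ref{fact:suzuki_no_def_j}. Hence $U$ is illfounded.
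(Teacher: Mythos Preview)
Your arguments for parts (\ref{item:cof(delta-bar)_in_inner}) and (\ref{item:Sigma_0_Los_crit_for_J_alpha}) are correct and match the paper's, and your overall strategy for (\ref{item:Ult_illfd}) is the same as the paper's. One small point on (\ref{item:Ult_illfd}): when $\bar{\delta}<\delta$, the map $i_E:L(V_{\bar{\delta}})\to V$ is not of the form $j:V\to V$, so Fact \ref{fact:suzuki_no_def_j} does not literally apply; the paper cites instead \cite[Theorem 3.1]{suzuki_no_def_j}, which rules out definable-from-parameters elementary embeddings from a definable transitive class into $V$.

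The genuine gap is in your treatment of part (\ref{item:cof(delta)=om}) when $\bar{\delta}<\delta$. Your plan is to push $j$ up to $\rSigma_{n+1}$-elementary maps $k_\alpha:\J_\alpha(V_{\bar{\delta}})\to\J_{\alpha'}(V_\delta)$ via fine-structural ultrapowers, and you assert that $\cof(\bar{\delta})=\om$ ``secures wellfoundedness of these ultrapowers (build an illfounded hull over some $V_\eta$, $\eta<\bar{\delta}$, as there)''. But that hull argument, as carried out in the proof of Theorem \ref{tm:V=L(V_delta)_cof(delta)>om}(\ref{item:no_j_deltabar_to_delta})(\ref{item:maximal_extension_of_j}) and in Lemma \ref{lem:V=HOD(V_delta)_cof(delta)>om}(\ref{item:wfd}), uses $\cof(\bar{\delta})>\om$ in an essential way: one needs uncountable cofinality to find a single $\eta<\bar{\delta}$ at which the illfoundedness already appears. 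With $\cof(\bar{\delta})=\om$ this step fails, and indeed part (\ref{item:Ult_illfd}) of this very lemma shows that the full ultrapower $\Ult_0(L(V_{\bar{\delta}}),E_j)$ \emph{is} illfounded. So you have no wellfoundedness for the $k_\alpha$ you need, and the route through ultrapowers does not obviously go through.

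The paper instead proves (\ref{item:cof(delta)=om}) directly and first, by a self-contained induction on $n\geq 1$ adapting \cite[Theorem 5.6]{cumulative_periodicity}: one shows simultaneously that $j$ is $\Sigma_n$-elementary and that for each $\alpha<\bar{\delta}$,
\[
j\bigl(\Th_{\Sigma_n}^{V_{\bar{\delta}}}(V_\alpha)\bigr)=\Th_{\Sigma_n}^{V_{\delta}}(V_{j(\alpha)}).
\]
This uses only that $j$ is $\in$-cofinal $\Sigma_1$-elementary and that $\cof(\bar{\delta})=\om$ (so that, e.g., each $\Sigma_n$ theory over $V_{\bar{\delta}}$ with parameters from a bounded $V_\alpha$ lies in $V_{\bar{\delta}}$ and is hit by $j$); no ultrapower, no wellfoundedness, and no fine structure is needed.
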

\begin{proof}[Proof Sketch]
We write $\J_\alpha$ for $\J_\alpha(V_\delta)$.

Part \ref{item:cof(delta)=om}:
By  \cite[Theorem 
5.6***]{cumulative_periodicity} if $\bar{\delta}=\delta$,
and otherwise,  
the proof is a straightforward adaptation.
(Show by induction on $n\in[1,\om)$
that $j$ is $\Sigma_n$-elementary
and that for each $\alpha<\bar{\delta}$
we have
\[ 
j(\Th_{\Sigma_n}^{V_{\bar{\delta}}}(V_\alpha))=\Th_{\Sigma_n}^{V_{\delta}}(V_{
j(\alpha)}).\]
The proof of these facts is just like in \cite{cumulative_periodicity}.)

Part \ref{item:cof(delta-bar)_in_inner}:
A routine consequence of the fact that $\cof(\bar{\delta})=\cof(\delta)=\om$ in 
$V$.

Part \ref{item:Sigma_0_Los_crit_for_J_alpha}: This is much as 
in the proof of Lemma \ref{lem:V=HOD(V_delta)_cof(delta)>om}, using that 
\[ \J_\beta=\Hull_{\Sigma_1}^{\J_\beta}
(V_{\bar{\delta}}\cup(\bar{\delta}+\beta))\]
for all $\beta\leq\alpha$.
Suppose for example that $\alpha=\beta+1$.
Let $f,h\in\J_\alpha$ with $f,h:\left<V_{\bar{\delta}}\right>^{<\om}\to V$.
Suppose that for $E_a$-measure one many $u$,
we have
\[ \J_\alpha\sats\exists y\in h(u)\ [\varphi(f(u),y)],\]
where $\varphi$ is $\Sigma_0$. We have $m<\om$
such that $f,g\in\Ss_m(\J_\beta)$, where $\Ss$ denotes
Jensen's $\Ss$-operator (so $\J_\alpha=\bigcup_{k<\om}\Ss_k(\J_\beta)$).
Fix a surjection 
\[ \pi:(V_{\bar{\delta}}\cross\beta^{<\om})\to\Ss_m(\J_\beta) \]
with $\pi\in\J_\alpha$. Then arguing as before,
using that $\cof(\bar{\delta})=\om$,
we can find $\xi<{\bar{\delta}}$ such that for $E_a$-measure one many
$u$, there is $y\in\pi``(V_\xi\cross\beta^{<\om})$
such that
\begin{equation}\label{eqn:J_alpha_sats_varphi} \J_\alpha\sats y\in h(u)\ \&\ 
\varphi(f(u),y).\end{equation}
Now for pairs $(u,v)\in\left<V_{\bar{\delta}}\right>^{<\om}\cross V_\xi$,
let $g'(u,v)$ be the least $y\in\pi``(\{v\}\cross\beta^{<\om})$
such that line (\ref{eqn:J_alpha_sats_varphi}) holds,
if there is such a $y$. Then we find an appropriate
index $b$ and convert $g'$ into a function $g$,
with $(b,g)$ witnessing $\Sigma_0$-{\L}o\'{s}' criterion, like before.

Part \ref{item:Ult_illfd}: By part 
\ref{item:Sigma_0_Los_crit_for_J_alpha}, we would otherwise get
$i^{L(V_{\bar{\delta}}),0}_E:L(V_{\bar{\delta}})\to V$ elementary, 
contradicting Suzuki \cite[Theorem 
3.1]{suzuki_no_def_j}.
\end{proof}

\begin{rem}
Let $M$ be a transitive set. Recall that $M$ is \emph{admissible}
iff $M$ satisfies Pairing, Infinity,
$\Sigma_0$-Separation,
and whenever $d,p\in M$ and $\varphi$ is a 
$\Sigma_1$ formula and
$M\sats\all x\in d\ \exists y\ \varphi(x,y,p)$
then there is $e\in M$ such that
$M\sats\all x\in d\ \exists y\in e\ \varphi(x,y,p)$.
\end{rem}
\begin{dfn}\label{dfn:kappa_X}
 Given a transitive set $X$, let $\kappa_X$ denote
 the least $\kappa\in\OR$ such that $\J_\kappa(X)$ is admissible.
\end{dfn}

Recall the notation $\wfp$ and $\illfp$ from \S\ref{subsec:notation}.
A well-known fact is:
\begin{fact}\label{fact:wfp_admissible}
\tu{(}$\ZF$\tu{)} Let $M$ be an extensional structure in the language of set 
theory,
 let $X\in\wfp(M)$
 \tu{(}and we assume $M$ is transitive below $X$\tu{)}.
Suppose $M\sats$``$V=L(X)$''
 \tu{(}but $M$ might not satisfy $\ZF$\tu{)}. If $M$ is 
illfounded then $\J_{\kappa_X}(X)\sub M$.
\end{fact}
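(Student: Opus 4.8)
\emph{Proof proposal.} The plan is to reduce the statement to the assertion that $\wfp(M)$ is admissible, and then establish that assertion by a truncation-style argument.

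First I would analyze $\wfp(M)$. Since $X\in\wfp(M)$, $M$ is transitive below $X$, and $M\sats$``$V=L(X)$'', a standard condensation observation applies: each level $\J_\xi^M(X)$ of $M$'s constructibility hierarchy that lies inside $\wfp(M)$ is a genuine $\J_\mu(X)$, because the rudimentary functions generating the hierarchy are absolute between $M$ and $V$ at wellfounded stages; and $\wfp(M)$ is exactly the union of these genuine levels. Hence $\wfp(M)=\J_\lambda(X)$ where $\lambda=\OR\inter\wfp(M)$ (one checks $\lambda$ is a limit ordinal, since $M$'s next level would again be a genuine, hence wellfounded, set), and $M$ end-extends $\wfp(M)$. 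Since $\J_{\kappa_X}(X)\sub\J_\lambda(X)=\wfp(M)\sub M$ as soon as $\lambda\geq\kappa_X$, it suffices to show $\J_\lambda(X)$ is admissible. Pairing, Infinity and $\Sigma_0$-Separation clearly hold at any sufficiently tall limit $\J$-level, so the content is $\Sigma_1$-collection.

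Suppose $\Sigma_1$-collection fails in $\J_\lambda(X)$, witnessed by $a,p\in\J_\lambda(X)$ and a $\Sigma_1$ formula $\varphi=\exists w\,\varphi_0$ (with $\varphi_0$ being $\Sigma_0$): so $\J_\lambda(X)\sats\forall x\in a\ \exists y\ \varphi(x,y,p)$, but no $b\in\J_\lambda(X)$ bounds the witnesses. Working in $M$ and using $M\sats$``$V=L(X)$'', for each $x\in a$ let $\sigma(x)$ be the least $\xi\in\OR^M$ with $\J_\xi^M(X)\sats\exists y\ \varphi(x,y,p)$. I would verify: (i) for each genuine $x\in a$ we have $\sigma(x)<\lambda$, because $\J_\lambda(X)\sats\exists y\,\varphi(x,y,p)$ supplies genuine witnesses $y_0,w_0$ lying in a genuine level $\J_{\mu_0}(X)$ with $\mu_0<\lambda$, and $M$ computes that level and the truth of the $\Sigma_0$ matrix there correctly; and (ii) $\sup_{x\in a}\sigma(x)=\lambda$ — for if this sup were some $\mu^\ast<\lambda$, then, running the absoluteness of (i) in reverse, $\J_{\mu^\ast}(X)\in\J_\lambda(X)$ would bound all the witnesses, contradicting the choice of $a,p,\varphi$.

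Now I would invoke illfoundedness. Since $M$ is illfounded, $\OR^M$ is illfounded; fix $\nu\in\OR^M$ in its illfounded part. As $\wfp(M)\inter\OR^M$ is an $\in^M$-initial segment of $\OR^M$, the ordinal $\nu$ lies $\in^M$-above every $M$-ordinal corresponding to a genuine ordinal below $\lambda$; in particular $\sigma(x)<^M\nu$ for every genuine $x\in a$, and $a,p\in\J_\nu^M(X)$. Inside $M$, the set $R=\{\sigma(x)\bigm|x\in a\}$ is definable over $\J_\nu^M(X)$ by a bounded formula in the parameters $a,p,\nu$ — all the quantifiers involved (over levels $\J_\xi(X)$ with $\xi<\nu$, over $y,w$ in such levels, and the minimality clause) are bounded by $\J_\nu^M(X)$ — so $R\in\J^M_{\nu+\omega}(X)\sub M$ by the $L(X)$-closure of $M$. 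But $R$ is genuinely a set of genuine ordinals below $\lambda$, hence genuinely wellfounded, so $R\in\wfp(M)=\J_\lambda(X)$; thus $R\in\J_\mu(X)$ for some genuine $\mu<\lambda$, whence $R$ is bounded below $\lambda$, contradicting (ii). Therefore $\J_\lambda(X)$ is admissible and $\J_{\kappa_X}(X)\sub\J_\lambda(X)=\wfp(M)\sub M$.

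The step I expect to be the main obstacle is the last paragraph: extracting the range $R$ of the ``level'' function $\sigma\rest a$ as an element of $M$ using only that $M\sats$``$V=L(X)$'' (and not full $\ZF$ or even Collection), which is what forces one to define $\sigma$ by a bounded formula whose quantifiers are all capped by a single level $\J_\nu^M(X)$ with $\nu$ taken from the illfounded part — together with the absoluteness bookkeeping in (i) and (ii) that genuine $\J$-levels, and $\Sigma_0$ truth in them, are computed correctly inside $M$.
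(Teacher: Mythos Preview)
Your argument has a genuine gap: the identity $\wfp(M)=\J_\lambda(X)$ that you assert at the outset is not justified, and in fact is essentially equivalent to the admissibility you are trying to prove. Only the inclusion $\J_\lambda(X)\sub\wfp(M)$ is clear. For the other direction you write that ``$\wfp(M)$ is exactly the union of these genuine levels'', but an element $y\in\wfp(M)$ only satisfies $M\sats y\in\J_\alpha(X)$ for \emph{some} $\alpha\in\OR^M$, and nothing forces this $\alpha$ to be standard. Your own set $R$ illustrates the problem: all $\in^M$-elements of $R$ are genuine ordinals below $\lambda$, so there is no infinite $\in^M$-descending chain from $R$ and hence $R\in\wfp(M)$; yet $R$ is cofinal in $\lambda$, so $R\notin\J_\mu(X)$ for any $\mu<\lambda$, i.e.\ $R\notin\J_\lambda(X)$. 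Thus, under the hypothesis that $\J_\lambda(X)$ is inadmissible, $R$ itself witnesses $\wfp(M)\neq\J_\lambda(X)$, and your final step is circular.

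The fix is small but essential, and it is what the paper does. Rather than trying to place $R$ inside $\J_\lambda(X)$, one produces $\lambda$ itself as an element of $M$, which is an immediate contradiction. In your notation this amounts to taking the $M$-downward closure $D=\{\alpha\in\OR^M\mid\exists x\in a\ (\alpha\leq^M\sigma(x))\}$; since each $\sigma(x)$ is a genuine ordinal below $\lambda$ and these are cofinal in $\lambda$, one checks $D=\lambda$ exactly, and $D\in M$ by rudimentary closure. The paper streamlines this further: having chosen $d,p,\varphi$ so that $\lambda$ is the \emph{least} $\lambda'$ with $\J_{\lambda'}(X)\sats\forall x\in d\ \exists y\ \varphi(x,y,p)$, one observes that this $\Pi_2$ statement persists to every $\J_\alpha^M(X)$ with $\alpha\in\OR^M\setminus\lambda$; hence the $M$-definable set $\{\alpha\in\OR^M\mid\J_\alpha^M(X)\sats\neg\forall x\in d\ \exists y\ \varphi(x,y,p)\}$ is exactly $\lambda$, so $\lambda\in M$, contradiction. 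This avoids any appeal to the shape of $\wfp(M)$ beyond $\J_\lambda(X)\sub M$.
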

\begin{proof}
Let $\lambda=\OR\inter\wfp(M)$.
 Because $\lambda\psub\OR^M$ but $\lambda\notin M$,
 and $M\sats$``$V=L(X)$'', and hence, $M\sats$``I am rudimentarily closed'',
 it is easy to see that $\lambda$ is closed under ordinal addition and 
multiplication. Moreover, it is easy to see that $\J_\lambda(X)\sub M$,
and hence, $\J_\lambda(X)\sub\wfp(M)$.

Now suppose that $\lambda<\kappa_X$. Then we can fix a $\Sigma_1$ formula 
$\varphi$
 and $d,p\in\J_\lambda(X)$ such that $\lambda$ is the least $\lambda'$ such that
 \[ \J_{\lambda'}(X)\sats\all x\in d\ \exists y\ \varphi(x,y,p).\]
Note then that for all $\alpha\in\OR^M\cut\lambda$,
\[ M\sats\text{``}\J_\alpha(X)\sats\all x\in d\ \exists y\ 
\varphi(x,y,p)\text{''}.\]
But then for such $\alpha$,
\[ M\sats\text{``}\{\alpha\in\OR\bigm|\J_\alpha(X)\sats\neg\all x\in d\ \exists 
y\ \varphi(x,y,p)\}\in\J_{\alpha+1}(X)\text{''}.\]
But note that this set is exactly $\lambda$, so $\lambda\in M$, a contradiction.
\end{proof}

\begin{fact}\label{fact:always_Hull_below_admissible}
\tu{(}$\ZF$\tu{)} Let $X$ be transitive. Then for every $\alpha\leq\kappa_X$,
 we have
 \[ 
\J_\alpha(X)=\Hull_{\Sigma_1}^{\J_\alpha(X)}
(X\cup\{X\}).\]
Therefore \tu{(}i\tu{)}
$\pow(X)\inter\J_{\alpha+1}(X)\not\sub\J_\alpha(X)$
and \tu{(}ii\tu{)} for every $x\in\J_\alpha(X)$ there is a surjection
$\pi:X\to x$ with $\pi\in\J_\alpha(X)$.
\end{fact}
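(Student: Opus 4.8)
The plan is to prove the displayed equation by induction on $\alpha\leq\kappa_X$, and then read off (i) and (ii). Throughout write $H_\alpha=\Hull_{\Sigma_1}^{\J_\alpha(X)}(X\cup\{X\})$. When $\J_\alpha(X)=\J(X)$, every element is $f(X,\vec x)$ for a rudimentary $f$ and $\vec x\in X^{<\om}$, and by Remark~\ref{rem:J(X)} the graph of $(f,\vec x)\mapsto f(X,\vec x)$ is $\Sigma_1$ over $\J(X)$ in the parameter $X$; since each $\vec x\in X^{<\om}$ is $\Sigma_0$-definable from its entries, this gives $H_\alpha=\J(X)$. For the inductive step I would prove simultaneously the apparently stronger statement that for every $\alpha\leq\kappa_X$: \textup{(a)} $H_\alpha=\J_\alpha(X)$, and \textup{(b)} $\J_\beta(X)\in H_\alpha$ for every $\beta<\alpha$. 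Granting \textup{(b)}, \textup{(a)} is bookkeeping with rudimentary functions: at a successor $\alpha=\delta+1$, any $y=g(\J_\delta(X),\vec a)\in\J_{\delta+1}(X)$ has, by \textup{(a)} at $\delta$, each $a_i$ $\Sigma_1$-definable over $\J_\delta(X)$ from members of $X$, hence (using $\J_\delta(X)\in H_{\delta+1}$ from \textup{(b)}, and that the satisfaction predicate of the \emph{set} $\J_\delta(X)$ is $\Sigma_1$ over $\J_{\delta+1}(X)$ in the parameter $\J_\delta(X)$) each $a_i\in H_{\delta+1}$, so $y\in H_{\delta+1}$; the limit case is the same, reading off witnesses at a suitable earlier level. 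Likewise \textup{(b)} at $\delta+1$ for $\beta<\delta$ reduces, via \textup{(a)} at $\delta$, to the single fact $\J_\delta(X)\in H_{\delta+1}$, and at a limit to $\J_\beta(X)\in H_\alpha$ for all $\beta<\alpha$. So the whole induction reduces to: for $\beta<\alpha\leq\kappa_X$, the ordinal $\beta$ (equivalently $\J_\beta(X)$) lies in $H_\alpha$.

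This last clause is the technical heart and the step I expect to be the main obstacle: it is the assertion that the $\Sigma_1$-hull of $X\cup\{X\}$ inside $\J_\alpha(X)$ condenses back onto $\J_\alpha(X)$ for all $\alpha\leq\kappa_X$ --- i.e., the relativization to $X$ of the standard fine-structure fact that the levels of $L$ up to and including the first admissible one are pointwise $\Sigma_1$-definable without real parameters. I would argue it as usual: since $H_\alpha\elem_{\Sigma_1}\J_\alpha(X)$ and $X\cup\{X\}\sub H_\alpha$ with $X$ transitive, $H_\alpha$ is extensional, and by condensation for the relativized $\J$-hierarchy its transitive collapse is some $\J_{\bar\alpha}(X)$, with uncollapse $\pi:\J_{\bar\alpha}(X)\to\J_\alpha(X)$ that is $\Sigma_1$-elementary and fixes $X\cup\{X\}$ pointwise. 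If $H_\alpha\neq\J_\alpha(X)$ then $\pi$ is not surjective; let $\mu$ be the least ordinal either moved by $\pi$ or (when $\pi$ is an inclusion with $\bar\alpha<\alpha$) omitted from its range, so $\mu<\alpha$ and $\mu\notin H_\alpha$, while by the inductive hypothesis \textup{(b)} applied below $\mu$ the relevant smaller levels already lie in $H_\alpha$. One then converts the discontinuity of $\pi$ at $\mu$ into a $\Sigma_1^{\J_\alpha(X)}(X\cup\{X\})$ partial map whose range is unbounded in $\OR\inter\J_\alpha(X)$ but whose domain is a set coded already in $\J_{\mu+\om}(X)$; reflecting this failure of $\Sigma_1$-replacement downward produces an admissible level strictly below $\alpha$, contradicting $\alpha\leq\kappa_X$ together with the minimality of $\kappa_X$. (The case $\bar\alpha=\alpha$ with $\pi$ a nontrivial self-embedding is handled the same way, using that $\crit(\pi)<\alpha$ forces the levels below $\crit(\pi)$ into $H_\alpha$.) Everything else in the induction is routine.

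Finally, parts (i) and (ii). Given the displayed equation, and using that $X$ carries a surjection onto $\om\times X^{<\om}$ inside $\J_1(X)$ (automatic when $X=V_\delta$ with $\delta\in\Lim$, the case at hand), fix a $\Sigma_1^{\J_\alpha(X)}(\{X\})$ partial surjection $s$ of $X$ onto $\J_\alpha(X)$. For (i) put
\[ D=\{u\in X\bigm| s(u)\text{ is undefined, or }s(u)\text{ is defined and }u\notin s(u)\}. \]
Rewriting the $\Sigma_1$ definition of (the graph of) $s$ with the parameter $\J_\alpha(X)\in\J_{\alpha+1}(X)$ turns all its quantifiers into ones bounded by $\J_\alpha(X)$, so $D$ is $\Sigma_0$-definable over $\J_{\alpha+1}(X)$ from $\{\J_\alpha(X),X\}$, whence $D\in\pow(X)\inter\J_{\alpha+1}(X)$; and $D\notin\rg(s)=\J_\alpha(X)$ by the Russell argument (if $D=s(u_0)$ then $u_0\in D\iff u_0\notin D$). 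This gives (i). For (ii), given $x\in\J_\alpha(X)$ we have $x\sub\J_\alpha(X)=\rg(s)$; let $\pi$ be $s$ cut down to $s^{-1}(x)$ and then extended by a fixed value in $x$ on the rest of $X$ (trivial if $x=\emptyset$). Then $\pi:X\to x$ is a surjection, and the same computation as in (i), now carried out with the \emph{set} $x\in\J_\alpha(X)$ in place of $\J_\alpha(X)$ and reading off $s$ at a level $\J_\beta(X)$ with $x\in\J_\beta(X)$, shows via $\Sigma_0$-separation inside the $\J$-hierarchy that $\pi\in\J_\alpha(X)$.
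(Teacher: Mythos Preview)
Your inductive framework is reasonable and the treatments of (i) and (ii) are essentially correct, but the description of the ``technical heart'' has the logic inverted. You write that the discontinuity of $\pi$ gives a $\Sigma_1$ cofinal map, and that ``reflecting this failure of $\Sigma_1$-replacement downward produces an admissible level strictly below $\alpha$''. A failure of $\Sigma_1$-collection witnesses \emph{non}-admissibility, not admissibility, so this sentence cannot be right as stated; and for $\alpha<\kappa_X$ the non-admissibility of $\J_\alpha(X)$ is no contradiction at all. The actual mechanism runs the other way. The paper argues directly: with $\beta=\sup(H\cap\OR)$ (so $H=\J_\beta(X)$ after condensation), if $\beta<\alpha\leq\kappa_X$ then $\J_\beta(X)$ is \emph{not} admissible by minimality of $\kappa_X$. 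Fix $d,p\in\J_\beta(X)$ and $\Sigma_1$ $\varphi$ such that $\beta$ is least with $\J_\beta(X)\models\forall x\in d\,\exists y\,\varphi(x,y,p)$. Then $\J_\alpha(X)\models\exists\beta'[\J_{\beta'}(X)\models\forall x\in d\,\exists y\,\varphi(x,y,p)]$, a $\Sigma_1$ statement in $d,p,X$; the least witness is $\beta$, so $\beta\in H$, contradiction. In your inductive setup the same move works: since $d,p\in\J_\beta(X)$ and the result holds at $\beta$ by induction, $d,p\in H_\alpha$, hence $\beta\in H_\alpha$. So the key input is the \emph{non}-admissibility of every $\J_\beta(X)$ with $\beta<\kappa_X$, used to manufacture a $\Sigma_1$ definition of $\beta$; there is no need to produce unbounded maps or reflect anything.

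Once that step is fixed, your induction collapses to essentially the paper's argument, which does it in one pass without the (a)/(b) bookkeeping. A minor point on (ii): your ``read off $s$ at a level $\J_\beta(X)$ with $x\in\J_\beta(X)$'' only handles limit $\alpha$; at a successor $\alpha=\gamma+1$ with $x\in\J_{\gamma+1}(X)\setminus\J_\gamma(X)$ you need to drop to the $\Ss$-hierarchy (as the paper does, using $\Ss_n(\J_\gamma)=\Hull_{\Sigma_1}^{\Ss_n(\J_\gamma)}(X\cup\{X,\gamma\})$) to get the surjection inside $\J_{\gamma+1}(X)$.
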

\begin{proof}
Let $H=\Hull_1^{\J_\alpha(X)}(X\cup\{X\})$
and $\beta=\sup(H\inter\OR)$. Note then that
$H=\J_\beta(X)$.
So it suffices to see that $\beta=\alpha$, so suppose $\beta<\alpha$.
Then $\J_\beta(X)$ is inadmissible. So let $p,d\in\J_\beta(X)$
and $\varphi$ be $\Sigma_1$, such that $\beta$ is least such that
\[ \J_\beta(X)\sats\all x\in d\ \exists y\ \varphi(x,y,p).\]
Then
\[ \J_\alpha(X)\sats\exists\beta'\in\OR\ [\J_{\beta'}(X)\sats\all 
x\in d\ \exists y\ \varphi(x,y,p)].\]
But $\beta$ is the least such $\beta'$, and since
$p,d\in H$, it follows that $\beta\in H$, a contradiction.

Part (i) of the ``therefore'' clause now follows by a standard diagonalization.
For part (ii), if $\alpha=\beta+1$, use that 
$\J_\alpha=\bigcup_{n<\om}\Ss_n(\J_\beta)$
(where $\Ss_n$ is the $n$th iterate of Jensen's $\Ss$-operator)
and for each $n\in[1,\om)$, 
\[ 
\Ss_n(\J_\beta)=\Hull_{\Sigma_1}^{\Ss_n(\J_\beta)}
(X\cup\{X,\beta\}).\qedhere\]
\end{proof}

We now prove the promised strengthening of Theorem 
\ref{tm:j_not_amenably_Sigma_1} (note that if $\bar{\delta}=\delta$
then the hypothesis that $\cof(\delta)=\om$ is redundant, by Theorem 
\ref{tm:V=L(V_delta)_cof(delta)>om}):
\begin{tm}\label{tm:no_j_in_adm} \tu{(}$\ZF$\tu{)} Let $\delta\in\Lim$ with 
$\cof(\delta)=\om$ and 
$j:V_{\bar{\delta}}\to 
V_\delta$ be
$\Sigma_1$-elementary and $\in$-cofinal. Let 
$\theta=\kappa_{V_\delta}$  \tu{(}see \ref{dfn:kappa_X}\tu{)}. Then 
$j\notin\J_\theta(V_\delta)$.
In fact,  $j$ is not $\bfSigma_1^{\J_\theta(V_\delta)}$,
and  not $\bfPi_1^{\J_\theta(V_\delta)}$.
\end{tm}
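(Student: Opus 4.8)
The plan is to reduce the three assertions to the single statement $j\notin\J_\theta(V_\delta)$, and then to derive a contradiction from $j\in\J_\theta(V_\delta)$ by forming an extender ultrapower of $\J_\theta(V_\delta)$ and exploiting the admissibility of $\theta$.

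\emph{Reduction.} Suppose $j$ were $\bfSigma_1^{\J_\theta(V_\delta)}$ or $\bfPi_1^{\J_\theta(V_\delta)}$, say via a formula $\psi$ and a parameter $p$. Since $j$ is a total function on $V_{\bar{\delta}}$, for each $\alpha<\bar{\delta}$ and each $x\in V_\alpha$ the statement ``$\langle x,y\rangle\in j\rest V_\alpha$'' is $\Pi_1$ over $\J_\theta(V_\delta)$ while its negation is $\Sigma_1$ (in the $\bfSigma_1$ case ``$y\neq j(x)$'' becomes ``$\ex y'\neq y\,\psi(x,y',p)$''; in the $\bfPi_1$ case it is just ``$\no\psi(x,y,p)$''); hence $j\rest V_\alpha$ is $\Delta_1$ over $\J_\theta(V_\delta)$ and so $j\rest V_\alpha\in\J_\theta(V_\delta)$ by $\Delta_1$-Separation. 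Running the same analysis one level up, ``$w\in\widetilde{j}$'' and ``$w\notin\widetilde{j}$'' are then both $\Sigma_1$ over $\J_\theta(V_\delta)$ (using $\Sigma_1$-Collection to bound the witnesses of ``$w$ is a function contained in $j$''), so $\widetilde{j}=\{w\in V_\delta\bigm|w\in\widetilde{j}\}\in\J_\theta(V_\delta)$ by $\Delta_1$-Separation and $j=\bigcup\widetilde{j}\in\J_\theta(V_\delta)$ by rudimentary closure. Conversely $j\in\J_\theta(V_\delta)$ makes $j$ be $\bfSigma_1^{\J_\theta(V_\delta)}$ by Fact \ref{fact:always_Hull_below_admissible}. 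So it suffices to show $j\notin\J_\theta(V_\delta)$.

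\emph{The ultrapower.} Assume toward a contradiction that $j\in\J_\theta(V_\delta)$; when $\bar{\delta}=\delta$ we may also assume $j$ fully elementary by passing to a finite iterate $j_n\in\J_\theta(V_\delta)$, as in \cite{cumulative_periodicity}. First I would check that $\Sigma_0$-{\L}o\'s' criterion holds for $\Ult_0(\J_\gamma(V_\delta),E_j)$ for every $\gamma\leq\theta$, where $E_j$ is the extender derived from $j$; the argument is that of Lemma \ref{lem:V=L(V_delta)_cof(delta)=om}(\ref{item:Sigma_0_Los_crit_for_J_alpha}) and Lemma \ref{lem:V=HOD(V_delta)_cof(delta)>om}(\ref{item:Los}), using that $\J_\gamma(V_\delta)=\Hull_{\Sigma_1}^{\J_\gamma(V_\delta)}(V_\delta\cup\{V_\delta\})$ for $\gamma\leq\theta$ and that each $\Ss_m(\J_\beta(V_\delta))$ ($\beta<\theta$, $m<\om$) is a surjective image of $V_\delta$ by a map in $\J_{\beta+1}(V_\delta)$ (both by Fact \ref{fact:always_Hull_below_admissible}), together with $\cof(\delta)=\cof(\bar{\delta})=\om$, which lets one compress the witnesses of a ``measure-one $\ex y$'' statement into a single $V_\xi$ with $\xi<\bar{\delta}$. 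By Generalized {\L}o\'s' Theorem (\ref{tm:generalized_Los}) the ultrapower map $i:=i^{\J_\theta(V_\delta),0}_{E_j}\colon\J_\theta(V_\delta)\to U:=\Ult_0(\J_\theta(V_\delta),E_j)$ is then $\in$-cofinal and $\Sigma_1$-elementary, with $j\sub i$, $\crit(i)=\crit(j)=:\kappa$, $i(V_{\bar{\delta}})=V_\delta$, $U\sats$``$V=L(V_\delta)$'', and $V_\delta\in\wfp(U)$.

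\emph{Illfoundedness and the contradiction.} I claim $U$ is illfounded. If not, $U=\J_{\theta'}(V_\delta)$; but working in the model $L(V_\delta)\ni j$, either $\cof^{L(V_\delta)}(\delta)>\om$, in which case $j\notin L(V_\delta)$ by Theorem \ref{tm:V=L(V_delta)_cof(delta)>om} (for $\bar{\delta}<\delta$ using that $j$ already gives a cofinal map $V_{\bar{\delta}}\to\delta$ in $L(V_\delta)$), a contradiction; or $\cof^{L(V_\delta)}(\delta)=\om$, in which case $\Ult_0(L(V_\delta),E_j)$ is illfounded by Lemma \ref{lem:V=L(V_delta)_cof(delta)=om}(\ref{item:Ult_illfd}), and — since every element of a $\J$-level $\leq\theta$ is a surjective image of $V_\delta$ (Fact \ref{fact:always_Hull_below_admissible}) and $\cof(\delta)=\om$ — this illfoundedness must already appear in $U$. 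So $U$ is illfounded; since $U\sats$``$V=L(V_\delta)$'', $V_\delta\in\wfp(U)$ and $\theta=\kappa_{V_\delta}$, Fact \ref{fact:wfp_admissible} gives $\J_\theta(V_\delta)\sub\wfp(U)$, whence $\wfp(U)=\J_\lambda(V_\delta)$ with $\lambda=\OR\inter\wfp(U)\geq\theta$ admissible. I would then run a minimality-of-critical-point argument in the spirit of Theorem \ref{tm:j_not_amenably_Sigma_1}, re-based on this ultrapower: take $j$ with $\crit(j)=\kappa$ least among such embeddings in $\J_\theta(V_\delta)$, let $\gamma_0<\theta$ be least with $j\in\J_{\gamma_0+1}(V_\delta)$, fix a $\Sigma_1$ formula $\varphi$ and $\bar{p}\in V_\delta$ with $\widetilde{j}=\{k\in V_\delta\bigm|\J_{\gamma_0}(V_\delta)\sats\varphi(k,\bar{p},V_\delta)\}$ (available by minimality of $\gamma_0$, $\Delta_1$-Separation and Fact \ref{fact:always_Hull_below_admissible}), define ``goodness'' and ``strength'' of pairs $(q,\mu)\in V_{\alpha_0}\cross\kappa$ just as in Theorem \ref{tm:j_not_amenably_Sigma_1} but filtering $\J_{\gamma_0}(V_\delta)$ by the levels $\Ss_m(\J_\beta(V_\delta))$ for $\beta<\gamma_0$, and apply $i$ (which agrees with $j$ on $V_\delta$ and is $\Sigma_1$-elementary on $\J_\theta(V_\delta)\sub\J_\lambda(V_\delta)$) to the resulting sets $A_m,B_{(m,n)}\in V_\delta$; this should give $(\bar{p},\kappa)\in i(A_\om\inter B_\om)$, hence a genuine $(q,\mu)\in A_\om\inter B_\om$ with $\mu<\kappa$, and the union of the associated partial maps is an $\in$-cofinal $\Sigma_1$-elementary $\ell\colon V_\delta\to V_\delta$ with $\crit(\ell)=\mu<\kappa$ and $\widetilde{\ell}$ definable over $\J_{\gamma_0}(V_\delta)$, so $\ell\in\J_\theta(V_\delta)$, contradicting the minimality of $\kappa$.

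\emph{The main obstacle.} The hard parts are the claim that $U$ must be illfounded — i.e.\ that failure of wellfoundedness of the extender ultrapower cannot be postponed past level $\theta$, for which $\cof(\delta)=\om$, the descent to $L(V_\delta)$, and Fact \ref{fact:always_Hull_below_admissible} have to be combined carefully — and the final goodness/strength step: unlike in Theorem \ref{tm:j_not_amenably_Sigma_1}, the extension $i$ does \emph{not} fix the natural filtration $\langle\J_\beta(V_\delta)\rangle_{\beta<\theta}$, since it moves every ordinal $\geq\kappa$, so the transfer of goodness under $i$ must be routed through the extender $E_j$ and the wellfounded part $\J_\lambda(V_\delta)$ of $U$, and one must keep the bookkeeping level $\gamma_0$ below the least ordinal that $i$ maps into $\illfp(U)$. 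Finally, the case $\bar{\delta}<\delta$ needs separate treatment throughout, as finite iterates are not available there.
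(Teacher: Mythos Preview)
Your reduction step is essentially sound, though more elaborate than needed: the paper simply applies $\Sigma_1$-Collection directly. If $j(x)=y\iff\J_\theta\sats\varphi(x,y,p)$ with $\varphi$ $\Sigma_1$, then $\J_\theta\sats\forall x\in V_{\bar\delta}\,\exists\alpha\,[\J_\alpha\sats\exists y\,\varphi(x,y,p)]$, and admissibility bounds the $\alpha$'s below some $\lambda<\theta$, giving $j\in\J_{\lambda+1}$. The $\bfPi_1$ case is handled by the dual trick.

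The main argument, however, has a genuine gap and is much harder than it needs to be. Your illfoundedness claim for $U=\Ult_0(\J_\theta(V_\delta),E_j)$ does not follow from the illfoundedness of $\Ult_0(L(V_\delta),E_j)$: the functions witnessing an infinite descent in the latter may lie in $\J_\beta(V_\delta)$ for $\beta$ arbitrarily large, and ``every element of a $\J$-level $\leq\theta$ is a surjective image of $V_\delta$'' says nothing about such $\beta>\theta$. There is no mechanism here for pushing the descent down into $\J_\theta$. Even granting illfoundedness, your proposed goodness/strength argument is left as a sketch precisely at the point where it diverges from Theorem~\ref{tm:j_not_amenably_Sigma_1}: there, the extension $\widehat{j}$ fixes each $\Ss_n(V_\lambda)$, which is exactly what makes the transfer of ``$m$-good'' under $j$ work; here $i$ moves every ordinal $\geq\kappa$, and you acknowledge but do not resolve this.

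The paper avoids all of this by choosing the base of the ultrapower differently. Fix $\alpha<\theta$ with $E\in\J_\alpha$, set $M=\J_{\alpha+\kappa+1}(V_\delta)$ (a \emph{proper} initial segment of $\J_\theta$, since $\theta$ is admissible), and form $U=\Ult_0(M,E)$. The key observation is that $\pow(V_\delta)\cap U\sub M$: any $A\in\pow(V_\delta)\cap U$ is $[a,f]$ with $f\in M$, and since $E\in M$, $A$ is definable over $M$ from $a,f,E$. Now if $U$ were wellfounded, then $i^M_E(\alpha+\kappa)>\alpha+\kappa$, so $\J_{\alpha+\kappa+2}\sub U$, whence by Fact~\ref{fact:always_Hull_below_admissible} there is a subset of $V_\delta$ in $U\setminus M$, a contradiction. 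So $U$ is illfounded, and by Fact~\ref{fact:wfp_admissible}, $\J_\theta\sub\wfp(U)$; but then $\pow(V_\delta)\cap\J_\theta\sub U\sub M=\J_{\alpha+\kappa+1}$, again contradicting Fact~\ref{fact:always_Hull_below_admissible}. No minimality-of-critical-point argument is needed at all. For $\bar\delta<\delta$, the paper runs the same argument with $\J_{\bar\theta+1}(V_{\bar\delta})$ coded inside $V_{\bar\delta}$.
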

\begin{proof}
 We write $\J_\alpha$ for $\J_\alpha(V_\delta)$.
Suppose first  $j\notin\J_\theta$,
and we deduce the rest. Let $\varphi$ be $\Sigma_1$ 
and $p\in\J_\theta$,
and suppose  for all $x\in V_{\bar{\delta}}$ and $y\in V_\delta$, we have
$j(x)=y$ iff $\J_\theta\sats\varphi(x,y,p)$.
Then note that
\[ \J_\theta\sats\all x\in V_{\bar{\delta}}\ \exists\alpha\in\OR\ 
[\J_\alpha\sats\exists y\in V_\delta\ \varphi(x,y,p)], \]
and so by admissibility, there is $\lambda<\theta$ such that
\[ \J_\lambda\sats\all x\in V_{\bar{\delta}}\ \exists y\in V_\delta 
\ \varphi(x,y,p).\]
But then for $x\in V_{\bar{\delta}}$ and $y\in V_\delta$, we have
$j(x)=y$ iff $\J_\lambda\sats\varphi(x,y,p)$,
so $j\in\J_\theta$, contradiction.

Now suppose that for all $x,y$, we have
$j(x)=y$ iff $\J_\theta\sats\neg\varphi(x,y,p)$.
Note for each $x\in V_{\bar{\delta}}$, letting $y=j(x)$,
\[ \J_\theta\sats\all y'\in V_\delta\cut\{y\}\ \varphi(x,y',p),\]
and so by admissibility, there is (a least) $\alpha_x<\theta$ such that
\[ \J_{\alpha_x}\sats\all y'\in V_\delta\cut\{y\}\ \varphi(x,y',p).\]
Then
$\J_\theta\sats\all x\in V_{\bar{\delta}}\ 
\exists\alpha\in\OR\ [\J_\alpha\sats\exists y\in V_\delta\ \all 
y'\in V_\delta\cut\{y\}\ \varphi(x,y',p)]$,
but then by admissibility, we get $\sup_{x\in V_{\bar{\delta}}}\alpha_x<\theta$,
but then $j\in\J_\theta$, a contradiction.

So suppose $j\in\J_\theta$. Then
$\cof^{L(V_\delta)}(\delta)=\cof^{L(V_\delta)}(\bar{\delta})=\om$.
For if $\bar{\delta}=\delta$ then 
$\cof(\delta)^{L(V_\delta)}=\om$ by Theorem 
\ref{tm:V=L(V_delta)_cof(delta)>om},
and if $\bar{\delta}<\delta$ then since $j$ is cofinal, 
$\delta$ is not inaccessible in $L(V_\delta)$, but then since $V_\delta\sub 
L(V_\delta)$, $\cof^{L(V_\delta)}=\cof^V(\delta)=\om$.
So Lemma \ref{lem:V=L(V_delta)_cof(delta)=om},
applies.

We now give the full argument
assuming that $\bar{\delta}=\delta$, and then sketch the changes
for the case that $\bar{\delta}<\delta$.
Let $E=E_j$ and  $\alpha<\theta$ with $E\in\J_\alpha$.
Let $\kappa=\crit(E)$. 
Let $M=\J_{\alpha+\kappa+1}(V_\delta)$ and
$U=\Ult_0(M,E)$.
By Lemma \ref{lem:V=L(V_delta)_cof(delta)=om},
$\Sigma_0$-{\L}o\'{s}' criterion holds for $U$,
so $i^{M}_E$ is $\in$-cofinal
and $\Sigma_1$-elementary. Because $M\sats$``$V=L(V_\delta)$'',
therefore $U\sats$``$V=L(i^M_E(V_\delta))$''.

\begin{clmtwo} $i^M_E(V_\delta)=V_\delta$.\end{clmtwo}
\begin{proof}
It suffices to see that $i^M_E$ is continuous at $\delta$.
So let $f\in M$ and $\alpha<\delta$
with $f:\left<V_\alpha\right>^{<\om}\to\delta$,
and let
$a\in\left<V_{j(\alpha)}\right>^{<\om}$.
We want to see that $[a,f]^M_E<\delta$.
But $\cof(\delta)=\om$, so fix $g:\om\to\delta$ cofinal,
and for $n<\om$ let
\[ A_n=\{u\in\left<V_\alpha\right>^{<\om}\bigm|f(u)<g(n)\}.\]
Then $\left<V_\alpha\right>^{<\om}=\bigcup_{n<\om}A_n$,
so the usual argument gives $A_n\in 
E_a$ for some $n<\om$, which suffices.
\end{proof}

By the claim, $V_\delta\in\wfp(U)$
and $U\sats$``$V=L(V_\delta)$''.

\begin{clmtwo} $U$ is illfounded.\end{clmtwo}
\begin{proof}
 Suppose $U$ is wellfounded. Then note that 
$i^M_E(\alpha+\kappa)>\alpha+\kappa$,
and by the previous claim, that 
$\J_{i^M_E(\alpha+\kappa)+1}\sub U$,
so
\[ \pow(V_\delta)\inter\J_{\alpha+\kappa+2}\sub U.\]
But
$\pow(V_\delta)\inter U\sub M$,
because given any $A\in\pow(V_\delta)\inter U$,
we can find some pair $(a,f)$ such that $[a,f]^M_E=A$,
with $f\in M$ and $a\in V_\delta$, and since $E\in M$, it easily follows
that $A\in M$. Putting the $\sub$-statements together,
we contradict Fact \ref{fact:always_Hull_below_admissible}.
\end{proof}

By the above claim and Fact \ref{fact:wfp_admissible}, we have
$\J_\theta\sub U$, so $\pow(V_\delta)\inter\J_\theta\sub 
U$.
But then we reach a contradiction like in the proof of the claim.
This completes the proof in this case.

Now suppose instead that $\bar{\delta}<\delta$.
Fix a structure $M\sub V_{\bar{\delta}}$ which codes
$\J_{\bar{\theta}+1}(V_{\bar{\delta}})$,
where $\bar{\theta}=\kappa_{V_{\bar{\delta}}}$.
Then letting $U=\Ult_0(M,E_j)$ (computed in the codes)
and $k:M\to U$ be the ultrapower map,
we have $M,U,k\in\J_\theta(V_\delta)$.
Arguing as above, $k$ is continuous at $\bar{\delta}$,
so $\delta\in\wfp(U)$, and $U$ is illfounded,
because otherwise, by elementarity, it includes a code for $\J_\theta$
in it. But then by Fact \ref{fact:wfp_admissible}, $\J_\theta$``$\sub$''$U$
(that is, all elements of $\J_\theta$ are coded into $U$),
also a contradiction.
\end{proof}

We next observe that the  preceding result is optimal
in the case that $\bar{\delta}=\delta$
and we have a lot of $\AC$:\footnote{Thanks to Gabriel Goldberg for pointing
out the existence of \ref{fact:corazza}, and hence providing
the consistency upper bound for the hypothesis of 
\ref{tm:j_in_L(V_delta)_under_V=HOD}.}

\begin{fact}[Corazza]\label{fact:corazza}
 \tu{(}$\ZFC$\tu{)} Suppose $j\in\mathscr{E}(V_\delta)$,
 where $\delta\in\Lim$ \tu{(}so $\delta=\kappa_{\om}(j)$ 
and $V_\delta\sats\ZFC$\tu{)}. Then there is a 
set-forcing $\PP$
 which forces
(i) $V_\delta\sats\ZFC+$``$V=\HOD$'' and
(ii) there is $k\in\mathscr{E}(V_\delta)$
 with $\check{j}\sub k$.\footnote{In both statements here
 the $V_\delta$ is in the sense of the forcing extension.}
\end{fact}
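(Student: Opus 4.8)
The plan is a forcing argument: I want a set iteration $\PP$ over $V$ such that, in the extension, $\PP$ (i) codes everything below $\delta$ into a definable pattern, forcing $V_\delta\sats\ZFC+V=\HOD$, and (ii) is ``coherent under $j$'' enough that $j$ lifts to some $k\in\mathscr{E}(V_\delta^{V[G]})$ with $\check{j}\sub k$. As a preliminary move I would first iterate the standard reverse Easton forcing for $\GCH$: it is uniformly definable over $V_\delta$ without parameters and small at each stage, so it preserves ``$V_\delta\sats\ZFC$'' and, by the routine lifting argument, lifts any member of $\mathscr{E}(V_\delta)$; hence I may assume $V_\delta\sats\ZFC+\GCH$ while still having $j\in\mathscr{E}(V_\delta)$ with $\delta=\kappa_\om(j)$.

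For the main forcing I would take the Easton-support iteration $\PP=\langle\PP_\alpha,\dot{\QQ}_\alpha\rangle_{\alpha<\delta}$ in which $\dot{\QQ}_\alpha$ is trivial unless $\alpha$ is inaccessible in $V_\delta$, in which case $\dot{\QQ}_\alpha$ is a canonical forcing — uniformly definable from $\alpha$ and the part $G_\alpha$ of the generic built so far — coding a wellordering of $H_\alpha^{V_\delta[G_\alpha]}$ into the continuum function on a block of regular cardinals just above $\alpha$, in the style of the standard ``$V=\HOD$ by coding into the $\GCH$ pattern'' constructions. The design points to arrange are: (a) the definition is parameter-free and uniform over inaccessibles, so $j(\PP_\alpha)=\PP_{j(\alpha)}$ for every $\alpha<\delta$ and hence $\PP=\bigcup_{\alpha<\delta}\PP_\alpha=\bigcup_{\alpha<\delta}j(\PP_\alpha)$ is ``fixed'' by $j$; (b) $\dot{\QQ}_\alpha$ manipulates only $2^\gamma$ for $\gamma\geq\alpha^+$, so the tail of $\PP$ past any $\kappa_n$ is ${\leq}\kappa_n$-closed over $V_\delta[G_{\kappa_n}]$, adds no bounded subset of $\kappa_n$, and freezes $H_{\kappa_n}$ from stage $\kappa_n$ on, making the coding well-posed; (c) conditions of $\PP_\alpha$ have bounded support, so $\PP_\alpha\sub V_\alpha$ and in particular $j\rest\PP_{\kappa_0}=\id$. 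Granting (a)--(c), part (i) is routine: $\PP$ is an Easton iteration, hence preserves ``$V_\delta\sats\ZFC$'' and all cardinals ${\geq}\kappa_0$; in the extension the continuum function on the blocks above the cardinals inaccessible in $V_\delta[G]$ is a definable class encoding a wellordering of each $H_\alpha^{V_\delta[G]}$; and since $\delta$ is a limit of such inaccessibles, every set of $V_\delta[G]$ sits in some $H_\alpha^{V_\delta[G]}$ and so is ordinal-definable there, giving $V_\delta[G]\sats\ZFC+V=\HOD$.

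The heart of the matter — and the step I expect to be the main obstacle — is part (ii). The difficulty is that the two demands on $\PP$ pull against each other: the coding forces generic choices that are irreversible, which \emph{a priori} blocks the naive lift $\tau^G\mapsto j(\tau)^G$, so the coding scheme itself must be chosen \emph{$j$-coherently}, i.e.\ so that the object coded at $j(\alpha)$ is forced to be the $j$-image of the object coded at $\alpha$. The intended bootstrap is property (c): below $\kappa_0$ we have $j=\id$, so the coding data produced through stage $\kappa_0$ is already fixed by $j$, and one then propagates a lift up the critical sequence $\langle\kappa_n\rangle_{n<\om}$. Concretely I would build inside $V[G]$ an increasing $\om$-chain of partial elementary maps $k_n$, each extending $j$ on a larger piece of $V_\delta$ and respecting the coding (so that $k_n$ carries the stage-$\kappa_n$ coding datum to the stage-$\kappa_{n+1}$ one), passing from $k_n$ to $k_{n+1}$ by absorbing the block forcing $\PP_{\kappa_{n+1}}/\PP_{\kappa_n}$ and its $j$-image through a master-condition/diagonalization argument that uses the high closure from (b) and, under $\GCH$, the fact that only boundedly many dense sets need be met at each block; since $\cof(\delta)=\om$ the process runs through exactly $\om$ blocks, and $k=\bigcup_n k_n$ is an elementary self-map of $V_\delta[G]$ with $\check{j}\sub k$. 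The real work lies in pinning down a coding scheme for which this $j$-coherence genuinely holds level by level, and in verifying that the closure and counting estimates permit each block-lift to be carried out within $V[G]$ compatibly with the generic already committed; once that is in place, the remainder is standard lifting technology and bookkeeping.
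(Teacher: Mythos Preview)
The paper does not prove this statement: it is recorded as a \emph{Fact} due to Corazza and used without proof (the footnote only thanks Goldberg for pointing out its existence). So there is no argument in the paper to compare your proposal against.

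That said, your outline is essentially the approach Corazza takes: a reverse Easton iteration below $\delta$ that codes well-orderings into a definable pattern (so that $V_\delta^{V[G]}\sats V=\HOD$), with the iteration chosen uniformly enough that $j$ can be lifted block-by-block along the critical sequence $\left<\kappa_n\right>_{n<\om}$ using master conditions and the high closure of the tail forcings. Your identification of the main obstacle---arranging the coding to be $j$-coherent so that the lift goes through at each step---is accurate, and the bootstrap you describe (lift $j\rest V_{\kappa_n}^{V[G]}$ to $k_n$, then extend using closure and a diagonalization over the next block) is the right shape. The details you would need to nail down are exactly the ones you flag: the precise coding poset, the counting argument that under $\GCH$ the number of dense sets to meet at each block is small enough relative to the closure, and the verification that the partial lifts cohere. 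None of this is in the present paper, so if you want a complete proof you should consult Corazza's original.
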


\begin{tm}\label{tm:j_in_L(V_delta)_under_V=HOD}
\tu{(}$\ZF$\tu{)} Let $\delta\in\OR$ with
$V_\delta\sats\ZFC+$``$V=\HOD$'' and
$\mathscr{E}(V_\delta)\neq\emptyset$,
and  take $\delta$  least such.
Let $\theta=\kappa_{V_\delta}$ 
 and $M=\J_\theta(V_\delta)$.
 Then there is $j\in\mathscr{E}(V_\delta)$ which is
$\Sigma_1^M(\{V_\delta\})\wedge\Pi_1^M(\{V_\delta\})$,
meaning there are $\Sigma_1$ formulas $\varphi,\psi$ such that
\[ 
\all x,y\in V_\delta\ [j(x)=y\iff 
M\sats[\varphi(x,y,V_\delta)\wedge\neg\psi(x,y,
V_\delta)]].\]
\end{tm}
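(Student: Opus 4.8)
The plan is to build $j$ as the leftmost branch, in the canonical wellorder of $V_\delta$, through a tree of partial truth-preserving maps, and then to show this branch has length $\delta$ and that its graph is $\Delta_1^M(\{V_\delta\})$ — in particular of the stated form — the admissibility of $M$ doing the work for the complexity bound.

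First some preliminaries. Fix $j_0\in\mathscr{E}(V_\delta)$. Since $\alpha<\delta$ implies $\sup j_0``\alpha\leq j_0(\alpha)<\delta$, each $j_0\rest V_\alpha$ lies in $V_\delta$, so the standard argument — compute $\kappa_\om(j_0)$, note that $j_0$ would fix it were it $<\delta$, and apply Kunen \cite{kunen_no_R} inside $V_\delta\sats\ZFC$ — gives $\kappa_\om(j_0)=\delta$ and hence $\cof(\delta)=\om$. Since $V_\delta\sats V=\HOD$, fix its canonical parameter-free wellorder $<^*$; then $<^*$ and the $\Sigma_n$-satisfaction relations of $V_\delta$ (for all $n$ simultaneously) lie in $\J_1(V_\delta)\sub M$, and $M$ satisfies $\Delta_1$-separation and $\Sigma_1$-collection. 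Let $T$ be the set of pairs $(\alpha,e)$ with $\alpha<\delta$ and $e\colon V_\alpha\to V_\beta$ (some $\beta<\delta$) which is $\Sigma_0$-elementary and $\in$-cofinal — with the successor-level convention from the notion of ``$m$-good'' in the proof of Theorem \ref{tm:j_not_amenably_Sigma_1} — and such that $e$ moreover preserves all $\Sigma_n$-truths of $V_\delta$: $V_\delta\sats\psi(x)\iff V_\delta\sats\psi(e(x))$ for every $n$, every $\Sigma_n$ formula $\psi$ and every $x\in V_\alpha$; order $T$ by extension. Then $T$ is $\Delta_1^M(\{V_\delta\})$, closed under restriction, and $(\alpha,j_0\rest V_\alpha)\in T$ for all $\alpha<\delta$.

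Now prune: set $T_0=T$, $T_{\nu+1}=\{(\alpha,e)\in T_\nu : e$ extends to cofinally many levels $<\delta$ inside $T_\nu\}$, and $T_\lambda=\bigcap_{\nu<\lambda}T_\nu$. By $\Delta_1$-separation and $\Sigma_1$-collection each $T_\nu\in M$ and the sequence $\langle T_\nu\rangle$ is $\Sigma_1^M(\{V_\delta\})$, so by admissibility of $M$ it stabilizes at some $\nu_0<\theta$; put $T^*=T_{\nu_0}$. Then $T^*\in M$ is $\Delta_1^M(\{V_\delta\})$, closed under restriction, every node of $T^*$ extends to every higher level $<\delta$ within $T^*$, and (by induction on $\nu$) $(\alpha,j_0\rest V_\alpha)\in T^*$ for all $\alpha$, so $T^*$ is nonempty and tall. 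Define $L$ by recursion on $\alpha<\delta$: $L(\alpha)$ is the $<^*$-least $e$ with $(\alpha,e)\in T^*$ and $e\supseteq L\rest\alpha:=\bigcup_{\gamma<\alpha}L(\gamma)$.

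The main obstacle is to show this recursion never gets stuck, i.e. $\dom(L)=\delta$; at successor steps this is immediate from well-prunedness of $T^*$, so the issue is a fusion problem at a limit $\alpha<\delta$: one must show that $L\rest\alpha$, which satisfies the local conditions defining $T$, lies in $T^*$. Here I would exploit $<^*$: for $\gamma<\alpha$ and large $\beta<\delta$ let $g_{\gamma,\beta}$ be the $<^*$-least node of $T^*$ at level $\beta$ extending $L(\gamma)$; then $\langle g_{\gamma,\beta}\rangle_{\gamma<\alpha}$ is $<^*$-non-decreasing, and the crux is that it is eventually constant — proved by a boundedness estimate on the ranks of these minimal completions together with the minimality of $\delta$ (a genuine failure would, after taking a sufficiently elementary hull of the relevant objects and transitively collapsing, produce some $\bar\delta<\delta$ with $V_{\bar\delta}\sats\ZFC+V=\HOD$ and $\mathscr{E}(V_{\bar\delta})\neq\emptyset$, contradicting the choice of $\delta$). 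Its eventual value $\bar g$ satisfies $\bar g\rest V_\alpha=L\rest\alpha$ and $(\beta,\bar g)\in T^*$; as this holds for cofinally many $\beta$, $L\rest\alpha\in T^*$. Granting totality, set $j=\bigcup_{\alpha<\delta}L(\alpha)\colon V_\delta\to V_\delta$; since each $L(\alpha)$ preserves all $\Sigma_n$-truths, so does $j$, so $j\in\mathscr{E}(V_\delta)$. For the complexity: since $T^*\in M$ is $\Delta_1^M(\{V_\delta\})$ and $M$ is admissible, ``$s=\langle L(\gamma):\gamma\leq\alpha\rangle$'' is $\Delta_1^M(\{V_\delta\})$ ($\Sigma_1$-collection absorbs the bounded universal quantifiers coming from ``$<^*$-least''), whence the graph of $j$ is both $\Sigma_1^M(\{V_\delta\})$ — via ``$\exists\alpha\,\exists s\,[s=\langle L(\gamma):\gamma\leq\alpha\rangle\wedge x\in V_\alpha\wedge s(\alpha)(x)=y]$'' — and $\Pi_1^M(\{V_\delta\})$, as the negation of ``$\exists\alpha\,\exists s\,\exists y'\,[s=\langle L(\gamma):\gamma\leq\alpha\rangle\wedge x\in V_\alpha\wedge s(\alpha)(x)=y'\wedge y'\neq y]$''. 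Taking $\varphi$ to be the $\Sigma_1$ definition and $\psi$ the $\Sigma_1$ formula whose negation is the $\Pi_1$ definition yields $j(x)=y\iff M\sats[\varphi(x,y,V_\delta)\wedge\neg\psi(x,y,V_\delta)]$, as required.
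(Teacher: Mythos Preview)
Your argument has a genuine gap at the step ``by admissibility of $M$ it stabilizes at some $\nu_0<\theta$''. Admissibility gives $\Sigma_1$-collection over a fixed domain, but there is no domain here to collect over: you would need something like ``for every $t\in T$ there is $\nu$ with $t\notin T_\nu$'', which is false exactly when $T_\infty\neq\emptyset$. In fact Lemma~\ref{lem:T_infty_perfect} (proved later in the paper for the closely analogous tree $T^{\delta,m}$) shows that when the tree has a branch, the derivative sequence stabilizes \emph{exactly} at $\kappa_{V_\delta}=\theta$, not before. So $T^*\notin M$, and the rest of your construction cannot proceed. More tellingly, your conclusion that the graph of $j$ is $\Delta_1^M(\{V_\delta\})$ --- i.e., both $\bfSigma_1^M$ and $\bfPi_1^M$ --- directly contradicts Theorem~\ref{tm:no_j_in_adm}, which says no $j\in\mathscr{E}_1(V_\delta)$ is $\bfSigma_1^{\J_\theta(V_\delta)}$ or $\bfPi_1^{\J_\theta(V_\delta)}$. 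The target class $\Sigma_1\wedge\Pi_1$ (an intersection of a $\Sigma_1$ set with a $\Pi_1$ set) is strictly weaker than $\Delta_1$, and this distinction is essential.

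The paper's proof avoids both problems. First, it works with an $\omega$-indexed tree of finite sequences (so there is no limit-stage fusion issue), and builds into the tree the extra condition ``$V_\kappa\sats V=\HOD$'' where $\kappa=\crit(j_i)$; together with the minimality of $\delta$, this forces every infinite branch to have domain $V_\delta$ rather than some smaller $V_\lambda$. Second, instead of pruning, it uses rank analysis: a node $t$ is on the leftmost branch iff $T_t$ is unranked and every $<^*$-smaller sibling is ranked. The $\Sigma_1$ part (``all $<^*$-smaller siblings have rank functions'') is handled by admissibility via collection, while the $\Pi_1$ part (``$T_t$ has no rank function'') is irreducibly $\Pi_1$ --- this is precisely where the $\wedge\Pi_1$ in the statement comes from, and why one cannot do better.
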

\begin{proof}
 Note $\cof(\delta)=\om$. The $j$ satisfying these requirements
 is just the left-most branch through the natural tree searching
 for such an embedding. 
 That is, let $T$ be the tree whose nodes are finite sequences
 \[ 
((j_0,\alpha_0,\beta_0),(j_1,\alpha_1,\beta_1),\ldots,
(j_{n-1},\alpha_{n-1},\beta_{n-1})) \]
 such that for each $i<n$,
 $j_i:V_{\alpha_i}\to V_{\beta_i}$ is elementary and $\kappa=\crit(j_i)$ 
exists,
 $V_\kappa\sats$``$V=\HOD$'',
 and if $i+1<n$ then 
$\beta_i<\alpha_{i+1}$
 and $j_i\sub j_{i+1}$.

 Now any $\Sigma_1$-elementary $j:V_\delta\to V_\delta$
 determines an infinite branch through $T$ (we can take $\alpha_0=\crit(j)$
 and $\beta_n=j(\alpha_n)$ and $\alpha_{n+1}=\beta_n+1$). 
This is clear
 enough except for the fact that $V_{\kappa}\sats$``$V=\HOD$''
 where $\kappa=\crit(j)$. But because $V_\delta\sats\ZFC$,
 we must have $\kappa_\om(j)=\delta$, and since 
$V_\delta\sats$``$V=\HOD$'', it follows that 
$V_\kappa\sats$``$V=\HOD$'' also. Conversely,
let $\left<(j_i,\alpha_i,\beta_i)\right>_{i<\om}$ be an infinite branch
through $T$, and  $\lambda=\bigcup_{i<\om}\alpha_i=\bigcup_{i<\om}\beta_i$.
Then $\lambda\in\Lim$ and $j\in\mathscr{E}_1(V_\lambda)$, and since 
$V_\kappa\sats$``$V=\HOD$''
where $\kappa=\crit(j)$, therefore $V_\lambda\sats\ZFC+$``$V=\HOD$''.
In fact $j$ is fully elementary (either since $V_\lambda\sats\ZF$,
or because $\cof(\lambda)=\om$ and by 
\cite[Theorem 5.6***]{cumulative_periodicity}),
so  $\lambda=\delta$
by  minimality.

Note that $T$ is definable over $V_\delta$.
Now the rank analysis of  $T$ is computed over $M$.
That is, given a node $t\in T$,
let
\[ T_t=\{s\in T\bigm| t\ins s\text{ or }s\ins t\}.\]
Then there is a rank function for $T_t$ (in $V$) 
iff there is one in $M$; this is a standard consequence of 
admissibility.
Let $<^*$ be the standard wellorder of $V_\delta$
resulting from the fact that $V_\delta\sats$``$V=\HOD$''.
Let $b=\left<t_i\right>_{i<\om}$ be the left-most branch
of $T$ with respect to $<^*$.
That is, $t_0=\emptyset$, $t_1=\left<(j_0,\alpha_0,\beta_0)\right>$
is the $<^*$-least node of $T$ of length $1$ such that
there is no rank function for $T_{t_1}$ (in $M$),
and then $t_2=\left<(j_i,\alpha_i,\beta_i)\right>_{i<2}$
is the $<^*$-least node of $T$ of length $2$,
extending $t_1$, such that there is no rank function for $T_{t_2}$ (in $M$),
etc. Note here that because $T_{t_n}$ has no rank function
(in $M$), $t_{n+1}$ does exist. This determines our branch $b$,
and hence a $\Sigma_1$-elementary $j:V_\delta\to V_\delta$.

Finally note that $b$ is appropriately definable.
\end{proof}

For our remaining results we also restrict to the case that 
$\bar{\delta}=\delta$. We will use the Cantor-Bendixson derivative analysis on 
the tree of attempts to build 
embeddings $j$,  in order to obtain further information about such embeddings, 
as done in 
\cite[\S4]{con_lambda_plus_2} (we repeat some common details here, though,
for self-containment):

\begin{dfn}
\tu{(}$\ZF$\tu{)} Let $\delta\in\Lim$ and $m\leq\om$.
 Then $T=T^{\delta,m}$ denotes the  following tree of  attempts to 
build
 a (possibly partial) $\Sigma_m$-elementary 
$j:_{\mathrm{p}}V_\delta\to V_\delta$.
 The nodes in $T$ are finite sequences
\[t=((j_0,\alpha_0,\beta_0),\ldots,(j_n,\alpha_n,\beta_n))\]
such that $j_i:V_{\alpha_i}\to V_{\beta_i}$ is $\Sigma_1$-elementary
and cofinal, $j_i:_{\mathrm{p}}V_\delta\to V_\delta$
is $\Sigma_m$-elementary on its domain $V_{\alpha_i}$,
$\beta_i<\alpha_{i+1}$,
and $j_{\alpha_i}\sub j_{\alpha_{i+1}}$. Write $j_t=j_n$.

For $\alpha\in\OR$, let $T_{\alpha}$ be the $\alpha$th derivative of $T$,
defined as follows.
Set $T_0=T$, and for limit $\lambda$ set 
$T_\lambda=\bigcap_{\alpha<\lambda}T_\alpha$.
Given $T_\alpha$, $T_{\alpha+1}$ is the set of all 
$t\in T_\alpha$ such that for every $\beta<\delta$
there is an extension $s$ of $t$ with $s\in T_\alpha$
such that $\beta\in\dom(j_s)$.
Let $T_\infty=T_{\OR}$. We say that $T_\infty$ is \emph{perfect}
iff for every $t\in T_\infty$
there are $s_1,s_2\in T_\infty$, both extending $t$,
such that $j_{s_1}\not\sub j_{s_2}\not\sub 
j_{s_1}$.

We write $[T]$ for the set of infinite branches through 
$T$. Clearly each $b\in[T]$ determines a $\Sigma_1$-elementary map 
$j_b:V_\lambda\to V_\lambda$
for some limit $\lambda\leq\delta$, and if $\lambda=\delta$
then $j$ is $\Sigma_m$-elementary.

We say that an embedding $k:V_\lambda\to V_\lambda$, for limit $\lambda$, is 
\emph{$V$-amenable}
iff $k\rest V_\alpha\in V$ for each $\alpha<\lambda$.
Clearly $j_b$ above is $V$-amenable.
\end{dfn}

Note $\left<T_{\alpha}\right>_{\alpha\in\OR}\in L(V_\delta)$,
 where $T=T^{\delta,m}$.
The following lemma shows that if $T_\infty\neq\emptyset$
then $T_\infty$ is, by a certain natural measure, of maximal complexity:
 
\begin{lem}\label{lem:T_infty_perfect}
\tu{(}$\ZF$\tu{)} Let $\delta\in\Lim$.
 Let $\gamma$ be least such that $T_\gamma=T_\infty$.
 Then:
 \begin{enumerate}
  \item If $\mathscr{E}_m(V_\delta)\neq\emptyset$
  then $T_\infty\neq\emptyset$.
 \item If $T_\infty\neq\emptyset$ then
  $T_\infty$ is perfect and $\gamma=\kappa_{V_{\delta}}$.
\item If $T_\infty=\emptyset$ then $\gamma<\kappa_{V_\delta}$.
 \end{enumerate}
\end{lem}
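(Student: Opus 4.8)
This is a Cantor–Bendixson analysis of the tree $T = T^{\delta,m}$, with the admissible ordinal $\kappa_{V_\delta}$ serving as the natural stopping point for the derivative process. I would organize the proof around three claims, matching the three parts of the lemma.

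\emph{Part 1.} Suppose $j \in \mathscr{E}_m(V_\delta)$. Since $V_\delta \sats \ZFC$ (here we would be in the regime where this tree is useful; in general, just note $\cof(\delta)=\om$ via the hypothesis or Theorem~\ref{tm:V=L(V_delta)_cof(delta)>om} — actually for Part 1 we only need $j$ itself), I produce an infinite branch of $T$: fix $\alpha_0 = \crit(j)$ (or any ordinal), set $\beta_n = j(\alpha_n)$, $\alpha_{n+1} = \beta_n + 1$, and $j_n = j \rest V_{\alpha_n}$. Each such finite initial segment is a node of $T$, since $j_n : V_{\alpha_n} \to V_{\beta_n}$ is $\Sigma_1$-elementary and cofinal and $j_n :_{\mathrm p} V_\delta \to V_\delta$ is $\Sigma_m$-elementary on its domain. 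A standard fact about Cantor–Bendixson derivatives: any node lying on an infinite branch survives every derivative $T_\alpha$ (by induction on $\alpha$ — at successors, the branch itself provides the required extensions reaching arbitrarily high $\beta < \delta$ since $\sup_n \alpha_n = \delta$; at limits, take intersections). Hence the initial segments of this branch lie in $T_\infty$, so $T_\infty \neq \emptyset$.

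\emph{Part 2.} Assume $T_\infty \neq \emptyset$. First, \emph{perfection}: given $t \in T_\infty = T_\gamma$, I must find $s_1, s_2 \in T_\infty$ extending $t$ with $j_{s_1} \not\sub j_{s_2} \not\sub j_{s_1}$. Since $t \in T_{\gamma+1} = T_\gamma$, for each $\beta < \delta$ there is an extension $s \in T_\gamma$ of $t$ with $\beta \in \dom(j_s)$; taking $\beta$ past $\dom(j_t)$ gives a proper extension $s'$. If $T_\infty$ failed to be perfect at $t$, then all extensions of $t$ in $T_\infty$ would be linearly ordered by $\sub$ on their associated partial embeddings, so they would cohere into a single partial map $k :_{\mathrm p} V_\delta \to V_\delta$; the domain of $k$ is either all of $V_\delta$ (giving $k \in \mathscr{E}_m(V_\delta)$, which is fine but the point is $k$ is then unique above $t$ — one then diagonalizes, or more simply) or bounded. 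The cleaner argument: the set of extensions of $t$ in $T_\infty$ is then an $L(V_\delta)$-definable set linearly ordered by $\sub$, and this lets one code a cofinal-in-$\delta$ or bounded object that contradicts either $T_\infty \ni t$ surviving or the definability available at $\kappa_{V_\delta}$; I would mirror the argument of \cite[\S4]{con_lambda_plus_2} here. Second, $\gamma = \kappa_{V_\delta}$: the sequence $\langle T_\alpha \rangle_{\alpha \in \OR}$ is $\Delta_1$-definable over the $\J_\alpha(V_\delta)$ hierarchy uniformly, and the derivative operation stabilizes; by admissibility of $\J_{\kappa_{V_\delta}}(V_\delta)$ and Fact~\ref{fact:always_Hull_below_admissible}, $\gamma \leq \kappa_{V_\delta}$ (the stabilization point is computed by a $\Sigma_1$ recursion that an admissible set closes under, and it cannot be found strictly inside, else $\J_\gamma(V_\delta)$ would be admissible and smaller). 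And $\gamma \geq \kappa_{V_\delta}$: if $\gamma < \kappa_{V_\delta}$, then since $T_\infty \neq \emptyset$ is perfect, we could build a branch inside $\J_{\kappa_{V_\delta}}(V_\delta)$ producing $j \in \mathscr{E}_m(V_\delta)$, but more to the point, the nontriviality of the derivative process down to $\gamma$ would have to be reflected, and the standard argument (exactly Part 3's contrapositive) forces $\gamma$ to be the least admissible. Actually the equality $\gamma = \kappa_{V_\delta}$ is cleanest proved as: $\gamma \le \kappa_{V_\delta}$ always by admissibility, and if $T_\infty\ne\emptyset$ then $\gamma < \kappa_{V_\delta}$ is impossible because that is precisely the conclusion of Part 3 in the case $T_\infty=\emptyset$ — so I would prove Part 3 first and derive the $\gamma=\kappa_{V_\delta}$ half of Part 2 from it together with $\gamma\le\kappa_{V_\delta}$.

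\emph{Part 3.} If $T_\infty = \emptyset$, then $\gamma$ is least with $T_\gamma = \emptyset$, and in particular $\emptyset \notin T_\gamma$ while $\emptyset \in T_\beta$ for $\beta < \gamma$. I claim $\gamma < \kappa_{V_\delta}$. The derivative sequence $\langle T_\beta \rangle$ and the ordinals $\beta$ at which $\emptyset$ (or any fixed node) drops out are $\Sigma_1$-trackable over the $\J$-hierarchy above $V_\delta$; the map sending each node $t \in T$ to the least $\beta$ with $t \notin T_\beta$ (its "rank") is then a $\Sigma_1$-definable partial function on $T \subseteq V_\delta$ with $T \in \J_1(V_\delta)$, taking ordinal values. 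If $\gamma \geq \kappa_{V_\delta}$, then since $T_\infty = \emptyset$ every $t \in T$ has such a rank, and $\sup$ of these ranks over $t \in T$ is $\geq \gamma \geq \kappa_{V_\delta}$; but admissibility of $\J_{\kappa_{V_\delta}}(V_\delta)$ (Collection for $\Sigma_1$) says this supremum, being the image of a set $T$ under a $\Sigma_1$ function, is bounded below $\kappa_{V_\delta}$ — contradiction. So $\gamma < \kappa_{V_\delta}$.

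\textbf{Main obstacle.} The routine part is Parts 1 and 3, which are pure admissibility/derivative bookkeeping. The delicate point is the \emph{perfection} clause in Part 2 — showing that the absence of a fork at some $t \in T_\infty$ leads to a contradiction. One must argue that a $\sub$-linearly-ordered family of surviving extensions coheres into a single object whose existence either contradicts minimality of $\gamma$ or contradicts $t \in T_{\gamma+1}$ (via the definability/admissibility of $\J_{\kappa_{V_\delta}}(V_\delta)$ and the fact, from Fact~\ref{fact:always_Hull_below_admissible}, that new subsets of $V_\delta$ appear cofinally). I would follow \cite[\S4]{con_lambda_plus_2} closely for this step, as the self-contained details there handle exactly this amalgamation-versus-definability tension.
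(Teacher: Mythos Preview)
Parts 1, 3, and the bound $\gamma\leq\kappa_{V_\delta}$ are fine and match the paper. There are two real issues in Part 2.

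\textbf{Perfection.} Your outline is on the right track but stops short of the actual contradiction. If $t\in T_\infty$ has only $\sub$-comparable extensions in $T_\infty$, then since $t\in T_{\gamma+1}=T_\gamma$, those extensions reach arbitrarily high in $V_\delta$, so the union $j=\bigcup_{s}j_s$ is a total $\Sigma_m$-elementary $j:V_\delta\to V_\delta$. The point you are missing is \emph{where} $j$ lives: for each $\alpha<\delta$, every $s\in T$ extending $t$ with $V_\alpha\sub\dom(j_s)$ but $j_s\rest V_\alpha\neq j\rest V_\alpha$ must drop out of $T_\xi$ for some $\xi<\kappa_{V_\delta}$, and admissibility gives a uniform such $\xi$; hence $j\rest V_\alpha$ is definable over $\J_\xi(V_\delta)$, and a second application of admissibility gives $j\in\J_{\kappa_{V_\delta}}(V_\delta)$. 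That contradicts Theorem~\ref{tm:no_j_in_adm}. You gesture at ``definability/admissibility tension'' and cite \cite{con_lambda_plus_2}, but the concrete input is Theorem~\ref{tm:no_j_in_adm}, and without naming it the argument is incomplete.

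\textbf{The equality $\gamma=\kappa_{V_\delta}$.} Here there is a genuine error. You propose to get $\gamma\geq\kappa_{V_\delta}$ from Part 3, but Part 3 says $T_\infty=\emptyset\Rightarrow\gamma<\kappa_{V_\delta}$; its contrapositive is $\gamma\geq\kappa_{V_\delta}\Rightarrow T_\infty\neq\emptyset$, which goes the wrong way. Nothing in Parts 1 or 3 rules out $T_\infty\neq\emptyset$ with $\gamma<\kappa_{V_\delta}$. This is in fact the hardest part of the lemma, and the paper's argument is substantial: assuming $\gamma<\kappa_{V_\delta}$, one has $T_\infty\in\J_\xi(V_\delta)$ for some $\xi<\kappa_{V_\delta}$, and one forces with $T_\infty$ over $L(V_\delta)$ to obtain a generic $V$-amenable $j_b\in\mathscr{E}_m(V_\delta)$. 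One then analyzes the finite iterates $(j_b)_n$ to secure $V$-stability of some $k=(j_b)_n$, verifies $\Sigma_0$-{\L}o\'{s} for $\Ult_0(\J_\eta(V_\delta),E_k)$, and takes $U=\Ult_0(\J_\chi(V_\delta),E_k)$ for suitable $\chi>\xi$. If $U$ were wellfounded one would get $\Th_{\Sigma_1}^{\J_\chi}(V_\delta\cup\{V_\delta\})\in\J_\chi[G]$, and a forcing-definability argument then puts it in $\J_\chi$, contradicting Fact~\ref{fact:always_Hull_below_admissible}; but if $U$ is illfounded then $\J_{\kappa_{V_\delta}}\sub\wfp(U)$ by Fact~\ref{fact:wfp_admissible}, giving the same contradiction. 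None of this machinery appears in your sketch, and your suggestion that one could ``build a branch inside $\J_{\kappa_{V_\delta}}(V_\delta)$'' does not by itself yield a contradiction --- indeed Theorem~\ref{tm:j_in_L(V_delta)_under_V=HOD} shows such branches can exist just above $\J_{\kappa_{V_\delta}}(V_\delta)$.
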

\begin{proof}
If $j\in\mathscr{E}_m(V_\delta)$ then easily $T_\infty\neq\emptyset$,
and in fact, if we force over $V$ to collapse $V_\delta$
to become countable, then in $V[G]$,
there is an infinite branch $b\in[T_\infty]$ with $j_b=j$.

 The fact that $\gamma\leq\kappa=\kappa_{V_\delta}$ is standard:
 Suppose not and let $t\in T_\kappa\cut T_{\kappa+1}$.
 Then we can fix $\alpha<\delta$ such that no $s\in T_\kappa$
 extending $t$ has $\alpha\in\dom(j_s)$.
 But then $\J_\kappa(V_\delta)\sats$``For every $s\in T$
 extending $t$ with $\alpha\in\dom(j_s)$
 there is $\beta\in\OR$ such that $s\notin T_\beta$''.
 By admissibility, it follows that there is $\xi<\kappa$
 such that $\J_\xi(V_\delta)$ satisfies this.
 But then note that $t\notin T_\kappa$, contradiction.
 The same argument (but slightly simpler) shows that if $T_\infty=\emptyset$
 then $\gamma<\kappa$.
 
 Now suppose that $T_\infty\neq\emptyset$
 but $T_\infty$ is not perfect.
 Then fix $t\in T_\infty$
 such that for all $s_1,s_2\in T_\infty$
 extending $t$, we have $j_{s_1}\sub j_{s_2}$
 or $j_{s_2}\sub j_{s_1}$.
 Let $\mathscr{S}$ be the set of all $s\in T_\infty$ extending $t$,
 and $j=\bigcup_{s\in\mathscr{S}}j_s$.
 Then note that $j:V_\delta\to V_\delta$ is a well-defined function
 and is $\Sigma_m$-elementary,
 and $j\in L(V_\delta)$.
 
 In fact, $j\in\J_\kappa$,
 contradicting Theorem \ref{tm:no_j_in_adm}.
 For fix $\alpha<\delta$ with $\alpha>\dom(j_t)$; so $j\rest V_\alpha\in 
V_\delta$.
Then for all nodes $s\in T$ extending $t$ with $V_\alpha\sub\dom(j_s)$
and $j\rest V_\alpha\neq j_s\rest V_\alpha$,
there is $\xi<\kappa$ with $s\notin T_\xi$.
So by admissibility, there is $\xi<\kappa$ such that $s\notin T_\xi$
for all such $s$. Therefore, for each $\alpha<\delta$
such that $\alpha>\dom(j_t)$,
there is $\beta<\delta$ and a map $k:V_\alpha\to V_\beta$
(actually $k=j\rest V_\alpha$) with $j_t\sub k$
and there is $\xi<\kappa$ such that $s\notin T_\xi$
for all $s$ as above. So by admissibility, there is $\xi<\kappa$
which works simultaneously for all $\alpha<\delta$.
But then clearly $j$ is definable from parameters over $\J_\xi$,
so $j\in\J_\kappa$, contradicting Theorem \ref{tm:no_j_in_adm}.

It remains to see that if $T_\infty\neq\emptyset$, hence perfect,
then $\gamma=\kappa_{V_\delta}$.
So suppose otherwise.
Let $\gamma<\xi<\kappa_{V_\delta}$.
Then $T_\infty\in\J_\xi$, and we can force over $L(V_\delta)$
with $T_\infty$, in the obvious manner, with the generic filter $G$ being an 
infinite branch $b$ through $T_\infty$, and note that by genericity,
$j_b:V_\delta\to V_\delta$ is $\Sigma_m$-elementary (that is, genericity
ensures that $\dom(j_b)=V_\delta$). Note that $j_b$ is $V$-amenable.

Now we will proceed through basically the argument from before,
but just need to see that things adapt alright to the generic embedding $j_b$.
We first consider the finite iterates $(j_b)_n$ of $j_b$ and the eventual 
fixedness of ordinals (that is, whether $(j_b)_n(\alpha)=\alpha$ for some 
$n$).
If $\alpha<\delta$ is a limit and $j_b(\alpha)=\alpha$
then $j_b\rest V_\alpha$ determines
$(j_b)_n\rest V_\alpha$ for each $n<\om$ as usual,
and this is all in $V$, so all ordinals $<\alpha$ are eventually 
fixed. So we may assume that there is a bound $<\delta$
on such ordinals $\alpha$. If $\delta=\alpha+\om$ 
for some limit $\alpha$
then clearly $j_b(\alpha+n)=\alpha+n$ for all $n<\om$,
so we are also done in this case. So we are left with the case
that $\delta$ is a limit of limits, and there is $\alpha<\delta$ such that 
$j_b$ fixes no ordinal
in $[\alpha,\delta)$, and take $\alpha$ least such. In particular,
$j_b(\alpha)>\alpha$. Letting 
$\alpha_0=\alpha$ and $\alpha_{n+1}=j_b(\alpha_n)$,
note that $\sup_{n<\om}\alpha_n=\delta$ (for if 
$\eta=\sup_{n<\om}\alpha_n<\delta$
then $j_b\rest V_\eta\in V$, so $\left<\alpha_n\right>_{n<\om}\in 
V$, so $\cof^V(\eta)=\om$, so $j_b(\eta)=\eta$, contradiction).\footnote{Since 
$j_b\notin V$, it seems that $\delta$ might not have cofinality $\om$ in $V$ 
here.}
But $j_b$ has unboundedly many fixed points ${<\alpha_0}$.
Therefore $(j_b)_1$ has unboundedly many ${<\alpha_1}$,
etc, $(j_b)_{n}$ has unboundedly many ${<\alpha_n}$.
But then $(j_b)_{n}\rest V_{\alpha_n}$ is enough to determine
$(j_b)_{n+k}\rest V_{\alpha_n}$ for $k<\om$
(working in $V$), so we get that all points $<\alpha_n$ are eventually fixed.
So if $\delta$ is singular in $V$, we can find $n<\om$
such that $(j_b)_n(\cof^V(\delta))=\cof^V(\delta)$.
Similarly, if $\delta$ is regular but non-inaccessible,
we can find $n<\om$ such that $(j_b)_n(\scot^V(\delta))=\scot^V(\delta)$
(for this, argue as before to first find $n<\om$
and a limit $\alpha<\delta$ such that $(j_b)_n$
has cofinally many fixed points ${<\alpha}$
and $\scot^V(\delta)\in V_\alpha$, and then proceed in $V$).

So fix $n<\om$ such that $k=(j_b)_n$ is like this.
Then for each $\eta\in\OR$, $\Sigma_0$-{\L}o\'{s}' theorem holds for 
$\Ult_0(\J_\eta,E_{k})$
The proof is just like before -- the fact that $k\notin V$ does not matter.
(The ultrapower is formed using only functions in $V$,
so all the calculations with partitioning measure one sets is done in $V$,
and because either $V_\delta$ is inaccessible in $V$ or $k$ fixes the relevant 
objects, the argument goes through.)

So consider $U=\Ult_0(\J_{\chi},E_k)$
where $\chi=(\om\dot(\delta+\xi))+\crit(k)+1$ (we had 
$\PP=T_\infty\in\J_\xi$).
We claim that $U$ is illfounded.
For otherwise $\OR^U>\chi$,
so as before, we get 
\[t=\Th_{\Sigma_1}^{\J_\chi}(V_\delta\cup\{V_\delta\})
\in\J_{\chi}[G].\]
Let $\tau\in\J_{\chi}$ be a $T_\infty$-name
such that $\tau_G=t$.
Now $t\in L(V_\delta)$, and since $G$ is $L(V_\delta)$-generic,
there is $p\in G$ such that $L(V_\delta)\sats$``$p$ forces 
$\check{t}=\tau$''. But then for $(\varphi,x)\in V_\delta$ we have
\[ (\varphi,x)\in t\iff
\J_\chi\sats p\forces_{\PP}(\varphi,x)\in\tau, \]
because for example if $(\varphi,x)\in t$ but there is $q\leq p$  and 
$\J_\chi\sats q\forces_{\PP}(\varphi,x)\notin\tau$,
then $p$ cannot have forced $\check{t}=\tau$;
moreover here this forcing relation is definable over $\J_\chi$,
and in fact, it is definable from parameters over $\J_{\chi-1}$.
For the $\Sigma_0$ forcing relation over $\J_{\om\cdot(\delta+\xi)}$
is $\Delta_1^{\J_{\om\cdot(\delta+\xi)}}(\{\PP\})$,
because we have enough closure at this stage,
and this is then maintained level by level, and the $\Sigma_0$ forcing relation 
over $\J_{\chi-1}$
is $\Delta_1^{\J_{\chi-1}}(\{\PP\})$,
and the $\Sigma_n$ forcing relation for $\bfSigma_n^{\J_{\chi-1}}$-definable 
names $\sub\J_{\chi-1}$ is definable from $\PP$ over $\J_{\chi-1}$,
but $\tau$ can be taken to be such a name. Hence we get $t\in\J_\chi$,
which is a contradiction.

So $U$ is illfounded, and hence $\J_{\kappa_{V_\delta}}\sub\wfp(U)$.
But then we again get $t\in\J_\chi[G]$, which is again a contradiction,
completing the proof.
\end{proof}

\begin{tm}\label{tm:DC_or_force_embeddings} \tu{(}$\ZF$\tu{)} Let 
$\delta\in\Lim$
with $\mathscr{E}(V_\delta)\neq\emptyset$,
 where $m\in[1,\om]$.
 Then:
 \begin{enumerate}
  \item In a set-forcing extension of $V$, for each $V$-amenable
$j\in\mathscr{E}_m(V_\delta)$ and $\alpha<\delta$
 there is a $V$-amenable $k\in\mathscr{E}_m(V_\delta)$
 with $k\rest V_\alpha=j\rest V_\alpha$
 but $k\neq j$.

  \item If $\DC$ holds and $\cof(\delta)=\om$ then for each 
$j\in\mathscr{E}_m(V_\delta)$ and $\alpha<\delta$ there 
is 
 $k\in\mathscr{E}_m(V_\delta)$ with $k\rest V_\alpha=j\rest 
V_\alpha$
 but $k\neq j$.
 \end{enumerate}
\end{tm}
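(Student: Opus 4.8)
The plan is to run both parts through the tree $T=T^{\delta,m}$ and its perfect kernel $T_\infty$ from Lemma~\ref{lem:T_infty_perfect}. Since $\mathscr{E}(V_\delta)=\mathscr{E}_\om(V_\delta)\sub\mathscr{E}_m(V_\delta)$, the hypothesis gives $\mathscr{E}_m(V_\delta)\neq\emptyset$, so by Lemma~\ref{lem:T_infty_perfect} the tree $T_\infty$ is nonempty and perfect, $T_\infty=T_{\kappa_{V_\delta}}$, and $T_\infty,\langle T_\alpha\rangle_\alpha\in L(V_\delta)$; in particular $T_\infty$ is pruned, meaning that for every $t\in T_\infty$ and $\gamma<\delta$ some $s\in T_\infty$ extends $t$ with $\gamma\in\dom(j_s)$. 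Both parts rest on the following mechanism, to be applied either in $V$ or in a forcing extension, whenever a cofinal sequence $\langle\delta_i\rangle_{i<\om}$ in $\delta$ is available. Given a $\Sigma_m$-elementary $j:V_\delta\to V_\delta$ and $\alpha<\delta$, I would read off from $j$ a branch $b_j\in[T_\infty]$ whose successive nodes $t_0\pins t_1\pins\cdots$ have domains $V_{\alpha_i}$ cofinal in $\delta$ (this is the correspondence between embeddings and branches underlying Lemma~\ref{lem:T_infty_perfect}; an induction on the derivatives $T_\beta$, using prunedness, shows every initial segment of such a ``full'' branch actually lies in $T_\infty$). Fix $n$ with $V_\alpha\sub V_{\alpha_{n-1}}$ and put $t=b_j\rest n$, so $j_t=j\rest V_{\alpha_{n-1}}\super j\rest V_\alpha$. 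By perfectness fix $s_1,s_2\in T_\infty$ extending $t$ with $j_{s_1}\not\sub j_{s_2}\not\sub j_{s_1}$; since $j_{s_1},j_{s_2}$ are functions both extending $j_t$ and have comparable ($V_\alpha$-type) domains, they disagree on a common point, so they are not both $\sub j$, and we may assume $j_{s_1}\not\sub j$. If $s_1$ can be extended to a branch $b'\in[T_\infty]$ with node-domains cofinal in $\delta$, then $k:=j_{b'}$ is a total $\Sigma_m$-elementary map $V_\delta\to V_\delta$ (Lemma~\ref{lem:T_infty_perfect}) with $k\super j_{s_1}$, hence $k\neq j$, while $k\rest V_\alpha=j_t\rest V_\alpha=j\rest V_\alpha$; and $k$ is $V$-amenable since every node of $T_\infty$ lies in $L(V_\delta)\sub V$, so each $k\rest V_\gamma$ equals $j_u$ for a node $u$ of $b'$.

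For part~(2), I would assume $\DC$ and $\cof(\delta)=\om$, fix $\langle\delta_i\rangle_{i<\om}$ cofinal in $\delta$, and take $j\in\mathscr{E}_m(V_\delta)$, $\alpha<\delta$. Here $b_j$ can be taken in $V$: using $\DC$ together with $\cof(\delta)=\om$ and reflection, build a cofinal tower $\langle\alpha_i\rangle_{i<\om}$ of levels at which $j$ restricts to a node of $T_\infty$. Produce $t$ and $s_1$ as above. To extend $s_1$ to a cofinal branch, invoke $\DC$ once more: recursively pick $u_0=s_1$ and $u_{i+1}\in T_\infty$ extending $u_i$ with $\delta_i\in\dom(j_{u_{i+1}})$, possible at each step by prunedness; then $b'=\bigcup_{i<\om}u_i$ is as required and $k=j_{b'}$ witnesses the claim.

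For part~(1) no choice is available in $V$, so I would force. Let $\PP\in V$ be a forcing collapsing $V_\delta$ to be countable (e.g.\ $\Coll(\om,V_\delta)$); throughout part~(1) the symbol $V_\delta$ denotes the ground-model set $V_\delta^V$. In $V[G]$ we still have $\ZF$; the objects $T_\infty$ and $\langle T_\alpha\rangle_\alpha$, being elements of $L(V_\delta^V)$ and computed from $V_\delta^V$, are unchanged, $T_\infty$ is now a countable pruned perfect tree, $\cof(\delta)=\om$, and $V_\delta^V$ is wellorderable. Now let $j$ be any $V$-amenable member of $\mathscr{E}_m(V_\delta)$ of $V[G]$ and $\alpha<\delta$. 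Such a $j$ determines a full branch $b_j\in[T_\infty]^{V[G]}$ (exactly the remark in the proof of Lemma~\ref{lem:T_infty_perfect}), and the mechanism of the first paragraph runs inside $V[G]$: produce $t,s_1$, and extend $s_1$ to a cofinal branch $b'$ by choosing at each step, along an enumeration of $T_\infty$, the first admissible extension (one exists by prunedness). Then $k=j_{b'}\in V[G]$ is total and $\Sigma_m$-elementary, is $V$-amenable because the nodes of $T_\infty$ are ground-model sets, satisfies $k\rest V_\alpha=j\rest V_\alpha$, and $k\neq j$.

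The main obstacle will be part~(1): one must read ``$V$-amenable $j\in\mathscr{E}_m(V_\delta)$'' in the extension as referring to $\Sigma_m$-elementary $j:V_\delta^V\to V_\delta^V$ all of whose proper restrictions lie in $V$, observe that these correspond exactly to full branches of the \emph{ground-model} tree $T_\infty$, and ensure that $V[G]$ — which need not model full $\AC$ — still has enough choice (an enumeration of the countable tree $T_\infty$, and a wellorder of $V_\delta^V$) to re-run the branch construction. The remaining verifications — that a full branch's nodes all survive into $T_\infty$, that $j_{b'}$ is genuinely total and $\Sigma_m$-elementary, and that two mutually incompatible extensions of $j_t$ (which have comparable domains) cannot both refine $j$ — are routine, the middle one being essentially built into Lemma~\ref{lem:T_infty_perfect} and the definition of $[T]$.
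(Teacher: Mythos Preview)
Your proposal is correct and follows the same approach as the paper: invoke Lemma~\ref{lem:T_infty_perfect} to get that $T_\infty$ is perfect (and pruned), then exploit perfectness to split off from any given $j$ at any prescribed level, and finally build a cofinal branch through the split-off node either by $\DC$ (part~2) or by collapsing $V_\delta$ to countable (part~1). The paper's own proof is a single sentence deferring all of this to ``immediately gives the theorem''; you have simply unpacked that sentence, including the verification that restrictions of a $V$-amenable $j$ land in $T_\infty$ and that incomparable extensions of a common node cannot both refine $j$. One minor over-statement: building the branch $b_j$ from a given $j$ does not itself require $\DC$, since the restrictions are determined by $j$ and a fixed cofinal $\omega$-sequence; $\DC$ is only needed (as you correctly use it) to thread a branch through $T_\infty$ beyond the splitting node $s_1$.
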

\begin{proof}
By Lemma \ref{lem:T_infty_perfect}, $T_\infty$ is perfect (notation as 
there),  which immediately gives the theorem (of course we
can take the generic extension to be $V[G]$ where $G$ collapses $V_\delta$ to 
become countable).
\end{proof}

We now show that the kind of embedding defined in Theorem 
\ref{tm:j_in_L(V_delta)_under_V=HOD}  cannot be extended to 
the whole of $\J_\theta(V_\delta)$:
\begin{tm}\tu{(}$\ZF$\tu{)}
 Let $\delta\in\Lim$ and $j\in\mathscr{E}(V_\delta)$. Let 
$\theta=\kappa_{V_\delta}$.
 Suppose that $j$ is definable
 over $\J_\theta(V_\delta)$. Then there is 
$\alpha<\theta$
 such that $\Ult_0(\J_\alpha(V_\delta),E_j)$
 is illfounded.
\end{tm}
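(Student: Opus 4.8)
The plan is to argue by contradiction: assume $\Ult_0(\J_\alpha(V_\delta),E_j)$ is wellfounded for every $\alpha<\theta$, and reach a contradiction via a least‑critical‑point argument in the style of Theorem~\ref{tm:RL_no_def_j} and Suzuki's theorem.

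First I would observe that $\cof(\delta)=\om$. Since $j$ is definable over $\J_\theta(V_\delta)$ from a parameter $p$, we have $j\in\J_{\theta+1}(V_\delta)\sub L(V_\delta)$, and $L(V_\delta)$ models $\ZF+\text{``}V=L(V_\delta)\text{''}+\text{``}j\in\mathscr{E}_1(V_\delta)\text{''}$; applying Theorem~\ref{tm:V=L(V_delta)_cof(delta)>om} inside $L(V_\delta)$ forces $\cof^{L(V_\delta)}(\delta)=\om$, hence $\cof^V(\delta)=\om$. Next I would check that $\Sigma_0$-{\L}o\'{s}' criterion, in fact the full {\L}o\'{s} criterion, holds for $\Ult_0(\J_\alpha(V_\delta),E_j)$ for every $\alpha\leq\theta$: this is the argument of part~\ref{item:Sigma_0_Los_crit_for_J_alpha} of Lemma~\ref{lem:V=L(V_delta)_cof(delta)=om}, which uses only $\cof(\delta)=\om$ together with Fact~\ref{fact:always_Hull_below_admissible} (every element of $\J_\alpha(V_\delta)$, $\alpha\leq\theta$, is $\Sigma_1$-definable over $\J_\alpha(V_\delta)$ from $V_\delta\cup\{V_\delta\}$, so every existential witness in such an ultrapower reduces to one coded by an element of $V_\delta$, which the $\om$-cofinality of $\delta$ then bounds on a measure-one set); alternatively one quotes Lemma~\ref{lem:V=L(V_delta)_cof(delta)=om} applied inside $L(V_\delta)$, as $\Sigma_0$-{\L}o\'{s}' criterion is absolute. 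Hence, under the contradiction hypothesis, for each $\alpha<\theta$ the ultrapower $U_\alpha:=\Ult_0(\J_\alpha(V_\delta),E_j)$ is extensional, wellfounded and transitive, its ultrapower map $k_\alpha$ is fully elementary and $\in$-cofinal, $k_\alpha$ is continuous at $\delta$ so $k_\alpha(V_\delta)=V_\delta$ and $k_\alpha\rest V_\delta=j$, whence $U_\alpha=\J_{\beta_\alpha}(V_\delta)$ with $\crit(k_\alpha)=\crit(j)$. Since every function in $\J_\theta(V_\delta)$ already lies in some $\J_\alpha(V_\delta)$ with $\alpha<\theta$, these ultrapowers cohere, so $U:=\Ult_0(\J_\theta(V_\delta),E_j)=\J_\beta(V_\delta)$ is wellfounded with $\beta=\sup_{\alpha<\theta}\beta_\alpha\geq\theta$, and $k:=i^{\J_\theta(V_\delta)}_{E_j}\colon\J_\theta(V_\delta)\to\J_\beta(V_\delta)$ is fully elementary, $\in$-cofinal, with $k(V_\delta)=V_\delta$, $k\rest V_\delta=j$ and $\crit(k)=\crit(j)$.

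Now I would run the minimization, keeping the given formula $\varphi$ fixed. For $q\in\J_\theta(V_\delta)$ call $q$ \emph{good} if $j_q:=\{(x,y)\bigm|\J_\theta(V_\delta)\sats\varphi(x,y,q)\}$ is a fully elementary $\in$-cofinal self-map of $V_\delta$ for which $\Ult_0(\J_\alpha(V_\delta),E_{j_q})$ is wellfounded for all $\alpha<\theta$; by hypothesis our $p$ is good. Using the (set-sized) satisfaction relation for $V_\delta$ to express full elementarity of $j_q$, and the $\Sigma_1$-expressibility of ``$\Ult_0(\J_\alpha(V_\delta),E_{j_q})$ admits a ranking'', the predicate ``$q$ is good with $\crit(j_q)=\xi$'' is first-order over $\J_\theta(V_\delta)$ for this fixed $\varphi$, with a definition that is absolute to every transitive $\J_\gamma(V_\delta)$, $\gamma\geq\theta$. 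Let $\kappa_0$ be the least $\xi$ attained as $\crit(j_q)$ by a good $q$; then $\kappa_0\leq\crit(j)<\delta$, $\kappa_0$ is definable without parameters over $\J_\theta(V_\delta)$, and its defining property is absolute upward. Fixing a good $q_0$ with $\crit(j_{q_0})=\kappa_0$ and applying the previous paragraph to $j_0:=j_{q_0}$ in place of $j$, we obtain a fully elementary $\in$-cofinal $k_0\colon\J_\theta(V_\delta)\to\J_{\beta_0}(V_\delta)$ with $k_0\rest V_\delta=j_0$ and $\crit(k_0)=\kappa_0$. Since $\kappa_0$ is parameter-freely definable over $\J_\theta(V_\delta)$ with a definition absolute to the transitive end-extension $\J_{\beta_0}(V_\delta)\supseteq\J_\theta(V_\delta)$, elementarity of $k_0$ gives $k_0(\kappa_0)=\kappa_0$, contradicting $\crit(k_0)=\kappa_0$.

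The main obstacle is the bookkeeping in the last two paragraphs, namely: (i) that ``$q$ is good with $\crit(j_q)=\xi$'' really is first-order over $\J_\theta(V_\delta)$ and is computed identically over $\J_{\beta_0}(V_\delta)$ --- in particular that the fixed $\varphi$ is evaluated the same way in both; this is cleanest if one first reduces $j$ to a defining formula $\varphi$ of low complexity (or absorbs the higher-complexity data, i.e.\ the restrictions $j_q\rest V_\gamma\in V_\delta$ for $\gamma<\delta$ and the satisfaction relation for $V_\delta$, into parameters) so that the only genuinely unbounded clause is a $\forall\alpha\in\OR$ over a $\Sigma_1$ statement about rankings of a definable relation; and (ii) the coherence and wellfoundedness of the direct limit $\Ult_0(\J_\theta(V_\delta),E_j)$ when $\cof(\theta)=\om$, where one argues, exactly as in the proofs of Theorem~\ref{tm:no_j_in_adm} and Lemma~\ref{lem:T_infty_perfect}, that no $\om$-cofinal descending chain of ultrapower indices can arise (using $\Sigma_0$-{\L}o\'{s}' criterion and the $\om$-completeness, in the relevant sense, of the measures $E_a$), or else one replaces the full direct limit by $\Ult_0(\J_\alpha(V_\delta),E_{j_0})$ for a single $\alpha<\theta$ chosen large enough that $\beta_\alpha\geq\theta$. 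Everything else is routine given \S\ref{sec:ultrapowers_in_ZF} and the facts of \S\ref{sec:admissible}.
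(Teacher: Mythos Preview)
Your overall strategy---assume all the partial ultrapowers are wellfounded, build the ultrapower of $\J_\theta(V_\delta)$, and run a least-critical-point argument---is the same as the paper's. But you are missing the one observation that makes the endgame work, and without it your ``bookkeeping'' points (i) and (ii) are genuine problems rather than technicalities.

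The missing observation is that $U=\Ult_0(\J_\theta(V_\delta),E_j)=\J_\theta(V_\delta)$ exactly; that is, $\beta=\theta$, not merely $\beta\geq\theta$. The reason is that $\theta$ is the \emph{least} ordinal with $\J_\theta(V_\delta)$ admissible, so $\J_\theta(V_\delta)\sats$``there is no $\gamma>\delta$ with $\J_\gamma(V_\delta)$ admissible''; since $k(V_\delta)=V_\delta$ and $k$ is elementary, $U$ satisfies the same sentence, which forces $\beta\leq\theta$. (The same argument applied to each $\J_\alpha(V_\delta)$ with $\alpha<\theta$ shows $\beta_\alpha<\theta$, so your proposed fix for (ii)---choosing $\alpha<\theta$ with $\beta_\alpha\geq\theta$---cannot work.) For wellfoundedness of $U$ itself one needs one more ingredient you do not mention: any function $f:\left<V_\delta\right>^{<\om}\to\J_\alpha(V_\delta)$ can be replaced by some $g\in\J_\alpha(V_\delta)$ agreeing on a measure-one set (using a surjection $V_\delta\twoheadrightarrow\J_\alpha(V_\delta)$ from Fact~\ref{fact:always_Hull_below_admissible} together with $\cof(\delta)=\om$). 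This makes the factor maps $U_\alpha\to U_{\alpha'}$ into end-extensions, so $U=\bigcup_{\alpha<\theta}\J_{\beta_\alpha}(V_\delta)=\J_\theta(V_\delta)$.

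Once $k_0:\J_\theta(V_\delta)\to\J_\theta(V_\delta)$ is a self-map, your obstacle (i) evaporates: there is no second structure $\J_{\beta_0}(V_\delta)$ over which anything needs to be recomputed. The paper then finishes not by your ``good $q$'' predicate (whose wellfoundedness clause is awkward to express first-order over $\J_\theta(V_\delta)$, since the ultrapowers are definable classes rather than sets there), but by showing directly that any $\Sigma_1$-elementary $k_+:\J_\theta(V_\delta)\to\J_\theta(V_\delta)$ with $k_+(V_\delta)=V_\delta$ is unique given $k_+\rest V_\delta$, is definable from $k_+\rest V_\delta$, and is automatically fully elementary; then the usual Suzuki minimization on parameters defining such maps gives the contradiction.
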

\begin{proof}
Write $\J_\alpha$ for $\J_\alpha(V_\delta)$.
Since $j\in L(V_\delta)$, we have
$\cof(\delta)=\cof^{L(V_\delta)}(\delta)=\om$.

\begin{clm}Let $\alpha<\theta$ with $\alpha$ either a successor or
$\cof(\alpha)<\delta$ and
 $j$ continuous at $\cof(\alpha)$.
Let $f:\left<V_\delta\right>^{<\om}\to\J_\alpha$
and $a\in\left<V_\delta\right>^{<\om}$.
Then there is $g\in\J_\alpha$ such that $g(u)=f(u)$ for $E_a$-measure one many 
$u$.\end{clm}
\begin{proof} By the usual calculations using the continuity
of $j$, we can assume that $\rg(f)\sub x$ for some $x\in\J_\alpha$.
But by Fact \ref{fact:always_Hull_below_admissible}, there is a surjection 
$\pi:V_\delta\to x$ with $\pi\in\J_\alpha$. Therefore, we can assume that 
$x=V_\delta$.
(That is, given $y\in x$, let $Z_y=\{w\in V_\delta\bigm|\pi(w)=y\}$,
and then let $z_y=Z_y\inter V_\beta$ where $\beta$ is least such that this 
intersection is $\neq\emptyset$, and then let 
$f':\left<V_\delta\right>^{<\om}\to V_\delta$ be $f'(u)=z_{f(u)}$. Then
clearly $f'$ codes $f$ modulo $\pi$.) But since $\cof(\delta)=\om$,
there is then $\beta<\delta$ such that $f(u)\in V_\beta$
for $E_a$-measure one many $u$. But then restricting to this set,
we get a function $g\in V_\delta$.\end{proof}

So let
$M=\J_\theta$ and $U=\Ult_0(M,E)$.
Suppose now that the theorem fails.

\begin{clm}$U=M$ is wellfounded and $j_+=i^M_E:M\to U$ is cofinal and 
$\Sigma_1$-elementary, with $j_+(V_\delta)=V_\delta$ and $j\sub j_+$.
\end{clm}
\begin{proof}
We also have $\Sigma_0$-{\L}o\'{s}' criterion
for the ultrapower, by Lemma \ref{lem:V=L(V_delta)_cof(delta)=om}. This gives 
the 
$\Sigma_1$-elementarity of $i^M_E$.
And note that by the previous claim and our contradictory hypothesis,
$U$ is wellfounded.  The fact that $j_+(V_\delta)=V_\delta$
follows from the continuity of $j_+$ at $\delta$, which
holds because 
$\cof(\delta)=\om$, like in the proof of the previous claim.

It remains to see that $U=M$. Note that $U=\J_\gamma(V_\delta)$
for some $\gamma$, and certainly $\theta\leq\gamma$.
But 
$M\sats$``There is no $\alpha>\delta\in\OR$ such that $\J_\alpha$ is 
admissible'', so by $\Sigma_1$-elementarity
and as $j_+(V_\delta)=V_\delta$,  $U$ satisfies this 
statement. Therefore $\gamma\leq\theta$, so we are done.
\end{proof}

\begin{clm}Given any $\Sigma_1$-elementary $k:V_\delta\to V_\delta$,
 there is at most one extension of $k$ to a $\Sigma_1$-elementary
 $k_+:\J_\theta\to\J_\theta$ with $k_+(V_\delta)=V_\delta$.
 Moreover, if $k$ is definable from parameters
 over $\J_\theta$ then so is $k_+$.
\end{clm}
\begin{proof}
Let us first observe that $k_+\rest\theta$ is uniquely determined.
Given $\alpha<\theta$, there is a $\Sigma_1$ formula $\varphi$
and $z\in V_\delta$ such that $\alpha$ is the least $\alpha'\in\OR$
such that $\J_{\alpha'}\sats\psi(z,V_\delta)$, where
\[\psi(\dot{z},\dot{v}) =\text{``}
\all x\in\dot{v}\ \exists y\ \varphi(x,y,\dot{z},\dot{v})\text{''}.\]
Hence $k_+(\alpha)$ must be the least $\alpha'\in\OR$
such that $\J_{\alpha'}\sats\psi(k(z),V_\delta)$.

But now $\J_\alpha=\Hull_1^{\J_\alpha}(V_\delta\cup\{V_\delta\})$,
and since $k_+\rest\J_\alpha:\J_\alpha\to\J_{k_+(\alpha)}$
must be $\Sigma_1$-elementary and have $k_+(V_\delta)=V_\delta$,
$k$ determines 
$k_+\rest\J_\alpha$.

The ``moreover'' clause clearly follows from the manner in which
we have computed $k_+$ above from $k$.\end{proof}

\begin{clm}Let $k_+:\J_\theta\to\J_\theta$ be $\Sigma_1$-elementary
 with $k_+(V_\delta)=V_\delta$.
 Then $k_+$ is fully elementary.
\end{clm}
\begin{proof}
Given $\gamma\leq\delta$ and $\alpha\leq\theta$,
let $t_\gamma^\alpha=\Th_{\Sigma_1}^{\J_\alpha}(V_\gamma\cup\{V_\delta\})$.
Let $\widetilde{t}_\gamma^\alpha$ code $t_\gamma^\alpha$
as a subset of $V_\gamma$.
We claim that $k_+(\widetilde{t}_\delta^\theta)=\widetilde{t}_\delta^\theta$.
For given $\gamma<\delta$, since $\widetilde{t}^\theta_\gamma\in V_\delta$,
by admissibility there is $\alpha_\gamma<\theta$ such that
$t^\beta_\gamma=t^\theta_\gamma$ for all 
$\beta\in[\alpha_\gamma,\theta]$.
But then it easily follows that
$k(\widetilde{t}^\theta_\gamma)=\widetilde{t}^\theta_{k(\gamma)}$.
But then it follows that 
$k(\widetilde{t}^\theta_\delta)=\widetilde{t}^\theta_\delta$.

Also, if $\delta$ is singular in $\J_\theta$
then $k$ is continuous at $\cof^{\J_\theta}(\delta)$,
because $k(\delta)=\delta$. Similarly if $\J_\theta\sats$``$\delta$ is regular 
but not inaccessible''. 

From here we can argue as in the proof of 
\cite[Theorem 5.6***]{cumulative_periodicity}.
\end{proof}

Using the preceding claims,
we can now derive the usual kind of contradiction,
 considering the least critical point $\kappa$
 of any  $\Sigma_1$-elementary $k_+:\J_\theta\to\J_\theta$ such that 
$k_+(\delta)=\delta$ and $k_+$ is $\Sigma_n$-definable from parameters
over $\J_\theta$ (for some appropriate $n$).
This completes the proof.
\end{proof}

\begin{rem}
The argument above shows that
 if $\delta$ is a limit and $\kappa=\kappa_{V_\delta}$,
 then $\cof^{L(V_\delta)}(\kappa)=\cof^{L(V_\delta)}(\delta)$,
 definably over $\J_\kappa(V_\delta)$. It also shows that,
 with $T_\infty$ as before, for each $\alpha<\delta$,
 since $T_\infty\inter V_\alpha\in V_\delta$,
 there is $\gamma<\kappa$ such that $T_\gamma\inter V_\alpha=T_\infty\inter 
V_\alpha$. Also note that, for example, if $\delta=\lambda+\om$,
then $T_\infty$ 
cannot be just finitely splitting
beyond some node $t$ (counting here the number of nodes
beyond $t$ in each $V_{\lambda+n}$, for $n<\om$).
For otherwise (taking $t$ with $\dom(j_t)=V_{\lambda+n}$ for some $n$),
the 
tree structure
of $(T_\infty)_t$ is coded by a real, hence is in $V_\delta$, and an 
admissibility argument
easily gives that $(T_\infty)_t$ is computed by some $\gamma<\kappa$,
which leads to a contradiction as before. However, such an argument
doesn't seem to work in the case that $(T_\infty)_t$ is just
$\om$-splitting.
\end{rem}

\begin{rem}\label{rem:j_in_L(V_delta)}
In the only example we have where $\delta\in\Lim$ and
and $j\in\mathscr{E}(V_\delta)\inter L(V_\delta)$,
we have $V_\delta\sats\ZFC+$``$V=\HOD$'',
so $L(V_\delta)\sats\AC$ and $\delta=\kappa_\om(j)$.
Is it possible to have $j\in L(V_\delta)\inter\mathscr{E}(V_\delta)$
with $\kappa_\om(j)<\delta$? Or even with $\kappa_\om(j)=\delta$ but 
$L(V_\delta)\sats\neg\AC$? Is it possible to have this with 
$(V_\delta,j)\sats\ZFR$? We know that we need $\cof^{L(V_\delta)}=\om$
for this. We have the tree $T_\infty\in L(V_\delta)$.
But without left-most branches, it is not clear to the author
how to get an embedding in $L(V_\delta)$ from this.
Relatedly, is it possible for $j$ to be generic over $L(V_\delta)$
and have $V_\delta=V_\delta^{L(V_\delta)[j]}$?
\end{rem}

\bibliographystyle{plain}
\bibliography{../bibliography/bibliography}

\end{document}